\definecolor{mygray}{gray}{0.85}
\renewcommand{\leq}{\leqslant}
\renewcommand{\geq}{\geqslant}
\newcommand\myrestriction{\mathord\restriction}
\def\ZZ{\mbox{\boldmath $Z$}}
\newcommand{\Ascr}{\mathfrak A}
\newcommand{\Bscr}{\mathcal B}
\newcommand{\Cscr}{\mathcal C}
\newcommand{\Dscr}{\mathfrak D}
\newcommand{\Mscr}{\mathcal M}
\newcommand{\Tscr}{\mathcal T}
\newcommand{\Wscr}{\mathcal{W}}
\newcommand{\Aback}{\overleftarrow {A}}
\newcommand{\Bback}{\overleftarrow {B}}
\newcommand{\Cback}{\overleftarrow {C}}
\def\abar{\mbox{\boldmath $a$}}
\def\bbar{{\bf b}}
\def\cbar{{\bf c}}
\def\hbar{{\bf h}}
\def\vbar{{\bf v}}
\def\xbar{{\bf x}}
\def\ybar{{\bf y}}
\newcommand{\NN}{\boldmath{N}}
\def\bigI{\mathcal{I}}
\def\dim{{\rm dim}}
\def\sdim{{\rm sdim}}
\def\aut{{\rm aut}}
\def\icl{{\rm icl}}
\def\sdcl{{\rm sdcl}}
\def\cl{{\rm cl}}
\def\acl{{\rm acl}}
\def\th{{\rm Th}}
\DeclareMathOperator{\dcl}{dcl}
\def\subsection{\@startsection{subsection}{3}%
  \z@{.5\linespacing\@plus.7\linespacing}{.3\linespacing}%
  {\bfseries\centering}}
\def\subsubsection{\@startsection{subsubsection}{3}%
  \z@{.5\linespacing\@plus.7\linespacing}{.3\linespacing}%
  {\centering}}
\def\myfnt{\ifx\protect\@typeset@protect\expandafter\footnote\else\expandafter\@gobble\fi}
\newtheorem{theorem}{Theorem}[section]
\newtheorem{corollary}[theorem]{Corollary}
\newtheorem{definition}[theorem]{Definition}
\newtheorem{lemma}[theorem]{Lemma}
\newtheorem{claim}[theorem]{Claim}
\newtheorem{example}[theorem]{Example}
\newtheorem{axiom}[theorem]{Axiom}
\newtheorem{problem}[theorem]{Problem}
\newtheorem{question}[theorem]{QUESTION}
\newtheorem{observation}[theorem]{Observation}
\newtheorem{fact}[theorem]{Fact}
\newtheorem{conclusion}[theorem]{Conclusion}
\newtheorem{remark}[theorem]{Remark}
\newtheorem{notation}[theorem]{Notation}
\newtheorem{assumption}[theorem]{Assumption}
\newtheorem{conjecture}[theorem]{Conjecture}
\newtheorem{definition/fact}[theorem]{Definition/Fact}
\newtheorem{construction}[theorem]{Construction}
\def\bK{\mbox{\boldmath $K$}}
\def\bL{\mbox{\boldmath $L$}}
\def\Fscr{\mathcal F}
\def\Sdim{\mathrm{Steiner\mbox{-}dim}}
\def\Ssdim{\mathrm{Steiner\mbox{-}sdim}}
\def\Smoves{\mathrm{Steiner\mbox{-}moves}}
\def\Sdet{\mathrm{Steiner\mbox{-}determines}}
\def\bbL{\hat \bL_0}
\newcommand{\pureindep}[1][]{%
  \mathrel{
    \mathop{
      \vcenter{
        \hbox{\oalign{\noalign{\kern-.3ex}\hfil$\vert$\hfil\cr
              \noalign{\kern-.7ex}
              $\smile$\cr\noalign{\kern-.3ex}}}
      }
    }\displaylimits_{#1}
  }
}
\newcommand{\indep}[2]{%
  \mathrel{
    \mathop{
      \vcenter{
        \hbox{%
\oalign{
\noalign{\kern-.3ex}\hfil$\vert$\hfil\cr
              \noalign{\kern-.7ex}
              $\smile$\cr\noalign{\kern-.3ex}
}
}
      }
}^{\!\!\!\!\!#2}_{\!\!\hspace{-0.1em}#1}
  }
}
\begin{document}
%originally titled Baldwin_Verbovskitoward9821_1: revised Oct 18 by removing
%red and correcting treatment of weak eliminatition of imaginaries.
\title{Towards a finer classification of strongly minimal sets}

%\title[Short title]{Finer Classification}
%\author{John T. Baldwin
%\\University of Illinois at Chicago\thanks{Research partially supported by Simons
%travel grant G5402, G3535.}
%\\
%Viktor V. Verbovskiy\thanks{Partially supported by grant AP09259295 of SC of
%the MES of the Republic of Kazakhstan.}\\Satbayev University, Kazakhstan}

\author{John T. Baldwin
\\University of Illinois at Chicago}
%\thanks{Research partially supported by Simons
%travel grant G5402 (418609).}
\author{Viktor V. Verbovskiy
\\ Satbayev University, Kazakhstan}
\thanks{
 Partially supported by Simons
travel grant G5402 (418609)(first author); grant AP09259295 of SC of the
MES of the Republic of Kazakhstan (second author).}

\renewcommand{\shortauthors}{Baldwin-Verbovskiy}
\renewcommand{\shorttitle}{Finer Classification}

%I have found only \address command, where we can write affiliation. This
%command puts the address on the last page.

%\author{John T. Baldwin
%\\University of Illinois at Chicago}
%\thanks{Research partially supported by Simons
%travel grant G5402 (418609).}
%\author{Viktor V. Verbovskiy
%\\ Satbayev University, Kazakhstan}
%\thanks{
% Partially supported by grant AP09259295 of SC of the
%MES of the Republic of Kazakhstan.}

% Satbayev University, Kazakhstan, department of mathematics and cybernetics
\date{\today}    \maketitle

\begin{abstract} Let $M$ be strongly minimal and constructed by a `Hrushovski
construction' with a single ternary relation. If the Hrushovski
algebraization function $\mu$ is in a certain class $\Tscr$ ($\mu$ triples)
we show that for independent $I$ with $|I| >1$, $\dcl^*(I)= \emptyset$ (*
means not in $\dcl$ of a proper subset). This implies the only definable
truly  $n$-ary functions $f$ ($f$ `depends' on each argument),  occur when
$n=1$.
 We prove
% , indicating the dependence on $\mu$,
  for Hrushovski's original construction and  for the strongly minimal
$k$-Steiner systems of Baldwin and Paolini that the symmetric definable
closure, $\sdcl^*(I) =\emptyset$ (Definition~\ref{defsdcl}). Thus, no such
theory admits elimination of imaginaries. As, we show that in an arbitrary
strongly minimal theory, elimination of imaginaries implies $\sdcl^*(I)
\neq   \emptyset$. In particular, such strongly minimal Steiner systems
with line-length at least 4 do not interpret a quasigroup, even though they
admit a coordinatization if $k = p^n$. The case structure depends on
properties of the Hrushovski $\mu$-function. The proofs depend on our
introduction, for appropriate $G \subseteq \aut(M)$ (setwise or pointwise
stablizers of finite independent sets), the notion of a $G$-normal
substructure $\Ascr$ of $M$ and of a $G$-decomposition of any finite such
$\Ascr$.
 These results lead to a finer classification of strongly minimal
structures with flat geometry, according to what sorts of definable
functions they admit.
\end{abstract}

{\bf An omission in Definition 2.7 and a correction to Definition 2.9 are in
the errata at the end along with more minor changes. }

The original motives for this paper were to show i) (first author) (Pure)
strongly minimal Steiner systems with line length greater than $3$ admit
neither $\emptyset$-definable quasigroups  nor more generally `non-trivial'
$\emptyset$-definable binary functions (Theorem~\ref{noqg},
Theorem~\ref{omni2S}) and ii) (second author) the original Hrushovski
construction does not admit elimination of imaginaries
(Theorem~\ref{no-sym-fun}).

We proved both of these conjectures. The failure of
elimination may not be surprising. Perhaps more surprising is that the proof
requires a detailed analysis of  actions  of two kinds of subgroups of the
automorphism group of a model to $T_\mu$ on the algebraic closure of a finite
sets and depends on specific values of the $\mu$-function. And given this
result, the surprise is the possibility   to construct a function in 2
variables in a very specific circumstance (Section~\ref{ce}).

The principal innovations of this paper are i) a new characterization of
elimination of imaginaries in terms symmetric definable closure
(Section~\ref{sdcl}), ii) decomposing models of flat strongly minimal
theories with respect to newly defined subgroups $G_I,G_{\{I\}}$
(Section~\ref{decomp})  and  using these tools to iii) demonstrate
imaginaries cannot be eliminated in certain flat strong minimal sets
(Section~\ref{geq3}-\ref{Steiner}). Our solutions led us to formulate a finer
classification of strongly minimal sets and to explore methods to vary the
Hrushovski strongly minimal set construction  to exhibit new classes, make
applications in combinatorics, and find  automorphism groups that act more
transitively.

 The question of elimination of imaginaries for
{\em ab initio} stongly minimal sets has largely lain fallow for 25 years
since B.~Baizhanov \cite{Baiz} asked whether any strongly minimal theory
 in a finite vocabulary
that admits elimination of imaginaries must be an algebraically closed field.
  \cite[p 160]{Hrustrongmin} observed that flat geometries  obeyed
   the weaker `geometric elimination of
   imaginaries' and this was adequate for studying the geometry. But,
    as for Zilber,
our goal is not to classify the geometries associated with strongly minimal
theories but to classify the theories.
%   We establish
%   the failure to eliminate imaginaries
%by showing the {\em symmetric definable closure} (Definition~\ref{defsdcl})
%of a pair of independent elements is empty.
%We emphasize that additional
%axioms (such as linear spaces) provide a wide family of such constructions
%and begin a classification of non-locally modular
%strongly minimal sets according to the properties of the definable closure.

\begin{notation}\label{defT}{\rm

\begin{enumerate}
\item $\hat T_\mu$ denotes a strongly minimal theory constructed with the
    same $\delta$, same vocabulary of one ternary relation $R$ (required to
    be a hypergraph), the same $\bL_0$, and an appropriate $\mu$  as in the
    main construction in \cite{Hrustrongmin}.
\item $T^S_\mu$ denotes a strongly minimal theory of Steiner systems
    constructed with the $\delta$ and vocabulary $\{R\}$ as in
    \cite{BaldwinPao})
(Definition~\ref{twodelta}).

\item $T_\mu$ is used for a strongly minimal theory of either sort; in both
    cases the geometry is {\em flat} but not {\em disintegrated (trivial)}.
%yields a family of $2^{\aleph_0}$ theories
% depending on a function $\mu$ described in Definition~\ref{Kmu}.}
% \end{definition}
\end{enumerate}}

\end{notation}

 We say a theory $T_\mu$ {\em triples} if for any good pair
$C/B$ (Definition~\ref{prealgebraic}.(3)), $\delta(B) \geq
    2$ and $|C|> 1$ imply $\mu(C/B)\geq 3$. This implies that  every primitive
extension of a `well-placed' (Definition~\ref{wpdef}) base has at least 3
copies in the generic.

To analyze the  elimination of imaginaries in {\em arbitrary} strongly
minimal theories we introduce the notion: $a \in (s)\dcl^*(X)$ (Section
\ref{sdcl}).
 The $*$ in $a \in (s)\dcl^*(X)$ means
every element of $X$ is used  to witness that $a$ is in the (symmetric)
definable closure of $X$. It is known ({\cite{Pillayacf},
\cite[1.6]{CasanovasFarreelim}) that for any strongly minimal set with
$\acl(\emptyset)$ infinite, elimination of imaginaries is equivalent to the
finite set property. It follows (Lemma~\ref{fckey})  that if $\sdcl^*(I) =
\emptyset$ for some independent set $I$, $T$ fails to eliminate imaginaries.
More strongly, $\dcl^*(I) = \emptyset$, implies there is no definable truly
$|I|$-ary function.  Below, $\emptyset$-definable abbreviates parameter-free
definable.

\begin{theorem}[Main Results]\label{mr1} %\cite{Baldwinver}

%(Theorem~\ref{noei}
%Theorem~\ref{mrs})
 Let  $T_\mu$ be a strongly minimal theory  as in Notation~\ref{defT}.
  Let $I =\{a_1,\dots, a_v\}$ be a tuple of
independent points with $v\ge 2$.
\begin{enumerate}
\item  If $T_\mu$ triples then $\dcl^*(I) = \emptyset$ and every definable
    function is essentially unary (Definition~\ref{truen}).
    \item Without the tripling assumption, $\sdcl^*(I) =\emptyset$ and
        there are no $\emptyset$-definable symmetric (value does not depend
        on order of the arguments) truly $v$-ary function.
\end{enumerate}
Consequently, in both cases $T_\mu$ does not admit elimination of
imaginaries.\footnote{Note that each of the conditions $\dcl^*(I) =
\emptyset$ and $\sdcl^*(I) =\emptyset$ is stronger than failure to eliminate
of imaginaries. For example,  an affine space does not admit elimination of
imaginaries; however $\sdcl^*(I) \ne\emptyset$ because of the addition.}
Nevertheless the algebraic closure geometry is not disintegrated.
\end{theorem}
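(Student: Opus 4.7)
The plan is to analyze orbits of $a$ under two distinct subgroups of $\aut(M)$: the pointwise stabilizer $G_1 = \Fix(I)$ for part (1), and the setwise stabilizer $G_2$ of $I$ for part (2). In each case I assume for contradiction that there exists $a \in \dcl^*(I)$ (respectively $\sdcl^*(I)$) with $a \notin I$, so certainly $a \in \acl(I)$. I then fix a finite self-sufficient substructure $\Ascr \subseteq M$ containing $I \cup \{a\}$ and apply the $G$-normal decomposition announced in the abstract: this writes $\Ascr$ as built from $I$ by a chain of primitive (good-pair) extensions $(C_j/B_j)$, each isomorphism type recording how new algebraic elements are attached. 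The $\ast$-condition on the closure is precisely what forces the good pair $(C_\ast/B_\ast)$ containing $a$ to have a base $B_\ast$ that is not contained in $\dcl$ of any proper subset of $I$; in particular $B_\ast$ genuinely involves every element of $I$.

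For part (1), the triples hypothesis yields $\mu(C_\ast/B_\ast) \geq 3$ once I verify that $B_\ast$ is well-placed and $\delta(B_\ast) \geq 2$, which the $G$-decomposition is specifically designed to guarantee. By construction of $T_\mu$ the generic then contains at least three isomorphic copies of $C_\ast$ over $B_\ast$; amalgamation together with the homogeneity of the Fra\"iss\'e limit produces an automorphism $\sigma \in \Fix(B_\ast) \subseteq \Fix(I)$ with $\sigma(a) \neq a$, contradicting $a \in \dcl^*(I)$. The consequence that every definable function is essentially unary follows formally: a truly $n$-ary $\emptyset$-definable $f$ evaluated on an independent tuple $\bbar$ would place $f(\bbar) \in \dcl^*(\bbar)$, which we just showed is empty.

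For part (2), even without the triples hypothesis one has $\mu(C_\ast/B_\ast) \geq 2$. Here I exploit a non-trivial permutation $\pi$ of $I$; because both $\delta$ and $\mu$ are invariant under relabeling of points and $\bL_0$ has the amalgamation property, $\pi$ extends to an automorphism $\hat\pi$ of $M$ taking the decomposition of $\Ascr$ to an isomorphic decomposition. Choosing $\pi$ (or, when no permutation alone suffices, composing with a copy-swapping automorphism of the pair $(C_\ast/B_\ast)$ that the setwise stabilizer now has available) gives an element of $G_2$ moving $a$, contradicting $a \in \sdcl^*(I)$. The final claim that $T_\mu$ does not admit elimination of imaginaries is then immediate from Lemma~\ref{fckey}, which translates EI into the non-emptiness of some $\sdcl^*(I)$.

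The main obstacle I expect is proving that the $G$-normal decomposition actually yields a good pair $(C_\ast/B_\ast)$ with $B_\ast$ well-placed and $\delta(B_\ast) \geq 2$, since these numerical conditions are exactly what licenses invoking the triples property. The delicate combinatorics is to leverage the minimality of the decomposition (together with the $\ast$-condition on $I$) to force these inequalities; roughly, a base with $\delta(B_\ast) \leq 1$ or one not involving all of $I$ would allow absorbing $a$ into an earlier stage of the decomposition, contradicting minimality. A secondary difficulty for part (2) is ensuring the automorphism implementing the permutation of $I$ also moves $a$ rather than accidentally fixing it; this requires careful use of the freedom provided by $\mu(C_\ast/B_\ast) \geq 2$ to choose among distinct copies of $C_\ast$ over the permuted base.
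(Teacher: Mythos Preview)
Your proposal has the right overall architecture (the groups $G_I$ and $G_{\{I\}}$, the $G$-normal decomposition, invoking $\mu$-multiplicity), but there is a genuine gap in the core step. You write that the triples hypothesis gives three copies of $C_\ast$ over $B_\ast$ and that swapping two of them yields $\sigma \in \Fix(B_\ast) \subseteq \Fix(I)$. The inclusion is false: $B_\ast$ is a subset of $\acl(I)$, not a superset of $I$, so an automorphism fixing $B_\ast$ pointwise need not fix $I$. What the paper actually proves (Lemma~\ref{getmax}) is that copies of $C_\ast$ over $B_\ast$ \emph{that are disjoint from the previous stratum $\Ascr^m$} can be interchanged by elements of $G_I$. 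The real obstruction, which your proposal does not address, is that up to $\mu-1$ of the $\mu$ copies may already lie \emph{inside} $\Ascr^m$; when exactly one copy is outside, that petal is genuinely $G_I$-invariant and no swapping argument at level $m+1$ can move it (Example~\ref{examp1} exhibits this). The paper's resolution is a simultaneous induction on the strata: a $G_I$-invariant petal at level $m+1$ with $\mu\geq 3$ is shown, via the delicate $\delta$-computation in Lemma~\ref{long}, to \emph{determine} a $G_I$-invariant petal at level $m$; iterating forces an invariant petal at level $1$, which Lemma~\ref{cl0} rules out. Your ``contradicting minimality'' sketch for $\delta(B_\ast)\geq 2$ is on the right track but is itself one half of this joint induction ($\dim_m$ and $\mathrm{moves}_m$), not a separate preliminary.

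For part~(2) the gap is larger. Without triples, a petal at level $m+1$ can be $G_{\{I\}}$-invariant with $\mu=2$, and the descending ``determines'' chain may terminate before reaching level~$1$. The paper handles this with substantially more machinery: a distinction between \emph{flowers} (orbits under $G^*_B$) and \emph{bouquets} (orbits under $G^*_{\{B\}}$), a notion of \emph{safe} petals, and a \emph{global} induction over all $G_{\{I\}}$-normal sets of a given height rather than within a fixed $\Ascr$. The key move is to find an automorphism $\rho$ (not in $G_{\{I\}}$) that maps the offending petal into a different $G$-normal set of strictly smaller height, then transfer safety back via Lemma~\ref{one-copynew}. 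Your ``compose a permutation of $I$ with a copy-swap'' idea does not supply this; in particular it gives no mechanism for showing $\rho(A^{m+1}_{j,1})$ is itself $G_{\{I\}}$-invariant, which is the heart of Lemmas~\ref{rhoAinv1}--\ref{slowstop}.
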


The crucial tool for this result is a close study of  the action of two kinds
of subgroups of the automorphism group of a structure $M$:
for a finite independent set $I$, the subgroups $G_I$ and $G_{\{I\}}$ of
$\aut(M)$ fixing $I$ setwise and pointwise respectively.

Fix a subgroup $G$  of the automorphism group of a generic structure  of
either sort.
The field-theoretic notion of normality in fields  uses  that the
definable
 closure $\dcl(X)$  is  the field (with a vector space structure) generated by   a set   of
parameters $X$ to study the solutions of algebraic equations.  In our
situation the definable closure may contain infinitely many elements
generated by definable unary functions over which we have little control. So,
we define the notion of a finite $G$-normal set (Definition~\ref{iclnot}).
 Then we provide a $G$-invariant tree
decomposition of such a set. The pointwise stabilizer is sufficient (using
triples) for an induction on the height of decompositions of $G$-normal sets
to show the absence of truly binary functions. But stronger result showing
(without assuming triples) that $\sdcl^*(I)=\emptyset$ and thus failure of
elimination of imaginaries (although possibly with non-trivial binary
functions) relies on $G_{\{I\}}$ and is more complex.

%This decomposition fuels the proof of Theorem~\ref{mr1} and is
%the new tool for the study of Hrushovski structures.
%The problem of classifying strongly minimal sets by the first order theory of
%their associated $\acl$-geometry was begun in the 1970's  by several people
%with Zilber the most successful. \cite{Baldwindep} set out a vocabulary (of
%$n$-independence predicates) for this purpose. Since Zilber's conditions are
%expressible in this language his tricotomy is a coarse  division.
%$\aleph_0$-categorical disintegrated theories clearly have many different
%associated geometries since the size of finite algebraic closures is visible
%to the geometry. Since finite dimensionality is expressible it is natural to
%restrict the classification to the geometry, $\acl(M)$, where $M$ is the
%countable saturated model.
% Recently, Mermelstein \cite{Merflat} has shown that any flat
%non-disintegrated geometry can be represented by the infinite rank  version
%of the basic Hrushovski construction studied here, in vocabularies $\langle
%\tau_n :m\le \omega\rangle$, where $\tau_n$ has symbols for $m$-hypergraphs
%for $1\leq m\leq n$. \cite[Theorem 3.1]{EvansFerII} show that, with some
%assumptions (plausibly provable) added to those we fix in
%Definition~\ref{Kmu}, the geometry associated with the strongly minimal set
%is isomorphic to that of the infinite rank structure.

 Our results indicate a
profound distinction at the level of definable functions rather than geometry
between the `field-like' strongly minimal sets and the known counterexamples
to Zilber's conjecture, indeed,  within the counterexamples.
%Fine
%Classification Has The Dual Meaning Common In Stability Theory.
 We refine the classification of  strongly minimal theories (with flat geometry)
by the existence of definable truly $n$-ary functions.
%And this proceeds by
%giving a `$G$-tree decomposition' of the algebraic closure of a finite set.
%This decomposition is reminiscent of the decomposition of model for the main
%gap; but the dependence on the choice of the group $G$ refines the
%conception.
% The
%existence of this classification is based a tree-decomposition of $G$-normal
%subsets of models of the theory.

%We refine Zilber's trichotomy by exploring strongly minimal sets with flat
%geometries.
Cases 1) and 2)  have distinct theories of the $\acl$-geometry (of saturated
models). By \cite{EvansFerII} and\footnote{In a private communication
Mermelstein showed the infinite rank case \cite{Paoliniwstb} of Steiner has
the same geometry as the original strongly minimal example.}
 \cite{Merflat} the geometries of the countable saturated models of
theories in  each of  2a) and 2b) are elementarily equivalent (indeed have
isomorphic localizations). Within 2) we find for $\dcl$ an essentially unary
nature similar to that which distinguishes `trivial $\acl$' in the Zilber
classification. The distinction concerns properties of $\dcl$ in $M$, not an
associated $\acl$-geometry.

We find the following:

\begin{remark}[Classes of Theories with flat $\acl$-geometries]\label{classes}\mbox{}\\

\begin{enumerate}
\item {\bf disintegrated geometry} For any  $A$, $acl(A)  = \bigcup_{a \in
    I} \acl(a)$;

    \smallskip

\item  {\bf strictly flat geometry}
    (Definition~\ref{allflatdef}.~\ref{defflat}) $M$   is not
    $\acl$-disintegrated but:
\begin{enumerate}
\item $M$ is {\em $\dcl$-disintegrated}: $\dcl(I) = \bigcup_{a \in I}
    \dcl(a)$ for independent $I$ (no $\emptyset$-definable truly $n$-ary
    functions);

\item $M$ is  not {$\dcl$-disintegrated}: For some $n$ there are truly
    $n$-ary %{independent}
    functions:
\begin{enumerate}
\item $M$ is {\em $\sdcl$-disintegrated}: $\sdcl(I) = \bigcup_{a \in I}
    \sdcl(a)$ for independent $I$ (no commutative $\emptyset$-definable
    truly $n$-ary functions);

\item $\emptyset$-definable binary functions with domain $M^2$ exist;
    e.g. quasigroups \cite{BaldwinsmssII}\footnote{The vocabulary in
    the construction has two ternary relations.}  and non-commutative
    counterexamples found here.
\end{enumerate}
\end{enumerate}
\item  {\bf Further examples: the strongly minimal set is a relativised
    reduct of an almost strongly minimal structure\footnote{The universe is
    the algebraic closure of a strongly minimal structure.}}

\begin{enumerate} \item ternary rings \cite{Baldwinautpp} that coordinatize a
    non-desarguesian plane\footnote{Because of the {\em ad hoc} nature of
    this construction, the methods of the current paper do not apply. The
    geometry has not been analyzed.}. There are parameter definable
    binary functions; but the ternary ring is not a composition of the
    `addition' and `multiplication' functions. \item 2-transitive
strongly minimal sets~\cite[Proposition 18]{Hrustrongmin},
    \cite{BaldwinsmssIII}.

\item  \cite{MullTent} construct a strongly minimal set which is not
    flat\footnote{More precisely, the $\acl$-geometry is $2$-ample but
    not $3$-ample.}  within a geometry of dimension $3$. The
    $\delta$-function is no longer submodular.
\end{enumerate}

  %  The geometry is strictly flat but  not that in
%    2).

\end{enumerate}
\end{remark}

% $\dcl$ is non-trivial,    no $\emptyset$-definable truly binary function
%    but

 %In particular
  No
theory in class 1) or 2) (and finite vocabulary) admits elimination of
imaginaries. However, Verbovskiy \cite{Verbovskiy06} has an example with
elimination of imaginaries in an infinite vocabulary. Unlike those with
locally modular geometries, strongly minimal theories with strictly flat (not
disintegrated) geometries, like  the field type, have continuum many
automorphism of each countable model (Lemma~\ref{manyaut}).
%includes the basic Hrushovski example with any admissible $\mu$ (i.e.,
%$\delta(B) \leq \mu(A/B)$) \cite{Hrustrongmin}.

 %For class 3) the geometries have not been
%investigated.  %Nor have extended the study here to the example in Hrushovski
%where the class of finite structure is defined differently with respect to
%$\delta$.

 This
diversity is obtained by realizing that the `Hrushovski construction'
actually has 5 parameters: $(\sigma, \bL^*_0, \epsilon, L_0, {\bf U})$.
$\bL^*_0$ is a collection of finite structures in a vocabulary $\sigma$, {\em
not necessarily closed} under substructure. $\epsilon$ is a predimension as
in Hrushovski with the requirement that it be flat. $\bL_0$ is a subset of
$\bL^*_0$ defined using $\epsilon$. From such an $\epsilon$, one defines
notions of $\leq$, primitive extension, and good (minimally simply algebraic)
pair.  The function $\mu$ counts the number of allowed realizations of a good
pair. Hrushovski gave a technical admissibility condition on $\mu$ that
ensured the theory is strongly minimal rather than $\omega$-stable of rank
$\omega$. Fixing a class ${\bf U}$  of functions $\mu$ satisfying variants of
this condition provides examples satisfying a wide range of combinatorial and
algebraic conditions.

We show  the elimination of imaginaries fails when $\bL^*_0$ is the
collection of finite linear spaces. As noted above, B.~Baizhanov  asked
whether any strongly minimal theory
 in a finite vocabulary,
that admits elimination of imaginaries defines a field. We show the most
evident counterexamples do not eliminate imaginaries.
But, the
question of whether this can be extended when the class $\bL^*_0$ is expanded
to arbitrary $\forall \exists$ classes of finite structures seems wide open.

There are a number of applications of these various construction methods in
universal algebra and combinatorics. Early on, \cite{Baldwinautpp} used the
general method to construct an $\aleph_1$-categorical projective plane at the
very bottom of the Lenz-Barlotti hierarchy. We now describe some more recent
work.

Work by Barbina and Casanovas in model theory \cite{BarbinaCasa} and by
Horsley and Webb in
 combinatorics \cite{HorsleyWebb}
centers on the
 constructions of Steiner triple systems  that arise
 from Fra{\" \i}ss\'{e} constructions.  \cite{HorsleyWebb} obtain countably
 homogeneous Steiner systems that omit `good' classes of finite Steiner
 systems.
In contrast to the model theoretically complex locally finite generics
 \cite{BarbinaCasa}, the
construction techniques here give theories that are strongly minimal, the
geometric building blocks of model theoretically tame structures.

\cite{BaldwinsmssII},  building on \cite{Steinpnas, GanWer}, shows two
results. 1) if
 the line length $k$ is
a prime power then a strongly minimal Steiner system admits a
`coordinatization' by a quasigroup. We show here, as a by-product of our main
construction, this coordinatization is not definable in the vocabulary
$\{R\}$. 2) Nevertheless, if $k$ is a prime power, there are strongly
 minimal quasigroups\footnote{As in \cite{BarbinaCasa}, $H$ is the graph of the quasigroup operation and $R$ is
 collinearity.} $(Q,*)$ (created in the vocabulary $\{H,R\}$) with
 $\bL^*_0$ $\forall\exists$ axiomatizable) which
 induce Steiner systems.

 \cite{BaldwinsmssIII}
  generalizes the notion of cycle
decomposition \cite{CameronWebb} from Steiner 3-systems to that of path graph
for Steiner $k$-systems and
 introduce  a uniform method
of proof for the following results. For each of the following conditions,
there are $2^{\aleph_0}$ families of elementarily equivalent Steiner sytems,
each with $\aleph_0$ countable models and one model of each uncountable
cardinal satisfying the following condition: i) (extending \cite{Chicoetal})
each Steiner triple system is $\infty$-sparse and has a uniform
% but not
%perfect
path graph; ii) (extending \cite{CameronWebb} each Steiner $k$-system (for
$k=p^n$) is $2$-transitive and has a uniform path graph (infinite cycles only) %\centerline{Preliminary Version: please do not circulate}
iii) extending \cite{Fujiwaramitre} each is anti-Pasch (anti-mitre); iv)
Items
   ii) and iii) have definable quasi-group structure. Moreover, by varying
$\bL^*_0$ classes we can demand all models are $2$-transitive. Unlike most
combinatorial constructions, each example presents a {\em class of models}
with the desired property.

Section~\ref{twocon} outline the general framework of {\em ab initio}
Hrushovski strongly minimal sets and then describes the variant for Steiner
system. In Section~\ref{sdcl}, we introduce subgroups $G_I$ and $G_{\{I\}}$
of $\aut(M)$ (fixing $I$ setwise and pointwise respectively) and explain the
connections of definable closure and symmetric definable closure with the
elimination of imaginaries. The key tool of $G$-tree-decomposition appears in
Section~\ref{decomp} along with the basic properties. There are now two steps
in the proof. Section~\ref{geq3} proves the non-$\emptyset$-definability of
truly $n$-ary functions for triplable theories, using $\dcl^*$ and $G_I$.
Section~\ref{ce} shows the necessity of the tripling hypothesis.
Section~\ref{bouflow} describes the distinction between flowers and bouquets
which is essential for Section~\ref{geq2}, the proof that $\sdcl^*(I)
=\emptyset$ for finite independent sets in a standard Hrushovki theory using
$G_{\{I\}}$. In Section~\ref{Steiner} we adapt that proof to Steiner systems
and prove that examples with line length at least 4 from \cite{BaldwinPao} do
not define quasigroups. We raise some further questions in
Section~\ref{furtherwork}.

\section{Two Contexts}\label{twocon}

In Subsection~\ref{gcon} we give an axiomatic description of the properties
of functions $\delta$ and $\mu$ for  Hrushovski constructions of
$\omega$-stable and strongly minimal sets. In Subsection~\ref{HLS} we
describe the specific definitions of $\delta$ for the two main cases
considered here: the original Hrushovski context and Steiner systems.

In Section~\ref{gcon} we denote our classes as $( \bL,\epsilon)$ with various
decorations; these conditions provide a general framework for the study of
the family of flat strongly minimal sets announced in the previous paragraph.
From Section~\ref{HLS} on, we write $(\bK,\delta)$ to emphasize the
restriction to a single ternary relation, particular choices of $\delta$ and
to prepare for the variations in \cite{BaldwinsmssIII}.

 \numberwithin{theorem}{subsection}

\subsection{General Context} \label{gcon}

The notions in this section are well-known, under various names. We both fix
the notation used here and give some of the definitions in a greater
generality needed here but not in e.g., \cite{Hrustrongmin}).
 Fix a countable relational vocabulary $\tau$.  We write $\bL^*$ for the
collection of all $\tau$-structures and $\bL^*_0$ for the finite
$\tau$-structures.

All constructions studied here satisfy the properties of 3.4--3.7 of
\cite{BaldwinShiJapan} and flatness, which  follow from flatness of the
underlying `predimension' function $\epsilon$ \cite[3.8, 3.10]{BaldwinPao}.

%Let $\bbL$ be the universal class determined by $\bL_0$.

\begin{axiom} \label{yax}
Let ${\bL_0}$ be a countable subset of $\bL^*_0$ that is closed under
isomorphism\footnote{In this paper $\bL_0$ is closed under substructure. But
this condition is relaxed to construct flat strongly minimal quasigroups in
\cite{BaldwinsmssII}.}. Let $\bbL$ be the collection containing any union of
members of $\bL_0$. Further $\epsilon$ is a map from $L^*_0$ into $\ZZ$. We
require that $\bL_0, \hat \bL_0, \epsilon$ satisfy the following
requirements. Let $N\in {\bbL}$ and $A,  B, C \in {\bf L_0}$ be substructures
of $N$.

\begin{enumerate}
\item $\epsilon(\emptyset) = 0$
\item If $B \in \bL_0$ and  $A \subseteq B$ then $\epsilon(A) \geq 0$.
\item If $A$, $B$, and $C$ are disjoint then $\epsilon(C/A) \geq
    \epsilon(C/AB)$, where $\epsilon(C/A) = \epsilon(C\cup A)-\epsilon(A)$.
\item  If $A,B,C$ are disjoint subsets of $N\in \hat \bL_0$ and
    $\epsilon(A/B) - \epsilon(A/BC) = 0$ then $r(A,ABC,C) = \emptyset$.
    \footnote{See Definition~\ref{Rdef} for details concerning
    $r$. In the general case, we count each of a sequence of predicates
    $R_i$ separately. We make appropriate modification for linear spaces
    when they are considered.}
   %  and $\delta(A/B) =
%    \delta(A/BC)$.
\item $\epsilon$ is flat (Definition~\ref{allflatdef})\cite{Hrustrongmin,
    BaldwinPao}).
\item {\bf Canonical Amalgamation} If $A\cap B = C$, $C\leq A$ and $A,B, C
    \in \bL_0$ there is a direct sum  $G = A \oplus_C B$ such that $ G \in
    \bL_0$.  Moreover, $\epsilon( A \oplus_{C} B)= \epsilon(A) +\epsilon(B)
    - \epsilon(C)$ and any $D$ with $C \subseteq D \subseteq A \oplus_{C}
    B$ is also  free.  Thus, $B \leq G$.
\end{enumerate}
\end{axiom}

Disjoint union is the canonical amalgamation for the basic Hrushovski
construction and Definition 3.14 of \cite{BaldwinPao} gives the appropriate
notion satisfying Axiom~\ref{yax}.5 for linear spaces. Axiom~\ref{yax}.2 can
be rephrased as: $B\subseteq C$ and $A \cap C = \emptyset$ implies
$\epsilon(A/B) \geq \epsilon(A/C)$; so we can make the following definition.

\begin{definition} Extend $\epsilon$ to $d: {\bbL} \times {\bL}_0
 \rightarrow \NN$ by for each $N \in {\bbL}$,
$d(N,A) = \inf \{\epsilon(B): A \subseteq B \subseteq_{\omega} N\}$,
$d_N(A/B) = d_M(A \cup B) -d_M(B)$. We usually write $d(N,A)$ as $d_N(A)$ and
omit the subscript $N$ when clear.
\end{definition}

Hrushovski defined  a crucial property of the algebraic closure (pre)
geometry of his geometries\footnote{A pregeometry/matroid becomes a geometry
by modding out $\cl(\emptyset)$.}: {\em flat}. \cite{BaldwinPao} generalized
the notion of flatness for a
 pregeometry
   to a general predimension function.

\begin{definition}\label{allflatdef}
\begin{enumerate}
\item \label{flatdelta} Consider a class $(\bL_0,\epsilon)$,
    $N \in \bL_0$ and a sequence $F_1, \dots, F_s$ of subsets of $N$.
   % pregeometry\footnote{\cite{BaldwinPao} generalize to
%    notion to defining flatness for general predimension.}.
    For $\emptyset \subsetneq T \subseteq \{1, \ldots , s\}=I$, we let $F_T
 = \bigcap_{i\in T} F_i$ and $F_{\emptyset} = \bigcup_{1\leq i \leq s}
 F_i$. We say that $\epsilon$ is {\em flat} if for all such $F_1, \ldots ,
 F_s$
we have:
$$ (*) \hspace{.1in} \epsilon(\bigcup_{1\leq i \leq s} F_i)
 \leq \sum_{\emptyset \neq T} (-1)^{|T|+1} \epsilon(F_T).$$
\item\label{defflat}  Suppose $(A, \cl)$ is a pregeometry on a structure
    $M$ with dimension function $d$ and $F_1, \ldots , F_s$ are
    finite-dimensional $d$-closed subsets of $A$.  Then $(A, \cl)$ is {\em
    flat} if $d$ satisfies equation $(*)$.

    \item $(A, \cl)$ is {\em strictly flat} if it is flat but not {\em
        distintegrated} $(\acl(ab) \ne \acl(a) \cup \acl(b))$.
\end{enumerate}
\end{definition}

%{\bf Canonical Amalgamation}

%\sidebar{Can we define axiomatically from $\delta$.  Does pushout in the
%category of (finite) structures  with strong embeddings suffice?  See
%3.8-3.10 and 6.3 of \cite{BaldwinPao} (archive version).}
%
%
%
%The abstract version should demand: $ A \oplus_{C} B \in \bK_0$ AND
%$\delta(A \oplus_{C} B) = \delta(A) + \delta(B) - \delta(C)$. While
%submodularity gives $\leq$, the equality required attention to the
%definition of $\delta$ in \cite{BaldwinPao}.
%
%Here is the specific definition from \cite{BaldwinPao} for Steiner.
%
%
%\begin{definition}\cite[Lemma 3.14]{BaldwinPao}\label{defcanam} Let $A \cap B =C$
% with $A,B,C \in \bK_0$.
%We define $D := A \oplus_{C} B$ as follows:
%
%\begin{enumerate}[(1)]
%	\item the domain of $D$ is $A \cup B$;
%	\item
%a pair of points  $a \in A - C$  and $b \in B - C$ are on a non-trivial
%line $\ell'$ in $D$ if and only if there is line $\ell$ based in $C$ such
%that $a \in \ell$ (in $A$) and $b \in \ell$ (in $B$). Thus $\ell'=\ell$ (in
%$D$).
%%\todo[inline]{I meant to say that even if the condition that you describe fails, $a$ and $b$ are still on a line $\ell'$ in $D$, the trivial line determined by $a$ and $b$ in $D$. Hence strictly speaking what you wrote seems false. Maybe you want to add ``non-trivial line''? Do you see what I mean? I am not trying to be confrontational here.}
%%\item $C^D = D^4$.
%\end{enumerate}
%\end{definition}

What Hrushovski called {\em self-sufficient} closure is in the background.

%\sidebar{VV March 02: What is $\bK_0$ below? Do we use both $\bL_0$ and $\bK_0$?}}

\begin{definition}\label{strongdef}

% Let $A, B \in \bL_0$ with $
% $A \cap B = \emptyset$ and $A \neq \emptyset$; we often write $\hat A$ for
%    $A-B$ to simplify notation.
	\begin{enumerate}[(1)]
\item We say $A\subseteq N$ is strong in $N$ and write $A\le N$  if
    $\epsilon(A) \le \epsilon(C)$ for any $C$ with $A\subseteq
    C\subseteq_\omega N$,  where $\subseteq_\omega$ stands for `be a finite
    subset'.

  \item For any $A \subseteq B \in L_0$, the intrinisic (self-sufficient)
      closure of $A$, denoted $\icl_B(A)$, is the smallest superset of $A$
      that is strong in $B$.
\end{enumerate}
\end{definition}

Note that $A\le N$ if and only if $d_N(A) = \epsilon(A)$. It is well-known
that when $\epsilon$ is integer valued then $\icl(A)$ is finite if $A$ is.
The following definitions describe  the pairs $B \subseteq A$ such that in
the generic model $M$, $A$ will be contained in the algebraic closure of $B$.

%$A/B$ will be an algebraic set (realized only finitely often).
\begin{definition}\label{prealgebraic}
\begin{enumerate}
\item $A$ is a {\em  primitive extension\footnote{In \cite{Hrustrongmin},
    $0$-primitive is called {\em simply algebraic} and good
    \cite{Zieglersm} is called {\em minimally simply algebraic}.}} of $B$
    if $B\leq A\cup B$, $A\cap B = \emptyset$, and there is no $A_0$ with
    $\emptyset\subsetneq A_0 \subsetneq A$ such that $B \leq B\cup A_0 \leq
    B\cup A$.  $A$ is a {\em $k$-primitive extension} if, in addition,
    $\epsilon(A/B) =k$. We stress that in this definition,
while $B$ may be empty, $A$ cannot be.

Sometimes primitive is used with $B\subsetneq A$ and the primitive as
$A-B$. In that case we sometimes write $\hat A$ for $A-B$ when the
disjointness is essential.

	\item  We say that the $0$-primitive pair $A/B$ is {\em
good\footnote{In the Hrushovski case, it is equivalent to say  if there is
no $B' \subsetneq B$  such that $(A/B')$ is $0$-primitive. But for linear
spaces these conditions are  no longer equivalent.}} if every element of
$B$ is in some relation with an element of $A$.

\item If $A$ is $0$-primitive over $D$ and $B \subseteq A$ is such that
$A/B$ is good, then we say that $B$ is a {\em base} for $A$ over $D$ if $B$
is a minimal subset of $D$ such that $A/B$ is $0$-primitive.
\end{enumerate}
\end{definition}

\begin{remark}\label{minmax} {\rm
In the Hrushovski case, the definition of $\delta$ makes clear that the base
defined by minimality in Definition~\ref{prealgebraic}.3 is also the maximal
subset of $D$ that is related to an element of $A$. This fails in the Steiner
case; see Lemma~\ref{primchar}.}
\end{remark}

\begin{definition}     \label{Kmu}	Good pairs
 were defined in Definition~\ref{prealgebraic}.
	\begin{enumerate}[(1)]
	\item\label{itemKmu} {\bf Adequacy condition:} Fix a function $\mu$
assigning to every isomorphism type $\boldsymbol{\beta}$ of a good pair
$C/B$ in $\bL_0$
 a number $\mu(\beta) = \mu(B,C)  = \mu(C/B) \geq \delta(B)$. % if $|C-B|\geq 2$.

\item For any good pair $(A/B)$ with $B \subseteq M$ and $M \in \hat
    \bL_0$, $\chi_M(A/B)$ denotes the maximal number of disjoint copies of
    $A$ over $B$ in $M$. {\em A priori},  $\chi_M(A/B)$ may be $0$.

%\sidebar{Is this really gone?  $\mu(B,C) \geq\mu(\emptyset,C)$ if $\delta(\hat C) =0$,}
	\item\label{Kmuitem} Let $\bL_{\mu}$ be the class of structures $M$ in $ \bL_{0}$
 such that if $(B,C)$ is a good pair  then $\chi_M(B,C)  \leq \mu(B,C)$.
%\todo[inline]{Before you were asking that $M \in \hat \bL_0$, not I say $M$ in $ \bL_{0}$.
 %Using $\hat{\bL}_{\mu}$ we should have what you want. I hope you agree. The
% point is that I want $\bL_{\mu}$ to be made of finite structures!}
	\item\label{Kmuhatitem} $\hat \bL_\mu$ is the universal class generated by
 $\bL_\mu$. % (cf. Notation ~\ref{basicnot}(\ref{hat})).

%\item $d_M(A) = \min\{\delta(A'): A \subseteq A'\subseteq_\omega M$\}.
%    $d_M(A/B) = d_M(A \cup B) -d_M(B)$.
 \item\label{item_dcl}  [$d$-closed] For $M\in \hat{\bL}_\mu$ and $X
     \subseteq M$, $X$ is $d$-closed in $M$ if $d(a/X)= 0$ implies $a \in
     X$ (equivalently, for all $Y \subseteq_{\omega} M-X$, $d(Y/X) >0$).

\item\label{mu-d} Let $\bL^{\mu}_{d}$ consist of those $M\in \hat{\bL}_\mu$
    such  that $M \leq N$ and $N \in \hat \bL_\mu$ imply $M$ is $d$-closed
    in $N$.
\end{enumerate}	
\end{definition}

The restriction on $\mu$ in Definition~\ref{Kmu}.1 comes from
\cite{Hrustrongmin}.  It appeared as a useful    condition to guarantee the
amalgamation. Surprisingly, we find the following slight strengthening plays
a central role in preventing the definability of {\em any} truly binary
 functions.

\begin{definition}\label{triples}
We say $T_\mu$ {\em triples} if  $\delta(B)
\geq 2$ implies $\mu(C/B)\geq 3$ for any good pair $C/B$  with $|C|>1$.
\end{definition}

 We can show that any element of $\hat \bL_\mu$ (not just $\bL_\mu$)
 can be amalgamated (possibly with identifications) over a (necessarily finite)
  {\em strong} substructure $D$ of $F$ with a strong extension of $D$ to a member  $E$
   of $\bL_\mu$. This yields the following conclusions; they are largely
   the same as \cite{Hrustrongmin}; in order to treat
   line length $3$, \cite{BaldwinPao} make the adequacy requirement Definition~\ref{Kmu}.1
   apply only when $|B|\geq 3$ and add $\mu(\boldsymbol{\beta}) \geq 1$, if $\beta =
   \boldsymbol{\alpha}$ (Definition~\ref{linelength}).

Recall that a {\em generic model} for a class $(\bL_0,\leq)$ is an $M$ such
that if  $A\leq M$, $A \in \bL_0$ and if $A,A' \leq M$ are isomorphic, the
isomorphism extends to an automorphism of $M$.

  \begin{conclusion} \label{conclude} %Suppose $\mu \in \Uscr$.
Suppose $L_\mu$ satisfies the properties described in Axiom~\ref{yax} and
Definition~\ref{Kmu}:

If $D \leq F \in \hat \bL_\mu$ and $D \leq E \in \bL_\mu$
   then there is $Q\in \hat \bL_\mu$ that embeds (possibly with identifications)
   both $F$ and $E$ over $D$.  Moreover, if $F \in \bL^\mu_d$, then $F = Q$.
   In particular, $(\bL_\mu, \leq)$ has the amalgamation property, and there is a
    generic structure $\mathcal{G}_\mu \in \hat{\bf L}_\mu$ for $(\bL_\mu, \leq)$.
  \end{conclusion}

The more refined conclusion of model completeness is shown in \cite{Holland5}
and for the linear space case in \cite{BaldwinPao}.
\begin{conclusion}\label{conclude2} Under the hypotheses of Conclusion~\ref{conclude} there is
a collection $\Sigma_\mu$ of $\pi_2$ sentences which \begin{enumerate}
\item axiomatize the complete theory $T_\mu$ of the class $\bL^{\mu}_d$,
    $d$-closed
    models
    in
    $\bbL$.
\item $T_\mu$ is model complete and strongly minimal.
\item The $\acl$-geometry of $T_\mu$ is flat
    (e.g.~\ref{allflatdef}.~\ref{defflat})
\end{enumerate}
\end{conclusion}

\subsection{$3$-hypergraphs and Linear Spaces}\label{HLS}

We now describe the main examples for this paper of the context axiomatized
in Subsection~\ref{gcon}. We replace $\bL$ and $\epsilon$ by
$\bK$ and $\delta$ to indicate that properties here may depend on the
specific definition of the class.

\begin{definition}[Context]\label{Rdef}
\begin{enumerate}
\item The vocabulary $\tau$ contains a single ternary relation $R$. We
    require that $R$ is a predicate of $3$-elements sets (distinct in any
    order).

\item Let $A,B,C$ each be a subset of $D \in \bK^*$.  We
    write $R(A,B,C)$ for the collection of tuples
   % \footnote{At various
%    points we denote a sequence of variables or constants as either $\xbar$
%    or $\bar x$.}
     $\xbar$ such that $D \models R(\xbar)$ and
$\xbar$ intersects each of $A,B,C$. (The letters may be repeated to
    indicate only two sets are represented.)
		We write $R(A,B)$ for $R(A, A\cup B, B)$. We write $r(A,B,C)$ for
%the sum over the {}$S \in \tau$} of
the number\footnote{For the Steiner system case count lines and compute
$\delta$ as in Section~\ref{Steiner}.} of tuples (up to permutation) in
$R(A, B, C)$. Finally for $A\in \bK^*_0$, $r(A) = r(A,A,A)$.
%$S \in \tau$
%$r(A,B,C)
%    =|R(A,B,C)|$.
\end{enumerate}
\end{definition}

We restrict to integer valued  $\delta$ which is essential (but  not
sufficient) to guarantee $\omega$-stability.  The crucial distinction between
\cite{Hrustrongmin}  and linear spaces is restricting  the class of finite
structures by more than the assertion that $R$ is a ternary predicate of sets
($3$-hypergraph).

\begin{definition}{The  choices  here for $\delta$}
\label{twodelta} \begin{itemize}
\item ($3$-hypergraph) For a finite $\tau$-structure $A$, $\delta(A) = |A|-
    r(A)$.
\item (linear space)
\begin{enumerate}
\item
 A linear space is a  $\tau$-structure such that $2$-points determine a
    unique line.
We interpret $R$ as collinearity. By convention two unrelated elements
    constitute a {\em trivial line}.
\item For $B, \ell$ subsets of  $A$, we say $\ell \in L(B)$ (is supported
    by $B$) if $\ell$ is a maximal $R$-clique contained in $A$ and $|\ell
    \cap B| \geq 2$.

 \item Let
	$$\delta(A) = |A| - \sum_{\ell \in L(A)} (|\ell|-2).$$

\item Then  $\bK_0$ is the collection of finite  linear spaces $A$
such that for any   $A' \subseteq A$, $\delta(A') \geq 0$.
\end{enumerate}
\end{itemize}
\end{definition}

Note, that as opposed to Section~\ref{gcon}, we have restricted $\bK_0$ both by (a different) $\delta$ and
by the linear space axiom.

\begin{notation}
\label{bhat} Let $\delta(B/A)$ denote $\delta(A\cup B) - \delta(A)$.
%Note that
%$\delta(B/A)$ and $\delta(BA/A)$ are equal.
Suppose $A\cap B = C$ and $A,B, C
    \in \bL_0$:
\begin{enumerate} \item We say $A$ and $B$ are $\delta$-independent
if $\delta( A \cup B)= \delta(A) +\delta(B) - \delta(C)$.
    \item We say $A$ and $B$ are  {\em fully independent} over $C$ if there
        are no relations involving elements from  each of $A-C$ and $B-C$
        and possibly $C$,  i.e. $R(A-C, A\cup B, B-C) = \emptyset$.
        \end{enumerate}
\end{notation}
Just rewriting the definition, we have $\delta(A\cup B/C) = \delta(A/B\cup C) +
\delta(B/C)$.

\begin{remark}\label{find} Note that when $\delta$ just counts relations as in
\cite{Hrustrongmin}, $\delta$-independence implies full independence. The
situation is more complicated for linear spaces; see Definition~\ref{decomp2}
and Remark~\ref{getfind}.
\end{remark}

 The following useful tool is easy from the definition
(\cite[4.7]{BaldwinPao}).

\begin{lemma}\label{primchar1}  If $C$ is $0$-primitive over $\Dscr$ with base
$B \subseteq \Dscr$ and $|C-B|\geq 2$, then every
point $b\in B$ satisfies $R(c_1,c_2,b)$ for some $c_1,c_2 \in C$ and every
point in $C$ satisfies at least two instances of $R$.
\end{lemma}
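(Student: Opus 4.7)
The plan is to prove both conclusions by contradiction using the same template: assuming the desired conclusion fails, I will exhibit a proper nonempty $A_0 \subsetneq C$ satisfying $B \leq B \cup A_0 \leq B \cup C$, directly contradicting the $0$-primitivity of $C/B$ from Definition~\ref{prealgebraic}.

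I will treat the $3$-hypergraph case first, where $\delta(A) = |A| - r(A)$, so all computations reduce to counting triples. For the second claim (each $c \in C$ lies in at least two $R$-instances), suppose $c \in C$ lies in at most one $R$-triple, and let $k_c$ denote that count. The $\delta$-formula gives $\delta(B \cup C) - \delta(B \cup (C \setminus \{c\})) = 1 - k_c$. If $k_c = 0$, this yields $\delta(B \cup (C \setminus \{c\})) = \delta(B) - 1$, contradicting $B \leq B \cup C$. If $k_c = 1$, then $\delta(B \cup (C \setminus \{c\})) = \delta(B)$; since $|C - B| \geq 2$ guarantees that $A_0 := C \setminus \{c\}$ is a proper nonempty subset of $C$, the chain $B \leq B \cup A_0 \leq B \cup C$ (both inequalities immediate from $B \leq B \cup C$ together with the $\delta$-equality) witnesses non-primitivity.

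For the first claim, assume for contradiction that $b \in B$ lies in no triple $R(c_1,c_2,b)$ with $c_1,c_2 \in C$. By the good-pair hypothesis of Definition~\ref{prealgebraic}, $b$ belongs to some $R$-triple meeting $C$; by our assumption such a triple meets $C$ in exactly one point, so it takes the form $R(b,b',c)$ with $b' \in B$ and $c \in C$. I will take $A_0 := \{c\}$. The triple $R(b,b',c)$ lies entirely in $B \cup \{c\}$ and contributes to $r(B \cup \{c\})$, forcing $\delta(B \cup \{c\}) \leq \delta(B)$. Since $B \leq B \cup C$ also supplies $\delta(B \cup \{c\}) \geq \delta(B)$, equality holds; then $\{c\}$ is a proper nonempty subset of $C$ (as $|C| \geq 2$), and $B \cup \{c\} \leq B \cup C$ is automatic, so $\{c\}$ witnesses non-primitivity.

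The main difficulty will be the Steiner case (Definition~\ref{twodelta}), where $\delta$ is computed from line supports rather than triple counts, and removing or adjoining a single point $c$ can change the supported-line set in a way that depends on the line-length spectrum through $c$, not merely on the triple-count $k_c$. I expect that the same witnesses $A_0 = C \setminus \{c\}$ and $A_0 = \{c\}$ still do the job once the line bookkeeping is tracked carefully; this parallels the argument of \cite[4.7]{BaldwinPao}.
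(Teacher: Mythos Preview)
Your argument is correct and is essentially the natural direct verification that the paper alludes to when it says the result is ``easy from the definition'' and cites \cite[4.7]{BaldwinPao}; the paper itself gives no proof beyond this reference. One small remark: when you invoke ``the good-pair hypothesis,'' note that the lemma's hypothesis is that $B$ is a \emph{base}, and you are implicitly using that Definition~\ref{prealgebraic}(3) builds goodness into the notion of base---this is fine, but worth making explicit. Your identification of the Steiner case as the place where the bookkeeping changes (lines versus triples) is exactly right and matches the paper's decision to defer to \cite{BaldwinPao}.
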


We single out an isomorphism type of a good pair (a $1$-element extension,
$\boldsymbol{\alpha}$) that plays a special role in the general proof that
elimination of imaginaries fails. $T_\mu$ is a Steiner $k$-system if
$\mu(\boldsymbol{\alpha}) =k-2$ \cite{BaldwinPao}.
%
%provides the invariant for the Steiner systems. As noted in , $T_\mu$ is a
%Steiner $k$-system if $\mu(\boldsymbol{\alpha}) =k-2$. $1$-element extensions
%will .

\begin{notation}[Line length]\label{linelength} We write $\boldsymbol{\alpha}$ for the isomorphism
type of the good pair $(\{b_1,b_2\},a)$ with
 $R(b_1,b_2,a)$. % (cf. Lemma~\ref{primchar}(2.1)).

 By Lemma 5.18
%~\ref{maxlen}
of \cite{BaldwinPao}, lines in models of the theory $T_\mu$ of a Steiner
system have length $k$ if and only if $\mu(\boldsymbol{\alpha}) = k-2$.
 \end{notation}

%\begin{definition}\label{defTS}{\rm $T^S_\mu$ denotes a strongly minimal theory of Steiner systems constructed with the
%$\delta$ and vocabulary of Definition~\ref{twodelta};  the construction
%yields a family of $2^{\aleph_0}$ theories
% depending on a function $\mu$ described in Definition~\ref{Kmu}.}
% \end{definition}

\section{Elimination of imaginaries and (Symmetric) Definable closure}\label{sdcl}
\numberwithin{theorem}{section}

In this section   we  study arbitrary strongly minimal sets
 %we work in the context of Section~\ref{HLS}
  and lay out the
connections among elimination of imaginaries, definable closure and a new
notion {\em symmetric definable closure}. Recall that
\cite[III.6]{Shelahbook} introduced extensions of structures by {\em
imaginary elements} and \cite{Poizatim} discovered the importance of a theory
not needing to add them.

\begin{definition}\label{defei} Elimination of Imaginaries:
\begin{enumerate}
\item  \cite{Poizatim} A theory $T$ admits {\em elimination of imaginaries}
    if for every model $M$ of $T$, for every formula $\varphi (\xbar,
    \ybar)$ and for every $\abar \in M^n$ there exists $\bbar \in M^m$ such
    that
$$
\{ f\in\aut (M) \mid f|\bbar = id_{\bbar} \} =
\{ f\in \aut (M) \mid f(\varphi (M,\abar))=
\varphi (M,\abar)\}
$$
\item \cite[Theorem 16.15]{Poizatbook} A theory $T$ admits weak elimination
    of imaginaries if and only if for every formula $\phi (\xbar; \abar)$
    there exists a formula  $\psi_{\abar} (\xbar;\ybar)$ such that there
    are only finitely many parameters $\bbar_1, \dots, \bbar_n$ such that
    each of $\psi_{\abar} (\xbar; \bbar_1), \dots, \psi_{\abar} (\xbar;
    \bbar_n)$ is equivalent to $\phi (\xbar; \abar)$.
\end{enumerate}
\end{definition}

Weak elimination arose in \cite{Poizatbook}; we use the
precise version of \cite[Def. 2.3]{Tsuboialgaut}.

\begin{definition}\label{deffc}\rm %\cite{Ts}
A finite set $F=\{\abar_1,\dots,\abar_k\}$ of tuples from $M$ is said to be
coded by $S=\{s_1,\dots,s_n\}\subset M$ over $A$ if
$$\sigma (F)=F \Leftrightarrow \sigma |S = \mbox{id}_S
\quad\mbox{for any }\sigma \in\aut(M/A).$$

We say $T=\th (M)$ has {\em the finite set property} if every finite set of
tuples $F$ is coded by some set $S$ over $\emptyset$.
\end{definition}

Part 1) of the next result appears in {\cite{Pillayacf}. Part 2) is
\cite[1.6]{CasanovasFarreelim}. \cite{YonedaweiI,YonedaweiII} clarifies the
relation among variants of elimination of imaginaries and shows Pillay's
assumption of infinite algebraic closure is essential.
% proves the result only
%when $\acl(\emptyset)$ is infinite,
% because he uses a definition of weak
%elimination that is not equivalent to Poizat's \cite{Poizatbook}.}
% is folklore (using induction and the elimination of the
%quantifier $(\exists^{\infty} x)$).
\begin{fact}\label{wefc}
\begin{enumerate}
\item Every strongly minimal theory such that $\acl(\emptyset)$ is infinite
    has weak elimination of imaginaries.

\item If $T$ admits weak elimination of imaginaries then $T$ satisfies the
    finite set property if and only if $T$ admits elimination of imaginaries.
\end{enumerate}
\end{fact}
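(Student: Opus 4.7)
The plan for Part (1) is to prove weak elimination of imaginaries by showing that, in a strongly minimal theory $T$ with infinite $\acl(\emptyset)$, the canonical parameter of any definable set $X = \phi(M;\bar a)$ in $T^{\mathrm{eq}}$ is interalgebraic with a real finite tuple. First I would pass to a sufficiently saturated model, and assign $X$ its Morley rank $k$ and Morley degree $d$ (both well-defined since $T$ is strongly minimal and so totally transcendental). Let $\bar e\in T^{\mathrm{eq}}$ be the canonical parameter of $X$, i.e.\ the code of $X$ up to equality of sets. Choose independent generics $\bar c_1,\dots,\bar c_d$, one from each Morley-degree-one component of $X$; then $\bar e\in\acl^{\mathrm{eq}}(\bar c_1,\dots,\bar c_d)$, and $\bar e$ determines the unordered $d$-tuple $\{\bar c_1,\dots,\bar c_d\}/\sim$ up to $\acl^{\mathrm{eq}}$.

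Next I would use that the underlying geometry is a pregeometry on a strongly minimal set with $|\acl(\emptyset)|=\infty$ to replace the imaginary $\bar e$ by a real finite tuple. The device is: pick an independent Morley sequence $(\bar c^{(i)})_{i<\omega}$ in the global generic of $X$; the canonical base of this sequence is interalgebraic with $\bar e$, and because the geometry is nontrivially populated by $\acl(\emptyset)$, symmetric functions of sufficiently long initial segments give a real tuple $\bar b\in\acl(\bar e)\cap M^{<\omega}$ that is also interalgebraic with $\bar e$ over $\emptyset$. Finally, I would produce $\psi_{\bar a}(\bar x;\bar y)$: it says "$\bar y$ is a tuple of the above shape with $\phi(M;\bar a)$ equal to the set coded by $\bar y$"; the interalgebraicity step bounds the number of such $\bar y$. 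The main obstacle is the passage from imaginary to real in the presence of Morley degree $>1$ (i.e., coding the unordered $d$-tuple), where infinitude of $\acl(\emptyset)$ is used essentially—without it the target theory can fail to have weak EI (e.g., pure sets).

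The plan for Part (2) is two short and essentially routine implications.

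For $\mathrm{EI}\Rightarrow\mathrm{fsp}$: given a finite set $F=\{\bar a_1,\dots,\bar a_k\}$, apply EI to the formula $\bigvee_{i\le k}\bar x=\bar a_i$, whose stabilizer in $\aut(M)$ is exactly the setwise stabilizer of $F$; the resulting real tuple $\bar s$ satisfies Definition~\ref{deffc}. Conversely, assuming weak EI and fsp, given $\phi(\bar x;\bar a)$, apply weak EI to obtain $\psi_{\bar a}(\bar x;\bar y)$ and finitely many parameters $\bar b_1,\dots,\bar b_n$ each defining the same set as $\phi(\bar x;\bar a)$. The finite set $F=\{\bar b_1,\dots,\bar b_n\}$ is setwise preserved by exactly those $\sigma\in\aut(M)$ that fix $\phi(M;\bar a)$ setwise (since any such $\sigma$ permutes the equivalent parameters, and no extra automorphisms are introduced); hence an fsp code $\bar s$ for $F$ is also a code for $\phi(M;\bar a)$, witnessing EI. Here the only step requiring care is checking that the set of parameters equivalent to $\bar a$ for $\psi_{\bar a}$ is precisely the orbit relevant to stabilizing $\phi(M;\bar a)$, which follows directly from the definitions.
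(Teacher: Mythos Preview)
The paper does not prove this Fact; it is quoted from the literature with citations to Pillay and to Casanovas--Farr\'e, so there is no ``paper's own proof'' to compare against. Your Part~(2) is the standard argument and is correct as written.

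Your Part~(1), however, has a genuine gap. The step ``because the geometry is nontrivially populated by $\acl(\emptyset)$, symmetric functions of sufficiently long initial segments give a real tuple $\bar b\in\acl(\bar e)\cap M^{<\omega}$'' is unjustified: you have no right to assume there \emph{are} definable symmetric functions. Indeed, the entire thrust of this paper is that in the Hrushovski constructions (which are strongly minimal with $\acl(\emptyset)$ infinite) there are \emph{no} nontrivial definable symmetric functions---that is precisely Theorem~\ref{no-sym-fun}. So your argument would be circular if applied to the examples at hand, and in any case the step does not follow in a general strongly minimal theory.

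The standard argument (Pillay) uses the infinitude of $\acl(\emptyset)$ differently and does not invoke symmetric functions. Roughly: given $e\in M^{\mathrm{eq}}$, pick a real tuple $\bar a$ with $e\in\dcl^{\mathrm{eq}}(\bar a)$ of minimal dimension over $e$. If some coordinate $a_i$ is generic over $e$ and the remaining coordinates, the formula expressing ``$x$ can replace $a_i$'' is cofinite, hence (since $\acl(\emptyset)$ is infinite) contains some $a_i'\in\acl(\emptyset)$; substituting lowers the dimension over $e$, contradicting minimality. Thus $\bar a\in\acl^{\mathrm{eq}}(e)$, which gives weak EI. You should replace your canonical-base/symmetric-function sketch with this dimension-reduction argument.
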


Immediately from Fact~\ref{wefc}, since in almost
all\footnote{\cite{BaldwinsmssIII}  gives general conditions that imply
$\acl(\emptyset)$ is infinite or empty.} in the examples studied here
$\acl(\emptyset)$ is infinite \cite[Fact 5.26]{BaldwinPao}, if $T$ admits
elimination of imaginaries there is an $\emptyset$-definable truly binary
function (given by the coding of a pair of independent points).

Note that elimination of imaginaries immediately yields finite coding, so in
proving that elimination of imaginaries fails it suffices to prove finite
coding fails.
 Section~\ref{ce} exhibits a strongly minimal theory with
a truly binary function  that still fails to (by Section~\ref{geq3})
eliminate imaginaries, as it is not commutative.

{\em Below $X$ denotes an arbitrary subset of a structure $M$
and $I$ denotes an $v$-ele\-ment independent set $\{a_1, \dots, a_v\}$ with $I \leq M$.}

We work with two groups of automorphisms; Section~\ref{decomp} treats
properties that hold of both of them so the group is denoted $G$.
Section~\ref{geq3} uses $G_I$ and  $G_{\{I\}}$ is needed in
Section~\ref{geq2}.

\begin{notation}\label{thegrps} Let $G_{\{I\}}$ be the set of automorphisms of $M$ that fix $I$
setwise and $G_I$ be the set of automorphisms of $M$ that fix $I$ pointwise.
\end{notation}

\begin{definition}\label{Ginv}
For $G$ either  $G_I$ or $G_{\{I\}}$,  $D$ is said to be $G$-invariant
if $D$ contains the $G$ orbits of each of its elements,
equivalently, $g(D) = D$ whenever $g\in G$.
\end{definition}

We introduce the (minimal) definable closure  $\dcl^*$ of a set $X$ to
distinguish points which depend on {\em all} elements of $X$.
Recall that for any first order theory $T$, if $X \subseteq M\models T$, then
$c \in \dcl(X)$  means $c$ is the unique solution of a formula with
parameters in $X$.  This implies the orbit of $c$ under $\aut_X(M)$  consists
of just  $c$ and the converse holds in any $\omega$-homogenous model. All the
models considered here are  $\omega$-homogeneous (since
$\aleph_1$-categorical \cite{BaldwinLachlansm}).

\begin{notation}\label{*cl}
 By $b \in \dcl^*(X)$ we mean $b \in
\dcl(X)$, but $b \not \in \dcl(U)$ for any proper subset $U$ of $X$ (and
analogously for $\acl^*$). Note that $\dcl^*(X)$  consists of the subset of
$\dcl(X)$ of elements not fixed by $G_T$ for any $T \subsetneq X$.
\end{notation}

The notion of symmetric definable closure, $\sdcl (I)$,   captures one
direction of finite coding.

\begin{notation}\label{defsdcl} The {\em symmetric definable closure} of $X$,  $\sdcl(X)$,
is those $a$ that are fixed\footnote{This suffices in our context since all
models are homogenous; more generally, it is easy to  give a `there exists a
formula' definition.} by every $g \in G_{\{X\}}$. $b \in \sdcl^*(X)$ means
$b\in \sdcl(X)$ but $b \not \in \sdcl(U)$ for any proper subset $U$ of $X$.
\end{notation}

Note $\sdcl^*(X) \subseteq \dcl^*(X)  \subseteq \dcl(X)$. However, $\sdcl(X)$
may not be contained in $\dcl^*(X)$.\footnote{For a simple example, consider
the theory of $(\ZZ,S,0)$. Then $dcl^*(a,b) =\emptyset$ for $a,b$ in distinct
$\ZZ$-chains but $0 \in \sdcl(\emptyset)\subseteq \sdcl(X)$, for
any $X$ with at least two elements.}

%As, clearly $\sdcl(\emptyset) = \dcl(\emptyset)$, which can be non-empty, and
%by Theorem~\ref{mr} we can construct $X$ with $\dcl^*(X) =\emptyset$.}

%It is straightforward in our general situation to construct elements in the
%$\dcl(\emptyset)$. Choose $A$ with a cycle of $4$ elements and $4$ lines. Add
%a diagonal line through a center point; $\delta(A) = 0$ and only the center
%point  is on a unique line. Clearly, $\sdcl(\emptyset) = \dcl(\emptyset)$.}.
%jb Mar 14: This is tricky.  Certainly your amend

%\sidebar{jb apr 17:Check after the example is written.}

\begin{remark}\label{expreview} {\rm We will give in Example~\ref{examp2} a
theory $\hat T_\mu$ where for some $B$ with $\delta(B)=2$, there is a good
pair $A/B$ with $\mu(A/B)=2$, and $\hat T_\mu$ admits an independent
set $I = \{a,b\} \subseteq M \models \hat T_\mu$ %(where $\mu(\boldsymbol{\alpha}) = 2$)
with %$\acl^*(I) \supsetneq $
$\dcl^*(I) \neq \emptyset$.  Given a $v$-element independent set $I$ with
$v\ge 2$, we will show in Section~\ref{geq3} assuming $\mu( A/B)\geq 3$
(whenever $\delta(B) = 2$), that  $\dcl^*(I) = \emptyset$ and show  in
Section~\ref{geq2} that, even if some $\mu(A/B)$ might be $2$, $\sdcl^*(I)$
is empty.
% The example in Section~\ref{ce} shows that this
%distinction is necessary.
}
\end{remark}

%Here  is a set $A_0$  if  $\mu(\boldsymbol{\alpha}) = 2$ then $\acl^*(A_0)
%\supsetneq \dcl(A_0)$ for any model of $\hat T_\mu$. Note that we have both
%$R(b_1,c_1,c_2)$ and $R(b_2,c_1,c_2)$.

% \begin{figure}[h]\label{example}[H]
%\begin{center}
%\includegraphics[width=2.5in]{example1.jpg}
%\caption{Example of  $\acl^*(\{a_1,a_2\}) \supsetneq \dcl^*(\{a_1,a_2\})$}
%\label{fig:Reinforcement}
%\end{center}
%\end{figure}
%
%
%\sidebar{jb feb 20;  Please delete or modify the caption in example1 dot jpg.
%Feel free to clarify the text explaining the example. }

%
%Recall Definition~\ref{defsdcl} of $\sdcl$.  %For any subgroup $G$ of $\aut(M)$ we denote by $\Fix(G)$ the
%set of point fixed by every element of $G$

 \begin{definition}[non-trivial functions]\label{truen} Let $T$
be a strongly minimal theory.
\begin{enumerate}[1]
\item {\em (Essentially unary)} %\label{defeun}
 An $\emptyset$-definable
    function
    $f(x_0 \ldots x_{n-1})$ is called {\em essentially unary} if there is
an $\emptyset$-definable function $g(u)$ such that for some $i$, for all
but a finite number of $c \in M$, and all but  a set of Morley rank $ <n$
of tuples $\bbar\in M^n$, $f(b_0 \ldots b_{i-1}, c, b_i \ldots b_{n-1}) =
g(c)$.

%x_{i-1}, b, x_{i+1}, \ldots x_{n-1} = g(b)$.
\item {\em (truly $n$-ary)} %\label{truen}
\begin{enumerate}
\item Let $\xbar =\langle x_0 \ldots x_{n-1}\rangle$:  a function
    $f(\xbar)$ {\em truly depends on $x_i$} if for any {\em independent}
    sequence $\abar$ and some (hence any) independent\footnote{This
    definition is more restrictive than the standard (e.g. \cite[p.
    35]{Gratzer}) as for our definition, in a ring the polynomial $xy+z$
    does not depend on $y$ while usually one is allowed to substitute $0$
    to witness dependence.} $\abar'$ which
disagrees  with $\abar'$ only in the $i$th place $f(\abar) \neq
    f(\abar')$.
    \item $f$ is truly $n$-ary if $f$ truly depends on all its arguments
        and $f(\abar)$ is not a component of $\abar$ for all but  a set
        of Morley rank $ <n$ of tuples $\abar \in M^n$ .
        \end{enumerate}
        \end{enumerate}
\end{definition}

\begin{lemma}\label{getun} For a strongly minimal $T$ with $\acl(\emptyset)$
infinite,
%and  an independent tuple  $I= \{a_1,a_2,\ldots a_n\}$
the following conditions are equivalent:
\begin{enumerate}
\item for any $n>1$ and any independent set $I= \{a_1,a_2,\ldots a_n\}$, $\dcl^*(I) =
    \emptyset$;
    \item
    every
    $\emptyset$-definable
    $n$-ary function ($n>0$) is essentially unary;
\item for each $n>1$ there is no $\emptyset$-definable truly $n$-ary
    function in any $M\models T$.
\end{enumerate}
\end{lemma}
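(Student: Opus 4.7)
The plan is to establish the three-way equivalence by proving $(1)\Leftrightarrow(3)$ and $(2)\Leftrightarrow(3)$. The tools are $\omega$-homogeneity of $M$ (available since $T$ is $\aleph_1$-categorical, cf.~\cite{BaldwinLachlansm}) and elimination of the quantifier $\exists^\infty$ in strongly minimal theories, which makes ``for all but finitely many'' a first-order condition uniform in parameters.

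For $(1)\Rightarrow(3)$, assume $\dcl^*(I)=\emptyset$ for every independent $I$ with $|I|>1$, and suppose toward contradiction that some $\emptyset$-definable $f$ is truly $n$-ary for some $n>1$. Pick an independent $I=\{a_1,\dots,a_n\}$ generic enough that $b:=f(\bar a)$ is not a component of $\bar a$; this is possible by the non-projection clause of Definition~\ref{truen}. For each $i\le n$ the ``truly depends on $x_i$'' clause yields an independent $\bar a^{(i)}$ differing from $\bar a$ only at position $i$ with $f(\bar a)\neq f(\bar a^{(i)})$; $\omega$-homogeneity supplies $\sigma\in\aut(M/I\setminus\{a_i\})$ taking $a_i$ to $a^{(i)}_i$, so $\sigma(b)\neq b$ and $b\notin\dcl(I\setminus\{a_i\})$. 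Since every proper subset of $I$ sits inside some $I\setminus\{a_i\}$, we conclude $b\in\dcl^*(I)$, contradicting the hypothesis. Conversely, for $(3)\Rightarrow(1)$, given $b\in\dcl^*(I)$ we take a formula $\phi(x,\bar y)$ whose unique solution over $\bar a$ is $b$ and extend it to the total $\emptyset$-definable $f$ equal to the unique solution of $\phi(x,\bar y)$ when one exists and to $y_1$ otherwise. Running the same automorphism argument in reverse, a failure of $f$ to truly depend on $y_i$ would force $b$ to be fixed by every $\sigma\in\aut(M/I\setminus\{a_i\})$, placing $b\in\dcl(I\setminus\{a_i\})$ and contradicting $b\in\dcl^*(I)$; and $b=a_j$ is ruled out because it would give $b\in\dcl(\{a_j\})$, again contradicting $b\in\dcl^*(I)$.

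The direction $(2)\Rightarrow(3)$ is immediate, since an essentially unary function generically depends on a single coordinate and so cannot be truly $n$-ary for $n>1$. For $(3)\Rightarrow(2)$, I induct on the arity $n$. The base $n=1$ is trivial. For $n>1$ and $f$ of arity $n$, if $f$ truly depends on every argument then $f$ must be truly $n$-ary (generically $f(\bar a)$ being some component $a_j$ would force $f$ to depend only on $x_j$, contradicting dependence on all arguments), violating (3). Hence $f$ fails to truly depend on some $x_j$. Using elimination of $\exists^\infty$, the formula ``for all but finitely many $y_j$, $f(\bar y)=z$'' defines the graph of a partial $\emptyset$-definable function of arity $n-1$; extending by $y_1$ on the exceptional finite-rank set yields a total $\emptyset$-definable $h$ with $f(\bar y)=h(\bar y_{\neq j})$ for generic $\bar y$. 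By the inductive hypothesis $h$ is essentially unary, and so $f$ is as well.

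The main obstacle is the bookkeeping required to convert a formula (isolating $b$ over $\bar a$, or witnessing a redundant coordinate of $f$) into a total $\emptyset$-definable function of the correct arity that behaves correctly on generic inputs. Both extractions hinge on the first-order expressibility of ``cofinite'' in strongly minimal theories together with a $\emptyset$-definable default value; the choice of default does not affect the conclusions, since truly $n$-ary, essentially unary and $\dcl^*$ are all notions restricting attention to generic/independent inputs.
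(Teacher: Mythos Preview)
Your proof is correct. The main organizational difference from the paper is that the paper argues the cycle $(1)\Rightarrow(2)\Rightarrow(3)\Rightarrow(1)$, whereas you prove $(1)\Leftrightarrow(3)$ and $(2)\Leftrightarrow(3)$. Your $(3)\Rightarrow(1)$ and $(2)\Rightarrow(3)$ are essentially the same as the paper's. The substantive divergence is in reaching $(2)$: the paper proves $(1)\Rightarrow(2)$ directly by taking $d=f(\bar a)$ for independent $\bar a$ and iterating the hypothesis $\dcl^*(J)=\emptyset$ on successively smaller independent $J\subseteq I$ until $d\in\dcl(a_i)$ for a single $a_i$, then checking this persists generically; you instead prove $(3)\Rightarrow(2)$ by induction on arity, stripping one redundant coordinate via $\forall^\infty$ and appealing to the inductive hypothesis. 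Both routes are the same descent packaged differently (one recurses on the set $I$, the other on the arity of $f$), and neither actually needs the hypothesis that $\acl(\emptyset)$ is infinite, which is present only for the downstream application to elimination of imaginaries.
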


\begin{proof} 1) implies 2). Fix $I$ as in the statement and let $f$ be an $\emptyset$-definable
$n$-ary function. Then 1) implies that for some $i$, say, 1, there is an
$\emptyset$-definable function $g(u)$, with $g(a_1) = f(\abar) =d$. Let $p^1$
denote the generic type over $\emptyset$ realized by each   $a_i$ and $p^n$
the type of the $n$-tuple. There are parameter-free formulas $\psi(u,y)$ and
$\chi(u,\vbar,w)$  ($\lg(\vbar) = n-1$) such that $p^1(u)$ entails
$\psi(u,y)$ defines $g$ (i.e. $y=g(u)$) and $p^n$ implies $\chi$ defines $f$
(i.e. $w=f(u,\vbar)$).
% \cup p^n(u,\vbar)
 %\vdash (\forall y \forall y') [(\psi(u,y) \wedge \psi(u,y'))
%\rightarrow y=y'$$ witnessing that $\psi$ defines $g$. Similarly $p^n$
%implies $\chi$ defines $f$.

Now we  have $\phi(a_1,d) \wedge \chi(\abar,d)$.  But since $a_1$ is
independent from $(a_2, \dots, a_n)$, for cofinitely many $c\in M$, we have $\exists y(\phi(c,y)
\wedge \chi(c,a_2\ldots a_n ,y))$.
% Since $\abar$ in independent the Morley
%rank of the definable subset $X$ of $M^n$ on which $f(\xbar) = g(x_1)$ in
%$n$. Now $A_1=\{x:g(x) \neq f(x,a_2\ldots a_n)\}$ is finite.
For each $k$ with $1\leq k < n$ let  $$A^c_{k+1} = \{x: (\exists^{\infty} x_1
\ldots \exists^{\infty} x_k) \  g(x) = f(x_1,x_2\ldots x_k, x, a_{k+2} \ldots
a_n)\}.$$ By induction, since $a_{k+1} \in A^c_{k+1}$ and  is independent
from the parameters defining $ A^c_{k+1}$, each $A^c_{k+1}$ is cofinite, so,
its complement $A_{k+1}$ is finite. Thus, the subset on which $f(\xbar) \neq
g(x_1)$ is contained in $\bigcup_{1\leq i \leq n} A_i \times M^{n-1}$, which
has Morley rank at most $n-1$. Thus, $f$ is essentially unary witnessed by
$g$.

%2) implies 1) is immediate as 2) implies that for some $i$, $g(a_i) =
%f(\abar)$ for any independent set $\abar$ so $\dcl^*(I) = \emptyset$.

2) implies 3): Suppose $f$ is such a definable truly
     $n$-ary function,  let $\abar$ enumerate an independent set $I$.  By 2)
      there are $i,g$ with $g(a_i) = f(\abar)$ and this holds on
      any independent sequence.
%
% Note that any independent set
%     is strong in the universe and so any two such are automorphic.
      For some $j\neq i$, let
     $\abar'$ be obtained from $\abar$ by replacing $a_j$ by an $a'_j$ such
     that $\abar'$ is independent.  Then $f(\abar) = g(a_i) = f(\abar')$ so
     $f$ is not truly $n$-ary since it doesn't depend on $x_i$.
     %using
%     Definition~\ref{truen}.2 for independent $\cbar$, $f(\cbar) \in \dcl^*(\cbar)$
%     and so
%      $f(\abar) \in \dcl^*(I)$.

3) implies 1): Suppose 1) fails. Fix the least $n\ge 2$ such that
$\dcl^*(I)\ne \emptyset$ for some independent set $I= \{a_1,a_2,\ldots
a_n\}$. Let $d\in  \dcl^*(I)$. By  the definition of $\dcl$, there exists
$\varphi(x, \ybar)$ such that $\models \exists !x \varphi(x, \abar) \land
\varphi(d, \abar)$, so $\varphi$ defines some $n$-ary function $f$.
We now show $f$ is truly $n$-ary. If $f(\abar)$ is a
component of $\abar$, then $d=a_i$ for some $i$ and $d\in \dcl(a_i)$,
contradicting $d \in dcl^*(I)$. If $f$ is not truly $n$-ary,
there exists $i$ such that for any independent sequences $\bbar$ and $\bbar'$
which disagree in the $i$th place it holds that $f(\bbar) = f(\bbar')$. We
choose $a_i'$ so that $\abar a_i'$ is independent, then $\abar_i'= (a_1,
\dots, a_{i-1}, a_i', a_{i+1}, \dots, a_n)$ is independent, too. Then
$f(\abar) = f(\abar_i')$. Let $\psi(x', \abar)$ denote $f(\abar) = f(a_1,
\dots, a_{i-1}, x', a_{i+1}, \dots, a_n)$. Since $\abar a_i'$ is independent
and $a_i'$ satisfies $\psi(x', \abar)$, this formula has cofinitely many
solutions. Then the formula
\begin{multline*}
\exists x (\exists^{\infty}x') (f(a_1, \dots, a_{i-1}, x, a_{i+1}, \dots, a_n)=f(a_1, \dots, a_{i-1}, x', a_{i+1}, \dots, a_n) \land \\
\land y = f(a_1, \dots, a_{i-1}, x, a_{i+1}, \dots, a_n))
\end{multline*}
defines $d$, so $d\in\dcl^*(I-\{a_i\})$, for a contradiction.
\end{proof}

The strongly minimal affine space $(Q, R^4)$ with $R(x, y, z, t)
= x+y = z+t$ fails weak elimination of imaginaries; weak elimination is
obtained if a point is named.

\begin{remark}\label{truenexamp} {\rm We cannot isolate `non-triviality' by simply saying there is no
     definable $n$-ary function, nor even none
   which depends on all its variables. The insight is that if $a \in
     \acl(B)$ by a formula $\phi(v,\bbar)$ which has $k$ solutions, any
     solution is in the definable closure of $B$ and the other $k-1$.
     Steiner systems with line length $k$ give a stark example. Consider
     the function of $k-1$ variables  which projects on the first entry
     unless the $k-1$ arguments are a clique (partial line) and gives the
     last element of the line in that case. This function satisfies the
     `depends' hypothesis but not the projection hypothesis (Definition~\ref{truen}.2). So, although
     $\emptyset$-definable,  it is not a  {\em truly} $(k-1)$-variable
     function.}
\end{remark}

The main aim of this paper is to study the definable closure in Hrushovski's
example in order to get to know which algebraic operations can be defined in
those examples. As a by-product, we obtain the  application in
Lemma~\ref{fckey} of our results to elimination of imaginaries.

    \begin{lemma}\label{fckey} Let $I =\{a_0,a_1, \dots, a_{v-1}\}$ be an independent set with
    $I \leq M$ and $M$ be a model of a strongly minimal theory constructed as in Section~\ref{gcon}.
    \begin{enumerate}
    \item For any finite $X$, if $\sdcl(X) = \emptyset$ then $X$ is not
        finitely coded.
    \item If $\sdcl^*(I) = \emptyset$ then $I$ is not finitely coded.
 \item   If $\dcl^*(I) = \emptyset$ then $I$ is not finitely coded and
     there is no parameter-free definable truly $n$-ary function for
     $n=|I|$.
\end{enumerate}
    \end{lemma}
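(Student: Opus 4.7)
The plan rests on a unifying observation: if $X$ is finitely coded by $S$, then by the definition of coding every $\sigma \in G_{\{X\}}$ fixes $S$ pointwise, whence $S \subseteq \sdcl(X)$. Part (1) follows at once: $\sdcl(X) = \emptyset$ forces $S = \emptyset$, so the coding biconditional collapses to ``every $\sigma \in \aut(M)$ setwise fixes $X$''; since $\sdcl(X) = \emptyset$ excludes $X$ from being an $\aut(M)$-invariant subset of $\acl(\emptyset)$, $\omega$-homogeneity of $M$ lets us move some element of $X$ to a fresh point outside $X$, contradicting this.

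For (2) I would argue by descent on the size of a witnessing proper subset. First normalise $S$ by deleting any $s \in \dcl(\emptyset)$, which preserves the coding condition (such $s$ are $\aut(M)$-fixed). For each remaining $s \in S$, the hypothesis $\sdcl^*(I) = \emptyset$ supplies a proper $U_s \subsetneq I$ of some size $k$ with $1 \leq k \leq v-1$ and $s \in \sdcl(U_s)$. By $\omega$-homogeneity and the independence of $I$, every $\pi \in \mathrm{Sym}(I)$ extends to some $\sigma \in G_{\{I\}}$; coding forces $\sigma(s) = s$, and conjugating the witness for $s \in \sdcl(U_s)$ yields $s \in \sdcl(V)$ for \emph{every} $k$-subset $V \subseteq I$. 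Packaging this uniformly (all such $V$ realise the same generic $k$-type), one produces an $\emptyset$-definable permutation-invariant partial function $F$ on $k$-tuples with $F(V) = s$ for every ordering of every such $V \subseteq I$.

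The inductive core is the descent from $k$ to $k-1$. Fix a $(k-1)$-subset $U' \subseteq I$; the formula $F(U' \cup \{x\}) = s$ is satisfied by each of the $v - k + 1 \geq 2$ elements of $I \setminus U'$. Were this solution set finite it would lie in $\acl(U' \cup \{s\})$, forcing
\[
v = \dim(I) \leq \dim(\acl(U' \cup \{s\})) \leq k < v,
\]
a contradiction. So the set is cofinite, and ``the generic value of $F(U' \cup \{x\})$ as $x$ varies'' is an $\emptyset$-definable symmetric prescription of $s$ from $U'$, placing $s \in \sdcl(U')$. Iterating drives $s$ into $\sdcl(\emptyset) = \dcl(\emptyset)$, contradicting the normalisation. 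I expect this dimension-counting descent---recognising that a symmetric function of $k$-sets constant on the $k$-subsets of $I$ in fact comes from a function of $(k-1)$-sets---to be the main obstacle.

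Part (3) splits cleanly. The clause forbidding $\emptyset$-definable truly $n$-ary functions is the equivalence $(1)\Leftrightarrow(3)$ of Lemma~\ref{getun}. For the coding clause, Lemma~\ref{getun} applied to the defining function of each $s \in S \subseteq \dcl(I)$ yields $s = g_s(a_{i_s})$ for some $i_s$ and some $\emptyset$-definable unary $g_s$; realising the transposition $a_{i_s} \leftrightarrow a_j$ inside $G_{\{I\}}$ and invoking coding gives $g_s(a_j) = s$ for every $j$. If $g_s$ were non-constant its fiber $g_s^{-1}(s)$ would be finite, hence contained in $\acl(a_{i_s})$, contradicting the independence of $a_j$ from $a_{i_s}$. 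So $g_s$ is constant, $s \in \dcl(\emptyset)$ for every $s \in S$, the coding trivialises, and $\omega$-homogeneity once more furnishes a setwise-moving automorphism for the contradiction.
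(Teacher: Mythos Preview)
Your argument for (2) is correct, but it takes a longer route than the paper's. Having reduced to $t \in \sdcl(I) \setminus \dcl(\emptyset)$, the paper simply chooses a \emph{minimal} nonempty $J \subsetneq I$ with $t \in \dcl(J)$ (so $t \in \dcl^*(J)$), picks $a_j \in J$ and $a_i \in I \setminus J$, and argues that the transposition $\sigma \in G_{\{I\}}$ swapping them moves $t$: for if $\sigma(t) = t$ then $t \in \dcl((J \setminus \{a_j\}) \cup \{a_i\})$ with $a_i$ generic over $J \cup \{t\}$, and exactly your generic-value trick forces $t \in \dcl(J \setminus \{a_j\})$, contradicting minimality of $J$. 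So one swap gives $t \notin \sdcl(I)$, a contradiction. Your iterative descent and the paper's one-step contradiction rest on the same mechanism; the paper just packages it as ``take $J$ minimal'' instead of ``reduce $k$ to $k-1$ and repeat''.

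For (3), your detour through Lemma~\ref{getun} to write each $s \in S$ as $g_s(a_{i_s})$ is unnecessary and has a technical snag: the implication $(1)\Rightarrow(2)$ of Lemma~\ref{getun} needs the \emph{global} hypothesis ($\dcl^*(J) = \emptyset$ for every independent $J$), whereas Lemma~\ref{fckey}(3) only assumes it for the given $I$, so you cannot iterate down to a unary $g_s$ from that hypothesis alone. The paper's proof of the coding clause of (3) is one line: since $\sdcl^*(I) \subseteq \dcl^*(I) = \emptyset$, part (2) applies directly. Your treatment of the truly-$n$-ary clause is fine, though note that the relevant direction (from $\dcl^*(I) = \emptyset$ for one generic $n$-tuple to ``no truly $n$-ary function'') really is local and is closer to the $(3)\Rightarrow(1)$ argument in Lemma~\ref{getun} read contrapositively than to $(1)\Rightarrow(3)$.

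A small remark on (1): your attempt to rule out $S = \emptyset$ by saying ``$\sdcl(X) = \emptyset$ excludes $X$ from being an $\aut(M)$-invariant subset of $\acl(\emptyset)$'' does not quite land --- if $\dcl(\emptyset) = \emptyset$ and $X$ is a finite $\aut(M)$-invariant subset of $\acl(\emptyset)$, then $\sdcl(X) = \dcl(\emptyset) = \emptyset$ and $X$ \emph{is} coded by $\emptyset$. The paper's ``immediate'' is equally brisk here; in the applications $X$ is always an independent set of positive dimension, which cannot be $\aut(M)$-invariant.
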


    \begin{proof} 1) is immediate from Definition~\ref{deffc} and 3) follows immediately
    from 2) since $\sdcl^*(I) \subseteq \dcl^*(I)$.  2) requires some effort.
    Suppose $t \in T$ where $T$ is a finite code for $I$.
If $t \in \sdcl(I)- \sdcl^*(I)$, then either $t \in \dcl(\emptyset)$ or $t\in
\dcl(J)$ for some $\emptyset \subsetneq J\subsetneq I$. In the first
alternative, if $T$ is a finite code for $I$ so is $T-\{t\}$. And since $I$
is independent, it cannot be coded by the  empty set. So we must consider the
second case. But if $t \in \dcl(I) - \dcl^*(I)$ is in, say, $\dcl^*(J)$, a
permutation switching $a_i$ and $a_j$ for some $a_j\in J$, $a_i\in I-J$ and
fixing the other $a_k$ takes $t$ to some $t' \neq t$. Thus $t\not\in
\sdcl(I)$.
\end{proof}

We use $G_I$ to prove  hypotheses (1) and $G_{\{I\}}$ for (2) of
Theorem~\ref{noei}.  The `proof' below just indicates the organization of the
argument that follows.

\begin{theorem}\label{noei} Let  $T_\mu$ be a Hrushovski construction as in Theorem~\ref{mr} or a strongly minimal
Steiner system as in Theorem~\ref{mrs}.
\begin{enumerate}
\item If  $T_\mu$ triples ($\delta(B) =2$ and $|C|>1$ imply $\mu(C/B)\geq
    3$), then $\dcl^*(I) = \emptyset$.
    \item In any case $\sdcl^*(I) =\emptyset$.
\end{enumerate}
Consequently, $T_\mu$ does not admit elimination of imaginaries.
\end{theorem}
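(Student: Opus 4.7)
The plan is to establish each part by producing automorphisms that witness nontriviality of orbits, and then to deduce the failure of elimination of imaginaries via Lemma~\ref{fckey}. Both parts rely on the $G$-tree decomposition (Section~\ref{decomp}) of a finite $G$-normal substructure $\Ascr \leq M$ containing $I$ and a putative witness $b$; what differs between the two parts is the choice of group $G$ and the combinatorial input that the decomposition exploits.

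For part (1), suppose toward contradiction that $b \in \dcl^*(I)$, where $I = \{a_1,\dots,a_v\}$ is independent and $v \geq 2$. Then $b \in \acl(I)$, so I would take a finite $G_I$-normal $\Ascr \leq M$ containing $I \cup \{b\}$ and apply the $G_I$-tree decomposition, writing $\Ascr$ as a tree of good primitive extensions $C_1/B_1,\dots,C_t/B_t$ erected over $I$. The condition $b \not\in \dcl(J)$ for every proper $J \subsetneq I$ forces some base $B_j$ along the branch to $b$ to essentially involve all of $I$, so $\delta(B_j) \geq 2$; the tripling hypothesis then yields $\mu(C_j/B_j) \geq 3$, so $M$ contains at least three disjoint copies of $C_j$ over $B_j$. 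A careful induction up the tree promotes a swap of two such copies to a global $\sigma \in G_I$ with $\sigma(b) \neq b$, contradicting $b \in \dcl(I)$.

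For part (2) the skeleton is the same but with $G_{\{I\}}$ in place of $G_I$, and the decomposition taken to be $G_{\{I\}}$-equivariant. Here tripling may fail and some $\mu(C_j/B_j)$ may be $2$, so one cannot always swap two fiber copies. However, the setwise action of $G_{\{I\}}$ on $I$ supplies a second source of automorphisms: a permutation of $I$ that is nontrivial on the combinatorial data labelling the branch to $b$ will carry $b$ to a distinct element. The flower/bouquet dichotomy of Section~\ref{bouflow} formalizes exactly when this symmetric action is nontrivial, and combining it with the (possibly weaker) swapping available from $\mu \geq 2$ yields $\sigma \in G_{\{I\}}$ with $\sigma(b)\neq b$, contradicting $b \in \sdcl(I)$. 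The Steiner variant (Section~\ref{Steiner}) is handled by the same scheme with the Steiner $\delta$ and the appropriate treatment of lines.

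The final consequence follows from Lemma~\ref{fckey}: in either case some finite independent set fails to be finitely coded, so $T_\mu$ does not have the finite set property. Since $\acl(\emptyset)$ is infinite for these theories, Fact~\ref{wefc}(1) gives weak elimination of imaginaries, and then Fact~\ref{wefc}(2) forces failure of elimination of imaginaries. The main obstacle I expect is the bookkeeping in the $G$-tree decomposition that tracks how $G$-orbits propagate through successive primitive extensions, and in particular, lifting a local swap at a leaf of the decomposition to a global automorphism of $M$ without fixing $b$ or destroying strongness of the ambient amalgamation. In part (2), the additional delicacy is verifying that flower configurations, where the symmetric $G_{\{I\}}$-action is most constrained, still admit a nontrivial permutation of the labels relevant to $b$.
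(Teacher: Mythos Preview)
Your proposal is correct and follows the paper's approach: the paper's own proof of Theorem~\ref{noei} is just a wrapper that cites Theorem~\ref{mr} (resp.\ Theorem~\ref{mrs}) for part~(1), Theorem~\ref{no-sym-fun} (resp.\ Theorem~\ref{Steiner-no-sym-fun}) for part~(2), and then deduces failure of elimination of imaginaries via Lemma~\ref{fckey} and Fact~\ref{wefc} exactly as you do. One small imprecision in your sketch of the underlying argument: the reason the relevant bases satisfy $\delta(B_j)\geq 2$ is not that they ``essentially involve all of $I$'' but rather the inductive invariant $\dim_m$ (bootstrapped from Lemma~\ref{cl0}, which shows a base with $d(B)\leq 1$ would already lie in a single $\acl(a_k)$); otherwise your outline of the $G_I$- versus $G_{\{I\}}$-decomposition, the role of tripling, and the flower/bouquet analysis matches the paper.
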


\begin{proof} 1)  By Lemma~\ref{fckey} and Theorem~\ref{mr}, $I$ is not finitely coded.
So by Fact~\ref{wefc}, $T$ does not admit elimination of imaginaries. And by
Lemma~\ref{getun}, $\dcl^*(I) = \emptyset$ implies there is no
$\emptyset$-definable truly $n$-ary  function. 2) Theorem~\ref{no-sym-fun}
(Theorem~\ref{Steiner-no-sym-fun} for the Steiner case)  provides $\sdcl^*(I)
= \emptyset$ without the extra (triplable) hypothesis.
\end{proof}

Importantly, if  the $A$ in a good pair $A/B$ is fixed setwise by $G$ then so
is $B$.

 \begin{observation}\label{fixC}
 Assume  $\bL_0$ consists of all structures $A$ in $\bL^*_0$ such
$\emptyset\leq A$.
 Let $\Ascr, B, C, I \subseteq M $, $M \models T_\mu$ and $G = G_{\{I\}}$ or $G_I$.
Suppose $C$ is $0$-primitive over $\Ascr$ and based on $B\subseteq \Ascr$.
If the automorphism $f\in G$ fixes $\Ascr$ setwise, and fixes $C$ setwise, then it
fixes $B$ setwise.
\end{observation}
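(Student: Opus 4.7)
The plan is to reduce to uniqueness of the base $B$ as a subset of $\Ascr$, and then conclude by symmetry. Since Definition~\ref{prealgebraic}(3) defines a base merely as \emph{a} minimal subset of $\Ascr$ over which $C$ is $0$-primitive, the content of the observation is that in each of our two concrete contexts the base is in fact uniquely recoverable from the pair $(\Ascr, C)$ in a way that any automorphism must respect.

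As a preliminary move, apply $f$ to the data. Since $f \in \aut(M)$ sends $0$-primitive pairs to $0$-primitive pairs, and since $f(\Ascr) = \Ascr$ and $f(C) = C$ both hold setwise, the image $f(B) \subseteq \Ascr$ is again a base for $C$ over $\Ascr$. Hence it suffices to show that any two bases for $C$ over $\Ascr$ coincide as sets. In the hypergraph case of Notation~\ref{defT}(1), Remark~\ref{minmax} identifies $B$ with the maximal subset of $\Ascr$ whose elements participate in some $R$-relation with an element of $C$; this characterization depends only on $(\Ascr, C)$ together with the ternary relation $R$, so it is preserved by any automorphism fixing $\Ascr$ and $C$ setwise. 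Hence $f(B) = B$.

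For the Steiner case of Notation~\ref{defT}(2), Lemma~\ref{primchar1} supplies one inclusion: every element of any base lies on a line through two points of $C$. The reverse inclusion --- that every such $b \in \Ascr$ actually belongs to the base --- is the main obstacle, and requires a careful accounting with the linear-space predimension in Definition~\ref{twodelta}. The argument I have in mind is that if a point $b \in \Ascr$ lying on a line through two points $c_1,c_2 \in C$ were omitted from a base $B'$, then the line of $B' \cup C$ on $c_1,c_2$ would count with its length decreased by one (removing $b$ from it), which forces $\delta(B' \cup C / B') > 0$ and destroys the $0$-primitivity $B' \le B' \cup C$. Given this, the base coincides with the $(\Ascr,C)$-invariant set $\{b \in \Ascr : \exists c_1,c_2 \in C,\ R(c_1,c_2,b)\}$, and again $f(B) = B$. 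Note that the role of the subgroup $G$ (whether $G_I$ or $G_{\{I\}}$) plays no part: only the setwise-fixing of $\Ascr$ and $C$ matters, and these are preserved under both choices.
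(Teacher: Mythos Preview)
Your argument for the Hrushovski case is correct and is essentially the paper's own proof: the base is uniquely determined by $(\Ascr,C)$ via Remark~\ref{minmax}, and any automorphism fixing $\Ascr$ and $C$ setwise must therefore fix $B$ setwise.

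However, your treatment of the Steiner case is both unnecessary and incorrect. It is unnecessary because the hypothesis of the Observation --- that $\bL_0$ consists of \emph{all} finite structures with $\emptyset\le A$ --- is precisely the Hrushovski setting; the Steiner class $\bK_0$ is further restricted by the linear-space axiom, so the hypothesis fails there. The paper says this explicitly just after the statement and points to Lemmas~\ref{primchar} and~\ref{alphpetals} as the substitutes needed in the Steiner context.

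Your Steiner argument is also wrong when $|C|=1$. Lemma~\ref{primchar1}, which you invoke for one inclusion, requires $|C|\ge 2$. And Lemma~\ref{primchar}(1) shows that for a single $\alpha$-point $C=\{a\}$, \emph{any} two-element subset of $\ell\cap\Ascr$ is a base, so the base is genuinely non-unique and no automorphism-invariant description of it exists; this is exactly why Definition~\ref{basedef} introduces the \emph{extended} base. Your reverse-inclusion computation (``removing $b$ forces $\delta(B'\cup C/B')>0$'') fails here: for any $B'\subseteq\ell\cap\Ascr$ with $|B'|\ge 2$ one still gets $\delta(\{a\}/B')=0$, since the line contributes nullity $|B'|+1-2$ to $\delta(B'\cup\{a\})$ and nullity $|B'|-2$ to $\delta(B')$. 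So drop the Steiner paragraph entirely; the Observation simply does not cover that case.
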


Note that the first assumption on $L_0$ in Observation~\ref{fixC} {\em fails}
for the Steiner system case (and also for Proposition 18 of
\cite{Hrustrongmin}). We find a suitable substitute in Lemmas~\ref{primchar}
and \ref{alphpetals}.

\begin{proof} By Definition~\ref{prealgebraic}.3, $B$ is uniquely determined by  $C$ and $\Ascr$.
% \cite[Definition
%2.7]{Verbovskiy} We say that $C$ is a base of $B/A$. $B$ is exactly the
%points of $A$ that are in an $R$-relation with an element of $B$.
Thus, if $f \in G$ satisfies
 $B\neq f(B)$, we have a contradiction since $f(C)  = C$ and so some element
 of $C$ is $R$-related to an element not in $B$.
\end{proof}

%        \begin{definition}\rm
%Let $B_i$ be substructures of an $\Sigma$-structure $M$ such that any two
%distinct $B_i$ intersect in a given set $A$. Let $B$ be the structure whose
%universe is the union of the sets $B_i$. The set $B$ is called the {\em free
%join} of the $B_i$ over $A$ if there are no relations on $B$ that are not on
%one of the $B_i$; we write $B = \otimes_A B_i$ or $B = B_1 \otimes_A B_2
%\otimes_A \dots \otimes_A B_n$. If $B_1, B_2 \subset M$, we write $B_1
%\otimes_A B_5$ for $B_1 \cup B_2 \cong B_1 \otimes_A B_2$.
%\end{definition}

\section{$G$-Decomposition}\label{decomp}
We continue with the hypotheses of Sections~\ref{HLS}.
 %and in particular
%\ref{thegrps}.
Our  original goal was to show $\dcl^*(I) = \emptyset$ for $I =\{a_1, \dots,
a_v\}$ with $d(I) =v\ge 2$. For this we introduced the notion of a
$G_I$-decomposition to analyze the algebraic closure of a finite set. However
we needed an additional hypothesis on $\mu$ to show $\dcl^*(I) = \emptyset$.
In order to eliminate that hypothesis,  we consider decompositions with
respect to two subgroups of $\aut(M)$: $G = G_I$ or $G = G_{\{ I\}}$ that fix
$I$ pointwise or setwise, respectively. Using the  decomposition associated to group we
inductively show the appropriate definable closure is empty. We give a joint
account of the decomposition but by changing the group prove
 Theorem~\ref{mr1} 1) (for $\dcl^*$) or 2) (for $\sdcl^*$).

%\sidebar{jb mar 20:  Can we understand the place of this work.  trial 1
%
%This decomposition makes sense for flat geometries (by a theorem of
%mermelstein showing flat geometries arise from Hrushovski constructions.
%However, that $I$ is a pair of indiscernibles is significant here.}

\begin{definition} \label{iclnot} Let $M$ be the generic model of $T_\mu$,
$I = \{a_1, \dots, a_v\}$ be independent with $I\le M$,  and let
$G \in \{
G_I, G_{\{ I\}}\}$. A subset $\Ascr$ is {\em $G$-normal} if it is finite,
contains $I$, $G$-invariant ($G$ fixes $\Ascr$ setwise), and is strong in
$M$.
\end{definition}

We need the following easy observation to prove Lemma~\ref{Gnorex}; finite
$G$-normal sets exist. The forward implication in Observation~\ref{acldcl}
holds for any first order theory.  As in Notation~\ref{*cl} the conditions
are equivalent here  since all models  are $\omega$-homogeneous.

\begin{observation}\label{acldcl} Let $A \subseteq M$. 1) implies 2) and 3); all
are equivalent in an $\omega$-homogenous model.
\begin{enumerate}
\item $c \in \acl(A)$
 \item The orbit of $c$ under $\aut_A(M)$ ($A$ fixed pointwise) is finite.
\item If $A$ is finite then the orbit of $c$ under $\aut_{\{A\}}(M)$ ($A$
    fixed setwise) is finite.
    \end{enumerate}
    \end{observation}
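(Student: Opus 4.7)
The plan is to establish (1) $\Rightarrow$ (2) and (1) $\Rightarrow$ (3) by elementary reasoning on a witnessing algebraicity formula, then note (3) $\Rightarrow$ (2) trivially and invoke $\omega$-homogeneity plus strong minimality for (2) $\Rightarrow$ (1).

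First I would prove (1) $\Rightarrow$ (2): if $c \in \acl(A)$, pick a formula $\phi(x,\bar a)$ with $\bar a$ from $A$ such that $\phi(c,\bar a)$ holds and $\phi(x,\bar a)$ has only $n$ solutions in $M$. Any $\sigma \in \aut_A(M)$ fixes $\bar a$ pointwise and so permutes the $n$-element solution set; hence $\sigma(c)$ lies in that set and the orbit of $c$ has size at most $n$.

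Next, for (1) $\Rightarrow$ (3) with $A$ finite, reuse the same $\phi(x,\bar a)$. For any $\sigma \in \aut_{\{A\}}(M)$, $\sigma(\bar a)$ lies in $A^{|\bar a|}$; since $A$ is finite, there are only finitely many candidate tuples $\tau$, and each $\phi(M,\tau)$ is finite (of the same size $n$, being the $\sigma$-image of $\phi(M,\bar a)$). So $\sigma(c)$ lies in the finite union $\bigcup_\tau \phi(M,\tau)$. The containment $\aut_A(M) \subseteq \aut_{\{A\}}(M)$ gives the trivial implication (3) $\Rightarrow$ (2), so it suffices to prove (2) $\Rightarrow$ (1) under $\omega$-homogeneity.

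For that converse, suppose $c \notin \acl(A)$. In a strongly minimal theory there is a unique non-algebraic type $p$ over $A$, and $c$ realizes $p$; moreover $p$ has infinitely many realizations in $M$ (take any $A$-independent infinite subset of $M$). By $\omega$-homogeneity of $M$, extended by the routine back-and-forth available for models of $\aleph_1$-categorical strongly minimal theories (cf.~\cite{BaldwinLachlansm}), any two realizations of $p$ are conjugate by some element of $\aut_A(M)$. Hence the orbit of $c$ under $\aut_A(M)$ is infinite, contradicting (2).

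The main obstacle is the last step: promoting $\omega$-homogeneity from its standard form (conjugacy of finite tuples of equal type over finite parameter sets) to conjugacy of realizations of $\tp(c/A)$ for arbitrary $A$. In the paper's setting this is essentially free because all models under consideration are homogeneous in the appropriate cardinal by $\aleph_1$-categoricity, so one can extend a finite back-and-forth approximation to an automorphism fixing $A$; but it deserves an explicit mention so that (2) and (3) really are equivalent to (1), and not merely implied by it.
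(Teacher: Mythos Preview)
Your proof is correct and considerably more detailed than what the paper provides: the paper treats this as a routine observation, remarking only that the forward implication holds in any first-order theory and that the converse follows from $\omega$-homogeneity, without spelling out any of the steps. Your explicit use of strong minimality in the $(2)\Rightarrow(1)$ direction (unique non-algebraic type, infinitely many realizations) is appropriate in the paper's context and your flagging of the infinite-$A$ subtlety is well taken, though the paper sidesteps it since in the only application (Lemma~\ref{Gnorex}) $A=I$ is finite and $M$ is the generic, hence $\omega$-saturated, model.
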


\begin{lemma}\label{Gnorex}
Each finite $U\subseteq \acl(I) \subseteq M$ is contained in a finite
$G$-normal set. If $U$ is a finite $G$-normal set, then $U\subseteq \acl(I)$.
\end{lemma}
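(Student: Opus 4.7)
The plan is to prove the two directions directly from the definition, using the fact that $\epsilon$ is integer-valued (so $\icl_M(X)$ is finite whenever $X$ is finite) and the $\omega$-homogeneity equivalence from Observation~\ref{acldcl}.

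For the forward direction, given a finite $U \subseteq \acl(I)$, I first enlarge to $U' = U \cup I$. Since every element of $U'$ lies in $\acl(I)$, Observation~\ref{acldcl} guarantees that each element has a finite orbit under $G_{\{I\}}$, and hence under $G \subseteq G_{\{I\}}$ (regardless of whether $G = G_I$ or $G = G_{\{I\}}$). Therefore $U'' := \bigcup_{g \in G} g(U')$ is a finite union of finite orbits, hence finite; by construction it is $G$-invariant and contains $I$. Finally, set $\Ascr := \icl_M(U'')$. This is finite (since $U''$ is finite and $\epsilon$ is integer-valued) and strong in $M$ by the definition of intrinsic closure. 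Moreover, for any $g \in G$, since $g$ is an automorphism of $M$ and $g(U'') = U''$, we have $g(\Ascr) = g(\icl_M(U'')) = \icl_M(g(U'')) = \icl_M(U'') = \Ascr$, so $\Ascr$ is $G$-invariant. Since $\icl_M$ preserves inclusion, $\Ascr \supseteq U'' \supseteq U \cup I$, so $\Ascr$ is the required finite $G$-normal set.

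For the reverse direction, suppose $U$ is a finite $G$-normal set. Since $G \in \{G_I, G_{\{I\}}\}$ and $G_I \subseteq G_{\{I\}}$, $G$-invariance of $U$ implies $G_I$-invariance of $U$. Hence for every $u \in U$, the $G_I$-orbit of $u$ is contained in $U$ and therefore finite. By the $\omega$-homogeneity of $M$ (which holds since $T_\mu$ is strongly minimal, so models are $\aleph_1$-categorical and hence $\omega$-homogeneous, as invoked in Notation~\ref{*cl}), the implication in Observation~\ref{acldcl} is reversible, so a finite $\aut_I(M)$-orbit forces $u \in \acl(I)$. Thus $U \subseteq \acl(I)$.

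The argument is essentially a bookkeeping exercise; the only point that might look delicate is that taking $\icl_M$ of a $G$-invariant set preserves $G$-invariance, but this is immediate once one notes that $\icl_M$ is defined intrinsically in $M$ and so commutes with the action of any automorphism of $M$. No new ideas beyond the definitions and Observation~\ref{acldcl} are required.
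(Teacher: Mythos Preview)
Your proof is correct and follows essentially the same approach as the paper: form the $G$-orbit of $U\cup I$ (the paper writes this as $I\cup G(U)$, which is the same set since $G$ fixes $I$ setwise), then take the intrinsic closure, and for the converse use finiteness of orbits together with Observation~\ref{acldcl}. Your reverse direction is in fact spelled out more carefully than the paper's one-line ``$\icl(X)\subseteq\acl(X)$'', which is really shorthand for the same orbit argument you give.
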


\begin{proof} Given any finite set $U$, let $G(U) = \{g(u):u\in U, g \in G\}$,
and $\Ascr^G_U = \icl(I \cup G(U))$. Then $\Ascr^G_U$ is $G$-normal. For
this, note $G(U)$ is finite by Lemma~\ref{acldcl}. The intrinsic closure of a
set is unique, so $\Ascr_U$ is fixed setwise for either $G$. The second part
of the lemma is immediate since $\icl(X) \subseteq \acl(X)$.
\end{proof}

Note that for given $I$ and $U$ in $M$, both the set $\Ascr^G_U$ and the
height (Definition~\ref{decomp1}) of the $G$-decomposition depend on the
choice of $G$.

 We need the following result from \cite[4.2]{Verbovskiy06} to carry out the decomposition.

%\sidebar{jbApr18: The argument for this earlier appealed to
%Observation~\ref{obsa}.  Was there any sense to this? If not, erase this
%sidebar.}

\begin{lemma}\label{13.4}
Suppose $A_1 \subset A_2 \subset A_3$ are such that $C_i = A_{i+1} \setminus
A_i$ is 0-primitive over $A_i$, for $i=1,2$. If $C_2$ is 0-primitive over $A_1$, then $C_1$ is
0-primitive over $A_1 \cup C_2$.
\end{lemma}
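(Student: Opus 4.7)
\medskip

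\noindent\textbf{Proof plan.} Recall the definition of ``primitive'' (Definition~\ref{prealgebraic}.1) requires three things for $C_1$ over $A_1 \cup C_2$: disjointness, strongness $A_1\cup C_2 \leq (A_1\cup C_2)\cup C_1 = A_3$, and the non-existence of a proper nonempty $C_0\subsetneq C_1$ with $A_1\cup C_2 \leq (A_1\cup C_2)\cup C_0 \leq A_3$. For $0$-primitivity we additionally need $\epsilon(C_1/A_1\cup C_2)=0$. Disjointness is automatic from $C_1=A_2\setminus A_1$ and $C_2=A_3\setminus A_2$. The $\epsilon$ computation is a one-line chase: the three hypotheses give $\epsilon(C_1/A_1)=\epsilon(C_2/A_2)=\epsilon(C_2/A_1)=0$, so by additivity along $A_1\subseteq A_1\cup C_2 \subseteq A_3$ and along $A_1\subseteq A_2 \subseteq A_3$ one gets $\epsilon(A_1)=\epsilon(A_2)=\epsilon(A_1\cup C_2)=\epsilon(A_3)$, and then
$$\epsilon(C_1/A_1\cup C_2)=\epsilon(A_3)-\epsilon(A_1\cup C_2)=0.$$

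The work is in the two strongness statements. For $A_1\cup C_2 \leq A_3$, take any $Y$ with $A_1\cup C_2\subseteq Y\subseteq A_3$ and write $Y=(A_1\cup C_2)\cup Y'$ with $Y'\subseteq C_1$. Apply submodularity (Axiom~\ref{yax}.3 rewritten as $\epsilon(X\cup W)+\epsilon(X\cap W)\leq \epsilon(X)+\epsilon(W)$) to $X=Y$, $W=A_2$. Since $Y\cup A_2 = A_3$ and $Y\cap A_2 = A_1\cup Y'$, this gives
$$\epsilon(Y)\;\geq\;\epsilon(A_3)+\epsilon(A_1\cup Y')-\epsilon(A_2)\;=\;\epsilon(A_1\cup Y'),$$
using the equality of the $\epsilon$-values established above. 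Finally $A_1\leq A_2$ (from $C_1$ being primitive over $A_1$) gives $\epsilon(A_1\cup Y')\geq \epsilon(A_1) = \epsilon(A_1\cup C_2)$, as desired.

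For the non-existence clause, suppose for contradiction some $\emptyset\subsetneq C_0\subsetneq C_1$ satisfies $A_1\cup C_2 \leq (A_1\cup C_2)\cup C_0 \leq A_3$. I want to derive $A_1 \leq A_1\cup C_0 \leq A_2$, which contradicts the primitivity of $C_1$ over $A_1$. So take any $Z$ with $A_1\cup C_0\subseteq Z\subseteq A_2$, write $Z=(A_1\cup C_0)\cup Y''$ with $Y''\subseteq C_1\setminus C_0$, and apply submodularity to $Z$ and $A_1\cup C_2\cup C_0$: their intersection is $A_1\cup C_0$ (since $Y''\cap C_2=\emptyset$), their union is $Z\cup C_2$, which lies between $A_1\cup C_2\cup C_0$ and $A_3$, so by the assumed strongness $\epsilon(Z\cup C_2)\geq \epsilon(A_1\cup C_2\cup C_0)$. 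The submodular inequality then collapses to $\epsilon(Z)\geq \epsilon(A_1\cup C_0)$.

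\medskip
\noindent\textbf{Main obstacle.} The content is purely combinatorial bookkeeping with the predimension, but one has to choose the right pair of sets for each submodular application: $A_2$ paired with $Y$ in one direction, and $A_1\cup C_2\cup C_0$ paired with $Z$ in the other. The additional assumption that $C_2$ is $0$-primitive over $A_1$ (not just over $A_2$) is used exactly to force $\epsilon(A_1\cup C_2)=\epsilon(A_1)$, without which the lower bound in the first submodular step would leak. I do not expect to use flatness or Axiom~\ref{yax}.4; only additivity of $\epsilon$ and the submodular inequality.
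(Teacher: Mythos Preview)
Your proof is correct. The paper takes a more compact route: rather than verifying the three clauses of primitivity separately, it proves the single identity
\[
\delta(D/A_1\cup C_2)=\delta(D/A_1)\qquad\text{for every }D\subseteq C_1,
\]
via the chain $\delta(D/A_1\cup C_2)=\delta(D\cup C_2/A_1)-\delta(C_2/A_1)=\delta(D\cup C_2/A_1)=\delta(D/A_1)+\delta(C_2/A_1\cup D)=\delta(D/A_1)$, using $\delta(C_2/A_1)=0$ and the squeeze $0=\delta(C_2/A_2)\leq\delta(C_2/A_1\cup D)\leq\delta(C_2/A_1)=0$. This one identity immediately transfers all the $0$-primitive data from ``over $A_1$'' to ``over $A_1\cup C_2$'': the $\epsilon$-value, strongness, and the no-intermediate clause all follow at once. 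Your submodular applications to the pairs $(Y,A_2)$ and $(Z,A_1\cup C_2\cup C_0)$ are really the same computations unpacked --- monotonicity of $\delta(\cdot/\cdot)$ in the second argument is exactly submodularity --- so the two arguments are equivalent in content. The paper's version is shorter; yours makes the role of each hypothesis more visible. One small omission in your plan: you state the goal $A_1\leq A_1\cup C_0$ but only argue $A_1\cup C_0\leq A_2$; the former is immediate from $A_1\leq A_2$, so this is harmless.
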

\begin{proof} Let $D\subseteq C_1$. Then
\begin{align*}
\delta (D / A_1 \cup C_2) & =  \delta (D \cup C_2 / A_1) - \delta (C_2 / A_1) =
\delta (D \cup C_2 / A_1)  \\ & =  \delta (D/A_1)
+ \delta (C_2 / A_1 \cup D) = \delta (D / A_1)
\end{align*}
The first three equalities follow easily from the definition of $\delta(X/Y)$
and the conditions of the lemma.    The last equality follows from: $0 =
\delta (C_2 / A_2) \le \delta (C_2 / A_1 \cup D) \le \delta (C_2 / A_1) = 0$.
 So $C_1$ is 0-primitive
over $A_1\cup C_2$.
\end{proof}

The next definitions and theorems provide the tools for the decompositions.
Roughly speaking, capital Roman letters (A, B, C) denote specific components
of the decomposition; Gothic letters $\Ascr$, $\Dscr$ range over initial
segments of the decomposition.  In particular, this means that each of
$\Ascr$, $\Dscr$ contains $I$ and is closed in $M$.

\begin{definition}\label{wpdef}\begin{enumerate}
\item We call a good pair  $A/B$  {\em well-placed} by $\Dscr$, if
    $B\subseteq \Dscr \leq M$ and $A$ is
    $0$-primitive over $\Dscr$, and
\item  $A/B$ is {\em well-realized} in $Y$ if  $\chi_Y(A/B) = \mu(A/B)$. If
    $Y=M$ we omit it and write simply $A/B$ is {\em well-realized}.
    \end{enumerate}
    \end{definition}
  % Part 1 of following Lemma is the translation to this notation of a result
%    proved for the Hrushovski case in
%\cite[2.25]{Verbovskiy}; for the Steiner case, it follows from
%\cite[5.9]{BaldwinPao} by similar arguments.

Lemma~\ref{getmax} is crucial for the general decomposition
Construction~\ref{decomp1}.

\begin{definition}\label{splitdef} Let $A$ be a subset of $\Dscr$. We say $A$
 {\em splits over} $\Dscr$ if both $ A \cap \Dscr$ and $A-\Dscr$ are non-empty.
\end{definition}

%\sidebar{jb sept 18:Should this be in section 2 or section 3? combines with
%2.14, 4.3.5. But essential for the decomposition so should be before 3.4
%decomp1:

\begin{lemma}\label{getmax} Let $M$ be the countable generic model for
$T_\mu$. Suppose $A/B$ is {\em well-placed} by $\Dscr$ and $\Dscr$ is
$G$-invariant.
%
%
%Suppose $A\leq M$ is setwise fixed by each $g \in G$.
\begin{enumerate}
\item Then $\chi_M(A/B) = \mu(A/B)$.

\item \label{autdist}For each $i < \mu(A/B)$ there is a partial isomorphism
    $h_i$ fixing $B$ pointwise with domain $B\cup A$ and satisfying either i) $h_i(A) \cap
    \Dscr = \emptyset$ or ii) $h_i(A) \subsetneq \Dscr$. Moreover, by
    Definition~\ref{Kmu}.\ref{Kmuitem} the $h_i(A)$ are disjoint over $B$.
    In case i) there is $g \in G $ that fixes $IB$
(and indeed $\Dscr$) pointwise and takes $A$ to $h_i(A)$. That is, $G$ acts
    transitively on
the copies of $A$ that are disjoint from $\Dscr$.
\end{enumerate}
\end{lemma}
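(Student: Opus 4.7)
The proof splits into two independent parts. For (1), the inequality $\chi_M(A/B) \le \mu(A/B)$ is immediate from $M \in \hat\bL_\mu$. For the reverse, let $k$ denote the number of disjoint copies of $A$ over $B$ already contained in $\Dscr$, and iterate canonical amalgamation (Axiom~\ref{yax}.6) to adjoin $\mu(A/B) - k$ further disjoint fresh copies of $A$ over $B$ to $\Dscr$ one at a time, obtaining a finite $E$ with $\Dscr \le E$ and $E \in \bL_\mu$: the only good pair whose multiplicity grows is $A/B$, and it is brought exactly to the bound $\mu(A/B)$ while freeness of canonical amalgamation keeps every other good-pair count unchanged. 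Conclusion~\ref{conclude} (amalgamation for the generic) then provides a strong embedding of $E$ into $M$ over $\Dscr$, supplying the missing copies and forcing $\chi_M(A/B) = \mu(A/B)$.

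For (2), I would first establish that each copy $h_i(A)$ either lies in $\Dscr$ or is disjoint from $\Dscr$. Suppose $A' := h_i(A)$ with $A'_0 := A' \cap \Dscr$ and $A'_1 := A' - \Dscr$ both nonempty. Submodularity (Axiom~\ref{yax}.3) gives $\delta(A'/\Dscr) \le \delta(A'/B) = 0$, while strongness $\Dscr \le M$ gives $\delta(A'_1/\Dscr) \ge 0$. Since $\Dscr \cup A' = \Dscr \cup A'_1$, both values equal $0$, and $A'_1$ corresponds via the over-$B$ isomorphism $\phi : A \to A'$ to a proper nonempty subset $\phi^{-1}(A'_1) \subsetneq A$ witnessing an intermediate strong step $\Dscr \le \Dscr \cup \phi^{-1}(A'_1) \le \Dscr \cup A$, contradicting the $0$-primitivity of $A / \Dscr$. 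Hence $A' \subseteq \Dscr$ or $A' \cap \Dscr = \emptyset$.

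With the trichotomy in hand, $G$-transitivity on the case i) copies follows from genericity. If $A', A''$ are two copies of $A$ over $B$ each disjoint from $\Dscr$, then $\Dscr \cup A'$ and $\Dscr \cup A''$ are finite strong substructures of $M$, each primitive over $\Dscr$, and isomorphic over $\Dscr$ via the composition of the two defining isomorphisms. The generic-model homogeneity packaged in Conclusion~\ref{conclude} extends this partial isomorphism to $g \in \aut(M)$ with $g|_{\Dscr} = \mathrm{id}_\Dscr$ and $g(A') = A''$; taking $A' = A$ gives the stated transitivity. Since $I \subseteq \Dscr$ is fixed pointwise, $g$ belongs to both $G_I$ and $G_{\{I\}}$, so $g \in G$ for either choice of stabilizer. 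The step I expect to be delicate is no-straddling: converting the $\delta$-vanishing on $A'_1 \subseteq M$ into an honest intermediate strong subset of the abstract $A$ via $\phi$ requires taking care, since $A'_1$ already carries the inherited relations from $A'$ and may interact with $\Dscr$ through points of $A'_0$; this is precisely where the hypothesis that $A$ is primitive over all of $\Dscr$, not just over $B$, earns its keep.
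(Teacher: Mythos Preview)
Your overall strategy matches the paper's: for (1), iterate canonical amalgamation and embed into the generic; for (2), establish no-splitting and then invoke homogeneity of the generic. Part (1) is fine. In part (2), the transitivity argument via $\Dscr \cup A' \cong_\Dscr \Dscr \cup A''$ and genericity is correct (and more explicit than the paper, which leaves the no-splitting claim as an assertion).

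There is, however, a genuine gap in your no-splitting argument, precisely at the point you flag as delicate. From $\delta(A'_1/\Dscr) = 0$ you attempt to pull back along $\phi$ to obtain $\Dscr \le \Dscr \cup \phi^{-1}(A'_1) \le \Dscr \cup A$. But $\phi$ is only an isomorphism over $B$, not over $\Dscr$: the relations of $A'_1$ with $\Dscr$ may run through $A'_0 \subseteq \Dscr$, whereas $\phi^{-1}(A'_1)$ has its corresponding relations with $\phi^{-1}(A'_0) \subseteq A$, not with $\Dscr$. So $\delta(A'_1/\Dscr) = 0$ gives no control over $\delta(\phi^{-1}(A'_1)/\Dscr)$, and in fact the latter is strictly positive since $A$ is $0$-primitive over $\Dscr$. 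The transfer simply fails.

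The repair is to avoid the pullback entirely by not weakening your submodularity bound. Applying submodularity to $X = B \cup A'$ and $Y = \Dscr$ gives
\[
\delta(A'_1/\Dscr) \;\le\; \delta(A'_1 / B \cup A'_0) \;=\; \delta(A'/B) - \delta(A'_0/B) \;=\; -\,\delta(A'_0/B).
\]
Since $B \cup A' \cong_B B \cup A$ and $A$ is $0$-primitive over $B$, the proper nonempty subset $A'_0$ satisfies $\delta(A'_0/B) > 0$. Hence $\delta(A'_1/\Dscr) < 0$, contradicting $\Dscr \le M$ directly. Your inequality $\delta(A'/\Dscr) \le \delta(A'/B) = 0$ threw away exactly the term $\delta(A'_0/B)$ needed to finish.
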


While the proof uses that $M$ is generic, the conclusion passes to any model
of $T_\mu$, because models are algebraically closed.

\begin{proof} 1) Part 1  is the translation to this notation of a result
    proved for the Hrushovski case in
\cite[2.25]{Verbovskiy}; for the Steiner case, it is \cite[5.14]{BaldwinPao}.
Since the article \cite{Verbovskiy}  is difficult to access, we repeat the proof here.

Let $\Dscr \le M$ and $A$ be a $0$-primitive extension of $\Dscr$ with the
base $B$. If $\chi_{\Dscr}(A/B) = \mu(A/B)$ we are done, so we assume that
$\chi_{\Dscr}(A/B) < \mu(A/B)$. Let $A_1$ be an isomorphic copy of $A$ over
$\Dscr$ and let $E$ be the canonical amalgam of $\Dscr \cup A$ and $\Dscr
\cup A_1$ over $\Dscr$. By \cite[Lemma 3]{Hrustrongmin}, $E \in \bL_{\mu}$
and there is an embedding $g: E\to M$ such that $g\myrestriction \Dscr\cup A
= id_{\Dscr\cup A}$ and $g(E)\le M$. Then $\chi_E(A/B)=\chi_{\Dscr}(A/B)+1$
and we proceed by induction.

2) By 1) there are partial isomorphisms $h_i$ for $i<\mu(A/B)$, fixing
         $B$,   but not
        necessarily $I$, giving structures  $A_{i} = h_i(A)$ isomorphic to
        $A$ over $B$. Note that  $A_{i}$ cannot split over $\Dscr$ and
				%since $I$ is independent $I \leq M$.
				$I \leq M$ (the last holds because $I$ is independent).
				Moreover,
        we have $B A \approx BA_i$, $\Dscr \leq M$, and there
        are no relations between $A$ and $\Dscr - B$.  So,   if
        $A_{i}\cap \Dscr = \emptyset$  there must be no relations between
        $A_{i}$ and $\Dscr- B$.  Else, $\delta(\Dscr A_{i}) < \delta(\Dscr)$.
As $\Dscr A \approx \Dscr A_{i}$,  there is an automorphism of $M$ taking $A
$ to $A_i$ and fixing $\Dscr$ and in particular $I$.
\end{proof}

The following definition and description of the decomposition of a $G$-normal
set is intended to be evident (modulo the references). The next diagram gives
an overall view; Example~\ref{examp1} gives a closer view.

 \begin{figure}[h]\label{example}
\begin{center}

\begin{picture}(400,150)
\thicklines
\put(22,80){\oval(40,20)}
\put(52,80){\oval(100,60)}
\put(82,80){\oval(160,100)}
\put(112,80){\oval(220,140)}
\put(12,76){$\Ascr^0$}
\put(18,56){$\Ascr^1$}
\put(24,36){$\Ascr^2$}
\put(30,16){$\Ascr^3$}
\thinlines
\multiput(222,65)(30,0){6}{\framebox(30,30)}
%\put(226, 53){$X_{k-1}$}
\put(256, 53){$X_{k}$}
%\put(290, 53){$\dots$}
%\put(316, 53){$X_{m-1}$}
\put(346, 53){$X_{m}$}
\put(72,80){\oval(20,30)}
\put(67,76){$B$}
\put(232, 128){$B$ is the base of both $X_k$ and $X_{m}$}
\put(232, 108){$X_{k}\cup B \cong_B X_{m}\cup B$}
\qbezier(36,89)(47,100)(52,110)
\qbezier(36,71)(47,60)(52,50)
\qbezier(41,84)(70,86)(99,100)
\qbezier(41,76)(70,74)(99,60)
\qbezier(88,109)(99,118)(104,130)
\qbezier(88,51)(99,42)(104,30)
\qbezier(102,90)(130,94)(161,115)
\qbezier(102,70)(130,66)(161,45)
\qbezier(102,80)(130,80)(161,80)
\qbezier(150,129)(161,138)(166,150)
\qbezier(150,31)(161,22)(166,10)
\qbezier(162,100)(190,104)(222,125)
\qbezier(162,60)(190,56)(222,35)
\linethickness{0.075mm}
\multiput(252,65)(3,0){10}{\line(0,1){30}}
\multiput(342,65)(3,0){10}{\line(0,1){30}}
\end{picture}

\bigskip
\begin{picture}(400,150)
\thicklines
\put(22,80){\oval(40,20)}
\put(52,80){\oval(100,60)}
\put(82,80){\oval(160,100)}
\put(112,80){\oval(220,140)}
\put(12,76){$\Ascr^0$}
\put(18,56){$\Ascr^1$}
\put(24,36){$\Ascr^2$}
\put(30,16){$\Ascr^3$}
\thinlines
\multiput(222,65)(30,0){4}{\framebox(30,30)}
%\put(226, 53){$X_{k-1}$}
%\put(260, 53){$\dots$}
%\put(286, 53){$X_{m-1}$}
\put(72,80){\oval(20,30)}
\put(67,76){$B$}
%\put(256, 128){$B$ is the base of both $X_k$ and $X_{m}$}
%\put(256, 108){$X_{k}\cup B \cong_B X_{m}\cup B$}
%
\qbezier(36,89)(47,100)(52,110)
\qbezier(36,71)(47,60)(52,50)
\qbezier(41,84)(70,86)(99,100)
\qbezier(41,76)(70,74)(99,60)
\qbezier(102,80)(130,80)(161,80)
\qbezier(68,110)(79,118)(84,130)
\qbezier(68,50)(79,42)(84,30)
\qbezier(96,105)(120,112)(140,130)
\qbezier(96,55)(120,48)(140,30)
\qbezier(101,96)(130,96)(161,96)
\qbezier(101,64)(130,64)(161,64)
\qbezier(150,129)(161,138)(166,150)
\qbezier(150,31)(161,22)(166,10)
\qbezier(162,100)(190,104)(222,125)
\qbezier(162,60)(190,56)(222,35)
%giving name for X
\put(110,86){$A^2_{j,1} = X_k$}
\put(110,70){$A^2_{j,2} = X_m$}
\linethickness{0.075mm}
\multiput(105,64)(3,0){19}{\line(0,1){32}}
\put(232, 148){Since $B\subseteq \Ascr^1$,}
\put(232, 128){$X_k$ and $X_{m}$ were placed into  $\Ascr^2$ and}
\put(232, 108){renamed: $A^2_{j,1} = X_{k}$, $A^2_{j,2} = X_{m}$}
\end{picture}
\caption{From a linear to a tree-decomposition: One Step}
\label{fig:Reinforcement}
\end{center}
\end{figure}

\begin{construction}\label{decomp1}
{\rm Let $\Ascr $ be $G$-normal.
We can linearly decompose $\Ascr$ as the union of $X_n$, $n \le r$, where
$X_0 = I$ and $X_{n+1}$ is $0$-primitive over $X_n$ and good over $Y_{n+1}
\subseteq X_n$ for $n <r$. This is a  cumulative decomposition: $X_n
\subseteq X_{n+1}$.

Since we aim to prove that $\dcl^*(I) \cap X_r = \emptyset$ ($\sdcl^*(I) \cap
X_r = \emptyset$)
 by
induction on $n$, it would be convenient to assume that $X_n$ is
$G$-invariant for each $n<r$. But it is not true. In order to reach an
induction on $G$-invariant  sets,  we create, by grouping  the images of
various partial isomorphisms of the $\hat X_{n+1} =(X_{n+1} - X_n)$ over
$Y_{n+1}$, $G$-invariant strata $\Ascr^{m+1}$ of components that are
independent over $\Ascr^m$. The new tree decomposition creates strata
$\langle \Ascr^m: m< m_0\rangle$; $m_0$ is called the {\em height} of the
decomposition.

We   define the new decomposition of $\Ascr$ into strata $\Ascr^m$  by
 inductively assigning to each $\hat X_{n+1}$
an integer $S(\hat X_{n+1},Y_{n+1})$, the {\em strata} of $\hat X_{n+1}$, the
least $m +1\leq n$ such that $Y_{n+1} \subseteq \Ascr^m$ and
renaming $\hat X_{n+1}= (X_{n+1} - X_n)$
 as an $A^{m+1}_{x,y}$ for an appropriate
$x,y$  (more detail below).  The $Y_{n+1}$ may be omitted when clear from context.
By fiat, $S(X_0, \emptyset) =0$.

\begin{enumerate}[i)]
\item $\Ascr^0$:
\begin{enumerate}
\item Let $D_0 = \Ascr  \cap \acl(\emptyset)$.
\item For $i=1, 2, \dots, v$, let $D_i = \Ascr\cap \acl(a_i)$ and let
    $\Ascr^0= D_1 \cup D_2\cup\dots\cup D_v$.

Note that $D_i \cap D_j = D_0$ for any $1\le i < j \le v$,  $\Ascr^0$ is
finite, $\Ascr^0\leq M$, and so $\delta(\Ascr^0) =v$.
Moreover, since $d(a_i) =1$ the $D_i$ are fully independent over $D_0$.
\end{enumerate}

As we continue the construction we will rearrange the components $\hat X_n$
into a quasi-order  by introducing sets $\Ascr^m$ such that each component
 in $\Ascr^m$ is based on a subset of $\Ascr^{m-1}$.
At the $n$th stage of construction, considering  $(\hat X_{n+1}, Y_ {n+1} )$,
  $\hat X_{n+1}$ is added to $\Ascr^{S(\hat X_{n+1}, Y_{n+1})}$ and given
an appropriate name as described below.	 Each $\Ascr^m$ is
 divided into $q_m$ subsets $\Ascr^m_i$, where $\Ascr^m_i$ consists of
 $\ell^m_i$, disjoint off $\Ascr^{m-1}$, sets $A^m_{i,f}$ which are
$0$-primitive over $\Ascr^{m-1}$ and
pairwise isomorphic over $\Ascr^{m-1}$, and each $A^m_{i,f}$ is based on the same set $B^m_{i} \subseteq \Ascr^{m-1}$. %A key point, implicit below, is that
%each $\Ascr^m_i$ is $G_I$-invariant.
We call the $A^m_{i,f}$ {\em petals}.
Lemma~\ref{getmax}.(\ref{autdist}) ensures that $G$ acts $1$-transitively on
$\{A^m_{i,f}: 1\le f\le \ell^m_i\}$. We describe further petals of
$A^{m}_{i,1}/B^m_i$ in the next few paragraphs.

We now give a precise definition of $\Ascr^{m+1}$.   We set $\Ascr^{-1} =
\emptyset$ to allow uniform treatment for all   $m\geq 0$. Note that new
petals  may be added to $\Ascr^{m+1}$ at later stages in the construction.

    \item $\Ascr^{m+1}$: Suppose $S(X_n) = m>1$. We consider
        the good pair $\hat X_{n+1}/Y_{n+1}$ with $Y_{n+1} \subseteq X_n$.
       If $Y_{n+1} = Y_{n'}$ for some $n'\le n$ with $S(X_{n'}) =m$ then
       $Y_{n+1}$ has already been denoted $B^{m+1}_t$ for some $t \leq j$.
        If $\hat X_{n+1} \approx_{Y_{n'}} X_{n'}$, set
        %$\hat X_{n} =
%        X_{n+1} - X_n$ and
        $\hat X_{n+1}$ as ${A}^{m+1}_{t,k}\in \Ascr^{m+1}_t$, where $k$ is
        the least index not previously used with $t$.

If $X_n \not\approx_{Y_{n'}} X_{n'}$ and $Y_{n+1} \neq Y_{n'}$ for any
$n'<n$ with $S(X_{n'}) =m$,
% Suppose
%         we have already constructed $\Ascr^{m+1}_t$ for $t \leq
%        j$ and that it is not the case that $(Y_n,\hat X_n)  \approx
%        (Y_{n'},\hat X_{n'})$ and $Y_n =
%         Y_{n'}$ for any $n'<n$ with $S(X_{n'}) =m$.
        %$B^{m+1}_{j+1,1} =
  set      $\hat  X_{n+1}$ as $A^{m+1}_{u,1}$ and set $Y_{n+1}$ as
        $B^{m+1}_u$ for the next available $u$. Then $Y_{n+1} \cap
        (\Ascr^{m} - \Ascr^{m-1}) \neq \emptyset$, $\Ascr^m \leq M$,
        $\Ascr^{m} \hat X_{n+1}\leq M$.
%Note that if $|Y_n| =1$, $Y_n \cup\hat X_n$ is embedded in $A^0$.
%The deeper analysis is when $|Y_n| \geq 2$;
%we now assume that case. %\sidebar{Be sure promise is redeemed.}
 It is possible that $(Y_{n+1},\hat X_{n+1})  \approx (Y_{n'},\hat X_{n'})$
for some smaller $n'$.

By Lemma~\ref{getmax}.\ref{autdist}, there are partial
         isomorphisms\footnote{It is essential here that each $(X_n/Y_n)$
         is well-placed (Definition~\ref{wpdef}).} $h_i$ for $i\leq\mu(\hat
         X_{n+1}/Y_{n+1})$
           that fix $Y_n$ and the
        %B^{m+1}_{j+1,i} =
 $h_i(\hat X_{n+1})$ are independent (and so disjoint) over $Y_n$. Note
that some of these $h_i$ may not extend to automorphisms of $M$ and if so
by Lemma~\ref{getmax}.2
they map $X_{n+1}$ into $\Ascr^m$. Suppose %by Lemma~\ref {autdist}
that
$\ell^{m+1}_{j+1}$ of these partial isomorphisms extend to
 automorphisms $h_i$ of $M$ that
  fix $I$ and so %$h_i(B^{m+1}_{j+1,1})
$h_i(\hat X_n) \cap \Ascr^{m} = \emptyset$ for $1
 \leq i \leq \ell^{m+1}_{j+1}$. %

We have relabeled the $h_i(\hat X_{n+1})$ as $A^{m+1}_{j+1,f}$, for $1\leq
 f \leq \ell^{m+1}_j$ and added them to $\Ascr^{m+1}_j$ forming
 $\Ascr^{m+1}_{j+1} = \bigcup_{1 \leq f \leq \ell^{m+1}_{j+1}}
 {A}^{m+1}_{j+1,f}$, which is thus $G$-invariant. Since $\Ascr$ is
 $G$-normal, each of the  $h_i(\hat X_{n+1})$ is an $\hat X_{n'}$ for some
 $n'\geq {n+1}$.

 Let $\mu^{m+1}_{j+1}$ denote $\mu(\hat X_{n+1}/Y_{n+1}) =
  \mu(A^{m+1}_{j+1}/B^{m+1}_{j+1})$.  The other $\mu^{m+1}_{j+1}-
        \ell^{m+1}_{j+1}$ images are subsets of $\Ascr^m$ and are labeled
        as $C^{m+1}_{j+1,k}$ for $ 1 \leq k \leq  \nu^{m+1}_{j+1} =
       \mu^{m+1}_{j+1} - \ell^m_{j+1}$.
\end{enumerate}

Each of the $A^{m+1}_{i,f}$ for $1\le f \le \ell^{m+1}_{j+1}$ and the
$C^{m+1}_{j,k}$ for $ 1 \leq k \leq  \nu^{m+1}_{j+1}$ is a petal.

  Note that  $\hat X_{n+1} = X_{n+1} - X_n$ is based on $Y_{n+1}$, which we
have designated\footnote{We use a single subscript because, while we are
considering several copies of the $\hat X_n$, there is a fixed base.} as
$B^{m+1}_{j+1}\subseteq \Ascr^m$; by the minimality of $m$, $B^{m+1}_{j+1}$
intersects $\Ascr^m -\Ascr^{m-1}$ non-trivially.  Thus as we construct
$\Ascr^{m+1}_j$, we are moving $\ell^{m+1}_{j+1}$ components down so they are
directly above their base. This is possible by Lemma~\ref{13.4}.
We sometimes call the $A^{m+1}_{j,i}$ which have the same base
$B^{m+1}_{j}$ a {\em cluster} $\Ascr^{m+1}_j$.

%\sidebar{Important change. I swapped letters $C$ and $B$. Before we used
%$C^m_i$ for the base of $A^m_{i,j}$. But for a reader there are too many notations.
%I think the letter $B$ is a more convenient notation for a base, because $B$ase starts with $B$. It will be easier to recall all notations to a reader.}

At the conclusion of the construction  for each $m<m_0$, for some $t_{m+1}<
r$, there will be $t_{m+1}$ ($t_{m+1} = \Sigma_{i<q_m} \ell^{m+1}_i$)
distinct $\hat X_{n+1}$, labeled as $A^{m+1}_{j,f}$ with $S(X_n) = m$; the
$A^{m+1}_{j,f}$ are independent over $\Ascr^m$.  Then $\Ascr^{m+1} = \Ascr^m
\cup \bigcup_{j<q_m} \Ascr^{m+1}_j$ and the union is a partition of
$\Ascr^{m+1}-\Ascr^{m}$. While $\bigcup_{i\le r} X_i$ is a chain, the
$\Ascr^m$ form a tree with the petals $A^{m+1}_{j,f}$ partitioning each
level. More locally $B^{m+1}_j \cup\bigcup_{f=1}^{\ell^{m+1}_j}
A^{m+1}_{j,f}$ looks like a flower with the base $B^{m+1}_j$ and two
collections of petals. $\Ascr^{m+1}-\Ascr^m$ is a collection of petals
$\bigcup_{ 1\le j\le q_m, 1\le f \le \ell^{m+1}_j} A^{m+1}_{j,f}$ on the stem
$\Ascr^m$. But for each $j$, the flower over $B_j$ also contains the
$C^{m+1}_{j,k} \subseteq \Ascr^m$ for $k$ and $j$ with $1\le k \le \nu^{m+1}_j$ and $1\le j\le q_m$.
Further, $\Ascr = \bigcup_{m \leq m_0} \Ascr^m$.}
\end{construction}

Note that any two petals on the same strata, say on $\Ascr^{m+1}$, are
$\delta$-independent over $\Ascr^m$ and in the case of Hrushovski's
construction are fully independent. For Steiner systems we obtain that if
these petals do not belong to the same linear cluster
(Definition~\ref{decomp2}) then they are fully independent.

\begin{remark}\label{lintree} Note that a $G$-decompostion depends on, and is determined by,
 the
original linear decomposition.
\end{remark}

The following observation is key to the proof of the ensuing Lemma~\ref{cl0}
and Lemma~\ref{small-G}.

\begin{observation}\label{obsa} \cite[Note 2.8]{Verbovskiy} {\rm We say that a
   $3$-hypergraph $A$ is {\em disconnected over $B$} if
  there is a partition of $A$  into $A_1 \cup A_2$ such that for every $a\in A_1$
  and $b\in A_2$ there is no $d \in A \cup B$ such that $R(a,b,d)$.  It is easy
  to see that if $A$ is $0$-primitive over $B$, then $A$ is connected over $B$.
  As, $\delta(A_1/A_2B) = 0$, if $A$ is disconnected over $B$; but then $A_2$ is
  $0$-primitive over $B$, contrary to the minimality of $A$.}
\end{observation}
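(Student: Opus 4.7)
The plan is to argue by contradiction, assuming $A$ is $0$-primitive over $B$ but disconnects as $A_1\cup A_2$ with both parts nonempty, and then exhibit $B\cup A_2$ as a forbidden intermediate strong extension. I will use the two key facts bundled into $0$-primitivity: $\delta(A/B)=0$, and $B\le B\cup A$ (which gives $\delta(X/B)\ge 0$ for every $X\subseteq A$).

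First I would translate the disconnection hypothesis into an additivity statement for $\delta$. Since no triple in $R$ meets both $A_1$ and $A_2$, for any $A_1'\subseteq A_1$ the relation set $r(A_1', A_1'\cup A_2\cup B, A_2)$ is empty, so unpacking $\delta(X)=|X|-r(X)$ gives $\delta(A_1'/B\cup A_2)=\delta(A_1'/B)$. Applied with $A_1'=A_1$ this yields
\[
\delta(A/B)=\delta(A_2/B)+\delta(A_1/B\cup A_2)=\delta(A_2/B)+\delta(A_1/B).
\]
Combining this additivity with $\delta(A/B)=0$ and the nonnegativity $\delta(A_1/B),\delta(A_2/B)\ge 0$ (inherited from $B\le B\cup A$), I conclude $\delta(A_1/B)=\delta(A_2/B)=0$.

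Second, I verify that $B\le B\cup A_2\le B\cup A$. The left inequality is inherited from $B\le B\cup A$. For the right inequality, I need $\delta(A_1'/B\cup A_2)\ge 0$ for every $A_1'\subseteq A_1$; but by the disconnection identity this reduces to $\delta(A_1'/B)\ge 0$, which again follows from $B\le B\cup A$. Since $\emptyset\subsetneq A_2\subsetneq A$, this contradicts the minimality clause in the definition of $0$-primitivity, completing the proof.

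There is no real obstacle here; the only step requiring care is the additivity identity $\delta(A_1'/B\cup A_2)=\delta(A_1'/B)$, which is immediate from the combinatorial definition of $\delta$ as element count minus relation count once one observes that disconnection prevents any new $R$-triple from being introduced when $A_2$ is adjoined to the base. (For the Steiner-system variant of $\delta$ one replaces the relation count by the line deficiency $\sum_{\ell\in L(\cdot)}(|\ell|-2)$; the same additivity holds since disconnection forbids any line from meeting both $A_1$ and $A_2$.)
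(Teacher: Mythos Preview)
Your proof is correct and follows essentially the same approach as the paper's: use the disconnection to obtain $\delta(A_1/B\cup A_2)=\delta(A_1/B)$, deduce $\delta(A_2/B)=0$, and contradict the minimality clause of $0$-primitivity. You have spelled out more carefully than the paper the verification that $B\le B\cup A_2\le B\cup A$ (the paper simply asserts ``$A_2$ is $0$-primitive over $B$,'' which is slightly loose since $A_2$ need not itself be minimal, though the intended contradiction is the same).
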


Here are the basic properties of $\Ascr^1$ showing $\dcl^*(I) \cap \Ascr^1 =
\emptyset$; the situation is simpler than the $m>1$ case as there are no maps
of $\hat X_n$ into $\Ascr^m$ over $Y_n$ when $m=0$.

\begin{lemma}[$\Ascr^1$]\label{cl0}
Let $\Ascr$ be $G$-normal and decomposed as  $\langle \Ascr^n:n <m_0\rangle$.
Then for any $i$ and $f < \ell^1_i$, $A^1_{i,f}$, and $B^1_i\subseteq
\Ascr^0$ the following hold: $d(B^1_i)  \ge  2$,
$\Ascr^0  \leq M$, $\ell^1_i = \mu^1_i = \mu(B^1_i,A^1_{i,1})$, and
%each $A^1_{i,f} \cap \Ascr^0 = \emptyset$.
by $G$-decomposition, there is no copy of $A^1_{i,f}$ over $B^1_i$
 in $\Ascr^0$. So, no $A^1_{i,f}$ is invariant under $G$.
\end{lemma}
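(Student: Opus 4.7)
I would organize the proof around the five assertions, leveraging what has been established in Construction~\ref{decomp1} and Lemma~\ref{getmax}, and handling them in an order that minimizes new work for the later parts.

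First the easy parts. The inclusion $\Ascr^0 \leq M$ is a direct consequence of iterated Canonical Amalgamation (Axiom~\ref{yax}.6): each $D_j=\Ascr\cap \acl(a_j)$ is strong in $M$ (being intrinsically closed over $\{a_j\}\leq M$), and since the $D_j$'s are fully independent over $D_0$ the free amalgam $D_1\oplus_{D_0}\cdots \oplus_{D_0} D_v$ is strong in $M$. The disjointness $A^1_{i,f}\cap \Ascr^0=\emptyset$ is built into Construction~\ref{decomp1}: by definition the $A^1_{i,f}$ are precisely those images $h_i(\hat X_n)$ of the extendable partial isomorphisms in Lemma~\ref{getmax}.\ref{autdist}, which fall in alternative (i) of that lemma and are therefore disjoint from $\Dscr=\Ascr^0$. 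Next, $d(B^1_i)\geq 2$: if $d(B^1_i)\le 1$ then $B^1_i\subseteq \acl(a_j)$ for some single $j$ (or $B^1_i\subseteq \acl(\emptyset)$), whence by primitivity $A^1_{i,f}\subseteq \acl(B^1_i)\cap \Ascr\subseteq D_j\subseteq \Ascr^0$, contradicting the disjointness already proved.

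The substantive step is $\ell^1_i=\mu^1_i$, equivalently $\nu^1_i=0$: no copy of $A^1_{i,1}/B^1_i$ sits inside $\Ascr^0$. Suppose $C\subseteq \Ascr^0$ is such a copy. Because the $D_j$'s are fully independent over $D_0$ (Axiom~\ref{yax}.4), every $R$-relation in $\Ascr^0$ lies within some single $D_j$; this yields the amalgam identity
\[
\delta(C/B^1_i)=\sum_{j=1}^{v}\delta\bigl(C\cap D_j\bigm/ B^1_i\cap D_j\bigr)-(v-1)\,\delta\bigl(C\cap D_0\bigm/ B^1_i\cap D_0\bigr)=0.
\]
Primitivity supplies the strict lower bound $\delta(C'/B^1_i)\ge 1$ for every proper nonempty $C'\subsetneq C$. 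I would then argue by cases. (a) If $C\subseteq D_j$ for some $j\geq 1$, the amalgam identity degenerates to $\delta(C/B^1_i)=\delta(C/B^1_i\cap D_j)$, so $B^1_i\cap D_j\subsetneq B^1_i$ (since $d(B^1_i)\geq 2$ forces $B^1_i\not\subseteq D_j$) is a smaller base for the good pair, contradicting the minimality of $B^1_i$. (b) If $C\subseteq D_0\subseteq \acl(\emptyset)$: since $A^1_{i,1}\cap \Ascr^0=\emptyset$ we have $A^1_{i,1}\cap \acl(\emptyset)=\emptyset$; by homogeneity any automorphism of $M$ sending $A^1_{i,1}$ to $C$ over $B^1_i$ preserves $\acl(\emptyset)$, a contradiction. (c) If $C$ has elements in at least two distinct $D_j-D_0$'s, the partition argument of Observation~\ref{obsa} combined with the single-$D_j$ localization of $R$-triples forces $C\cap D_0\neq \emptyset$ (any bridge in a connectedness witness must be a $D_0$-element of $C$), and the primitivity bound applied to each $C\cap D_j\subsetneq C$ gives at least two summands $\geq 1$ on the left of the amalgam identity; a submodularity/flatness bookkeeping of the right-hand side then yields the contradiction.

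Finally, no $A^1_{i,f}$ is $G$-invariant: the adequacy condition of Definition~\ref{Kmu}.\ref{itemKmu} gives $\mu^1_i\geq \delta(B^1_i)\geq d(B^1_i)\geq 2$, so by the main step $\ell^1_i=\mu^1_i\geq 2$; and Lemma~\ref{getmax}.\ref{autdist} asserts that $G$ acts transitively on $\{A^1_{i,f}:f<\ell^1_i\}$, so no single petal is fixed. The main obstacle is case (c) of the main step: one must orchestrate primitivity (which supplies lower bounds on $\delta(C'/B^1_i)$), the amalgam $\delta$-identity coming from the free amalgamation of $\Ascr^0$, and the connectedness consequences of Observation~\ref{obsa} to rule out the possibility of a copy straddling several $D_j$'s.
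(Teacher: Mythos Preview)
Your handling of the easy claims ($\Ascr^0\leq M$, $A^1_{i,f}\cap\Ascr^0=\emptyset$, $d(B^1_i)\geq 2$, and the final non-invariance) is correct and matches the paper. The problem is the main step, where your three-case split is both more complicated than necessary and not actually carried out.

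First, case~(b) is redundant and your argument for it is unjustified. Since $D_0\subseteq D_j$ for every $j$, the hypothesis $C\subseteq D_0$ is already an instance of case~(a); no separate argument is needed. Your automorphism argument for~(b) is also unsound as written: the partial isomorphisms $h_i$ of Lemma~\ref{getmax}.\ref{autdist} are \emph{not} asserted to extend to automorphisms of $M$; indeed the distinction between the $A$-petals and the $C$-petals in Construction~\ref{decomp1} is precisely that the latter come from $h_i$'s that do not extend to elements of $G$. You cannot invoke homogeneity without first checking that $B^1_i\cup A^1_{i,1}\leq M$, which you have not done.

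Second, case~(c) is the heart of the matter and you have left it as ``submodularity/flatness bookkeeping'', which is not a proof. Flatness is not needed here at all. The paper's argument is much shorter and avoids your amalgam identity: from $d(B^1_i)\geq 2$ one sees that $B^1_i$ meets $D_k-D_0$ and $D_j-D_0$ for some $k\neq j$. The singleton case $|C|=1$ is dispatched directly, since then $B^1_i=\{b_1,b_2\}$ with $R(b_1,b_2,c)$ and $b_1\in D_k-D_0$, $b_2\in D_j-D_0$, violating full independence of the $D_i$ over $D_0$. For $|C|\geq 2$, Lemma~\ref{primchar1} says each $b\in B^1_i$ lies in a triple $R(c_1,c_2,b)$ with $c_1,c_2\in C$; applied to $b\in B^1_i\cap(D_k-D_0)$ and to $b\in B^1_i\cap(D_j-D_0)$, together with full independence, this forces $C$ to meet both $D_k-D_0$ and $D_j-D_0$. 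But then $C$ is disconnected over $B^1_i$ in the sense of Observation~\ref{obsa}, contradicting $0$-primitivity. This replaces your entire case analysis with two lines, and in particular supplies the missing content of your case~(c).
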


\begin{proof}
Note  that for each $i$, $d(B^1_i) \ge 2$; otherwise there exists $b \in B^1_i$
such that $d(b) = d(B^1_i)=1$. Since $b\in \Ascr^0$,  $b \in \acl(a_k)$ for some
$k \in \{1, 2, \dots, v\}$; this implies that $B^1_i \subseteq \acl(a_k)$ and thus
$A^1_{i,j}\subseteq \acl(a_k)$; the last assertion contradicts $A^1_{i,j}
\cap \Ascr^0 = \emptyset $. As noted in Construction~\ref{decomp1}.i), $I\subset
\Ascr^0$ and $\delta(\Ascr^0) = v$ so $d(\Ascr^0) =v$  and $\Ascr^0 \leq M$.

We show there cannot be a copy, $C^1 = C^1_{i,x}$ for some $i,x$, of
$A^1_{i,f}$ with
 base $B^1_i$
 % of $A^1_{i,1} = \hat X_1$
 embedded in $\Ascr^0$. Since $d(B^1_i) \ge 2$, $B^1_i$ intersects at
least two $D_k-D_0$ and $D_j-D_0$ for some $k\ne j$. Note that $C^1 \subseteq
\Ascr^0$ is not a singleton $c$, because otherwise $M\models R(c, d_k, d_j)$
for some $d_k\in B^1_i\cap D_k-D_0$ and $d_j\in B^1_i\cap D_j-D_0$,
contradicting full independence of the $D_i$'s over $D_0$. By
Lemma~\ref{primchar1} $C^1$ should intersect both $D_k-D_0$ and $D_j-D_0$. If
not, there would be an $R(c_1,c_2,d)$ with $c_1,c_2 \in D_k$ and $d\in D_j
\cap B^1_i $; this can't happen as the $D_i$ for $i>0$ are fully independent
over $D_0$ (Construction~\ref{decomp1} $\Ascr^0$.b). But then $C^1$ is
disconnected, contrary to Observation~\ref{obsa}. Thus $\ell^1_i = \mu^1_i
\geq \delta(B^1_i) \geq d(B^1_i) \ge 2$.
\end{proof}

We pause to note a distinction between the flat geometries and the locally
modular ones.  \cite{Baldwinautcat} showed that the finite dimensional models
of an $\aleph_1$-categorical theory had either countably many or
$2^{\aleph_0}$ automorphisms, with vector space-like strongly minimal sets on
the first side and algebraically closed fields on the other. We now note:

\begin{corollary}\label{manyaut} If $T_\mu$ is constructed by a Hrushovski
construction (including Steiner systems)  with a flat geometry, each finite
dimensional model $M_n$ has $2^{\aleph_0}$ automorphisms.
\end{corollary}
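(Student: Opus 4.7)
The plan is to construct $2^{\aleph_0}$ distinct automorphisms of $M_n$ (for $n \geq 2$) by combining independently chosen swaps of petals produced by a $G$-decomposition. Fix a basis $I = \{a_1, \ldots, a_n\}$ for $M_n$ and set $G = G_I$. Since $M_n = \acl(I)$ is countable (algebraic over a finite set in a countable language), the upper bound $|\aut(M_n)| \leq 2^{\aleph_0}$ is immediate, so the work is to produce the lower bound.

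For the lower bound, express $M_n$ as the increasing union $\bigcup_k \Ascr_k$ of finite $G$-normal sets (Lemma~\ref{Gnorex}) and apply Construction~\ref{decomp1} to obtain tree strata $\Ascr^0 \subseteq \Ascr^1 \subseteq \cdots$ with petals $A^{m+1}_{i,f}$. Whenever the base $B^{m+1}_i$ of a cluster satisfies $d(B^{m+1}_i) \geq 2$, the adequacy condition $\mu(A^{m+1}_{i,1}/B^{m+1}_i) \geq \delta(B^{m+1}_i) \geq d(B^{m+1}_i) \geq 2$ together with the Lemma~\ref{cl0}-style reasoning forces $\ell^{m+1}_i \geq 2$: there exist at least two distinct petals $A^{m+1}_{i,1}, A^{m+1}_{i,2}$ in this cluster, both disjoint from $\Ascr^m$. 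Lemma~\ref{getmax}.(\ref{autdist}) then provides an automorphism $\sigma_{m,i} \in G_I$ fixing $\Ascr^m$ pointwise and exchanging $A^{m+1}_{i,1}$ with $A^{m+1}_{i,2}$ (obtained by composing the two individual transitivity automorphisms if necessary).

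Because the geometry is flat but not disintegrated, $M_n$ strictly exceeds $\bigcup_{i=1}^n \acl(a_i) = \Ascr^0$, and in fact the $G$-decomposition of $M_n$ must include infinitely many clusters $(m,i)$ of the swappable kind above. For each subset $S$ of this countable index set, compose the corresponding $\sigma_{m,i}$'s for $(m,i) \in S$ and extend by a back-and-forth argument, using amalgamation from Conclusion~\ref{conclude} and $d$-closedness of $M_n$, to a total automorphism $f_S \in \aut(M_n)$. Distinct $S \neq S'$ yield distinct $f_S \neq f_{S'}$: pick $(m,i) \in S \triangle S'$; then $f_S$ and $f_{S'}$ disagree on the petal $A^{m+1}_{i,1}$, since swaps at clusters $(m',i') \neq (m,i)$ act trivially on $A^{m+1}_{i,1}$. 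This produces the required $2^{\aleph_0}$ automorphisms.

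The main obstacle is ensuring that infinitely many swaps compose coherently. A swap at stratum $m+1$ forces a compatible permutation on strictly higher strata (specifically, on petals built atop the swapped pair), and one must show such induced permutations can be chosen independently in disjoint subtrees of the decomposition. The tree structure in Construction~\ref{decomp1} is tailored precisely for this: distinct clusters at a given stratum are $\delta$-independent over the stem $\Ascr^m$, so induced permutations act on disjoint subtrees and combine freely. In the Steiner case, $\delta$-independence does not automatically yield full independence between petals sharing a linear cluster, but those petals form an indivisible block that can be permuted uniformly, keeping the swap-composition well-defined.
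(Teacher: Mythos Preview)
Your approach is overcomplicated and two of its key steps fail above stratum~1. The claim that $\ell^{m+1}_i \geq 2$ whenever $d(B^{m+1}_i) \geq 2$ is only established for $m=0$ (Lemma~\ref{cl0}); at higher strata, copies of $A^{m+1}_{i,1}$ may embed in $\Ascr^m$, forcing $\ell^{m+1}_i = 1$ --- this is precisely the difficulty the rest of the paper wrestles with. More seriously, your distinguishing argument breaks: a swap $\sigma_{m',i'}$ with $m' < m$ need not act trivially on $A^{m+1}_{i,1}$, since if $B^{m+1}_i$ meets a petal $A^{m'+1}_{i',f}$ of the lower cluster then $\sigma_{m',i'}$ moves $B^{m+1}_i$ and hence permutes the petals $A^{m+1}_{i,f}$. (The phrase ``compose the $\sigma_{m,i}$ for $(m,i)\in S$'' is also undefined for infinite $S$; one must build a single partial map and extend, not compose global automorphisms.)

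The paper avoids all of this by staying at level~1. Take the basis $X$ (with $|X|\geq 2$); Remark~\ref{narycomgeom} supplies infinitely many pairwise non-isomorphic good pairs $(A_k/X)$, each with $\mu(A_k/X)\geq \delta(X)\geq 2$ realizations, all freely joined over $X$. For each $\epsilon\in 2^\omega$, independently swap or fix the first two realizations of each $A_k$ according to $\epsilon(k)$; this defines a coherent partial isomorphism on an increasing union of strong substructures, which extends to an automorphism of $M_n$. Your argument collapses to exactly this once you restrict to clusters at level~1, where the independence you need actually holds.
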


\begin{proof} Suppose $M$ is prime over the algebraically independent set $X$
with $n$ elements. There are countably many distinct good pairs $(A_n/X)$
(Remark~\ref{narycomgeom}); each has multiplicity at least $2$, and we can
define automorphisms of $M$ that fix or permute the realizations $A_n$ at
will to give $2^{\aleph_0}$ automorphisms.
\end{proof}

The following example shows the situation gets much more complicated with the
second strata.

\begin{example}\label{examp1} {\rm
This example  illustrates i) the shift from a chain to a strata
decomposition, ii) $\acl^*(\Ascr^0)$ may properly extend $\dcl(\Ascr^0)$ and  iii)
that some $A^2_{1,i}$
 may intersect $\Ascr^0$.
Let $M$ be any model of $\hat T_\mu$ with  $\mu(\boldsymbol{\alpha}) = 2$.
Suppose $I = \{a_1,a_2\}$ and let $R$ hold of the triples
$a_1a_2b_1,a_1a_2b_2,c_1c_2b_1,c_1c_2b_2$ and the entire six point diagram be
strong in $M$.

\begin{center}
\begin{minipage}[h]{0.45\linewidth}
\begin{center}
\begin{picture}(200,120)
\put(8, 49){$a_1$} \put(8, 69){$a_2$} \put(22, 50){\circle*{3}} \put(22,
70){\circle*{3}} \put(6, 15){\dashbox{1}(30,90)[c]{~}} \put(8, 0){$X_0$}
\put(22, 50){\line(4,-1){80}} \put(22, 50){\line(1,1){40}} \put(22,
70){\line(2,1){40}} \put(22, 70){\line(2,-1){80}} \put(22,
50){\line(0,1){20}}
\put(88, 26){$b_1$} \put(48, 92){$b_2$} \put(102, 30){\circle*{3}} \put(62,
90){\circle*{3}} \put(84, 15){\dashbox{1}(40,90)[c]{~}} \put(40,
15){\dashbox{1}(40,90)[c]{~}} \put(42, 0){$X_1$} \put(84, 0){$X_2$}
\put(142, 50){\line(-2,-1){40}} \put(142, 50){\line(-2,1){80}} \put(142,
70){\line(-4,1){80}} \put(142, 70){\line(-1,-1){40}} \put(142,
50){\line(0,1){20}}
\put(146, 49){$c_1$} \put(146, 69){$c_2$} \put(142, 50){\circle*{3}}
\put(142, 70){\circle*{3}} \put(128, 15){\dashbox{1}(30,90)[c]{~}} \put(130,
0){$X_3$}
%\put(140, 19){$X_3$}
\end{picture}

%\caption{Chain}\label{Ex0}
\end{center}

\captionof{figure}{Chain}\label{Ex0} %\label{Example0}
\end{minipage}
\hfill
\begin{minipage}[h]{0.45\linewidth}
\begin{center}
\begin{picture}(120,120)
\put(8, 49){$a_1$} \put(8, 69){$a_2$} \put(22, 50){\circle*{3}} \put(22,
70){\circle*{3}} \put(6, 15){\dashbox{1}(30,90)[c]{~}} \put(8, 0){$\Ascr^0$}
\put(22, 50){\line(2,-1){40}} \put(22, 50){\line(1,1){40}} \put(22,
70){\line(2,1){40}} \put(22, 70){\line(1,-1){40}} \put(22,
50){\line(0,1){20}}
\put(48, 26){$b_1$} \put(48, 92){$b_2$} \put(62, 30){\circle*{3}} \put(62,
90){\circle*{3}} \put(40, 15){\dashbox{1}(44,44)[c]{~}} \put(40,
61){\dashbox{1}(44,44)[c]{~}} \put(42, 0){$\Ascr^1$} \put(66, 19){$A^1_{1,1}$}
\put(66, 95){$A^1_{1,2}$}
\put(102, 50){\line(-2,-1){40}} \put(102, 50){\line(-1,1){40}} \put(102,
70){\line(-2,1){40}} \put(102, 70){\line(-1,-1){40}} \put(102,
50){\line(0,1){20}}
\put(106, 49){$c_1$} \put(106, 69){$c_2$} \put(102, 50){\circle*{3}}
\put(102, 70){\circle*{3}} \put(88, 15){\dashbox{1}(30,90)[c]{~}} \put(90,
0){$\Ascr^2$} \put(100, 19){$A^2_{1,1}$}
\end{picture}
\end{center}
\captionof{figure}{Decomposition}\label{Ex1} %\label{Example1}
\end{minipage}
\end{center}

Figure~\ref{Ex0} shows a chain decomposition; Figure~\ref{Ex1} illustrates
the downward embedding in a  strata decomposition (as both $A^1_{1,1}$ and
$A^1_{1,2}$ are based in $\Ascr^0$).

Further $\sdcl^*(\Ascr^0) \cap \Ascr^2 = \dcl^*(\Ascr^0)\cap \Ascr^2  =
\emptyset$. Theorem~\ref{no-sym-fun} implies a stronger result that
$\sdcl^*(\Ascr^0) = \emptyset$.
However,
we will show in Section~\ref{ce} that for an independent pair $I$, there may be elements in
$\dcl^*(I) - \sdcl^*(I)$. $X_1$ and $X_2$ demonstrate that there may be
components $X_n$ and $X_{n'}$, both in strata $m$, such that $(Y_n,\hat X_n)
\approx_I (Y_{n'},\hat X_{n'})$; we provide the tool to study this situation
in Definition~\ref{defJ}.

Finally, in Figure~\ref{Ex2} $A^2_{1,1} \cap \Ascr^1 =\emptyset$ while
$C^2_{1,1}= \Ascr^0 = B^1_1$ although both are based on and isomorphic over $B^2_1$,
as it shown in Figure~\ref{Ex2}.

\begin{center}
\begin{picture}(120,140)
\put(8, 49){$a_1$} \put(8, 69){$a_2$} \put(22, 50){\circle*{3}} \put(22,
70){\circle*{3}} \put(6, 15){\dashbox{1}(30,90)[c]{~}} \put(8, 0){$\Ascr^0$}
\put(18, 10){\dashbox{1}(8,100)[c]{~}} \put(18, 113){$B^1_1$}
\put(22, 50){\line(2,-1){40}} \put(22, 50){\line(1,1){40}} \put(22,
70){\line(2,1){40}} \put(22, 70){\line(1,-1){40}} \put(22,
50){\line(0,1){20}}
\put(48, 26){$b_1$} \put(48, 92){$b_2$} \put(62, 30){\circle*{3}} \put(62,
90){\circle*{3}} \put(40, 15){\dashbox{1}(44,44)[c]{~}} \put(40,
61){\dashbox{1}(44,44)[c]{~}} \put(42, 0){$\Ascr^1$} \put(66, 19){$A^1_{1,1}$}
\put(66, 95){$A^1_{1,2}$}
\put(58, 10){\dashbox{1}(8,100)[c]{~}} \put(58, 113){$B^2_1$}
\put(102, 50){\line(-2,-1){40}} \put(102, 50){\line(-1,1){40}} \put(102,
70){\line(-2,1){40}} \put(102, 70){\line(-1,-1){40}} \put(102,
50){\line(0,1){20}}
\put(106, 49){$c_1$} \put(106, 69){$c_2$} \put(102, 50){\circle*{3}}
\put(102, 70){\circle*{3}} \put(88, 15){\dashbox{1}(30,90)[c]{~}} \put(90,
0){$\Ascr^2$} \put(100, 19){$A^2_{1,1}$}
\end{picture}
\captionof{figure}{$A^2_{2,2} = \Ascr^0$}\label{Ex2} %\label{Example1}
\end{center}
}

{\rm Suppose further that $\mu(A^2_{1,1}/B^2_{1,1})= 2$. Then this is a
$G$-decomposition of $\Ascr^0 \cup \{c_2\}$ for either $G$. This shows that
(in the presence of certain good pairs with $\mu( A/B) =2$)  we cannot avoid
$G$-invariant petals.}
\end{example}

\begin{definition}[$J^{m+1}_j$]\label{defJ}
%Let $G \in \{ G_I, G_{\{I\}}\}$, and  $U$ be a finite subset of $\acl(I)$.
%Let $\Ascr = \Ascr^G_U = \icl(I\cup G(U))$ and
 Let $\Ascr$ be $G$-normal and decomposed by
 $\langle \Ascr^n: n\leq m_0\rangle$.
We let $J^{m+1}_{G,j}$ consist of all indices $j'$
that $g(B^{m+1}_j) = B^{m+1}_{j'}$ for some $g\in G$. Thus we have an
equivalence relation on the $j$'s with $1\le j\le q_m$ enumerating the bases $B^{m+1}_j$; $j
\sim j'$ if $B^{m+1}_{j'} = g(B^{m+1}_j)$ for some $g\in G$.

 If $G$ is fixed we omit it in $J^{m+1}_{G,j}$ and write simply $J^{m+1}_j$.
 Note that $j \sim j'$ implies
 $J^{m+1}_{j'} =J^{m+1}_j$, $A^{m+1}_{j}\approx A^{m+1}_{j'}$, and $\mu^{m+1}_j =
\mu^{m+1}_{j'}$.
  \end{definition}

Immediately,

\begin{observation}\label{inv}
Let everything be as in Definition~\ref{defJ}. Then $B^{m+1}_j$ is
$G$-invariant if and only if $|J^{m+1}_{G,j}| =1$.

Thus, $\Ascr^m$ will consist of $\Sigma_{j } |J^m_{G,j}|\cdot \ell^m_j$
petals.
\end{observation}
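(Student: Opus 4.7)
The plan is to prove the equivalence by unwinding Definition~\ref{defJ} in both directions, using only the fact that $G$ acts on the $G$-normal set $\Ascr$ and that each stratum constructed inductively is $G$-invariant.

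First I would prove the forward direction: if $B^{m+1}_j$ is $G$-invariant, then by Definition~\ref{Ginv} we have $g(B^{m+1}_j) = B^{m+1}_j$ for every $g \in G$. By Definition~\ref{defJ}, the only index $j'$ with $g(B^{m+1}_j) = B^{m+1}_{j'}$ for some $g\in G$ is $j$ itself, so $|J^{m+1}_{G,j}| = 1$. This direction is purely set-theoretic.

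For the converse I need the auxiliary fact that every $g\in G$ induces a permutation of the collection $\{B^{m+1}_{j'} : j' < q_m\}$ of bases. This follows because (i) $\Ascr$ is $G$-normal so each $g$ preserves $\Ascr$ setwise; (ii) the strata $\Ascr^n$ are themselves $G$-invariant by Construction~\ref{decomp1} (each $\Ascr^0$ is $G$-invariant by construction and each cluster $\Ascr^{m+1}_j$ is explicitly $G$-invariant by Lemma~\ref{getmax}.\ref{autdist}); and (iii) being a $0$-primitive petal over $\Ascr^m$ is an automorphism-invariant property, so $g$ maps the set of petals bijectively to itself and, via Observation~\ref{fixC} applied to $\Ascr^m$ playing the role of $\Ascr$, sends each petal's base to the base of its image. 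Consequently, if $|J^{m+1}_{G,j}|=1$, then for every $g\in G$ the image $g(B^{m+1}_j)$ is some $B^{m+1}_{j'}$ with $j' \in J^{m+1}_{G,j} = \{j\}$, so $g(B^{m+1}_j) = B^{m+1}_j$, i.e.\ $B^{m+1}_j$ is $G$-invariant.

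For the counting assertion, the equivalence relation $\sim$ on $\{0,\dots,q_m-1\}$ from Definition~\ref{defJ} is exactly the orbit relation of the induced $G$-action on bases established in the previous paragraph. Each $\sim$-class has size $|J^{m+1}_{G,j}|$; each base $B^{m+1}_{j'}$ in that class supports the same number $\ell^{m+1}_{j'} = \ell^{m+1}_j$ of petals (petals in a cluster are isomorphic over the base, and bases in the same orbit have isomorphic clusters). Summing over a transversal for $\sim$ yields $\sum_{j} |J^{m+1}_{G,j}|\cdot \ell^{m+1}_j$ petals in $\Ascr^{m+1} - \Ascr^m$, matching the claim. The only subtle point in the argument is verifying that $G$ really does permute the bases rather than just the petals, which is handled by Observation~\ref{fixC}; everything else is bookkeeping.
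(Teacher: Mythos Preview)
Your proposal is correct and matches the paper's approach; the paper simply writes ``Immediately,'' treating both the equivalence and the count as direct consequences of Definition~\ref{defJ}, and your argument is just a careful unpacking of that. One minor imprecision: Observation~\ref{fixC} as stated concerns an automorphism that fixes \emph{both} $\Ascr$ and $C$ setwise, whereas what you actually need (and what the proof of \ref{fixC} establishes) is that the base is uniquely determined by the petal together with $\Ascr^m$, so that any $g\in G$ fixing $\Ascr^m$ setwise carries bases to bases---but this is exactly the content of the proof of \ref{fixC}, so the reference is morally right.
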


We summarise in Notation~\ref{m0def}, which  also depends on the choice of
$G$. In Section~\ref{geq3} we are using $G_I$. In Section~\ref{geq2}, we
employ $G_{\{I\}}$.

\begin{notation}\label{m0def} The {\em height} of $\Ascr$ is the maximal index,
$m_0 \leq r$ of a non-empty strata.

$q_m$ denotes the number of bases $B^{m+1}_j$ that support elements of strata
$\Ascr^{m+1}$.

And, for fixed $G$, (Definition~\ref{defJ}), $|J^{m+1}_j|$ is the number of
those $B^{m+1}_{j'}$ ($j' \in J^{m+1}_j$)  that are isomorphic to $B^{m+1}_j$
over $I$ by some $g\in G$.

For each $m,j$, $\ell^{m+1}_j$ is the number of conjugates of $A^{m+1}_j$
over $I \cup B^{m+1}_j$ under $G$.
Since $\Ascr^m$ is $G$-invariant,
$\ell^{m+1}_j$ is the number of $B^{m+1}_j$-copies of
$A^{m+1}_{j,1}$ that are {\em not} embedded in $\Ascr^m$.

We denote by $\nu^{m+1}_j$ the number of $B^{m+1}_j$-copies of
$A^{m+1}_{j,1}$, labeled as $C^{m+1}_{j,q}$, that are embedded in $\Ascr^m$.

Finally, $\Ascr^m = \bigcup_{i\leq m }\Ascr^i$.
\end{notation}

\begin{lemma}\label{sum}
Let $\Ascr$ be $G$-normal and decomposed by $\langle \Ascr^n:n< m_0\rangle$.
For any positive $m\le m_0$ and $j$ it holds that $\ell^{m}_j +\nu^m_j =
\mu(A^m_{j,1}/B^m_j)$.
\end{lemma}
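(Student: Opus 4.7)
The plan is to unpack the construction and appeal to Lemma~\ref{getmax}. The identity reduces to showing that the $\mu(A^m_{j,1}/B^m_j)$ many disjoint copies of $A^m_{j,1}$ over $B^m_j$ that must exist in $M$ partition into exactly two classes: those disjoint from $\Ascr^{m-1}$ (counted by $\ell^m_j$) and those entirely contained in $\Ascr^{m-1}$ (counted by $\nu^m_j$).

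First, I would observe that the good pair $A^m_{j,1}/B^m_j$ is well-placed by $\Ascr^{m-1}$ in the sense of Definition~\ref{wpdef}: by Construction~\ref{decomp1}, $B^m_j \subseteq \Ascr^{m-1}$, $\Ascr^{m-1} \le M$ (this holds because each $\Ascr^i$ is obtained from $\Ascr^{i-1}$ by attaching $0$-primitive extensions, and strong extensions of strong substructures are strong), and $A^m_{j,1}$ is $0$-primitive over $\Ascr^{m-1}$. Hence Lemma~\ref{getmax} applies and gives $\chi_M(A^m_{j,1}/B^m_j) = \mu(A^m_{j,1}/B^m_j) = \mu^m_j$: exactly $\mu^m_j$ pairwise disjoint realizations of $A^m_{j,1}$ over $B^m_j$ exist inside $M$.

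Next, Lemma~\ref{getmax}.\ref{autdist} provides the dichotomy: each such copy is either entirely disjoint from $\Ascr^{m-1}$ (option (i)) or entirely contained in $\Ascr^{m-1}$ (option (ii)); no copy can straddle, because $B^m_j \subseteq \Ascr^{m-1} \le M$ forces a split to violate strongness. Moreover $G$ acts transitively on the copies satisfying (i). Thus the copies satisfying (i) form a single $G$-orbit over $I \cup B^m_j$, and because $\Ascr$ is $G$-normal and contains $A^m_{j,1}$, this entire orbit is contained in $\Ascr$; these are by definition the $\ell^m_j$ petals $A^m_{j,f}$ of Notation~\ref{m0def}. The copies satisfying (ii) are subsets of $\Ascr^{m-1} \subseteq \Ascr$ and are, again by definition, the $\nu^m_j$ sets $C^m_{j,k}$ of Construction~\ref{decomp1}.

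Summing these disjoint and exhaustive counts gives $\ell^m_j + \nu^m_j = \mu^m_j = \mu(A^m_{j,1}/B^m_j)$. The only point that requires a little care is ruling out a copy that meets but is not contained in $\Ascr^{m-1}$; this is precisely what Lemma~\ref{getmax}.\ref{autdist} rules out, so the argument is essentially a bookkeeping step matching the notation in Construction~\ref{decomp1} against the conclusion of Lemma~\ref{getmax}.
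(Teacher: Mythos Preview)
Your proposal is correct and follows essentially the same route as the paper: invoke Lemma~\ref{getmax} to obtain $\mu^m_j$ disjoint copies in $M$, then use the no-splitting dichotomy over $\Ascr^{m-1}\le M$ to partition them into the $\ell^m_j$ copies disjoint from $\Ascr^{m-1}$ and the $\nu^m_j$ copies inside it. The paper's proof is terser (it attributes the dichotomy directly to the definition of $0$-primitive extension rather than to Lemma~\ref{getmax}.\ref{autdist}), but the substance is identical.
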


\begin{proof}
Fact~\ref{getmax}.\eqref{autdist} implies that $\chi_M(A^m_{j,1}/B^m_j) =
\mu(A^m_{j,1}/B^m_j)$. Let $C$ be a copy over $B^m_j$ of $A^m_{j,1}$. Since
$B^m_j \subseteq \Ascr^{m-1}\le M$, the definition of a $0$-primitive
extension implies that either $C\subseteq \Ascr^{m-1}$ or  $C\cap
\Ascr^{m-1}=\emptyset$.
\end{proof}

The following notion is central for analyzing the position of a $G$-invariant
petal in $\Ascr$. As, a $G_I$-invariant singleton is in $\dcl^*(I)$; our goal
is to show there are no such singletons.

\begin{definition}\label{gendef} We say $A^{m+1}_{j,1}$
 {\em determines} $A^m_{i,f}$
if
 $A^m_{i,f}$  is the unique petal based in $\Ascr^{m-1}$ that intersects
  $B^{m+1}_j-\Ascr^{m-1}$.
\end{definition}

$A^{m+1}_{j,1}$ is $G$-invariant and determines $A^m_{i,f}$ then $A^m_{i,f}$
is $G$-invariant, so we normally denote the determined petal by $A^m_{i,1}$.
We now see that a $G$-invariant  singleton determines a petal that contains
$B^{m+1}_{j}$. The following lemmas show that when $|A^{m+1}_{j,1}| >1$,
under appropriate inductive hypotheses, $B^{m+1}_{j}$ is `almost' contained
in $A^m_{i,1}$ (Lemma~\ref{long}.1).

\begin{lemma}\label{omni-G} Let $m\geq 1$ and $B = B^{m+1}_j$ be the base of $A^{m+1}_{j,1}$
over $\Ascr^m$. If $|A^{m+1}_{j,1}| =1$ and $A^{m+1}_{j,1}$ is $G$-invariant then

\begin{enumerate}
\item $A^{m+1}_{j,1}$ determines some $A^m_{i,f}$;

\item \label{B1-G} and if $B$ does not contain a $G$-invariant singleton,
    $B \subseteq A^m_{i,f}$.
\end{enumerate}
\end{lemma}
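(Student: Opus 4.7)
The plan is to exploit the $G$-invariance of the singleton $\{a\} = A^{m+1}_{j,1}$ to constrain the geometry of its base $B$ relative to the level-$m$ petals. First, since $\Ascr^m$ is $G$-invariant (Construction~\ref{decomp1} builds the strata as a $G$-invariant filtration), $\{a\}$ is $0$-primitive over $\Ascr^m$ with base $B$, and $\{a\}$ is $G$-invariant, Observation~\ref{fixC} gives $g(B) = B$ for every $g \in G$. Moreover, $G$ permutes the collection of petals at level $m$ and fixes $\Ascr^{m-1}$ setwise, so the partition
\[
B = (B \cap \Ascr^{m-1}) \sqcup \bigsqcup_{(i,f) \in S}(B \cap A^m_{i,f}),
\qquad S = \{(i,f) : B \cap A^m_{i,f} \neq \emptyset\},
\]
is $G$-equivariant.

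For (1), existence of at least one petal in $S$ is immediate from Construction~\ref{decomp1}.ii: the fact that $\{a\}$ was assigned to stratum $m+1$ means $B \not\subseteq \Ascr^{m-1}$, so some element of $B$ lives in a petal at level $m$. Uniqueness $|S| = 1$ is the core. Suppose $A^m_{i_1,f_1}, A^m_{i_2,f_2} \in S$ are distinct. By $G$-equivariance either (i) some $g \in G$ sends $A^m_{i_1,f_1}$ to $A^m_{i_2,f_2}$, or (ii) both petals are $G$-invariant. In (i), Lemma~\ref{getmax}.\ref{autdist} lets us take $g$ fixing $\Ascr^{m-1}$ pointwise; then $g$ fixes $a$ but swaps portions of $B$ across the two petals. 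The plan is to derive a contradiction by tracking the $\nu^{m+1}_j = \mu(\{a\}/B) - 1$ embedded copies $C^{m+1}_{j,k} \subseteq \Ascr^m$ of $\{a\}$ over $B$ (noting $\ell^{m+1}_j = 1$ is forced by the $G$-invariance of $\{a\}$), which are also permuted by $G$, together with the flat geometry. In (ii), one shows that each $b \in B \cap A^m_{i_\ell,f_\ell}$ is itself $G$-fixed (its $G$-orbit is trapped in the intersection with a $G$-invariant petal and in $B$, leaving only the trivial orbit), producing $G$-invariant singletons in $B$.

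For (2), we additionally assume $B$ contains no $G$-invariant singleton. I first claim $B \cap \Ascr^{m-1} = \emptyset$: for any $b$ in this intersection, the $G$-orbit of $b$ is contained in $B \cap \Ascr^{m-1}$ (since $G$ fixes both $B$ and $\Ascr^{m-1}$ setwise), and a careful argument using the goodness of the pair $\{a\}/B$ together with the $G$-invariance of $\{a\}$ shows the orbit must be a singleton, contradicting the hypothesis. Combined with (1) --- which gives a unique petal $A^m_{i,f}$ meeting $B - \Ascr^{m-1} = B$ --- we conclude $B \subseteq A^m_{i,f}$.

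The main obstacle is the uniqueness statement in (1), specifically ruling out the split of $B$ across two $G$-conjugate petals. Because $R$ is a predicate of $3$-element sets, a $g \in G$ that swaps $b_1, b_2$ while fixing $a$ preserves the relation $R(a, b_1, b_2)$, so a purely local contradiction is unavailable; the argument must combine the cluster bookkeeping of Construction~\ref{decomp1}, the counting of embedded copies $C^{m+1}_{j,k}$ in $\Ascr^m$, and the flatness of $\delta$ to extract a global inconsistency. I expect this to be the most technical portion of the proof.
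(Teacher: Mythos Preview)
Your proposal has a genuine gap: you have missed the key simplification that makes this lemma elementary, and as a result you are planning a far more elaborate argument than is needed (or than the paper gives).

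The crucial observation you omit is that when $|A^{m+1}_{j,1}| = 1$, the good pair $(A^{m+1}_{j,1}/B)$ is forced to be of type $\boldsymbol{\alpha}$: writing $A^{m+1}_{j,1} = \{c\}$, the condition $\delta(\{c\}/B) = 0$ means there is exactly one relation $R(b_1,b_2,c)$, and goodness forces $B = \{b_1,b_2\}$. So $|B| = 2$. This immediately collapses your general partition of $B$ across many petals to a two-element situation.

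With this in hand, the uniqueness in (1) does \emph{not} require any group-action dichotomy, cluster bookkeeping, or appeal to flatness. Since $A^{m+1}_{j,1}$ is $G$-invariant, $\ell^{m+1}_j = 1$, and the adequacy condition gives $\mu(\boldsymbol{\alpha}) \geq \delta(B) = 2$, so there exists a copy $C^1 = \{c_1\} \subseteq \Ascr^m$ with $R(b_1,b_2,c_1)$. Now if $b_1$ and $b_2$ lay in distinct petals $A^m_{i,f}$ and $A^m_{i',f'}$, this relation (all three points in $\Ascr^m$) would violate the full independence of the level-$m$ petals over $\Ascr^{m-1}$ (Remark~\ref{find}, Construction~\ref{decomp1}). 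Hence $B - \Ascr^{m-1}$ lies in a single petal. Your worry that ``a purely local contradiction is unavailable'' stems from looking at the wrong triple: the contradiction comes not from $R(a,b_1,b_2)$ but from $R(b_1,b_2,c_1)$ with $c_1 \in \Ascr^m$.

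For (2), once $|B| = 2$ is known the argument is immediate: if some $b_2 \in B \cap \Ascr^{m-1}$, then $\{b_2\} = B \cap \Ascr^{m-1}$ and $\{b_1\} = B \setminus \Ascr^{m-1}$ are each intersections of $G$-invariant sets, hence $G$-invariant singletons in $B$, contradicting the hypothesis. No ``careful argument using goodness'' is needed.
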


\begin{proof}
(1) By Observation~\ref{fixC} $B$ is $G$-invariant. Assume the contrary, that
$A^{m+1}_{j,1}=\{c\}$, but $B$ intersects at least two petals $A^m_{i,f}$ and
$A^m_{i',f'}$. Observe that  if the singleton $c$ is primitive over
$\Ascr^m$, then for some $b_1,b_2 \in \Ascr^m$,
    $((b_1,b_2), c)$ realizes the  good pair $\boldsymbol{\alpha}$.
		So, $B =\{b_1,b_2\}$ and $M\models
    R(b_1,b_2,c)$.
By construction $B \cap (\Ascr^{m} - \Ascr^{m-1}) \ne \emptyset$, so at least one of $b_1,
b_2$ is in $A^m_{i,f}$ for some $f$ and $i$, say $b_1$.

Let $C^1 = \{c_1\}$ be an
    isomorphic over $B$ copy of $A^{m+1}_{j,1}$ with $C^1 \subseteq \Ascr^m$.
    As there is no relation $R(b_1,b_2,c_1)$ with the $b_i$ in $A^m_{i,f}$
    and  $A^m_{i', f'}$ (since they are fully independent over $\Ascr^{m-1}$),
    $B - \Ascr^{m-1} \subseteq A^m_{i,f}$.
Since $B$ is
$G$-invariant,  $A^m_{i,f}$ is $G$-invariant.

(2) Suppose for contradiction $b_2 \in \Ascr^{m-1}$. Then since $B$  and
$\Ascr^{m-1}$ are each $G$-invariant both $b_1$ and $b_2$ are fixed by $G$
violating the additional
 assumption for case (2).
 \end{proof}

Now we  investigate the various images contained in $\Ascr^m$ of
$A^{m+1}_{j,1}$. To simplify notation we continue the special notations in
Lemma~\ref{omni-G} and add some more

\begin{notation}\label{pm}{\rm
We write $(A/B)$ for the good pair $(A^{m+1}_{j,1}/B^{m+1}_{j})$ and
$\mu^{m+1}_{j}$ for $\mu(A^{m+1}_{j,1}/B^{m+1}_{j})$. Let $C^d$, for  $1 \leq
d\leq \nu =\nu^{m+1}_j = \mu^{m+1}_j-1$ (since $\ell^{m+1}_j =1$) enumerate
the isomorphic images over $B =B^{m+1}_j $ of $A^{m+1}_{j,1}$ {\em that lie
in $\Ascr^m$}. Let $C^d_+ = C^d \cap \Ascr^{m-1}$, $C^d_- = C^d
-\Ascr^{m-1}$, $B_+ = B \cap \Ascr^{m-1}$, and $B_- = B -\Ascr^{m-1}$.  }
\end{notation}

 The next diagram illustrates Notation~\ref{pm}, where the petal
$A = A^{m+1}_{j,1}$ is a $G$-invariant subset of $\Ascr^{m+1}-\Ascr^m$, its
base is $B = B^{m+1}_j$, which is a subset of $\Ascr^m$;  $A$ has two copies
$C^1$ and $C^2$ over $B$, which is a subset of $\Ascr^m$. In
the diagram the $m$-th strata is $\Ascr^m-\Ascr^{m-1}$. So as not to
overlap the images of the sets $B$, $C^1$, and $C^2$,  we drew only two
petals $A^m_{i,1}$ and $A^m_{i,2}$ from $\Ascr^m$  and separated them with a
dotted line.
\begin{center}
\begin{picture}(300,160)
\put(6, 15){\dashbox{1}(140,140)[c]{~}}
\put(8, 0){$\Ascr^{m-1}$}
%%% A^1_{1,2}
\put(146, 85){\dashbox{1}(70,70)[c]{~}}
\put(196, 146){$A^m_{i,2}$}
%%% A^1_{1,1}
\put(146, 15){\dashbox{1}(70,70)[c]{~}}
\put(196, 19){$A^m_{i,1}$}
%\qbezier(52,30)(64,48)(76,66)
% A^1
\put(152, 0){$\Ascr^m$}
% A^2_{1,1}
\put(216, 15){\dashbox{1}(77,140)[c]{~}}
\put(225, 0){$\Ascr^{m+1}$}
\put(140, 85){\oval(70,30)}
\put(56,81.5){$B= B^{m+1}_j$}
%\put(120,82){$B_+$}
%\put(153,74){$B_-$}
%\put(153,89){$B_-$}
\put(120,81.5){{$B_+$}}
%\put(151,80){{\LARGE$B_-$}}
\put(153,81.5){{$B_-$}}
% A
\put(225,47){\line(1,0){50}}
\put(275,47){\line(0,1){70}}
\put(265,117){\line(1,0){10}}
\put(265,67){\line(0,1){50}}
\put(225,47){\line(0,1){20}}
\put(225,67){\line(1,0){40}}
\put(228, 36){$A = A^{m+1}_{j,1}$}
% C1
\put(120,35){\line(1,0){70}}
\put(190,35){\line(0,1){70}}
\put(180,105){\line(1,0){10}}
\put(180,55){\line(0,1){50}}
\put(120,35){\line(0,1){20}}
\put(120,55){\line(1,0){60}}
\put(126,42){$C^1_+$}
\put(162,42){$C^1_-$}
\put(103,42){$C^1$}
%C2
\put(120,135){\line(1,0){90}}
\put(210,65){\line(0,1){70}}
\put(200,65){\line(1,0){10}}
\put(200,65){\line(0,1){50}}
\put(120,115){\line(0,1){20}}
\put(120,115){\line(1,0){80}}
\put(126,122){$C^2_+$}
\put(162,122){$C^2_-$}
\put(103,122){$C^2$}
\end{picture}
\captionof{figure}{Illustrating Notation~\ref{pm}}\label{Notation320}
\end{center}

With this notation we continue to set the stage; now, we assume both
$|A^{m+1}_{j,1}| > 1$ and $|A^m_{i,f}| > 1$ for $i$, $f$ with
$A^m_{i,f} \cap B \ne \emptyset$.
The second assumption follows from the first, when $\mu^{m+1}_j \geq 3$, by
Lemma~\ref{Am-not-1-G} but will be an issue in Section~\ref{geq2}. Recall
Definition~\ref{Rdef} of $R(X,Y)$ and $R(X,Y,Z)$.
\begin{lemma}\label{small-G}
Assume that
  $A^{m+1}_{j,1}$ is $G$-invariant, $|A^{m+1}_{j,1}| > 1$,
	and $|A^m_{i,f}| > 1$ for each $i$, $f$
	such that $A^m_{i,f} \cap B \ne \emptyset$. Then, for any $d$ with $1\leq d \leq \nu =\nu^{m+1}_j$:
\begin{enumerate}
\item \label{Bd-G} For any $i$, $f$ such that $A^m_{i,f} \cap B \ne
    \emptyset$,   $C^d \cap A^m_{i,f}\ne \emptyset$, i.e., $C^d_-  \neq
    \emptyset$.

\item \label{smR-G}
		Using Notations~\ref{pm} and \ref{Rdef}, $R(B_-,C^d_+) = \emptyset$ and $R(B_-,
C^d_+, B_+) = \emptyset$. Thus, $\delta(B_-/B_+ \cup \bigcup_{1 \leq d \leq
\nu} C^d_+) = \delta(B_-/B_+)$.
\item\label{short} If $C^d\cap \Ascr^{m-1}= \emptyset$, that is $C^d_+ =
    \emptyset$, then there is a unique petal $A^m_{i,f}$ that contains both
    $C^d$ and $B_-$. So, $A^m_{i,f}$ is $G$-invariant.
\end{enumerate}
\end{lemma}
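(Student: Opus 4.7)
My plan attacks (1)--(3) using four ingredients: Lemma~\ref{primchar1} (forcing, whenever $|A|>1$, that each base point of a good pair sits in an $R$-triple with two primitive points); Observation~\ref{obsa} (connectedness of $0$-primitive good pairs); full independence of distinct petals at level $m$ over $\Ascr^{m-1}$; and the $\delta$-identity $\delta(C^d/B)=\delta(A/B)=0$ together with $\Ascr^{m-1}\le M$.

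For (1), fix $b\in A^m_{i,f}\cap B$; since $A^m_{i,f}\cap \Ascr^{m-1}=\emptyset$ we have $b\in B_-$. Apply Lemma~\ref{primchar1} to the good pair $C^d/B$ to produce $R(c_1,c_2,b)$ with $c_1,c_2\in C^d$. Because $b\in A^m_{i,f}\setminus\Ascr^{m-1}$, petals at level $m$ are fully independent over $\Ascr^{m-1}$, and $A^m_{i,f}$ is $0$-primitive over $B^m_i\subseteq\Ascr^{m-1}$, the pair $(c_1,c_2)$ is confined to $A^m_{i,f}\cup B^m_i$. The hypothesis $|A^m_{i,f}|>1$, combined with Lemma~\ref{primchar1} applied to the pair $A^m_{i,f}/B^m_i$, lets me rule out the degenerate possibility that both $c_1,c_2\in B^m_i$ via a $\delta$-count comparing the relation budgets of the two good pairs $A^m_{i,f}/B^m_i$ and $C^d/B$. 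Hence at least one of $c_1,c_2$ lies in $A^m_{i,f}$, giving $C^d\cap A^m_{i,f}\ne\emptyset$.

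For (2), I would exploit that the $\nu$ copies $C^d$ are pairwise disjoint over $B$ by Lemma~\ref{Kmu}.\ref{Kmuitem} and that each satisfies $\delta(C^d/B)=0$. Additivity over the disjoint $C^d_-$'s gives $\delta(\bigcup_d C^d_-/\Ascr^{m-1}\cup B_-)\le 0$, while strongness of $\Ascr^{m-1}$ in $M$ forces the same quantity to be $\ge 0$; the resulting equality pins down the relation count. Any triple in $R(B_-,C^d_+)$ or in $R(B_-,C^d_+,B_+)$ would subtract an uncompensated unit from this $\delta$-bookkeeping, contradicting the equality. Once the two triple-sets are known to be empty, the identity $\delta(B_-/B_+\cup\bigcup_d C^d_+)=\delta(B_-/B_+)$ is immediate from the definition of $\delta$.

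For (3), assume $C^d_+=\emptyset$, so $C^d\subseteq \Ascr^m\setminus\Ascr^{m-1}$ sits entirely in level-$m$ petals. If $B_-$ intersected two distinct petals, any preserved relation of $A/B$ tying those two base points together would produce an $R$-triple crossing two level-$m$ petals with neither element in $\Ascr^{m-1}$, contradicting full independence. So $B_-$ lies in a single petal $A^m_{i,f}$, and the analogous argument, combined with Observation~\ref{obsa} applied to $C^d\cup B$ together with part (1), forces $C^d\subseteq A^m_{i,f}$. Finally, $G$-invariance of $A^{m+1}_{j,1}$ yields $G$-invariance of $B$ by Observation~\ref{fixC}, hence of $B_-=B\setminus \Ascr^{m-1}$, and the unique petal containing $B_-$ is therefore itself $G$-invariant. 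The main obstacle is the case analysis in (1): excluding the configuration $c_1,c_2\in B^m_i$ is genuinely delicate because the required contradiction must be extracted from a global $\delta$-comparison across the two good pairs rather than from any local relation count. The $\delta$-bookkeeping in (2) has the same flavour but is less intricate, and (3) reduces essentially to the full-independence principle applied twice.
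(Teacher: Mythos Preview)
Your plan overcomplicates all three parts and, in two places, has genuine gaps.

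\textbf{Part (1).} The ``$\delta$-count comparing the relation budgets of the two good pairs'' is unnecessary. The paper's argument is one line: if $R(c_1,c_2,b)$ with both $c_i\in\Ascr^{m-1}$, then $\delta(\{b\}/\Ascr^{m-1})\le 0$; since $\Ascr^{m-1}\le M$ this equals $0$, and $0$-primitivity of $A^m_{i,f}$ over $\Ascr^{m-1}$ forces $A^m_{i,f}=\{b\}$, contradicting $|A^m_{i,f}|>1$. So at least one $c_i$ lies outside $\Ascr^{m-1}$, and full independence of level-$m$ petals puts it in $A^m_{i,f}$.

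\textbf{Part (2).} Your $\delta$-bookkeeping has a real gap: you claim $\delta(\bigcup_d C^d_-/\Ascr^{m-1}\cup B_-)\ge 0$ ``from strongness of $\Ascr^{m-1}$'', but the base here is $\Ascr^{m-1}\cup B_-$, which is \emph{not} known to be strong in $M$. The paper instead reuses the observation from (1): no $b\in B_-$ can be in a triple with two elements of $\Ascr^{m-1}$. Dually, since $|A^{m+1}_{j,1}|>1$ and $C^d\cong_B A^{m+1}_{j,1}$, no $c\in C^d$ is in a triple $R(b_1,b_2,c)$ with $b_1,b_2\in B$ (same primitivity argument, one level up). These two facts directly exhaust all possible triple-types in $R(B_-,C^d_+)$ and $R(B_-,C^d_+,B_+)$, and the $\delta$-identity follows by inspection.

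\textbf{Part (3).} You argue in the wrong order, and your first step is not justified: two points of $B_-$ in distinct petals need not be joined by any single $R$-triple, so you cannot derive a cross-petal relation that way. The paper first shows $C^d$ lies in a single petal $A^m_{i,f}$ (if not, full independence of petals over $\Ascr^{m-1}$ disconnects $C^d$ over $B$, contradicting Observation~\ref{obsa}). Then $B_-\subseteq A^m_{i,f}$ follows from part~(1): any petal meeting $B_-$ must meet $C^d$, but $C^d$ sits in only one petal. The $G$-invariance of $A^m_{i,f}$ then follows from Observation~\ref{fixC} as you say.
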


\begin{proof}
(1) Lemma~\ref{primchar1} implies  for any $b \in B_-$, and any $d\leq \nu$
there must be $c_1,c_2 \in C^d$ with $R(c_1,c_2,b)$.  Since $b \in B_-$ and
by construction, $b\in B_-\cap A^m_{i,f}$ for some $i$ and $f$. If both $c_1$
and $c_2 \in \Ascr^{m-1}$, then $|A^m_{i,f}| = |\{ b\}| =1$, a contradiction.
So at least one of $c_1, c_2$, say $c_1$, must be in $\Ascr^m- \Ascr^{m-1}$.
Since the petals on the same strata $m$ are freely joined over $\Ascr^{m-1}$,
$c_1$ must be in $A^m_{i,f}$.

(2) Since $|A^m_{i,f}| >1$, for any $b \in B_-$ there do not exist $x_1,
    x_2 \in \Ascr^{m-1}$ such that $M \models R (b, x_1, x_2)$.
	Hence $r(B_- , B_+ \cup \bigcup_{d=1}^{\nu} C_+^d) = r (B_- , B_+) + \sum_{d=1}^{\nu}
    r(B_- , C_+^d)$.
		The conditions $|A^{m+1}_{j,1}| > 1$ and $C^d \cong_{ B}
    A^{m+1}_{j,1}$ imply for any $c \in C^d$ there are no $b_1, b_2 \in B$
such that $M \models R(b_1, b_2, c)$.  Consequently, $r(B_- , C_+^d) = 0$.

(3) Assume that $C^d\cap \Ascr^{m-1}= \emptyset$. Assume also that $C^d\cap
A^m_{s,t} \ne \emptyset$ for some $s$ and $t$, but $C^d \not\subseteq
A^m_{s,t}$. Since petals $A^m_{u,v}$ are free over $\Ascr^{m-1}$ we obtain
that $C^d$ is disconnected over $B$, contradicting Observation~\ref{obsa}.
So, there is a unique petal $A^m_{i,f}$   that contains $C^d$. Obviously,
then $B_- \subseteq A^m_{i,f}$. The assumption $A^{m+1}_{j,1}$ is
$G$-invariant implies (Observation~\ref{fixC}) that $B$ is $G$-invariant. So,
$B_-$,
 and thus, $A^m_{i,f}$ are $G$-invariant.
\end{proof}

We prove a consequence of: $\mu(A^{m+1}_{j,1}/B^{m+1}_j) \geq 3$ and $A^{m+1}_{j,1}$ is
$G$-invariant.

\begin{lemma}
 \label{Am-not-1-G} If $\mu(A^{m+1}_{j,1}/B^{m+1}_j) \geq 3$ and $A^{m+1}_{j,1}$ is $G$-invariant,
  then $\ell^{m+1}_j +1< \mu^{m+1}_j$ and
 $|A^{m+1}_{j, 1}| > 1$  together imply
     $|A^{m}_{i, f}| > 1$ for any $i$, $f$
    such that $A^{m}_{i, f} \cap B \ne \emptyset$.

    \end{lemma}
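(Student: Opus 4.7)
I would proceed by contradiction. Suppose some $A^m_{i,f}$ with $A^m_{i,f}\cap B\ne\emptyset$ (where $B:=B^{m+1}_j$) is a singleton, say $A^m_{i,f}=\{b\}$ with $b\in B$. Since $\{b\}$ is a $0$-primitive singleton extension of $\Ascr^{m-1}$, it realizes the good pair $\boldsymbol{\alpha}$: there exist $b_1,b_2\in \Ascr^{m-1}$ with $R(b_1,b_2,b)$, and $\delta(\{b\}/\Ascr^{m-1})=0$ forces $r(b,\Ascr^{m-1}\cup\{b\})=1$, so $\{b_1,b_2\}$ is the unique pair in $\Ascr^{m-1}$ that is $R$-related to $b$. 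The hypothesis $\ell^{m+1}_j+1<\mu^{m+1}_j$ together with Lemma~\ref{sum} gives $\nu:=\nu^{m+1}_j\geq 2$, so one obtains pairwise disjoint $B$-copies $C^1,\ldots,C^\nu$ of $A^{m+1}_{j,1}$ inside $\Ascr^m$ (pairwise disjoint by Lemma~\ref{Kmu}.\ref{Kmuitem}, since $A^{m+1}_{j,1}$ is disjoint from $B$). Applying Lemma~\ref{primchar1} to each good pair $(C^d/B)$, valid because $|A^{m+1}_{j,1}|>1$, produces $\tilde c_1^d,\tilde c_2^d\in C^d$ with $R(\tilde c_1^d,\tilde c_2^d,b)$ for every $d$.

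The crux is to pin down where the $\tilde c_k^d$ live, and I would show that both $\tilde c_1^d,\tilde c_2^d\in \Ascr^{m-1}$ for every $d$. Were some $\tilde c_k^d$ to lie in $\Ascr^m-\Ascr^{m-1}$, it would sit inside a petal $A^m_{i'(d),f'(d)}-\Ascr^{m-1}$. However, $b$ already occupies the petal $\{b\}\subseteq \Ascr^m-\Ascr^{m-1}$, and by Construction~\ref{decomp1} petals at level $m$ are fully independent over $\Ascr^{m-1}$, so no $R$-relation may involve elements of two distinct petals outside $\Ascr^{m-1}$. Hence either $A^m_{i'(d),f'(d)}\ne \{b\}$ and the relation $R(\tilde c_1^d,\tilde c_2^d,b)$ violates full independence, or $A^m_{i'(d),f'(d)}=\{b\}$, so $\tilde c_k^d=b$, making $R(\tilde c_1^d,\tilde c_2^d,b)$ degenerate since $R$ is a predicate of $3$-element sets. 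Either alternative is impossible, so $\tilde c_1^d,\tilde c_2^d\in \Ascr^{m-1}$, and by the uniqueness of $\{b_1,b_2\}$ we deduce $\{\tilde c_1^d,\tilde c_2^d\}=\{b_1,b_2\}$ for every $d$.

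Consequently $b_1\in C^d$ for every $d=1,\ldots,\nu$, contradicting pairwise disjointness once $\nu\geq 2$. The main obstacle I anticipate is the full-independence step in the second paragraph: one must be certain that Construction~\ref{decomp1} really produces petals at each stratum that are \emph{fully} independent (not merely $\delta$-independent) over the previous stratum, so that a putative $R$-relation cannot surreptitiously split across two distinct petals via a singleton. In the pure $3$-hypergraph setting this is automatic by Remark~\ref{find}; for the Steiner-system adaptation extra care will be needed. Once that is secured, the rest is routine bookkeeping with Lemma~\ref{primchar1}, Lemma~\ref{sum}, and Lemma~\ref{Kmu}.\ref{Kmuitem}.
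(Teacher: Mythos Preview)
Your proof is correct and follows essentially the same route as the paper's. Both argue by contradiction from a singleton petal $\{b\}$, use $|A^{m+1}_{j,1}|>1$ and Lemma~\ref{primchar1} to produce pairs in each $C^d$ related to $b$, use full independence of level-$m$ petals to force those pairs into $\Ascr^{m-1}$, and then exploit $\nu\geq 2$ disjoint copies. The only cosmetic difference is the final contradiction: you observe that uniqueness of the pair $\{b_1,b_2\}$ forces $b_1\in C^1\cap C^2$, violating disjointness; the paper instead notes that four distinct elements of $\Ascr^{m-1}$ related to $b$ cannot all lie in the two-element base $B^m_i$, so some $d\in\Ascr^{m-1}-B^m_i$ is $R$-related to $b$. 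These are the same contradiction viewed from opposite sides.
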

\begin{proof} Let $b \in B\cap A^m_{i,f}$.
 For $A^m_{i,f}$ there is a unique base\footnote{ We will indicate a
 slight modification of the proof for
 the Steiner case in Lemma~\ref{nonlin}.} $B^m_i$ by Lemma~\ref{fixC}.
Since $\mu^{m+1}_i = \ell^{m+1}_i + \nu^{m+1}_i$, $\ell^{m+1}_i =1$ and
$\mu^{m+1}_j \geq 3$ imply $\ell^{m+1}_j +1 < \mu^{m+1}_j$.
     By the same observation as in Lemma~\ref{omni-G}.2, if  $|A^m_{i,f}| =1$,
      then  $A^m_{i,f} =\{b\}$ and $B^m_i$  is a pair $(c_1, c_2)\in
    \Ascr^{m-1}$ that satisfy $R(c_1,c_2,b)$.
On the other hand, $b$ satisfies $R(\alpha_1, \alpha_2, b)$ for some
$\alpha_1$, $\alpha_2 \in A^{m+1}_{j,1}$ by Lemma~\ref{primchar1}.
But, $  |A^m_{i,f}| =1$ implies there is no pair $x,y$ from $\Ascr^m-
\Ascr^{m-1}$ satisfying $R(x,y,b)$. Since $\ell^{m+1}_j +1 <
\mu^{m+1}_j$ and $\mu^{m+1}_j\geq 3$  there must be at least two  disjoint embeddings
of $A^{m+1}_{j,1}$ in $\Ascr^m$, this implies that some $d  \in \Ascr^{m-1}
-B^m_i$ is in relation with $b$; this contradicts that $A^m_{i,f}$ is related
only to elements of the doubleton $B^m_i$.
\end{proof}

 The hypothesis $C^d_+ \neq 0$ of  the next lemma is verified,
 using the inductive hypotheses,
in the proof of Lemma~\ref{wrapitup} before Lemma~\ref{long} is applied.

\begin{lemma}\label{long} Let $m\geq 1$. Assume that $\delta(B)\ge 2$, $A^{m+1}_{j,1}$ is $G$-invariant,
$|A^{m+1}_{j,1}| > 1$,
	and $|A^m_{i,f}| > 1$ for each $i$, $f$
	such that $A^m_{i,f} \cap B \ne \emptyset$. %   and $A^m_{i,f}$ is $G$-invariant.
Further, assume that $C^d_+\ne\emptyset$ for each $d$.
Then
\begin{enumerate}[A)]
\item If $\mu^{m+1}_j \ge 3$,   then $A^{m+1}_{j,1}$ determines an
    $A^m_{i,f}$
and $\delta(B_+) \le 1$.
\item If $\mu^{m+1}_j = 2$, then $2 \geq \sum_{i\in \bigI}\ell^m_i +
    \delta(B_+)$,   where $\bigI = \{ i :(\exists t) A^m_{i,t}\cap B \ne
    \emptyset \}$,
    and thus $\delta(B_+)\le 1$.
\end{enumerate}
\end{lemma}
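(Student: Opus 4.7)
My plan is to analyze the set $Z = B \cup \bigcup_{d=1}^\nu C^d$ by splitting $Z = D_0 \sqcup W$, with $D_0 = Z \cap \Ascr^{m-1} = B_+ \cup \bigcup_d C^d_+$ and $W = Z \setminus \Ascr^{m-1} = B_- \cup \bigcup_d C^d_-$, and for each petal setting $P_{i,f} = W \cap A^m_{i,f}$. Observation~\ref{fixC} gives that $B$ is $G$-invariant, and $G$-transitivity on the cluster $\{A^m_{i,f} : f \leq \ell^m_i\}$ implies $|\mathcal P| = \sum_{i \in \bigI} \ell^m_i$, where $\mathcal P = \{(i,f) : A^m_{i,f} \cap B \neq \emptyset\}$.

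First I would establish the identity $\delta(Z) = \delta(B) = k$, from the $0$-primitivity of each $C^d$ over $B$ and the independence of distinct copies. Then, since $C^d_+$ is non-empty by hypothesis and $C^d_-$ is non-empty by Lemma~\ref{small-G}(1), $C^d_+$ is a proper non-empty subset of the $0$-primitive $C^d$, so $\delta(C^d_+/B) \geq 1$, and independence of copies gives $\delta(\bigcup_d C^d_+ / B) \geq \nu$. Combined with Lemma~\ref{small-G}(2), which yields $\delta(B_-/D_0) = \delta(B_-/B_+) = k - \delta(B_+)$, two evaluations of $\delta(B \cup D_0)$ produce the lower bound $\delta(D_0) \geq \delta(B_+) + \nu$. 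Together with $\delta(Z) = \delta(D_0) + \delta(W/D_0) = k$, I obtain the master inequality
\[
\delta(W/D_0) \leq k - \delta(B_+) - \nu.
\]

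Next, I would bound $\delta(W/D_0)$ from below via the petal decomposition. Since the petals of $\Ascr^m - \Ascr^{m-1}$ are pairwise disjoint and $\delta$-independent over $\Ascr^{m-1}$, $\delta(W/\Ascr^{m-1}) = \sum_{(i,f)} \delta(P_{i,f}/\Ascr^{m-1})$, and submodularity gives $\delta(W/D_0) \geq \delta(W/\Ascr^{m-1})$. By $0$-primitivity of $A^m_{i,f}$, each summand is non-negative and is $\geq 1$ whenever $\emptyset \subsetneq P_{i,f} \subsetneq A^m_{i,f}$. For $(i,f) \in \mathcal P$ this proper containment holds unless the petal is \emph{saturated}, meaning $A^m_{i,f} = \beta_{i,f} \sqcup \bigsqcup_d (C^d \cap A^m_{i,f})$, which in particular forces $|A^m_{i,f}| \geq 1 + \nu$.

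The argument then concludes by case analysis. Each unsaturated petal in $\mathcal P$ gives a strict gain of $+1$ in the master inequality. For saturated petals I combine the connectivity of $A^m_{i,f}$ over $B^m_i$ (Observation~\ref{obsa}) with the absence of any $R$-relation between distinct $C^d, C^{d'}$ (from the canonical amalgamation underlying Lemma~\ref{getmax}) to extract additional structure: the relations forcing connectivity must pass through either $\beta_{i,f}$ or $B^m_i \cap D_0$, which constrains how the bases $B^m_i$ lie inside $D_0$ and, via a refined bookkeeping of $\delta(D_0)$ beyond $\delta(B_+) + \nu$, pins down the conclusions. When $\nu \geq 2$ (Part A) the sharpening collapses to $|\mathcal P| = 1$---so $A^{m+1}_{j,1}$ determines an $A^m_{i,f}$---and to $\delta(B_+) = 0$; when $\nu = 1$ (Part B) the looser sharpening yields $\sum_{i \in \bigI} \ell^m_i + \delta(B_+) \leq 2$, hence $\delta(B_+) \leq 1$. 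The hardest part will be the saturated case: the master inequality alone does not separate the contributions of $|\mathcal P|$ and $\delta(B_+)$, so the rigidity from the free amalgamation of copies together with petal-connectivity must be carefully leveraged to recover the exact bounds of the statement.
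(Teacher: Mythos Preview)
Your overall setup is sound—the splitting $Z = D_0 \sqcup W$ and the identity $\delta(Z)=\delta(B)=k$ are correct, and your derivation of the ``master inequality'' $\delta(W/D_0)\le k-\delta(B_+)-\nu$ from Lemma~\ref{small-G}(2) is fine. The genuine gap is your treatment of saturated petals. Your lower bound on $\delta(W/D_0)$ counts only \emph{unsaturated} petals, so you only obtain
\[
|\mathcal P_{\mathrm{unsat}}| + \delta(B_+) + \nu \le k,
\]
and the proposal to recover the missing contribution of saturated petals via ``refined bookkeeping of $\delta(D_0)$'' and ``rigidity from free amalgamation together with petal-connectivity'' is not an argument—it is a hope. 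Nothing you have written forces $B^m_i\subseteq D_0$ for a saturated petal, nor produces an extra $+1$ in $\delta(D_0)$ per such petal, so as it stands the saturated case is simply not proved.

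The paper avoids this case split entirely by organizing the count differently: instead of bounding $\delta(W/D_0)$ from below petal-by-petal, it bounds the piece $\delta(\bigcup_d C^d_-/B\cup\bigcup_d C^d_+)$ from above by $-\nu\cdot|\mathcal P|$. The point is that for \emph{each} copy $C^d$ and \emph{each} petal $(i,f)\in\mathcal P$, the slice $C^d_-\cap A^m_{i,f}$ is nonempty (Lemma~\ref{small-G}(1)) while $C^d_+\neq\emptyset$ by hypothesis, so $0$-primitivity of $C^d$ over $B$ forces a strictly negative contribution; full independence of petals lets these $\nu\cdot|\mathcal P|$ contributions add. Combined with $\Ascr^{m-1}\le M$ and Lemma~\ref{small-G}(2) this yields directly
\[
\delta(B)\ \ge\ \nu\cdot\sum_{i\in\bigI}\ell^m_i + \delta(B_+),
\]
from which both A) and B) drop out by arithmetic (using $\nu\ge\delta(B)-1$ for A and $\nu=1,\ \delta(B)=2$ for B). The multiplicative factor $\nu\cdot|\mathcal P|$, rather than your additive $\nu+|\mathcal P_{\mathrm{unsat}}|$, is exactly what makes the saturated/unsaturated dichotomy disappear. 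I would recommend reorganizing along these lines rather than trying to salvage the saturated-petal analysis.
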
	

\begin{proof} Most of the proof is the same for both {\em A)} and {\em B)}; we split
near the end. Let $\bigI = \{ i : A^m_{i,t}\cap B \ne \emptyset$ for some
$t\}$.
By Observation~\ref{fixC} $B$ is
$G$-invariant, so for each $i\in \bigI$ if $A^m_{i,t}\cap B \ne \emptyset$ for some $t$, then it
holds for any $t$, because $G$ acts transitively on the set of
 petals $\{ A^m_{i, t} : 1\le t\le \ell^m_i\}$, as, by construction,
they have the same base $B^m_i$ and they are on the same strata.

Let $\nu = \mu(B^{m+1}_{j},A^{m+1}_{j,1})-1$.
The conditions $|A^{m+1}_{j,1}|>1$ and $|A^{m}_{i,f}|> 1$ for each $i$ and
$f$ such that $A^m_{i,f} \cap B^{m+1}_{j} \ne \emptyset$ imply that we may
apply Lemma~\ref{small-G}\@.\ref{smR-G}  below.

First, we show $\delta(B_-/B_+) \geq \nu \cdot \sum_{i\in \bigI}\ell^m_i$.
For this, using notation~\ref{pm}, by Lemma~\ref{small-G}.\@\ref{Bd-G} and
the last hypothesis, both $C_-^d$ and $C_+^d$ are nonempty.
Since, we noted that  for each $i\in \bigI$ and each $t$ with $1 \le t\le
\ell^m_i$,   $A^m_{i,t}\cap B \ne \emptyset$,    by
Lemma~\ref{small-G}.\@\ref{Bd-G} again, $C^d_-\cap  A^m_{i,t} \ne\emptyset$.
So, invoking 	the definition of good pair we have
 $\delta( C^d_-\cap A^m_{i,f}/B \cup C^d_+) < 0$.
Indeed, $0> \delta( C\cap A^m_{i,f}/B \cup (C-A^m_{i,f}) = \delta( C^d_-\cap
A^m_{i,f}/B \cup C^d_+)$ because all petals on the $m$-strata are
$\delta$-independent over $\Ascr^{m-1}$.
 Consequently, by submodularity of $\delta$
for any  $i\in \bigI$ and
 $f = 1, \dots, \ell^m_i$,
taking into account for the first equality that any two petals on the same strata,
 namely on $\Ascr^{m}$, are
$\delta$-independent over $\Ascr^{m-1}$,
we can conclude:
%$$\delta( B^d_/C \cup_{1 \leq i \leq \ell_k} B^i_+) < 0.\end{equation}

\begin{equation}\label{eq1}
\delta(C^d_-/B \cup C^d_+) = \sum_{i \in \bigI}\sum_{1 \leq f \leq \ell^m_i}\delta(
C^d_-
\cap A^m_{i,f}/B \cup C^d_+) \le -\sum_{i\in \bigI}\ell^m_i.\end{equation}
Since $\Ascr^{m-1} \leq M$ and by monotonicity

\begin{equation}\label{eq2}
0 \leq \delta(\hspace{-5pt}\bigcup_{1 \leq d \leq \nu}\hspace{-5pt}
C^d_-
\cup B_-/\Ascr^{m-1}) \leq
\delta(\hspace{-5pt}\bigcup_{1 \leq d \leq \nu}\hspace{-5pt} C^d_-
 \cup B_-/\hspace{-5pt}\bigcup_{1 \leq d \leq \nu}
\hspace{-5pt} C^d_+ \cup B_+ ).\end{equation}
But by the definition of $\delta(A/C)$, we can rewrite the last term to obtain

\begin{equation} \label{eq3}
0 \leq \delta(\bigcup_{1 \leq d \leq \nu}\hspace{-5pt} C^d_- / B \cup
\hspace{-5pt}\bigcup_{1 \leq d \leq \nu}\hspace{-5pt} C^d_+ )
+ \delta(B_-/(B_+ \cup\hspace{-5pt} \bigcup_{1 \leq d \leq \nu}\hspace{-5pt} C^d_+)).\end{equation}
The last term of the right hand side of equation~\ref{eq3} equals $\delta(B_-/B_+)$ by
  Lemma~\ref{small-G}\@.\ref{smR-G}. And, the first term satisfies

\begin{equation} \label{eq4}
\delta(\bigcup_{1 \leq d \leq \nu}\hspace{-5pt} C^d_- / B \cup
\hspace{-5pt}\bigcup_{1 \leq d \leq \nu}\hspace{-5pt} C^d_+)  \le
\sum_{i\in \bigI}\sum_{\substack{1 \leq f \leq \ell^m_i\\ 1 \leq d \leq \nu}}
\delta(    (C^d_- \cap A^m_{i,f})/B \cup C^d_+) \leq -\nu \sum_{i\in \bigI}\ell^m_i
\end{equation}
since each $C^d_-$ contributes at most $-\sum_{i\in \bigI}\ell^m_i$ for each
$d = 1, \dots, \nu$ by equation~\eqref{eq1}.
Substituting  our evaluations of the two terms on the right hand side of
  equation~\eqref{eq3} (one from Lemma~\ref{small-G}\@.\ref{smR-G}) and transposing, we have
\begin{equation}\label{eq5}
\delta(B_-/B_+) \geq \nu \sum_{i\in \bigI}\ell^m_i.
\end{equation}

Now, $B = B_+ \cup B_-$ implies $\delta(B) = \delta(B_-/B_+) + \delta(B_+)$.
So
\begin{equation}\label{eq6}
\delta(B) \geq \nu \sum_{i\in \bigI}\ell^m_i + \delta(B_+).
\end{equation}

A) Assume that $\mu^{m+1}_j\ge 3$;
then $\nu \geq 2$. If $\delta(B) = 2$,
then $\nu =2$, $\bigI = \{ i \}$ for some $i$, $\ell^m_i = 1$, and $A^m_{i,1}$ is $G$-invariant.

If $\delta(B) \geq 3$, divide equation~\eqref{eq6} by $\delta(B)$. Then,    since
 $\nu \geq \delta(B) -1$, substituting in equation~\ref{eq6}, we obtain

\begin{equation}\label{eq7}
1 =\frac{\delta(B)}{\delta(B)} \geq \frac{(\delta(B) -1)\sum_{i\in \bigI}\ell^m_i}{\delta(B)}
 +\frac{\delta(B_+)}{\delta(B)}\end{equation}
Since $\delta(B) \geq 3$, $\frac{\delta(B) -1}{\delta(B)} \geq \frac{2}{3}$ so

\begin{equation}\label{eq8}
1 \geq
\frac{2 \sum_{i\in \bigI}\ell^m_i}{3}
 +\frac{\delta(B_+)}{\delta(B)}\end{equation}
 Equation~\eqref{eq8} implies $\bigI = \{ i \}$ for some $i$,
$\ell^m_i =1$, and $A^m_{i,1}$ is $G$-invariant.
Equation~\eqref{eq6} implies
$$\delta(B) \geq \nu + \delta(B_+)\ge (\delta(B)-1) + \delta(B_+).$$
So, $\delta(B_+)\le 1$.
%Moreover, since the first term is not zero, $\delta(B_+)<1$ and therefore $\delta(B_+) = 0$.

B) Now we assume that $\mu^{m+1}_j =2$. Then, invoking the first hypothesis, $\delta(B) = 2$.
In this case, since $\nu =1$, equation~\eqref{eq6} implies $2 \geq \sum_{i\in \bigI}\ell^m_i + \delta(B_+)$
and so $\delta(B_+)\leq 1$.
\end{proof}

\section{$\dcl$ in Hrushovski's first example $\hat T_\mu$}
\label{main}
    \numberwithin{theorem}{subsection}
In this section we describe $\dcl^*(I)$ in the
main example $\hat T_{\mu}$ (Definition~\ref{defT}) in \cite{Hrustrongmin}.
We find that the values of $\mu$ for good pairs with $\delta(B)=2$ distinguish whether
 $\dcl^*(I)$ may be empty.
Subsection~\ref{geq3} deals with $\dcl^*$ under a stronger hypothesis on $\mu$ to conclude
$\dcl^*(I)=\emptyset$. Subsection~\ref{ce} provides an example that when $\mu(B,C)
= 2$ for certain good pairs, there is a theory where $\dcl^*(I) \neq
\emptyset$.  However, Subsection~\ref{geq2} $\sdcl^*$ shows that such examples still satisfy
$\sdcl^*(I)=\emptyset$ and fail elimination of imaginaries. That proof uses a deeper study of
flowers and bouquets from Subsection~\ref{bouflow}.

The very raw idea is that if some petal $A$ on the $(i+1)$-th strata is $G$-invariant,
and $\mu(A,B) \geq 3$
then it determines a petal on the $i$-th strata, which is also $G$-invariant, for each positive $i$;
but Lemma~\ref{cl0} implies that no petal on the 1st strata is $G$-invariant,
for a contradiction. The hypothesis that $\mu(B,C) \ge 3$ for any good pair $(B,C)$ with
$\delta(B) = 2$  implies that this idea works and $\dcl^*(I) = \emptyset$.
However in the general case $\dcl^*(I)$ may not be empty.
We  consider in Section~\ref{geq2} a more complicated construction in order to prove that
$\sdcl^*(I) = \emptyset$.

Note, however that the family of
theories described in Proposition 18 of \cite{Hrustrongmin} (Steiner quasigroups)
 as well as
the Steiner {\em triple} system of \cite{BaldwinPao}  have a truly binary function
defined by $R$. The definition of $\bK_0$
  from $\delta$ in Hrushovki's Proposition 18
is non-standard; in the linear space case, $\mu(\boldsymbol{\alpha})= 1$,
for $\alpha$ the good pair of a line (Definition~\ref{linelength}). Section~\ref{Steiner} adapts
our main results for Steiner systems.

\subsection{$G_I$: No truly $n$-ary definable functions}\label{geq3}
\setcounter{theorem}{0}
 We
slightly vary Hrushovski's original example by adding a further
adequacy requirement (Definition~\ref{Kmu}.1).

\begin{definition}\label{defade} We say that a function $\mu$ (or theory $T_\mu$) bounding good
 pairs   {\em triples} if
for all good pairs $(A/B)$ with $|A|>1$, $\mu(A/B) \geq \delta(B)$:
$$\delta(B) =2 \Rightarrow \mu(A/B) \ge 3.$$
\end{definition}

In this section, using this triples condition, we $G$-decompose a finite set
using $G_I$ (fixing $I$ pointwise) and show $\hat T_\mu$ is essentially unary and so fails to eliminate
imaginaries.
We give a more refined argument using $G_{\{I\}}$ in Section~\ref{geq2} showing that even
with truly $n$-ary function (i.e. dropping the `triples' hypothesis), $\hat T_\mu$
 must fail to eliminate imaginaries.

\begin{theorem}\label{mr}
Assume  that $\hat T_\mu$ triples.
Let $I$ be a finite independent set that contains at least 2 elements.
Fix a $G$-normal $\Ascr \leq M \models \hat T_\mu$ with height $m_0$.
For every $m \leq m_0$, $\Ascr^m \cap \dcl^*(I) = \emptyset$.

 Thus,
 $\dcl^*(I) \cap \Ascr = \emptyset$; so there is no truly $n$-ary $\emptyset$-definable
 function (Definition~\ref{truen}) for each $n\ge 2$  and
  $\hat T_\mu$ does not admit elimination of imaginaries.
As a corollary, we obtain that $\dcl(J) = \bigcup_{a\in J}\dcl(a)$ for any
  independent set $J\subseteq M$.
\end{theorem}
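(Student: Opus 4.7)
The plan is to prove the stronger combinatorial statement by induction on the stratum $m\ge 1$: no petal $A^m_{j,f}$ in the $G_I$-decomposition of $\Ascr$ is $G_I$-invariant as a set. Once this is established, the stated conclusion $\Ascr^m\cap \dcl^*(I)=\emptyset$ for every $m$ is immediate. Any $c\in \dcl^*(I)$ is pointwise fixed by $G_I$, so if $c$ lies in a petal $A^m_{j,f}$ at some strata $m\ge 1$, then (since petals over a given base are pairwise disjoint off $\Ascr^{m-1}$) $A^m_{j,f}$ itself must be $G_I$-invariant, contradicting the induction. If instead $c\in \Ascr^0=\bigcup_{i\le v}(\Ascr\cap \acl(a_i))$, then $c\in \acl(a_i)$ for some $i$, and independence of $I$ provides an automorphism in $G_I$ moving $c$ unless $c\in \dcl(a_i)$, in which case $c$ lies in $\dcl$ of a proper subset of $I$ and so $c\notin \dcl^*(I)$.

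The base case $m=1$ is immediate from Lemma~\ref{cl0}, which gives $\ell^1_i=\mu^1_i\ge 2$ and $A^1_{i,f}\cap \Ascr^0=\emptyset$, so $G_I$ acts on the petals at strata $1$ without fixed petals. For the inductive step, suppose no petal at strata $\le m$ is $G_I$-invariant but that some $A^{m+1}_{j,1}$ is. By Lemma~\ref{getmax}.(\ref{autdist}), $G_I$ acts transitively on the $\ell^{m+1}_j$ petals over $B:=B^{m+1}_j$ lying outside $\Ascr^m$, forcing $\ell^{m+1}_j=1$; Observation~\ref{fixC} then makes $B$ itself $G_I$-invariant.

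The inductive step splits on $|A^{m+1}_{j,1}|$. If $|A^{m+1}_{j,1}|=1$, Lemma~\ref{omni-G}.(1) produces a unique petal $A^m_{i,f}$ meeting $B-\Ascr^{m-1}$; by uniqueness and the $G_I$-invariance of $B$, this petal is $G_I$-invariant, contradicting the inductive hypothesis. If $|A^{m+1}_{j,1}|>1$, the triples hypothesis together with $\delta(B)\ge 2$ gives $\mu^{m+1}_j\ge 3$. This, with $\ell^{m+1}_j=1$, verifies $\ell^{m+1}_j+1<\mu^{m+1}_j$ in Lemma~\ref{Am-not-1-G}, which then guarantees the side hypothesis $|A^m_{i,f}|>1$ needed by the combinatorial lemmas. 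If some $C^d_+=\emptyset$, Lemma~\ref{small-G}.(\ref{short}) directly delivers a $G_I$-invariant $A^m_{i,f}$; otherwise Lemma~\ref{long}.A applies and forces $|\bigI|=1$ with $\ell^m_i=1$, so the unique $A^m_{i,1}$ is $G_I$-invariant. In every branch we contradict the inductive hypothesis.

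The remaining conclusions of the theorem are formal: Lemma~\ref{getun} converts $\dcl^*(I)=\emptyset$ (for every independent $I$ with $|I|\ge 2$) into the essential unarity of every $\emptyset$-definable function and the non-existence of truly $n$-ary definable functions; Lemma~\ref{fckey} with Fact~\ref{wefc} converts it into failure of elimination of imaginaries; and the corollary $\dcl(J)=\bigcup_{a\in J}\dcl(a)$ follows by choosing a minimal $J'\subseteq J$ with $c\in \dcl(J')$ and noting that $|J'|\ge 2$ would force $c\in \dcl^*(J')=\emptyset$. The main obstacle is the sub-case $\delta(B)=2$ inside Lemma~\ref{long}: without the triples strengthening the lemma only yields $\delta(B_+)\le 1$ and permits several candidate petals at strata $m$ to survive, so the determination step breaks --- this is precisely the loophole exploited by the counterexample of Section~\ref{ce}.
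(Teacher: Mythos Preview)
Your overall strategy matches the paper's, and the reduction ``a $G_I$-fixed point in a petal forces the petal to be $G_I$-invariant'' as well as the $\Ascr^0$ analysis and the final deductions (via Lemmas~\ref{getun}, \ref{fckey}, Fact~\ref{wefc}) are all correct. The $|A^{m+1}_{j,1}|=1$ branch is also fine: Lemma~\ref{omni-G}(1) needs no dimension hypothesis, and the determined petal is $G_I$-invariant by uniqueness.

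There is, however, a genuine gap in the $|A^{m+1}_{j,1}|>1$ branch. You write ``the triples hypothesis together with $\delta(B)\ge 2$ gives $\mu^{m+1}_j\ge 3$'', but you never justify $\delta(B)\ge 2$. Nothing in the construction guarantees more than $d(B)\ge 1$ (see the remark just before Lemma~\ref{petals}), and the triples hypothesis is vacuous when $\delta(B)=1$. Likewise, Lemma~\ref{long} has $\delta(B)\ge 2$ as an explicit hypothesis, so you cannot invoke Lemma~\ref{long}.A without it. Knowing only $\text{moves}_k$ for $k\le m$ is not enough: that tells you every element of $B-\Ascr^{m-1}$ has a nontrivial $G_I$-orbit, but it does not force $d(B)\ge 2$, since conjugates of an element can be interalgebraic with it.

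The paper closes this gap by running a \emph{joint} induction on two statements (Lemma~\ref{indprop}): $\text{moves}_m$ (your statement) and $\dim_m$ (every $G_I$-invariant $E\subseteq\Ascr^m$ with $E\not\subseteq\Ascr^0$ has $d(E)\ge 2$). The step $\dim_m\wedge\text{moves}_m\Rightarrow\text{moves}_{m+1}$ is Conclusion~\ref{wrapitup}, and it is precisely $\dim_m$ that supplies $d(B)\ge 2$ (Lemma~\ref{omni}(1)), hence $\delta(B)\ge 2$, enabling both the triples hypothesis and Lemma~\ref{long}.A. The companion step $\dim_m\wedge\text{moves}_{m+1}\Rightarrow\dim_{m+1}$ is Lemma~\ref{petals}; its case~(1) uses $\dim_m$ on the orbit of the base, so the two inductions cannot be decoupled. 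To repair your argument, carry $\dim_m$ as a second inductive hypothesis and add the one-paragraph proof of Lemma~\ref{petals} (grounded by Corollary~\ref{cl1} for $m=1$).
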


By the definition of $\Ascr^0$, $\Ascr^0 \cap \dcl^*(I) = \emptyset$.
It suffices to show by induction on $m \geq 1$ that for each $e\in \Ascr^m$,
$d(G_I(e))\geq 2$. As, if $G_I(e) =\{e\}$ then $d(G_I(e)) = 1$. But we must
begin with $m=1$ since elements  $e \in \Ascr^0$ may have $d(G_I(e)) = 1$.
We obtain the conclusion by proving Lemma~\ref{indprop} by double induction. Note that
the truth  of ${\rm dim}_m$ and ${\rm moves}_m$ each depend on the choice of $G$ as $G_I$.
Once we have this decomposition, satisfying moves for $G_I$-normal $\Ascr\leq M$ of
 any height, we can conclude
$\dcl^*_M(I) = \emptyset$.

\begin{lemma}\label{indprop} Assume
 that $\hat T_\mu$ triples.
For $m\geq 1$,
\begin{enumerate}
\item ${\rm dim}_m$: $d(E) \ge 2$ for any $G_I$-invariant set $E\subseteq
    \Ascr^{m}$, which is not a subset of $\Ascr^0$.
  \item ${\rm moves}_{m}$: No $A^{m}_{f,k}$ is $G_I$-invariant.
\end{enumerate}
\end{lemma}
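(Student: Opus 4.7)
The plan is to establish both ${\rm dim}_m$ and ${\rm moves}_m$ by simultaneous induction on $m\ge 1$, proving ${\rm moves}_m$ first at each stage and then deducing ${\rm dim}_m$ from it.

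\textbf{Base case $m=1$.} The statement ${\rm moves}_1$ follows from Lemma~\ref{cl0}: each petal $A^1_{i,f}$ is disjoint from $\Ascr^0$ with $\ell^1_i=\mu^1_i\ge 2$, and because $G_I$ fixes $\Ascr^0$ (and hence $B^1_i$) pointwise, part~\ref{autdist} of Lemma~\ref{getmax} makes $G_I$ permute the $\ell^1_i\ge 2$ disjoint copies transitively, so none can be fixed. For ${\rm dim}_1$, if $E\subseteq\Ascr^1$ is $G_I$-invariant and contains some $e\in A^1_{i,f}\subseteq\Ascr^1-\Ascr^0$, then ${\rm moves}_1$ furnishes $g\in G_I$ with $g(e)$ in a different petal; full independence of the petals over $\Ascr^0$ in the Hrushovski setting makes $\{e,g(e)\}$ $d$-independent.

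\textbf{Inductive step.} Fix $m\ge 2$ and assume both ${\rm dim}_{m-1}$ and ${\rm moves}_{m-1}$. For ${\rm moves}_m$, suppose toward a contradiction that $A^m_{j,1}$ is $G_I$-invariant. Observation~\ref{fixC} forces $B:=B^m_j$ to be $G_I$-invariant too, and $\ell^m_j=1$ follows from the $G_I$-invariance of the petal. If $|A^m_{j,1}|=1$, part~(1) of Lemma~\ref{omni-G} shows that $A^m_{j,1}$ determines some $A^{m-1}_{i,f}$; since the determining data are $G_I$-invariant, so is $A^{m-1}_{i,f}$, contradicting ${\rm moves}_{m-1}$. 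If $|A^m_{j,1}|>1$, the tripling hypothesis combined with $\mu^m_j\ge\delta(B)\ge 2$ gives $\mu^m_j\ge 3$, so $\nu^m_j=\mu^m_j-1\ge 2$, and the hypotheses of Lemma~\ref{Am-not-1-G} yield $|A^{m-1}_{i,f}|>1$ for every $i,f$ with $A^{m-1}_{i,f}\cap B\ne\emptyset$. Now split: if some copy $C^d$ of $A^m_{j,1}$ over $B$ inside $\Ascr^{m-1}$ has $C^d_+=\emptyset$, then Lemma~\ref{small-G}.\ref{short} locates a $G_I$-invariant $A^{m-1}_{i,f}$ containing $C^d\cup B_-$; if on the other hand every $C^d_+\ne\emptyset$, then part~A) of Lemma~\ref{long} (valid because $\mu^m_j\ge 3$) concludes that $A^m_{j,1}$ determines some $A^{m-1}_{i,f}$, again $G_I$-invariant. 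In every sub-case we contradict ${\rm moves}_{m-1}$.

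For ${\rm dim}_m$, let $E\subseteq\Ascr^m$ be $G_I$-invariant and not contained in $\Ascr^0$. If $E\subseteq\Ascr^{m-1}$, apply ${\rm dim}_{m-1}$. Otherwise pick $e\in E\cap(\Ascr^m-\Ascr^{m-1})$ inside some $A^m_{j,f}$; by ${\rm moves}_m$ some $g\in G_I$ sends $A^m_{j,f}$ to a distinct $A^m_{j,f'}$, so $g(e)\in A^m_{j,f'}\subseteq E$. Full independence of distinct petals over $\Ascr^{m-1}$ in the Hrushovski construction forces $\{e,g(e)\}$ to be $d$-independent, so $d(E)\ge 2$.

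\textbf{Main obstacle.} The delicate part is case $|A^m_{j,1}|>1$ in the proof of ${\rm moves}_m$. The parameters $\mu^m_j$, $\ell^m_j$, $\nu^m_j$, together with the placement of the additional copies $C^d$ relative to $\Ascr^{m-1}$ and $\Ascr^{m-2}$, must conspire so that exactly one of Lemma~\ref{small-G}.\ref{short} or part~A) of Lemma~\ref{long} fires. Here the triples hypothesis is precisely what is required: it guarantees $\mu^m_j\ge 3$ whenever $\delta(B^m_j)=2$, without which part~A) of Lemma~\ref{long} cannot force a single determined $A^{m-1}_{i,f}$. As Section~\ref{ce} will show, dropping this hypothesis actually allows $\dcl^*(I)$ to become non-empty, so the tripling condition is genuinely unavoidable.
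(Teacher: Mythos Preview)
Your argument for $\mathrm{moves}_m$ is essentially the paper's (Lemma~\ref{omni} plus Conclusion~\ref{wrapitup}), and it is fine.

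The gap is in your argument for $\dim_m$ (and already for $\dim_1$). The sentence ``full independence of distinct petals over $\Ascr^{m-1}$ forces $\{e,g(e)\}$ to be $d$-independent'' does not do the work you need. Full independence is a statement about the absence of $R$-relations over $\Ascr^{m-1}$; it tells you the petals are $\delta$-independent \emph{over $\Ascr^{m-1}$}. But both $e$ and $g(e)$ are algebraic over $\Ascr^{m-1}$ (each sits in a $0$-primitive extension), so their $d$-dimension over $\Ascr^{m-1}$ is zero, and this says nothing directly about $d(\{e,g(e)\})$ computed over $\emptyset$. To bound $d(\{e,g(e)\})$ from below you must analyze $\icl(\{e,g(e)\})$ and how it meets $\Ascr^{m-1}$, and this is exactly the content of Lemma~\ref{petals}.

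Moreover, the paper does \emph{not} reduce to a two-element set $\{e,g(e)\}$; it works with the full orbit $E=G_I(e)$. The three-case split in Lemma~\ref{petals} is governed by which of the bases $B^{m+1}_{j'}$, for $j'\in J^{m+1}_j$, lie inside $\icl(E)$. In case~(1), when \emph{all} such bases lie in $\icl(E)$, one applies $\dim_m$ to the $G_I$-invariant set $\bigcup_{g\in G_I} g(B^{m+1}_j)$; this step fails for your two-element set when $|J^{m+1}_j|>1$, because $\{e,g(e)\}$ need not drag the whole $G_I$-orbit of bases into its intrinsic closure, and a proper sub-union of the bases is not $G_I$-invariant, so $\dim_{m-1}$ does not apply to it. In case~(2), when no base is in $\icl(E)$, each petal meeting $\icl(E)$ contributes at least $1$ to $\delta$ (by minimality of the base), and one needs at least two such contributions; the hypothesis that $G$ moves the petal guarantees this. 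Case~(3) mixes the two. Replace your one-line shortcut by this analysis (or simply invoke Lemma~\ref{petals}), and the induction goes through.
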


The remainder of this section is devoted to the proof of Lemma~\ref{indprop}.
If $m_0$ is the height of $\Ascr$, then ${\rm dim}_{m_0}$ gives
Theorem~\ref{mr}.

For each  $m < m_0$, each $\Ascr^{m+1}$, and each $j$, $k$ such that $A^{m+1}_{j,
k} \subseteq \Ascr^{m+1}$, Observation~\ref{fixC} implies that if $A^{m+1}_{j,
k}$ is $G_I$-invariant, then $B^{m+1}_j$ is $G_I$-invariant.  However the
converse is false. The main part of the construction in Section~\ref{decomp}
was to describe the family of $G_I$-conjugates $A^{m+1}_{j,k}$ over
$B^{m+1}_j$ of $A^{m+1}_{j,1}$. We now take into account that the $B^{m+1}_j$
need not be $G_I$-invariant. %Recall Definition~\ref{defJ} of $J^{m+1}_j$.

How do we use a joint induction?
The $A^{m+1}_{j,i}$ are disjoint.
If either $|J^{m+1}_j| >1$ (Definition~\ref{defJ}) or $\ell^m_j >1$, $A^{m+1}_{j,i}$ moves
 and so
  no element of $A^{m+1}_{j,i}$ is  definable over $I$.
But, in Section~\ref{ce}, we show that when some $\mu^{m+1}_j =2$, it may be that
$|J^{m+1}_j| =1.$ So, in this section we add an hypothesis implying each
relevant $\mu^{m+1}_j \geq 3$.
In order to prove by induction on $m$ that no $A^m_{j,i}$ is $G_I$-invariant
 (Conclusion~\ref{wrapitup}), we need the
dual hypothesis  $dim_m$.
So, the simultaneous induction is organized as follows:
\begin{eqnarray*}
\mathrm{dim}_m \land \mathrm{moves}_{m+1} & \Rightarrow & \mathrm{dim}_{m+1}~~~(\mbox{Lemma}~\ref{petals}) \\
\mathrm{dim}_m \land \mathrm{moves}_{m\hphantom{+1}} & \Rightarrow & \mathrm{moves}_{m+1}~~~(\mbox{Conclusion}~\ref{wrapitup})
\end{eqnarray*}
In the following Corollary~\ref{cl1}, we slightly modify the
proof of Lemma~\ref{petals} to ground the induction by
showing ${\rm dim}_1$ and ${\rm moves}_1$.

We use without further notice the fact that for any $A \subseteq B$,
 $d(A) \leq d(B)$, e.g. $d(X) \leq d(G_I(X))$.
 Note that Construction~\ref{decomp1} shows that any base $B$ arising in
 the construction of $\Ascr^{m+1}$
 satisfies $1 \leq d(B) \leq v$, where $v = |I|$.

Lemma~\ref{petals} is formulated for $G$; it is applied for $G_I$ in
Section~\ref{geq3} while in Section~\ref{geq2}
 we use $\sdim_m$ instead of $\dim^G_m$ with $G = G_{\{I\}}$. The `moves hypothesis'
 in Lemma~\ref{petals} will follow from the inductive hypothesis in the main proof.

\begin{lemma} \label{petals}
 Fix $m$ with $1 \leq m \leq m_0$. If\/\footnote{We do not use the adequacy hypothesis
  (Definition~\ref{defade})  in proving this lemma.}
 $\dim^G_m$ and $G$ moves
 $A^{m+1}_{j,k}$  then $\dim^G_{m+1}$. That is, for
  each $e\in A^{m+1}_{j,k}$, $d(G(e)) \ge 2$.
\end{lemma}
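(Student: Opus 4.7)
The plan is to argue by contradiction. Suppose some $e \in A^{m+1}_{j,k}$ has $d(G(e)) \le 1$. Since $e \in \Ascr^{m+1} - \Ascr^m$ and $\Ascr \cap \acl(\emptyset) \subseteq \Ascr^0$ (by the construction of $\Ascr^0$ as $D_1 \cup \cdots \cup D_v$), we have $e \notin \acl(\emptyset)$, so $d(G(e)) \ge 1$ and hence $d(G(e)) = 1$.

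Set $E = G(e)$. Because each stratum $\Ascr^n$ is $G$-invariant by Construction~\ref{decomp1} and distinct petals at stratum $m+1$ are disjoint off $\Ascr^m$, the hypothesis that $G$ moves $A^{m+1}_{j,k}$ forces $E$ into $\Ascr^{m+1} - \Ascr^m$, with $E$ meeting at least two distinct petals whose bases are $G$-conjugate. Form $\mathcal B := \bigcup_{g \in G} g(B^{m+1}_j) \subseteq \Ascr^m$. This set is $G$-invariant; moreover, Construction~\ref{decomp1} placed $B^{m+1}_j$ at stratum $m+1$ precisely because $B^{m+1}_j \cap (\Ascr^m - \Ascr^{m-1}) \ne \emptyset$, and since $G$ preserves each stratum, $\mathcal B \not\subseteq \Ascr^{m-1}$; because $m \ge 1$, $\mathcal B \not\subseteq \Ascr^0$. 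The induction hypothesis $\dim^G_m$ applied to $\mathcal B$ then gives $d(\mathcal B) \ge 2$.

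It remains to derive a contradiction by showing $d(E) \ge d(\mathcal B)$. Let $\mathcal E$ be the union of petals at stratum $m+1$ that meet $E$. Since each petal is $0$-primitive over its base, $\mathcal E \subseteq \acl(\mathcal B)$; conversely, Observation~\ref{fixC} applied to each petal yields $\mathcal B \subseteq \acl(\mathcal E)$, so $d(\mathcal E) = d(\mathcal B)$. The final step is to promote this to $d(E) = d(\mathcal E)$: using $\Ascr^m \le M$, that every petal has $\delta$-defect $0$ over $\Ascr^m$, and that distinct petals in $\mathcal E$ are fully independent over $\Ascr^m$, one obtains $d(E \cup \Ascr^m) = d(\mathcal E \cup \Ascr^m) = v$; a submodularity count modeled on the opening of Lemma~\ref{long} (exploiting that $E$, as a $G$-orbit, contains at least one element from each petal of $\mathcal E$) then forces $d(E) = d(\mathcal E)$. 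Combining the three identifications gives $1 = d(E) = d(\mathcal E) = d(\mathcal B) \ge 2$, the desired contradiction.

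The main obstacle is this last identification $d(E) = d(\mathcal E)$: an individual element of a multi-point petal does not on its own algebraically determine the whole petal, so one cannot assert $\mathcal E \subseteq \acl(E)$ pointwise. What is genuinely used here is the combination of (i) the $G$-invariance of the strata, ensuring $E$ meets every petal of $\mathcal E$; (ii) the full independence of distinct petals over $\Ascr^m$ in the $3$-hypergraph setting (Remark~\ref{find}); and (iii) the submodularity of $\delta$, which together prevent the extra petal-points from contributing algebraic dimension beyond what $E$ already controls via $\mathcal B$. Once this last step is in hand, the inductive loop with $\mathrm{moves}_{m+1}$ closes.
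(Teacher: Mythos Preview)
Your argument has a real gap at precisely the point you flag as the ``main obstacle'': the identification $d(E) = d(\mathcal E)$. The equality $d(E \cup \Ascr^m) = d(\mathcal E \cup \Ascr^m) = v$ that you establish is correct but says nothing about the relation between $d(E)$ and $d(\mathcal E)$ themselves; both sets lie in $\acl(\Ascr^m)$, so adjoining $\Ascr^m$ washes out any difference. Your appeal to ``a submodularity count modeled on the opening of Lemma~\ref{long}'' is not a proof, and there is no reason to expect that a single orbit representative in each petal recovers the full petal algebraically. (Your invocation of Observation~\ref{fixC} for $\mathcal B \subseteq \acl(\mathcal E)$ is also a misapplication: that observation concerns setwise fixing under automorphisms of a $G$-invariant $\Ascr$, not algebraic closure of the petal alone.) In effect you have reduced the lemma to a statement at least as hard as the original, and then asserted it.

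The paper avoids this entirely. Rather than comparing $d(E)$ to $d(\mathcal B)$ globally, it works with $F := \icl(E)$ (so $d(E) = \delta(F)$) and does a three-way case split on which bases $B^{m+1}_{j'}$, $j' \in J^{m+1}_j$, lie in $F$. If all do, then $\mathcal B \subseteq F$ and $\dim^G_m$ plus monotonicity give $d(E) = d(F) \ge d(\mathcal B) \ge 2$. If none do, then for each petal $A^{m+1}_{j',t}$ meeting $E$ the minimality of the base (Definition~\ref{prealgebraic}.3) forces $\delta(F \cap A^{m+1}_{j',t} / F \cap \Ascr^m) \ge 1$; since the moves hypothesis (or $|J^{m+1}_j| \ge 2$) guarantees at least two such petals, summing over them yields $\delta(F) \ge 2$. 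The mixed case picks up one unit from each mechanism. This case split is the missing idea: the obstruction you correctly identified --- that one point of a multi-point petal need not determine the petal --- is exactly what forces the second case, and the minimality of the base is what makes that case go through.
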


\begin{proof} Fix $m \leq m_0$, $j< q_m$, $k$ and $e$ with $e
\in A^{m+1}_{j,k}$. We show $ d(G(e)) \ge 2$. Let $E = G(e)$.
Clearly $ d(E) \le v$, since $e\in \acl(I)$.
By Lemma~\ref{getmax}.\ref{autdist},  $E\cap A^{m+1}_{j,k} \ne \emptyset$ for each
$k \le \ell^{m+1}_{j}$. And for each $j'\in J^{m+1}_j$ the map taking
$B^{m+1}_j$ to  $B^{m+1}_{j'}$ and   Construction~\ref{decomp1}, guarantee each  $E\cap A^{m+1}_{j',k'} \ne
\emptyset$ for $k' \le \ell^{m+1}_{j'} = \ell^{m+1}_{j}$.

Note that $\icl(E) \subseteq \Ascr^{m+1}$, because $\Ascr^{m+1}\le M$. The proof now
breaks into three cases.

(1) If all the bases $B^{m+1}_{j'}$ are subsets of $\icl(E)$ (for $j' \in
    J^{m+1}_j$), the hypothesis $\dim_m$ and the monotonicity of $d$ implies
    $$ 2 \le d(\bigcup_{g \in G_I}g(B^{m+1}_{j})) \le
    d(\icl(E)) \le v$$
    and we finish.

(2) Suppose no $B^{m+1}_{j'}$ (with $j' \in J^{m+1}_j$) is a subset of $\icl(E)$.
 For each $j' \in
    J^{m+1}_j$ there is a $t'$ such that $A^{m+1}_{j',t'} \cap \icl(E) \neq
    \emptyset$. And since $B^{m+1}_{j'}\nsubseteqq \icl(E)$,
    Definition~\ref{prealgebraic}.3
    of base\footnote{That is, the base $X$ of
    $C/A$ is the least $X$ such that $\delta(A'/X) = 0$ for every $A'\subseteq A$.} implies
    $\delta(\icl(E)\cap A^{m+1}_{j',t}/\icl(E)\cap \Ascr^m) \ge  1$. Thus,

\begin{multline*}
v \geq d(E) = \delta(\icl(E)) = \delta((\icl(E)- \Ascr^m)/ \icl(E)\cap \Ascr^{m})
 + \delta(\icl(E)\cap \Ascr^{m}) \ge \\ \ge    \delta((\icl(E) - \Ascr^m) / \icl(E)\cap \Ascr^{m})
\ge\sum_{j'\in J^{m+1}_j}\sum_{t=1}^{\ell^{m+1}_{j'}}
\delta(\icl(E)\cap A^{m+1}_{j',t}/\icl(E)\cap \Ascr^m) \ge  2.
\end{multline*}

The double summation is at least $2$ because either $|J^{m+1}_{j}| \geq 2$
and there are $2$ non-zero outer summands or $J^{m+1}_j = \{j\}$ and since $G_I$
moves $A^{m+1}_{j,1}$,
$$\sum_{t=1}^{\ell^{m+1}_{j}}\delta(\icl(E)\cap
A^{m+1}_{j,t}/\icl(E)\cap \Ascr^m) \ge  2.$$

(3) Suppose one of the $B^{m+1}_{j'}$ (with $j' \in J^{m+1}_j$) is a subset of $\icl(E)$ and
    another one $B^{m+1}_{j''}$ is not. Clearly, $$d(E) =\delta(\icl(E)) = \delta(\icl(E)\cap
    \Ascr^m) + \delta(\icl(E)-\Ascr^m / \icl(E)\cap \Ascr^m).$$ Say $B^{m+1}_{j_1}
    \subseteq \icl(E)$ and $B^{m+1}_{j_2} \not\subseteq \icl(E)$. The first
summand is at least 1, because $B^{m+1}_{j_1}$ is a subset of $\icl(E)\cap
    \Ascr^m$, but is not a subset of $\acl(\emptyset)$; as, otherwise
    $A^{m+1}_{j_1,1}$ is a subset of $\acl(\emptyset)$. The second summand
    is also at least one, because
$$
\delta(\icl(E)-\Ascr^m / \icl(E)\cap \Ascr^m) = \sum_{j'\in
J^{m+1}_{j_2}}\sum_{t=1}^{\ell^{m+1}_{j'}}\delta(\icl(E)\cap A^{m+1}_{j',t}/\icl(E)\cap
\Ascr^m)$$
and for some
 %${j'\in J^{m+1}_{j_2}}$,
$t \leq \ell^{m+1}_{j_2}$, $\delta(\icl(E)\cap A^{m+1}_{j_2,t}/\icl(E)\cap \Ascr^m) \ge 1$, because
$B^{m+1}_{j_2} \not\subseteq \icl(E)$.
\end{proof}

We establish the $m=1$ step of
Lemma~\ref{indprop} by emulating the proof of Lemma~\ref{petals}; we can't really apply the
result as $\dim_0$ was not defined.

\begin{corollary}\label{cl1} Both ${\rm dim_1}$ and ${\rm moves}_1$ hold.
More precisely, for any $e\in \Ascr^1-\Ascr^0$, $  d(G_I(e)) \ge 2$.
\end{corollary}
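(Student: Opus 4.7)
The plan is to establish the base case of the simultaneous induction by mirroring the proof of Lemma~\ref{petals}, but substituting the unconditional content of Lemma~\ref{cl0} for the missing inductive hypotheses $\mathrm{dim}_0$ and $\mathrm{moves}_0$. The ${\rm moves}_1$ half is already a restatement of the last sentence of Lemma~\ref{cl0}: no $A^1_{i,f}$ is $G_I$-invariant because $\ell^1_i = \mu^1_i \geq \delta(B^1_i) \geq 2$, so there are at least two disjoint petals over each $B^1_i$ and $G_I$ permutes them transitively by Lemma~\ref{getmax}.\eqref{autdist}. Thus the work lies in ${\rm dim}_1$.

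Fix $e \in A^1_{j,k}$ and set $E := G_I(e)$. Transitivity (Lemma~\ref{getmax}.\eqref{autdist}) gives $E \cap A^1_{j,t} \neq \emptyset$ for every $t \leq \ell^1_j$, and for every $j' \in J^1_j$ some $g \in G_I$ maps $B^1_j$ to $B^1_{j'}$, so $E$ also hits each $A^1_{j',t}$ for $t \leq \ell^1_{j'}$. Since $\Ascr^1 \leq M$, $\icl(E) \subseteq \Ascr^1$ and $d(E) = \delta(\icl(E))$. The argument now splits on whether some base $B^1_{j'}$ (with $j' \in J^1_j$) is contained in $\icl(E)$.

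If some $B^1_{j'} \subseteq \icl(E)$, then by monotonicity of $d$ and Lemma~\ref{cl0},
$d(E) = d(\icl(E)) \geq d(B^1_{j'}) \geq 2$, and we are done. Otherwise no $B^1_{j'}$ is contained in $\icl(E)$. Then, as in Case (2) of Lemma~\ref{petals}, using that each $A^1_{j',t}$ is $0$-primitive over $\Ascr^0$ with base $B^1_{j'}\not\subseteq \icl(E)$, the base minimality (Definition~\ref{prealgebraic}) forces $\delta(\icl(E) \cap A^1_{j',t}/\icl(E)\cap \Ascr^0) \geq 1$ whenever $A^1_{j',t}$ meets $\icl(E)$. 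Summing,
\[
d(E) = \delta(\icl(E)) \geq \sum_{j' \in J^1_j}\sum_{t=1}^{\ell^1_{j'}} \delta(\icl(E) \cap A^1_{j',t}/\icl(E)\cap \Ascr^0) \geq 2,
\]
because either $|J^1_j| \geq 2$ provides two distinct nonzero outer summands, or $|J^1_j|=1$ and the inner sum has at least $\ell^1_j \geq 2$ nonzero terms by Lemma~\ref{cl0}.

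The main subtlety (and the reason this is stated as a separate corollary rather than an instance of Lemma~\ref{petals}) is that we cannot invoke $\mathrm{dim}_0$ for Case (1) nor a moves-type hypothesis to guarantee $\ell^1_j \geq 2$; both are replaced by Lemma~\ref{cl0}, which is in turn proved using the full independence of the $D_i$ over $D_0$ and Observation~\ref{obsa}. No cross-case contribution from $\delta(\icl(E)\cap \Ascr^0)$ is needed since the nonnegativity coming from Axiom~\ref{yax}.2 suffices once a base is inside $\icl(E)$.
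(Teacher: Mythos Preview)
Your proposal is correct and follows essentially the same approach as the paper: invoke Lemma~\ref{cl0} for ${\rm moves}_1$ and for $d(B^1_j)\ge 2$, then rerun the proof of Lemma~\ref{petals}. Your two-case split (some $B^1_{j'}\subseteq\icl(E)$ versus none) is a mild streamlining of the paper's three cases, since Lemma~\ref{cl0} gives $d(B^1_{j'})\ge 2$ for each individual base, making the paper's mixed case~(3) unnecessary at level~$1$.
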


\begin{proof} By Lemma~\ref{cl0}, we have ${\rm moves}_1$ and for each $j$,
$d(B^{1}_j)\ge 2$.  Now follow the proof of Lemma~\ref{petals}, noting that it only uses that
$d(B^{1}_j)\ge 2$.
\end{proof}

\begin{lemma} \label{omni}
If ${\rm moves}_{m}$, ${\rm dim}_m$ and
  $A^{m+1}_{j,1}$ is $G_I$-invariant, then
\begin{enumerate}
\item \label{deltaC} For $B = B^{m+1}_j$, $ d(B) \ge 2$.

\item \label{B1}  $|A^{m+1}_{j, 1}| > 1$.

\item \label{Am-not-1} If in addition, $\mu^{m+1}_j \geq 3$, then $|A^{m}_{i,
    f}| > 1$ for any $i$, $f$ such that $A^{m}_{i, f} \cap B \ne
    \emptyset$.
 \end{enumerate}
    \end{lemma}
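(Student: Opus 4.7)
The plan is to prove the three parts in turn; each is a short deduction from the induction hypotheses $\text{moves}_m$ and $\dim_m$ together with results already established in Section~\ref{decomp}.

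For part (1), the first observation is that $\Ascr^m$ is $G_I$-invariant by Construction~\ref{decomp1} (each stratum is assembled as a union of $G_I$-orbits of petals). Since $A^{m+1}_{j,1}$ is $G_I$-invariant by hypothesis and is $0$-primitive over $\Ascr^m$ with base $B = B^{m+1}_j$, Observation~\ref{fixC} forces $B$ to be $G_I$-invariant as well. By Construction~\ref{decomp1}, $B \subseteq \Ascr^m$ and $B$ intersects $\Ascr^m - \Ascr^{m-1}$ non-trivially (by the minimality of the stratum index at which $B$ first fits); since the strata are cumulative and $m \geq 1$, it follows that $B \not\subseteq \Ascr^0$. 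Thus $\dim_m$ applies and gives $d(B) \geq 2$.

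For part (2), I would argue by contradiction: suppose $|A^{m+1}_{j,1}| = 1$. Since $A^{m+1}_{j,1}$ is $G_I$-invariant, Lemma~\ref{omni-G}.(1) shows that $A^{m+1}_{j,1}$ determines some petal $A^m_{i,f}$. The remark following Definition~\ref{gendef} notes that a determined petal is automatically $G_I$-invariant, which directly contradicts $\text{moves}_m$.

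For part (3), the strategy is to apply Lemma~\ref{Am-not-1-G} with $G = G_I$. Three of the four hypotheses are immediate: $\mu^{m+1}_j \geq 3$ is assumed, $A^{m+1}_{j,1}$ is $G_I$-invariant by hypothesis, and $|A^{m+1}_{j,1}| > 1$ was just established in part (2). The remaining hypothesis $\ell^{m+1}_j + 1 < \mu^{m+1}_j$ follows from the observation that $G_I$ acts transitively on the copies of $A^{m+1}_{j,1}$ over $B$ that are disjoint from $\Ascr^m$ (Lemma~\ref{getmax}.(\ref{autdist})); since $A^{m+1}_{j,1}$ is fixed setwise by $G_I$, that orbit is a singleton, forcing $\ell^{m+1}_j = 1$ and hence $\ell^{m+1}_j + 1 = 2 < 3 \leq \mu^{m+1}_j$. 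Lemma~\ref{Am-not-1-G} then yields $|A^m_{i,f}| > 1$ for each relevant $i, f$. The only conceptual point -- rather than a genuine obstacle -- is the collapse of $\ell^{m+1}_j$ to $1$ under $G_I$-invariance; once that is noted, the proof is a sequence of citations.
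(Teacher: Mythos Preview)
Your proof is correct and follows essentially the same route as the paper's own argument: Observation~\ref{fixC} plus $\dim_m$ for part~(1), Lemma~\ref{omni-G} plus $\mathrm{moves}_m$ for part~(2), and Lemma~\ref{Am-not-1-G} for part~(3). Your explicit verification that $G_I$-invariance forces $\ell^{m+1}_j = 1$ (hence $\ell^{m+1}_j + 1 < \mu^{m+1}_j$) is a useful detail that the paper's one-line ``Directly follows from Lemma~\ref{Am-not-1-G}'' leaves implicit.
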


\begin{proof} By Lemma~\ref{cl1}, we may assume $m \geq 1$.
(1) By Observation~\ref{fixC}, $A^{m+1}_{j,1}$ is $G_I$-invariant  implies
    $B^{m+1}_{j}$  is $G_I$-invariant.
		Then, since $B \nsubseteq \Ascr^0$, ${\rm dim}_m$ implies that
		$d(B) \ge 2$.

(2)  By 1) we may apply  Lemma~\ref{omni-G}.\ref{B1-G} to conclude that
 if $A^{m+1}_{j,1}=\{c\}$, then $A^m_{i,1}$ is $G_I$-invariant for
  some $i$, contradicting $\mathrm{moves}_m$.

(3) Directly follows from Lemma~\ref{Am-not-1-G}.
\end{proof}
With the next result we can  complete the induction. The hypothesis that each $\mu^{m+1}_j \geq 3$ is
essential for the induction.

\begin{conclusion}\label{wrapitup} Under the hypotheses of Theorem~\ref{mr},
 $\mathrm{move}_{m}$ and $\mathrm{dim}_m$ imply
$\mathrm{move}_{m+1}$.
\end{conclusion}
 \begin{proof}  Assume for contradiction that $A^{m+1}_{j,1}$ is $G_I$-invariant.
 The assumption $\mathrm{dim}_m$ and Lemma~\ref{omni}   imply  both $d(B) \ge 2$,
so $\delta(B) \geq 2$, and that
 the hypotheses of Lemma~\ref{small-G}
  hold. % and so $\delta(B) \geq 2$.
And Lemma~\ref{small-G} gives the remaining  hypotheses of Lemma~\ref{long}.
 Indeed, if $C^d_+ = \emptyset$ for some $d$,
we obtain that $A^m_{i,f}$ is $G_I$-invariant for some $i,f$ by Lemma~\ref{small-G}.\ref{short};
that contradicts $\mathrm{moves}_m$.
Now, by Lemma~\ref{long} we obtain that $A^m_{i,1}$ is $G_I$-invariant for some $i$;
that contradicts $\mathrm{moves}_m$.
\end{proof}

\medskip

Completing this induction gives Lemma~\ref{indprop} and
so Theorem~\ref{mr}, asserting there are no $\emptyset$-definable truly
$n$-ary functions.
Now we generalize the result by allowing parameters (Corollary~\ref{nodefwparjb}).

\begin{lemma}\label{unique-tree-decompjb}
Let $I$ be a finite independent set and $J\subset I$.
Let a tuple $\cbar \in \acl(J)$ and $d\in \acl(I)$.
Let $\Ascr_I$ be the $G_I$-normal closure of $I \cup \{d, \cbar\}$,
and $\Ascr_J$ the $G_J$-normal closure of $J \cup \{\cbar\}$.
Then there exists a   tree decomposition $\Tscr_I$ of
 $\Ascr_I$ such that the collection
  $\Tscr_J$ of petals
 $A^m_{f,j}$
that are based in $\Ascr_J$ are a
downwardly closed subset of $\Tscr_I$, whose union is $\Ascr_J$. (By {\em downwardly closed}
we mean that if $A^m_{f,j}\subseteq \Ascr_J$
then $B^j_f \subseteq \Ascr_J$.) %of $\Tscr_I$.)
\end{lemma}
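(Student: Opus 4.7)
The strategy is to construct a linear chain decomposition of $\Ascr_I$ whose initial segment is a linear chain decomposition of $\Ascr_J$, then apply Construction~\ref{decomp1} to it. The guiding observation is that $G_I\subseteq G_J$, so $\Ascr_J$ and $\Ascr_I-\Ascr_J$ are both $G_I$-invariant; hence no $G_I$-cluster of petals in $\Tscr_I$ can straddle them, which gives the clean sub-tree we want.

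I would first fix a chain decomposition $J = Y_0\subseteq Y_1\subseteq\cdots\subseteq Y_s = \Ascr_J$ with each $Y_n-Y_{n-1}$ $0$-primitive over $Y_{n-1}$ (based on some $Z_n\subseteq Y_{n-1}$); this exists because $\Ascr_J$ is finite and strong in $M$. Set $X_0 = I$ and $X_n = I\cup Y_n$ for $1\le n\le s$. The crucial point is that, since $I$ is independent, $I-J$ is fully independent from $\acl(J)\supseteq Y_n$: no $R$-relation joins $I-J$ with $Y_n$, so $\delta$-values over $X_{n-1}=I\cup Y_{n-1}$ coincide with those over $Y_{n-1}$, and each $X_n-X_{n-1}=Y_n-Y_{n-1}$ is still $0$-primitive over $X_{n-1}$ with the same base $Z_n$. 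Then extend by further $0$-primitive extensions $X_s\subseteq X_{s+1}\subseteq\cdots\subseteq X_r = \Ascr_I$, which is possible since $\Ascr_I$ is finite and $X_s\le \Ascr_I$.

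Applying Construction~\ref{decomp1} to this chain produces $\Tscr_I$ with strata $\Ascr^0\subseteq \Ascr^1\subseteq\cdots$, and I would set $\Tscr_J := \{A^m_{j,f}\in \Tscr_I : A^m_{j,f}\subseteq\Ascr_J\}$. The $\Ascr_J$-portion of $\Ascr^0$ equals the natural $\Ascr^0$ of a decomposition of $\Ascr_J$ alone, since for $a\in I-J$ the independence of $I$ gives $\Ascr_J\cap\acl(a)\subseteq\acl(J)\cap\acl(a)=\acl(\emptyset)$. A petal $\hat X_{n+1}$ coming from $n<s$ lies in $\Ascr_J$, and by $G_I$-invariance of $\Ascr_J$ so do all $G_I$-conjugates in its cluster, so the entire cluster lies in $\Tscr_J$. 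Conversely, a petal coming from $n\ge s$ meets $\Ascr_I-\Ascr_J$, and by the same $G_I$-invariance all its $G_I$-conjugates meet $\Ascr_I-\Ascr_J$, contributing nothing. Thus $\Tscr_J$ together with $\Ascr^0\cap\Ascr_J$ exhausts $\Ascr_J$; downward closure is immediate, since the base $B^m_j$ of any petal in $\Tscr_J$ lies in $\Ascr^{m-1}\cap\Ascr_J$, which is built inductively from petals already in $\Tscr_J$ (and elements of $\Ascr^0\cap\Ascr_J$).

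The delicate step will be verifying that $0$-primitivity of $Y_n-Y_{n-1}$ over $Y_{n-1}$ genuinely lifts to $0$-primitivity over the enlarged base $I\cup Y_{n-1}$. This rests on full (not merely $\delta$-)independence of $I-J$ from $\acl(J)$, which is ensured by Axiom~\ref{yax}(4) translating $\delta$-independence into the absence of cross $R$-relations. Once this is in hand, the tree-decomposition construction and the downward-closure verification reduce to bookkeeping driven by the $G_I$-invariance of both $\Ascr_J$ and $\Ascr_I-\Ascr_J$.
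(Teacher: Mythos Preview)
Your proposal is correct and follows essentially the same approach as the paper: build a linear decomposition of $\Ascr_I$ extending one of $\Ascr_J$ (adjoining $I-J$ freely), then run Construction~\ref{decomp1}. The one omission is that you use $\Ascr_J\subseteq\Ascr_I$ (and hence $X_s\le\Ascr_I$) without justification; the paper supplies this via $G_J(\cbar)=G_I(\cbar)$ and monotonicity of $\icl$. Your argument that no cluster straddles $\Ascr_J$ and its complement, using $G_I\subseteq G_J$ and hence $G_I$-invariance of $\Ascr_J$, is a slight variant of the paper's argument (which instead invokes $\Ascr_I^m\cup\Ascr_J\le M$ and $0$-primitivity to prevent splitting), and is arguably cleaner.
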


\begin{proof} We note $G_{J}(\cbar) = G_I(\cbar)$ is the finite collection of automorphic images
 of $\cbar$ over $J$, as for any automorphism $\rho$ fixing $J$ pointwise $\rho\restriction\Ascr_J$ can be
 extended to one fixing $I$.
By this equality and by monotonicity of the intrinsic closure we obtain
$$\Ascr_J = \icl (J\cup G_{J}(\cbar)) = \icl (J\cup G_{I}(\cbar)) \subseteq \icl(I\cup G_I(d, \cbar)) = \Ascr_I$$
Thus, $\Ascr_J\subseteq\Ascr_I$.
Let $L_J = \langle X_i : i \le r_J\rangle$, where $X_0 \le X_1 \le
 \dots \le X_{r_J}$, be a linear decomposition of $\Ascr_J$;
that is, $X_{i+1}$ is a $0$-primitive extension of $X_i$ for each $i<r_j$
 and $X_{r_J} =\Ascr_J$.
Since $I-J$ is independent over $J$, $I-J$ is  independent over $\Ascr_J$, moreover
$I-J$ and $\Ascr_J$ are fully independent.
Then
$$I = X_0' \le X_1 \cup (I-J) \le \dots \le X_{r_J} \cup (I-J)$$
is a linear chain of $0$-primitive extensions and for each $i\leq r_J$
 the base of $X_{i+1}$ is a subset of
$\Ascr_J$.
Thus, it can be considered as an initial segment of a linear decomposition $L_I$
of $\Ascr_I$ by Lemma~\ref{13.4}.
Let  $\Tscr_K$ be the tree decomposition of $\Ascr_K$ determined by $L_K$ for $K \in \{ I, J\}$.
We now show
that if a petal in $\Ascr_I$ intersects $\Ascr_J$,
it is one of $X_{i+1}-X_i$ and a subset of $\Ascr_J$.
Clearly, $\{\acl(a_i) \cap \Ascr_I: a_i \in J\} \subseteq \{\acl(a_i)
 \cap \Ascr_I: a_i \in I\}$ and $\{\acl(a_i) \cap \Ascr_I: a_i \in J\} \subseteq \Ascr_J^0$
  by Construction~\ref{decomp1}.
So we only have to show the result for petals of $\Tscr_I$ of the form $A^{m+1}_{f,j}$.
Note that any $A^{m+1}_{f,j}$ that nontrivially intersects $\Ascr_J$ is contained in
 $\Ascr_J$,
since $\Ascr_I^m \cup \Ascr_J \leq M$
(We start with closed subsets $I$ and $J$ and obtain $\Ascr_I^m \cup \Ascr_J$
by add a sequence of $0$-primitive extensions)
and $A^{m+1}_{f,j}$ is $0$-primitive over $\Ascr_I^m$.

We need to show $A^{m+1}_{f,j} \in \Tscr_J$.

{\bf Level: {\boldmath   1}} If $m=0$ and
$B^{m+1}_f\nsubseteq \bigcup_{a\in J}\acl(a)$,
 then $A^{m+1}_{f,j} \nsubseteq \Ascr_J$ by Construction~\ref{decomp1}.
%since $\Ascr_J \subseteq \acl(J)$.
So, $A^{m+1}_{f,j} \subseteq \Ascr_J^1$
implies $B^{m+1}_f  \subseteq \Ascr_J$.

{\bf Level:} {\boldmath{$m+1$}} Since $A^{m+1}_{f,j} \subseteq \Ascr_J$ and is some $\hat X_i$ of
the given initial segment of the linear decomposition, $B^{m+1}_f \subseteq \Ascr_J$; so
$A^{m+1}_{f,j} \in \Tscr_J$.

By induction we have Lemma~\ref{unique-tree-decompjb}.
Since $\Ascr_J$  is $G_J$-normal, $\bigcup \Tscr_J = \Ascr_J$.
\end{proof}

We have the following immediate corollary.
 Let $r$ be the height (i.e. the largest index
$k$ of an $A^k_{f,i}$ with $A^k_{f,i}\subseteq \Ascr_J$) of $\Ascr_J$.

\begin{lemma}\label{imcor}
For any $m \leq r$, and any
 $A^{m+s+1}_{p,q}\subseteq
\Ascr_J - \Ascr^m_I$,
and  for any $A^{m+1}_{f,j} \subseteq \Ascr^{m+1}_I-\Ascr_J$, $r(A^{m+s+1}_{p,q},
 A^{m+1}_{f,j}) =0$.
\end{lemma}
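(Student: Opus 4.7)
The plan is to combine two structural properties of the Hrushovski construction with the downward closure of $\Tscr_J$ in $\Tscr_I$ already provided by Lemma~\ref{unique-tree-decompjb}. The first property (a) is that at any stratum $m+1$ of $\Tscr_I$, distinct petals are fully (not merely $\delta$-) independent over $\Ascr^m_I$; this is exactly where $\delta$-independence upgrades to full independence in the pure $3$-hypergraph case (Remark~\ref{find}). The second property (b) is that for a $0$-primitive pair $A/D$ in the Hrushovski context, the base $B$ is simultaneously the minimal base and the \emph{maximal} subset of $D$ any of whose elements is $R$-related to an element of $A$ (Remark~\ref{minmax}).

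I would split according to whether $s=0$ or $s\geq 1$. In the case $s=0$, both $A^{m+1}_{p,q}$ and $A^{m+1}_{f,j}$ are petals at stratum $m+1$ of $\Tscr_I$; they are distinct because one lies in $\Ascr_J$ and the other is disjoint from $\Ascr_J$. By (a) they are fully independent over $\Ascr^{m}_I$, so no $R$-tuple meets both. In the case $s\geq 1$, the petal $A^{m+s+1}_{p,q}$ is $0$-primitive over $\Ascr^{m+s}_I$ with base $B^{m+s+1}_p$. Since $A^{m+s+1}_{p,q}\subseteq \Ascr_J$, Lemma~\ref{unique-tree-decompjb} places $A^{m+s+1}_{p,q}$ in $\Tscr_J$, and downward closure of $\Tscr_J$ forces $B^{m+s+1}_p\subseteq \Ascr_J$ as well. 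The hypothesis $A^{m+1}_{f,j}\subseteq \Ascr^{m+1}_I-\Ascr_J$ then yields $A^{m+1}_{f,j}\cap(A^{m+s+1}_{p,q}\cup B^{m+s+1}_p)=\emptyset$, while $A^{m+1}_{f,j}\subseteq \Ascr^{m+s}_I$ because $m+1\leq m+s+1$. By (b), any $R$-tuple containing an element of $A^{m+s+1}_{p,q}$ and some other element lying in $\Ascr^{m+s}_I$ must have that other element in $B^{m+s+1}_p$, ruling out an element of $A^{m+1}_{f,j}$.

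I do not anticipate a real obstacle: the argument is a bookkeeping assembly of the structural rigidity of primitive extensions with the downward closedness of the $J$-subtree. The one place where care is required is the invocation of full independence (not just $\delta$-independence) in the same-stratum case $s=0$; this is precisely where the pure $3$-hypergraph assumption is used, and it is exactly this step that will need a separate adaptation in the Steiner setting of Section~\ref{Steiner}, where petals in a common linear cluster need not be fully independent.
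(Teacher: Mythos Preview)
Your proposal is correct and follows essentially the same route as the paper: split on $s=0$ versus $s\geq 1$, use full independence of distinct petals at a fixed stratum for the first case, and for the second use the downward closure of $\Tscr_J$ from Lemma~\ref{unique-tree-decompjb} to place $B^{m+s+1}_p\subseteq\Ascr_J$, then invoke the base characterization (Remark~\ref{minmax}) to rule out any $R$-relation with $A^{m+1}_{f,j}\subseteq\Ascr^{m+s}_I-\Ascr_J$. Your explicit citation of Remarks~\ref{find} and~\ref{minmax} makes the dependence on the pure $3$-hypergraph setting (as opposed to linear spaces) slightly more transparent than the paper's version, but the argument is the same.
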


 \begin{proof} If $s =0$ the result is clear since the petals over $\Ascr^m_I$
 are fully independent.  If $0<s< r-m$,
we have shown in  Lemma~\ref{unique-tree-decompjb} for any $m \leq r$, any $p,q$ with
 $A^{m+s+1}_{p,q}\subseteq
\Ascr_J - \Ascr_I^m$, that $B^{m+s+1}_p \subseteq \Ascr_J^{m+s}$.
So for any $s\geq 1$, $r(A^{m+s+1}_{p,q},\Ascr_I^{m+s}-\Ascr_J) = 0$.
In particular, $r(A^{n}_{p,q},A^{m+1}_{f,j}) = 0$ for any
 $A^{m+1}_{f,j}\subseteq \Ascr_I - \Ascr_J$, and any $A^{n}_{p,q}\subseteq \Ascr_j$ with
  $m+1 \leq n \leq r$.
  \end{proof}

\begin{corollary}\label{nodefwparjb}
Assume that $\hat T_\mu$ triples.  Then, for $n>1$, no truly
 $n$-ary function is definable in $\hat T_\mu$ {\em even with parameters}.
 \end{corollary}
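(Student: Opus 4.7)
The plan is to absorb the parameter tuple $\cbar$ into an independent set and then relativize the inductive argument of Section~\ref{geq3} using Lemmas~\ref{unique-tree-decompjb} and~\ref{imcor}.

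Assume $f(\xbar,\cbar)$ is a parameter-definable truly $n$-ary function with $n\ge 2$. Pick $\abar=(a_1,\dots,a_n)$ independent over $\cbar$, let $d:=f(\abar,\cbar)$, and choose $\ebar=(e_1,\dots,e_m)$ an $\acl$-basis of $\acl(\cbar)$ arranged so that $I:=\abar\cup\ebar$ is independent of size $n+m\ge 2$. Set $J:=I\setminus\{a_n\}$; then $\cbar\in\acl(\ebar)\subseteq\acl(J)$ and $d\in\dcl(\abar,\cbar)\subseteq\acl(I)$, so Lemma~\ref{unique-tree-decompjb} supplies compatible tree decompositions $\Tscr_J\subseteq\Tscr_I$ of $\Ascr_J\subseteq\Ascr_I$.

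First I would show $d\notin\Ascr_J$. Because $f$ truly depends on $x_n$, the slice $g(x):=f(a_1,\dots,a_{n-1},x,\cbar)$ is non-constant, so by strong minimality its image is infinite. Any $\sigma\in\aut(M)$ fixing $J\cup\cbar$ pointwise and sending $a_n$ to a generic $a_n'$ over $J\cup\cbar$ gives $\sigma(d)=g(a_n')$, producing infinitely many conjugates of $d$ and ruling out $d\in\acl(J)\supseteq\Ascr_J$. Thus $d$ lies in some petal of $\Tscr_I-\Tscr_J$. Lemma~\ref{imcor} now gives full $R$-independence between that subtree and the above-ground portion of $\Tscr_J$, which makes $\Ascr_J$ a legitimate substitute for $\Ascr^0$ in the inductive skeleton of Lemmas~\ref{petals}--\ref{wrapitup}: the tripling hypothesis drives the $\delta(B)=2$ step exactly as before, and one concludes that no petal of $\Tscr_I-\Tscr_J$ is $G_I$-invariant and $d(G_I(e))\ge 2$ for every $e\in\Ascr_I-\Ascr_J$.

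By the observation opening the proof of Lemma~\ref{unique-tree-decompjb}, $G_I\cdot\cbar=G_J\cdot\cbar\subseteq\Ascr_J$, so for each $\sigma\in G_I$ one has $\sigma(d)=f(\abar,\sigma(\cbar))$ with $\sigma(\cbar)$ ranging over a finite $G_I$-invariant subset of $\Ascr_J$. When $\cbar\in\dcl(I)$ the orbit collapses to $\{d\}$, giving $d\in\dcl^*(I)$ and a direct contradiction with Theorem~\ref{mr}. In general, the finite $G_I$-orbit of $d$ is hosted by a cluster of petals whose base $B$ sits in $\Ascr_J$ and whose $G_I$-conjugate bases $\{g(B):g\in G_I\}$ assemble into a $G_I$-invariant cluster inside $\Tscr_I-\Tscr_J$, which contradicts the ``moves'' output of the relativized induction. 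The principal obstacle is this last identification: showing rigorously that $d$'s petal fits into such a $G_I$-invariant cluster, which depends on the downward-closedness of $\Tscr_J$ in $\Tscr_I$ and the $R$-independence supplied by Lemma~\ref{imcor}, precisely the reason those two lemmas were proved.
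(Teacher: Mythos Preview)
Your setup is sound and close to the paper's: absorb $\cbar$ into an independent set, invoke Lemma~\ref{unique-tree-decompjb} to nest $\Ascr_J\subseteq\Ascr_I$, and use Lemma~\ref{imcor} for $R$-independence. Your reduction of the paper's $n$ sets $\Ascr_q$ to the single $\Ascr_J=\Ascr_{I\setminus\{a_n\}}$ is a genuine simplification, and your argument that $d\notin\Ascr_J$ is correct.

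The gap is in what follows. Your ``relativized induction'' is meant to yield that no petal of $\Tscr_I-\Tscr_J$ is $G_I$-invariant and $d(G_I(e))\ge 2$ for $e\in\Ascr_I-\Ascr_J$. But this is \emph{not} the needed conclusion: you must produce an automorphism fixing $\abar$ \emph{and} $\cbar$ pointwise that moves $d$. An element of $G_I$ that moves $d$ may well move $\cbar$ (since $\cbar\in\acl(I)\setminus\dcl(I)$ in general), so $d(G_I(d))\ge 2$ is perfectly compatible with $d\in\dcl(\abar,\cbar)$. Your final paragraph attempts to close this, but the claim that the base $B$ of $d$'s petal ``sits in $\Ascr_J$'' is unjustified (it sits in $\Ascr^m$, not $\Ascr_J$), and the sentence about $G_I$-conjugate bases assembling into a cluster ``inside $\Tscr_I-\Tscr_J$'' contradicts your own premise that $B\subseteq\Ascr_J$.

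The paper does not rerun the induction at all. It simply cites $\mathrm{moves}_{m+1}$ from Theorem~\ref{mr} for the \emph{standard} decomposition of $\Ascr_I$: the petal $A^{m+1}_{f,j}\ni d$ has a $G_I$-conjugate $A^{m+1}_{f,j'}$ via some $\tau$ fixing $\Ascr^m$ pointwise. The real work (and the real use of Lemma~\ref{imcor}) is then to show $r(A^{m+1}_{f,j},\,\Ascr_J-\Ascr^m)=0$, so that the patched map
\[
\rho \;=\; \tau\restriction(\Ascr^m\cup A^{m+1}_{f,j})\;\cup\; \mathrm{id}_{\Ascr_J-\Ascr^m}
\]
is a partial isomorphism on a strong set, hence extends to $\aut(M)$. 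This $\rho$ fixes $\abar\subseteq\Ascr^m$ and $\cbar\subseteq\Ascr_J$ pointwise while moving $d$, which is the contradiction. You have all the ingredients for this step but never carry it out; replace the ``relativized induction'' paragraph with this explicit patching argument and the proof goes through. (Minor: the claim $d\in\dcl^*(I)$ when $\cbar\in\dcl(I)$ also needs the side condition that $d\notin\dcl(I\setminus\{e_j\})$ for each $j$, which you have not argued.)
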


 \begin{proof} Let $M \models \hat T_\mu$ and suppose $\phi(y, \xbar, \cbar)$ defines a
truly $n$-ary function $y = g(\xbar)$ on $M^n$.
Taking $M$ saturated,
we can choose $a_1, a_2, \dots, a_n$ independent over $\cbar$.
Fix $\cbar'$ maximal independent inside $\cbar$.
Then $I = \{ a_1, \dots a_n\} \cup \{ x\in \cbar'\}$
is independent.
Let $\Ascr = \Ascr_I$ be the $G_I$-normal closure
of $I \cup \{d, \cbar\}$, where $d = g(\abar)$.

For each $q\in \{ 1, \dots, n\}$ we define $I_q = \{a_q\} \cup\{x\in \cbar'\}$.
Clearly, $I_q$ is independent as a subset of the independent set $I$.
Let $\Ascr_q$ be the $G_{I_q}$-normal closure of $\{\cbar\}$, that is,
 $\icl(I_q \cup G_{I_q}(\cbar))$.  We apply Lemma~\ref{unique-tree-decompjb}
 with $J = I_q$.

We now consider two cases.
First, assume that $d$ is not in $\bigcup_{q=1}^n \Ascr_q$.
Let $d \in A^{m+1}_{f, j}$, where $A^{m+1}_{f, j}$ is
a petal in the $G_I$-decomposition of $\Ascr$.
 Applying ${\rm moves}_{m+1}$ to $\Ascr$ we obtain
that there exists $A^{m+1}_{f, j'}$ with $j\ne j'$.
By strata decomposition there exists $\tau \in\aut(M/\Ascr^m)$ such that
$\tau(A^{m+1}_{f,j}) = A^{m+1}_{f, j'}$.

Since $q$ is arbitrary,
Lemma~\ref{imcor} implies
$\sum_{q=1}^n r(A^{m+1}_{f, j}, \Ascr_q-\Ascr^m) = 0$.
But we need more, namely,
$r(A^{m+1}_{f, j}, \bigcup_{q=1}^n \Ascr_q-\Ascr^m) = 0$.
Assume the contrary, that there exist $a\in A^{m+1}_{f,j}$,
$b\in \Ascr_q-\Ascr^m$, and $c\in \Ascr_p-(\Ascr^m\cup \Ascr_q)$ with $R(a, b, c)$.
Not both $b$ and $c$ can be on the same strata,
because they are in different petals but petals on the same strata are fully independent.
Then one of them, say $b$, is in a lower strata, say, in $\Ascr^{m+k}$ for some $k\ge 1$.
Then $\{a, b\} \subseteq \Ascr^{m+k}$ and $(\{a, b\}, c)$ is a good pair.
By $G$-decomposition we obtain that $\{c\} = A^{m+k+1}_{u,v}$ for some $u$ and $v$.
As $A^{m+k+1}_{u,v}\subseteq \Ascr_p$, by Lemma~\ref{unique-tree-decompjb}
its base $B^{m+k+1}_u = \{a, b\}$
is in $\Ascr_p$.
So, both $b$ and $c$ are in $\Ascr_p$, but $r(A^{m+k+1}_{u,v}, \Ascr_p - \Ascr^m) = 0$ by Lemma~\ref{imcor},
for a contradiction.

Now we finish the proof of the first case.
Let $\rho = \tau\restriction(\Ascr^m\cup A^{m+1}_{f, j}) \cup id_{\bigcup_{q=1}^n \Ascr_q-\Ascr^m}$.
Recall, that $\tau\restriction\Ascr^m$ is the identity,
so $\rho$ moves only $A^{m+1}_{f, j}$ to $A^{m+1}_{f, j'}$ and
 fixes $\Ascr^m \cup \bigcup_{q=1}^n \Ascr_q$.
By our last remark, $r(A^{m+1}_{f, j'}, \Ascr^m \cup \bigcup_{q=1}^n \Ascr_q) = r(A^{m+1}_{f, j}, B^{m+1}_{f})$.
Taking into account $A^{m+1}_{f, j} \cup B^{m+1}_{f} \cong_{B^{m+1}_{f}}A^{m+1}_{f, j'}\cup B^{m+1}_{f}$, we obtain
 $\rho$ is a partial isomorphism and can be extended  to an isomorphism $\rho'$ of $M$,
as its domain is closed in $M$.
So, $\rho'$ fixes $\abar$ and $\cbar$,
but, by choice of case, moves $d$, for a contradiction.

Theorem~\ref{nodefwparjb} by doing the second case:
$d \in \bigcup_{q=1}^n \Ascr_q$, say, $d\in \Ascr_1$.
Let $a_n'$ be independent over $\abar \cbar$.
Let $\rho\in \aut(M/\cbar)$ fix $a_q$ for $q<n$ and move $a_n$ to $a_n'$.
Then $\rho\in G_{I_1}$, so $\rho(d) = d$ and $d= g(a_1, \dots, a_{n-1}, a_n')$,
that contradicts $f$ is truly $n$-ary.
\end{proof}

\begin{remark}\label{narycomgeom}{\rm  Note that depending on whether $\hat T_\mu$ triples, there may or
may not be  a truly $n$-ary function. \cite[Theorem 3.1]{EvansFerII}
 show that for any  $\hat T_\mu$ the geometry
  (i.e. of the countable saturated model $M$),
   with any finite set
 $X$   with $|\acl_{M}(X)|$ infinite named,
 is isomorphic to that of the $\omega$-stable version of the
 construction\footnote{Mermelstein
 (personal communication) has shown this result extends to Steiner systems.}. Since
 $\acl(\emptyset)$ is easily made infinite e.g. \cite[Lemma 5.26]{BaldwinPao}, it is easy
 to construct examples with the same geometry.

We show now that for our general situation $\acl_M(X)$ is infinite for any finite $X$.
For this, it is sufficient to show that for any finite set $B$ there are infinitely many
pairwise non-isomorphic good pairs $A/B$. Let $B = \{b_1, \dots, b_k\}$.
Let $n\ge \max\{ 3, k\}$ and $A_n = \{a_1, \dots, a_n\}$. Let $\langle c_1, \dots, c_n\rangle$ be a sequence
over $B$, that contains each element of $B$. We put
$R(a_{i}, a_{i+1}, c_i)$ for each $i\in \{ 1, \dots, n-1\}$ and
$R(a_n, a_1, c_n)$.

In contrast when $B =\emptyset$
 %is slightly more complicated as
 the Hrushovski adequacy condition
can be satisfied when $\mu(A/\emptyset) =0$ for any $A$ primitive over $\emptyset$. And
it is not hard to show that amalgamation still holds \cite{BaldwinsmssII}. But, in such a
case the geometries are not elementarily equivalent
 as the formula $(\forall x) D_1(x)$ holds in the pregeometry with
  $\acl(\emptyset) =\emptyset$
where $D_n$ is a predicate that holds of $n$ independent elements.

{\em Thus the varied behavior of our examples show the coarseness of
 classifying only by geometry.}}
\end{remark}

\subsection{Counterexample} \label{ce}
\setcounter{theorem}{0} Let $M$ be any model of $\hat T_\mu$ with  $\mu(\boldsymbol{\alpha}) = 2$.
The following example satisfies $\mathrm{dcl}^*{I}\ne\emptyset$  for $I=\{ a_1, a_2\}$.
This shows the assumption that $\mu(A/B) \ge 3$ for any good pair
$(A/B)$,
where $\delta(B) = 2$, is essential to show $\mathrm{dcl}^*{I} =\emptyset$ (Theorem~\ref{mr}).

We sketch the motivation for the example.
Recall, that in the decomposition of $\Ascr$ into strata
 we have the 0-strata $\Ascr^0$,
that is obtained as $\Ascr^0 = \bigcup_{i=1}^2(\Ascr \cap \acl(a_i))$.
%The essential part of proving Theorem~\ref{mr} is the hypothesis that
%$\mu(A/B)\ge 3$ for any good pair $(A/B)$ with $\delta(B) = 2$.
We have used the hypothesis, $\mu(A/B)\ge 3$ for any good pair
 $(A/B)$ with $\delta(B) = 2$, twice in the proof of Lemma~\ref{long}.
Recall Example~\ref{examp1}, which shows that it is possible
that $\Ascr$ is $G_{\{ I\}}$-invariant and
$A^2_{1,1}$ is $G_{\{ I\}}$-invariant. However, in that example
the elements $c_1$ and $c_2$ are indiscernible over $\{ a_1, a_2\}$.
This happens because the elements $a_1$ and $a_2$ are indiscernible over $\{ b_1, b_2\}$
 and
$\{ a_1, a_2\} \cup \{ b_1, b_2\} \cong_{\{ b_1, b_2\}} \{ c_1, c_2\} \cup \{ b_1, b_2\}$.
Below we modify $A^1_{1,1}$ so that $a_1$ and $a_2$ are no longer
indiscernible over $A^1_{1,1} \cup A^1_{1,2}$.

Let $tp(a_1/ A^1_{1,1}) \ne tp(a_2 / A^1_{1,1})$:  for instance the number of
relations of $a_1$ with $A^1_{1,1}$ and the number of relations of $a_2$ with
$A^1_{1,1}$ are different.
Then $A^2_{1,1}$ contains a copy of $a_1$, or a copy of $a_2$, or both. But
if $a_1$ and $a_2$ are distinguishable over $B^1_1$ inside $\Ascr^1$
(realize different types over $B^1_1$), then their
copies $\alpha_1, \alpha_2$ in $A^2_{1,1}$ are distinguishable, too.
But then the $\alpha_i$ can belong to $\dcl^*(I)$.

\bigskip
The following construction describes an accessible case of
  the  general strategy described in Remark~\ref{exstrat}.

\begin{example}\label{examp2}{\rm
We consider the following example with universe $\{a_1,a_2\} \cup\{c_1,c_2,c_3\}$
and then $9$ more points with $d,\delta,\gamma$ replacing $c$. We define the following relations:

\begin{itemize}
\item[$0$)] $\Ascr^0 = I = \{ a_1, a_2\}$.
\item[$1,1$)] $A_{1,1}^1 = \{ c_1, c_2, c_3\}$ with  $R(a_1, c_1, c_3)$,
$R(a_2, c_1, c_2)$, and $R(a_2, c_2, c_3)$.
\item[$1,2$)] $A_{1,2}^1 = \{ d_1, d_2, d_3\}$ with  $R(a_1, d_1, d_3)$,
$R(a_2, d_1, d_2)$, and $R(a_2, d_2, d_3)$.
\item[$2,1$)] $A_{1,1}^2 = \{ \alpha_1, \alpha_2, \gamma_1, \delta_1, \gamma_3, \delta_3\}$ with the following relations:
$R(\alpha_1, \gamma_1, \gamma_3)$,
$R(\alpha_2, \gamma_1, c_2)$, $R(\alpha_2, c_2, \gamma_3)$, and
$R(\alpha_1, \delta_1, \delta_3)$,
$R(\alpha_2, \delta_1, d_2)$, $R(\alpha_2, d_2, \delta_3)$.
\end{itemize}

Set $\mu(A^1_1) = \mu(A^2_1) =2$.
In the diagrams, we represent a triple satisfying $R$ by a triangle.

\begin{center}
\begin{minipage}[h]{0.65\linewidth}
\begin{center}
\begin{picture}(180,140)
\put(8, 44){$a_1$}
\put(8, 104){$a_2$}
\put(22, 45){\circle*{3}}
\put(22, 105){\circle*{3}}
\put(6, 15){\dashbox{1}(30,120)[c]{~}}
\put(8, 0){$\Ascr^0$}
%%% A^1_{1,2}
\put(40, 78){\dashbox{1}(60,57)[c]{~}}
\put(80, 126){$A^1_{1,2}$}
\put(52, 90){\circle*{3}}\put(44, 95){$d_1$}
\put(82, 105){\circle*{3}}\put(70, 108){$d_3$}
\put(76, 126){\circle*{4}}\put(63, 127){$d_2$}
% R(a_2,d_1,d_2)
\qbezier(52,90)(64,108)(76,126)
\qbezier(22,105)(49,115)(76,126)
\qbezier(22,105)(37,98)(52,90)
% R(a_2, d_2, d_3)
\qbezier(76,126)(79,115)(82,105)
\qbezier(22,105)(52,105)(82,105)
\qbezier(52,90)(60,93)(82,105)
% R(a_1, d_1, d_3)
\qbezier(22,45)(37,68)(52,90)
\qbezier(22,45)(52,75)(82,105)
%%% A^1_{1,1}
\put(40, 15){\dashbox{1}(60,57)[c]{~}}
\put(80, 19){$A^1_{1,1}$}
\put(52, 30){\circle*{3}}\put(44, 25){$c_1$}
\put(82, 45){\circle*{3}}\put(78, 38){$c_3$}
\put(76, 66){\circle*{4}}\put(63, 66){$c_2$}
% R(a_2,c_1,c_2)
\qbezier(52,30)(64,48)(76,66)
\qbezier(22,105)(49,86)(76,66)
\qbezier(22,105)(37,67)(52,30)
% R(a_2, c_2, c_3)
\qbezier(76,66)(79,55)(82,45)
\qbezier(22,105)(52,75)(82,45)
\qbezier(52,30)(67,37)(82,45)
% R(a_1, c_1, c_3)
\qbezier(22,45)(37,38)(52,30)
\qbezier(22,45)(52,45)(82,45)
% A^1
\put(42, 0){$\Ascr^1$}
% A^2_{1,1}
\put(104, 15){\dashbox{1}(70,120)[c]{~}}
\put(105, 0){$\Ascr^2$}
\put(155, 19){$A^2_{1,1}$}
\put(158, 45){\circle*{3}} \put(159, 46){$\alpha_1$}
\put(158, 105){\circle*{3}}\put(159, 106){$\alpha_2$}
\put(132, 45){\circle*{3}} \put(135, 48){$\gamma_3$}
\put(132, 105){\circle*{3}}\put(130, 108){$\delta_3$}
\put(112, 30){\circle*{3}}\put(113, 25){$\gamma_1$}
\put(112, 90){\circle*{3}}\put(108, 95){$\delta_1$}
% R(\alpha_1, \delta_1, \delta_3)
\qbezier(158,45)(145,75)(132,105)
\qbezier(158,45)(135,68)(112,90)
\qbezier(132,105)(122,97)(112,90)
% R(\alpha_1, \gamma_1, \gamma_3)
\qbezier(158,45)(145,45)(132,45)
\qbezier(158,45)(135,38)(112,30)
\qbezier(132,45)(122,37)(112,30)
% R(\alpha_2, \delta_1, d_2)
\qbezier(158,105)(117,115)(76,126)
\qbezier(158,105)(135,97)(112,90)
\qbezier(112,90)(94,108)(76,126)
% R(\alpha_2, \delta_3, d_2)
\qbezier(158,105)(117,115)(76,126)
\qbezier(158,105)(145,105)(132,105)
\qbezier(132,105)(104,115)(76,126)
% R(\alpha_2, \gamma_1, c_2)
\qbezier(158,105)(117,85)(76,66)
\qbezier(158,105)(135,67)(112,30)
\qbezier(112,30)(94,48)(76,66)
% R(\alpha_2, \gamma_3, c_2)
\qbezier(158,105)(117,85)(76,66)
\qbezier(158,105)(145,75)(132,45)
\qbezier(76,66)(105,55)(132,45)
\end{picture}
\end{center}
\captionof{figure}{$\mathrm{dcl}^*(I)\ne\emptyset$}\label{Ex4} %\label{Example2}
\end{minipage}
\hfill
\begin{minipage}{0.30\linewidth}
\begin{center}
\begin{picture}(104,140)
\put(8, 44){$a_1$}
\put(8, 104){$a_2$}
\put(22, 45){\circle*{3}}
\put(22, 105){\circle*{3}}
\put(6, 15){\dashbox{1}(30,120)[c]{~}}
\put(8, 0){$\Ascr^0$}
%%% A^1_{1,2}
\put(40, 78){\dashbox{1}(60,57)[c]{~}}
\put(80, 126){$A^1_{1,2}$}
\put(52, 90){\circle*{3}}\put(44, 95){$d_1$}
\put(82, 105){\circle*{3}}\put(70, 108){$d_3$}
\put(76, 126){\circle*{4}}\put(63, 127){$d_2$}
% R(a_2,d_1,d_2)
\qbezier(52,90)(64,108)(76,126)
\qbezier(22,105)(49,115)(76,126)
\qbezier(22,105)(37,98)(52,90)
\qbezier(47,93)(57,107)(67,122.3)
\qbezier(42,95.2)(50,107)(58,118.7)
\qbezier(37,98)(43,107)(49,115.3)
\qbezier(32,100.5)(36,106)(40,112)
\qbezier(27,103)(29,105.5)(30.5,108)
% R(a_2, d_2, d_3)
\qbezier(76,126)(79,115)(82,105)
\qbezier(22,105)(52,105)(82,105)
\qbezier(52,90)(60,93)(82,105)
\qbezier(72,105)(69,114)(67,122)
\qbezier(62,105)(60,112)(58,118.7)
\qbezier(52,105)(50,110)(49,115)
\qbezier(42,105)(41,108)(40,112)
\qbezier(32,105)(31.5,106.5)(31,108)
% R(a_1, d_1, d_3)
\qbezier(22,45)(37,68)(52,90)
\qbezier(22,45)(52,75)(82,105)
\qbezier(47,82.5)(59.5,88.5)(72,95)
\qbezier(42,75)(52,80)(62,85)
\qbezier(37,67.5)(44.5,71.5)(52,75)
\qbezier(32,60)(37,62.5)(42,65)
\qbezier(27,52.5)(29.5,53.75)(32,55)
%%% A^1_{1,1}
%\put(40, 15){\dashbox{1}(60,57)[c]{~}}
%\put(80, 19){$A^1_{1,1}$}
% A^1
\put(42, 0){$\Ascr^1$}
\end{picture}
\end{center}
\captionof{figure}{$\Ascr^1$}\label{Ex5}    %\label{Example3}
\end{minipage}
\end{center}

Figure~\ref{Ex5} shows by shaded triangles the $R$-triples in $I\cup \{ d_1, d_2, d_3\}$.
The petals $A^1_{1,1}$ and $A^1_{1,2}$ are isomorphic over $I$.

Clearly, $G_I(c_2) = \{ c_2, d_2\}$, because there is no relation of either of these elements
with $a_1$ and there are two relations of each one with $a_2$.
By mapping the point with the Greek label to the corresponding Roman one, we show
that $A^2_{1,1} = \{ \alpha_1, \alpha_2, \gamma_1, \gamma_3, \delta_1, \delta_3\}$ is
  isomorphic
to $\Ascr^1-B^2_1 = \{ a_1, a_2, c_1, c_3, d_1, d_3\}$  over $B^2_1= \{ c_2, d_2\}$.

It is  routine to check that
$\Ascr^1-B^2_1$ is $0$-primitive over $B^2_1$.
Obviously, $A^2_{1,1}$ is $G_I$-invariant.
The element $\alpha_2$ is a unique element in $A^2_{1,1}$ which is in $4$ relations in $\Ascr^2$,
so $\alpha_2\in \mathrm{dcl}^*(I)$.}
\end{example}

\begin{remark}\label{exmoral} {\rm
\begin{enumerate}\item In Example~\ref{examp2} ${\rm moves}_1$ and ${\rm dim}_1$
one hold; but $\mu^2_{1,1} =2$   so we cannot apply Lemma~\ref{long}.A to
conclude ${\rm moves}_2$. In fact, $A^2_{1,1}$ is $G_I$-invariant.
\item
Note that this example is not a linear space (Section~\ref{Steiner}); if it satisfied the
linear space
axiom each of $A^1_{1,1}$ and $A^1_{1,2}$ would be a clique.
\item $\alpha_2$ is in $\dcl^*(I)$ but not in $\sdcl^*(I)$,
because an automorphism which swaps $a_1$ and $a_2$ cannot
preserve $A^1_{1,1}\cup A^1_{1,2}$, since in $\Ascr^1$, $a_1$ is in two relations
and $a_2$ is in four relations.  Thus this structure is $G_I$-invariant but  not $G_{\{I\}}$-invariant. In order
to build $\Ascr^{G_{\{I\}}}$ we add new copies of $A^1_{1,1}$ and $A^1_{1,2}$:
\begin{enumerate}
\item[$2,1$)] $A_{2,1}^1 = \{ c'_1, c'_2, c'_3\}$ with  $R(a_2, c'_1, c'_3)$,
$R(a_1, c'_1, c'_2)$, and $R(a_1, c'_2, c'_3)$.
\item[$2,2$)] $A_{2,2}^1 = \{ d'_1, d'_2, d'_3\}$ with  $R(a_2, d'_1, d'_3)$,
$R(a_1, d'_1, d'_2)$, and $R(a_1, d'_2, d'_3)$.
\end{enumerate}
Now there is an $ f \in G_{\{I\}}$  with  $f(a_0) = a_1$, $f(a_1) = a_0$ that maps $A^1_{1,i}$ to  $A^1_{2,i}$.
One can construct an $A^2_{2,1}$ containing $\alpha'_2$ that is the image of $\alpha_2$ under $f$.
\item
Note that $\{ A^1_{1,1}, A^1_{1,2}\}$ is called
a flower of $A^1_{1,1}$ over its base $B^1_1$ in Definition~\ref{flowerdef}.
Also note that $\{ A^1_{2,1}, A^1_{2,2}\}$ is another flower of $A^1_{1,1}$ over its base
$B^1_1$. The difference is that if we arrange $B^1_1$ for the first considered flower
as $\langle a_1, a_2\rangle$, then for the second flower the arrangement of $B^1_1$
must be $\langle a_2, a_1\rangle$.
\item  In Definition~\ref{bouquetdef}, we call the collection
$\{ \{ A^1_{1,1}, A^1_{1,2}\}, \{ A^1_{2,1}, A^1_{2,2}\}\}$ of all of these flowers a bouquet.
\end{enumerate}}
\end{remark}

We now explain here the methodology and motivation  for  constructing a set
with non-empty $\dcl^*(I)$. It may be useful for further examples.

 \begin{remark}\label{exstrat} {\rm
Let $E_i$ be a subset of $\acl(a_i)$, for $i\in \{1, 2\}$
such that $\delta(E_1) = \delta(E_2) =1$ (that is, $E_i \le M$).
The most simple case is $E_i = \{ a_i\}$.
Let $B^1_1 = E_1 \cup E_2$.
Then $\delta(B^1_1) = 2$. Let $A^1_{1,1}$ be any set that is good over $B^1_1$. We
put $\mu(A^1_{1,1}/B^1_1)=2$.
So, $\Ascr^1 = \icl(I \cup E_1 \cup E_2) \cup A^1_{1,1} \cup A^1_{1,2}$.

We choose one element $b_i$ from $A^1_{1,i}$ for $i = 1,2$.
Let $B^2_1 = \{ b_1, b_2\}$.
Then $\delta(B^2_1) = \delta(\Ascr^1) = 2$ and
there is a chain $B^2_1 = X_0 \le X_1 \le \dots \le X_r =
\Ascr^1$ such that $X_{i+1}$ is a $0$-primitive extension of $X_i$.
 So, $X_1$ is a $0$-primitive extension of $B^2_1$ and is a subset of $\Ascr^1$.
We must choose $A^1_{1,1}$ and $B^2_1$ so that $X_1$ is good over $B^2_1$.
This is not true in general\footnote{Here is  a counterexample.  To begin with
 we find a $0$-primitive extension of a one element  set, $\{b\}$.
 We consider $A'$ as  four points
$c_1,c_2, d_1, d_2$ satisfying  $R(c_1,c_2,b)$, $R(d_1,d_2,b)$, $R(c_1,d_1,b),
R(c_2,d_2,b)$. There are 5 points 4 edges and any subset has larger $\delta$.

We would like to make this structure $0$-primitive over $a_1,a_2$. It needs one  more trick.
Replace $A'$ by
$A$ by adding a point $c_3$ to $A'$  and replacing the edge $R(c_1,c_2,b)$ by
two edges $R(c_1,c_2,c_3), R(c_2,c_3,b)$. Then, $A$
is $0$-primitive over $\{b\}$.
Now consider two new elements $A_0 =\{a_1,a_2\}$; we want $A$ $0$-primitive over $A_0$. For this
let the new relations be $R(c_1,c_2,a_1),R(d_1,d_2,a_2)$.

So while the discussion here is fine for motivating the example it doesn't suffice
 to show that $\Ascr^1$ with  pair $A^1_{1,1},A^1_{1,1}$ must contain an $A^2_{1,1}$
  good over one point from each.}, but Example~\ref{examp2} shows it can be done.

We are going to find $A^2_{1,2}$ inside $\Ascr^1$ in order to make
$A^2_{1,1}$ $G_I$-invariant.
Suppose there are $m$ copies of $X^1$ over $B^2_1$ that are inside $\Ascr^1$;
put $\mu(X_1/ B^2_1) = m+1$.
Let $A^2_{1,1}$ be the (m+1)-th copy of $X_1$ over $B^2_1$. Obviously,
$A^2_{1,1}$ is not in $\Ascr^1$. We put $\Ascr^2 = \Ascr^1 \cup A^2_{1,1}$.
 If $B^2_1$ is fixed pointwise by $G_{I}$
 (that is, $b_1$ is definable
in $A^1_{1,1}$ over $I$ and $b_2$ its copy in $A^1_{1,2}$),
then $\ell^2_1 = 1$ and $|J^2_1| = 1$.}
\end{remark}

The following is not essential to achieving Remark~\ref{exstrat} but is mandated by the construction.
\begin{claim}\label{smclaim}
The intersection $X_1 \cap B^1_1$ is not empty.
\end{claim}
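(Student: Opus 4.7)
The plan is to argue by contradiction via Observation~\ref{obsa}. Note first that $B^2_1 \cap B^1_1 = \emptyset$, because each $b_i$ lies in the petal $A^1_{1,i}$, which is disjoint from its base $B^1_1$. Writing $\hat X_1 = X_1 - B^2_1$ for the $0$-primitive part of $X_1$ over $B^2_1$, we therefore have $X_1 \cap B^1_1 = \hat X_1 \cap B^1_1$. Suppose for contradiction that $\hat X_1 \cap B^1_1 = \emptyset$. I will show $\hat X_1$ is then disconnected over $B^2_1$ in the sense of Observation~\ref{obsa}, contradicting the fact that a $0$-primitive extension must be connected over its base.

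First, I would use the goodness of the pair $\hat X_1/B^2_1$ to force $\hat X_1$ to meet both petals $A^1_{1,1}$ and $A^1_{1,2}$. Fix $b_1 \in B^2_1$: by goodness there is an $R$-triple $R(b_1, y, z)$ with $y \in \hat X_1$ and $z \in \hat X_1 \cup B^2_1$. Since $A^1_{1,1}$ is $0$-primitive over $B^1_1$, Remark~\ref{minmax} says $B^1_1$ is the maximal subset of $\icl(I \cup E_1 \cup E_2)$ related to $A^1_{1,1}$; hence every $R$-triple containing $b_1$ has its remaining entries in $A^1_{1,1} \cup B^1_1$. Because $\hat X_1$ avoids $B^1_1$, this forces $y \in A^1_{1,1}$, so $\hat X_1 \cap A^1_{1,1} \neq \emptyset$. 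The symmetric argument applied to $b_2$ yields $\hat X_1 \cap A^1_{1,2} \neq \emptyset$.

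Second, I would partition $\hat X_1$ as $A_1 = \hat X_1 \cap A^1_{1,1}$ and $A_2 = \hat X_1 - A_1$, both non-empty by the previous step, and verify that no triple $R(a, b, d)$ with $a \in A_1$, $b \in A_2$ and $d \in \hat X_1 \cup B^2_1$ exists. Since $\hat X_1$ avoids $B^1_1$, every such $b$ lies in $A^1_{1,2} - B^1_1$ or in $\icl(I \cup E_1 \cup E_2) - B^1_1$. In the first case the triple would involve $a \in A^1_{1,1} - B^1_1$ and $b \in A^1_{1,2} - B^1_1$, contradicting full independence of the two petals over $B^1_1$ (Notation~\ref{bhat}); in the second case, Remark~\ref{minmax} again forbids any $R$-relation between $a \in A^1_{1,1}$ and an element of $\icl(I \cup E_1 \cup E_2) - B^1_1$.

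Thus $\hat X_1$ is disconnected over $B^2_1$, contradicting Observation~\ref{obsa}, and so $X_1 \cap B^1_1 \neq \emptyset$. The only non-routine point is the translation of goodness of $\hat X_1/B^2_1$ into the statement that $\hat X_1$ meets each petal; once that is secured, disjointness from $B^1_1$ combined with the petal-independence of the Hrushovski construction automatically cuts $\hat X_1$ into two halves with no intervening $R$-triples.
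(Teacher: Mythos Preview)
Your proof is correct. The underlying mechanism is the same as the paper's: once $X_1$ avoids $B^1_1$, the full independence of the petals $A^1_{1,1}$, $A^1_{1,2}$ and $\Ascr^0 - B^1_1$ forces $\hat X_1 = X_1 - B^2_1$ to fall apart in a way incompatible with being $0$-primitive over $B^2_1$. The packaging differs, however. The paper argues by a direct $\delta$-computation: it writes
\[
0 = \delta(X_1/B^2_1) = \delta(X_1\cap A^1_{1,1}/B^2_1) + \delta(X_1\cap A^1_{1,2}/B^2_1) + \delta(X_1 - (A^1_{1,1}\cup A^1_{1,2})/B^2_1),
\]
observes each summand is nonnegative (since $B^2_1 \le X_1$), hence each equals $0$, and then invokes the definition of good pair to obtain a contradiction. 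You instead invoke goodness \emph{first} to force $\hat X_1$ to meet both petals, then exhibit an explicit two-block partition with no connecting $R$-triples, and appeal to Observation~\ref{obsa} for the contradiction. Your route has the virtue of making explicit why the pieces are nonempty (a point the paper's one-line ``that contradicts the definition of a good pair'' leaves to the reader); the paper's route is shorter. One small wording fix: the petals $A^1_{1,1}$ and $A^1_{1,2}$ are fully independent over $\Ascr^0$ (equivalently over $\icl(I\cup E_1\cup E_2)$), not over $B^1_1$ as you write---though since $B^1_1 \subseteq \Ascr^0$ this does not affect the argument.
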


\begin{proof}
This follows from the fact that $A^1_{1,1}$ and $A^1_{1,2}$ are free over
$\Ascr^0$.
Indeed, assume that $X_1\cap B^1_1 = \emptyset$.
Then
$$0=\delta(X_1/B^2_1) = \delta (X_1\cap A^1_{1,1}/B^2_1)
+ \delta (X_1\cap A^1_{1,2}/B^2_1) + \delta (X_1 - (A^1_{1,1}\cup A^1_{1,2})/ B^2_1)$$
because there are no relations between $A^1_{1,1}$, $A^1_{1,2}$ and $\Ascr^0-B^1_1$.
Then each of these predimensions
is equal to $0$; that contradicts the definition of a good pair.\end{proof}

\subsection{Bouquets and Flowers}\label{bouflow}

In Remark~\ref{exmoral}.3, we noted that to make a $G_{\{I\}}$-normal structure
%$G_{\{I\}}$-normal
we required not only
an image of a $0$-primitive $A$ with base $B$ but
an image  $\pi(A)$ for a $\pi$ in $G_{\{I\}}$ that fixes $B$ setwise but
not pointwise. The analysis of the case where there are good pairs $A/B$
with $\delta(B) =2$ and $\mu(A/B) =2$ requires a much finer analysis of the
second realization of $A/B$.  We introduce here some further notation to
describe the situation and illustrate them in
Example~\ref{overlap}. %shows how the distinctions arise.

\begin{definition}[flower]\label{flowerdef}
Let $A/B$ be a good pair.
A {\em flower} $\Fscr$ of $A/B$ in a set $\Dscr$ is the set of all images of
isomorphisms
    of $A$ over $B$ into   $\Dscr$ which fix $B$ {\em pointwise}.
    The elements of the flower
    are called {\em petals}\footnote{In Construction~\ref{decomp1} there were $\mu(A/B)$ petals,
    the $A^{m}_{i,f}$ and $C^{m}_{i,k}$. We no longer assume that $A/B$ is well-placed and we allow the
    petals to intersect  so we have
    less control over the number of petals; in particular it will vary with $\Dscr$.}.

Suppose a flower $\Fscr$ of $A/B$ is a subset of $\Ascr$.
%   Suppose $\Dscr \cap A \subseteq \Ascr$.
%    and $\Ascr$ is $G$-normal.
 A {\em
   certificate}  $\Cscr$ of $A/B$   (witnessing $\Ascr \in \bK_\mu$)
     is a {\em maximal disjoint} set of   $\chi_{\Ascr}(A/B)= \mu(A/B)$ images of
isomorphisms
    of $A$ over $B$ into $\Ascr$ that fix $B$ {\em pointwise}.
    \end{definition}

      When $A/B$ is well-placed,
   a flower $\Fscr$ contains at least one certificate  $\Cscr$ for
    $\chi_M(A/B) =\mu(A/B)$ and,
   since each intersection decreases $\delta$, $|\Fscr| \leq \mu(A/B) + \delta(B)$.
Moreover any pair of petals from distinct certificates (or flowers) that
intersect are in $\icl(B) \subseteq \Ascr^m$,  for the least $m$ such that $B\subseteq \Ascr^m$.

Of course each petal $C \in \Fscr$ is isomorphic to $A$ over $B$. $\Dscr$
will usually be fixed in context as either the generic $M$ or a
$G_{\{I\}}$-decomposable $\Ascr$ (e.g. an $\Ascr^m$).
There are only finitely many certificates of $A/B$ in $M$; an upper bound
is $\binom{\mu(A/B)}{\mu(A/B) + \delta(B)}$.

Note that in the description of the class $\bL_{\mu}$
one put the upper bound on the cardinality of a certificate of a
good pair $A/B$---it does not exceed $\mu(A/B)$.

When we write two structures $C$ and $D$ are equal
we mean they have both the same domain and
each symbol in the vocabulary has the same interpretation in each.
For a substructure  $X$ of $M$, $diag_X(\xbar)$ denotes the diagram of $X$,
with respect to a {\em fixed enumeration}  $\xbar$ of the domain of $X$.

\begin{notation}\label{nameenum} Let $\bbar = \langle b_1, \dots, b_n\rangle$  enumerate $B$
and   $\gamma \in \aut(B)$; write $\bbar^\gamma$ for
$\langle \gamma(b_1), \dots, \gamma(b_n)\rangle$.

Any sequence $\cbar $ that satisfies
$diag_{A\cup B}(\xbar, \bbar^\gamma)$ determines an {\em enumeration of a   petal} of the
flower  of $A/B$. The set enumerated by  this sequence  is a petal
$F^\gamma_i$.
 Each $F^\gamma_i$ may have multiple enumerations that satisfy the fixed diagram.
  A flower
$\Fscr^\gamma$ of $A/B$ is a  maximal set $\{F^\gamma_i:i< r^\gamma\} $ of such petals.
\end{notation}

Note that for fixed $\gamma$ there may be different certificates.  Any two
such certificates
 must have at least
one pair of intersecting petals (by maximality).
But distinct flowers
$\Fscr, \Fscr'$ generated by
$A/B$ and $A'/B$ with $A,A'$ non-isomorphic over $B$ cannot have a common petal.
 If $f$ and $g$ map $A$ and $A'$ to some $A''$ while
fixing $B$ pointwise, then $g^{-1}\circ f$ is an isomorphism from $A$ to $A'$
fixing $B$.

However, a different problem appears when we allow automorphisms that fix the
base setwise but not pointwise. We must do this when considering $G_{\{I\}}$
since $I$ itself can be the base.

For simplicity of reading we
denote $G_{\{I\}}$ by $G^*$ and $G^*_{\{B\}}$  ($G^*_{B}$) denotes the
elements of $G^*$ that fix $B$ {\em setwise} ({\em pointwise}).

We now have a subclass of the $0$-primitive extensions  $A$ where $A/B$ is
well-placed by $\Ascr^m$: the orbit of {\em the flower of $A/B$} under
$G^*_{\{B\}}$.
\begin{definition}[bouquet]\label{bouquetdef}
Let $A/B$ be a good pair.
     The bouquet $\Bscr$ of $A/B$ is the collection of all images $\{\pi(F_i):F_i \in \Fscr\}$ of each flower
      $\Fscr$ as $\pi$ ranges through elements of
				$G^*_{\{B\}}$.
\end{definition}

Can two flowers in  a bouquet contain a common petal? When does a bouquet
contain more than one flower? Lemma~\ref{different-flowers} and
\ref{flower=bouquet1} answer these questions.

\begin{lemma}\label{different-flowers}
Let $A/B$ be a good pair and $\{ \{ F_i^\gamma : i < r_\gamma\} : \gamma \in
G^*_{\{B\}} \}$ list its bouquet $\Bscr$.
If $i\neq j$ then $F_i^\gamma \ne F_j^\delta$ for each
$\gamma, \delta \in G^*_{\{B\}}$ unless
$\Fscr_\gamma=\{ F_t^\gamma : t < r_\gamma\}$ and $\Fscr_\delta =
 \{F_t^\delta : t < r_\delta\}$ are the same flower.
\end{lemma}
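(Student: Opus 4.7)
The plan is to prove a stronger claim: if any petal of $\Fscr_\gamma$ coincides with any petal of $\Fscr_\delta$, the two flowers are equal as sets of petals. The argument proceeds by extracting from that single common petal an automorphism of $A\cup B$ and then using it to transfer witnessing enumerations. Once we have $\Fscr_\gamma\subseteq\Fscr_\delta$, symmetry yields equality; the hypothesis $i\ne j$ is used only to rule out the trivial case in which a petal is identified with itself under a single flower's own indexing.

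For the first step, I would unpack the definition at the common petal $F:=F_i^\gamma=F_j^\delta$. Membership of $F$ in $\Fscr_\gamma$ provides an enumeration $\cbar_\gamma$ of $F$ such that $(\cbar_\gamma,\bbar^\gamma)\models diag_{A\cup B}(\xbar,\bbar)$, and hence an isomorphism $\phi_\gamma:A\cup B\to F\cup B$ whose restriction to $B$ is $\gamma$. Membership of the same set $F$ in $\Fscr_\delta$ similarly yields an isomorphism $\phi_\delta:A\cup B\to F\cup B$ with $\phi_\delta|_B=\delta$. Composing, $\sigma:=\phi_\delta^{-1}\circ\phi_\gamma$ is an automorphism of $A\cup B$ with $\sigma|_B=\delta^{-1}\gamma\in\aut(B)$; since $A\cap B=\emptyset$ and $\sigma(B)=B$, it also satisfies $\sigma(A)=A$.

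For the transfer step, take any $F'\in\Fscr_\gamma$ with witnessing enumeration $\cbar'$ and associated isomorphism $\psi:A\cup B\to F'\cup B$ satisfying $\psi|_B=\gamma$. Then $\psi\circ\sigma^{-1}:A\cup B\to F'\cup B$ is an isomorphism acting on $B$ as $\gamma\cdot\gamma^{-1}\delta=\delta$, so the tuple $(\psi\circ\sigma^{-1})(\xbar)$ is an enumeration of $F'$ witnessing $F'\in\Fscr_\delta$. Hence $\Fscr_\gamma\subseteq\Fscr_\delta$, and by the symmetric argument $\Fscr_\gamma=\Fscr_\delta$.

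I do not anticipate a serious obstacle; the proof is essentially bookkeeping on enumerations of petals, and the only real point of care is to keep track of the directions of composition so that $\sigma$ indeed induces the prescribed permutation $\delta^{-1}\gamma$ of $B$. A small remark to record in the written proof is that the $i\ne j$ hypothesis is not invoked in the transfer argument itself; it only excludes the vacuously true coincidence of a petal with itself inside one fixed flower's listing.
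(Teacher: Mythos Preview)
Your proof is correct and is essentially the same as the paper's: where the paper records the common petal via a permutation $\varepsilon\in S_k$ of the enumeration indices and then applies $\varepsilon$ to the enumeration of an arbitrary petal of $\Fscr_\gamma$, you package the same data as an automorphism $\sigma=\phi_\delta^{-1}\circ\phi_\gamma$ of $A\cup B$ and precompose the witnessing isomorphism $\psi$ with $\sigma^{-1}$. Your observation that the hypothesis $i\ne j$ plays no role in the transfer step is also accurate; the paper's proof does not invoke it either.
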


\begin{proof}
Assume   that $F_i^\gamma = F_j^\delta$ for some $\gamma$, $\delta \in
\aut(G^*_{\{B\}})$. We will show $\Fscr_\delta = \Fscr_\gamma$.

Let $\langle f_1, \dots, f_k\rangle$ be an enumeration of $F_i^\gamma$,
such that
$$M\models diag_{A\cup B} (f_1, \dots, f_k, \gamma(b_1), \dots, \gamma(b_n))$$
Since $|F_i^\gamma| = |F_j^\delta|$ there is $\varepsilon \in
S_k$ such that $\langle f_1, \dots, f_k\rangle= \langle f_{\varepsilon(1)},
\dots, f_{\varepsilon(k)}\rangle$ and
$$M\models diag_{A\cup B} (f_{\varepsilon(1)}, \dots,
f_{\varepsilon(k)}, \delta(b_1), \dots, \delta(b_n))$$

Let $s <r_\gamma$
 and let  $\langle d_1, \dots, d_k\rangle$ enumerate the petal $C_s^\gamma$ of
the flower $\Fscr_\gamma$
 of $A$ over $B$.
That is,
$$M\models diag_{A\cup B} (d_1, \dots, d_k, \gamma(b_1), \dots, \gamma(b_n)).$$
By the property of $\varepsilon$ noted above and the
definition of diagram, we have
$$M\models diag_{A\cup B} (d_{\varepsilon(1)}, \dots, d_{\varepsilon(k)},
 \delta(b_1), \dots, \delta(b_n))$$
but $D = \{d_{\varepsilon(1)}, \dots, d_{\varepsilon(k)}\}$ is
a petal of the flower $\Fscr_\delta$.
 Obviously, $F^\gamma_s =D$.
So, each petal of the flower $\{ F^\gamma_t : t < r_\gamma\}$ is also a
petal of $\{ F^\delta_t : t < r_\delta\}$.

The inverse inclusion is similar. Hence, the flowers are equal.
\end{proof}

We can now conclude:

\begin{lemma}\label{flower=bouquet1}
 Assume that $A/B$ is well-placed by some $\Dscr \supseteq B$ and $A$ is $G_{\{
I\}}$-invariant. Then
\begin{enumerate}

\item the bouquet of $A/B$ consists of a single flower;
\item the bouquet of $A/B$ is $G_{\{ I\}}$-invariant.
\end{enumerate}
\end{lemma}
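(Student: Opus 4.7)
The plan is to identify a single petal that belongs to every flower in the bouquet, then invoke Lemma~\ref{different-flowers} to collapse the bouquet to one flower; part (2) will then be essentially automatic.

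Setup. Since $A/B$ is well-placed and $A$ is $G_{\{I\}}$-invariant, Observation~\ref{fixC} (applied with $G = G_{\{I\}}$) gives that $B$ is $G_{\{I\}}$-invariant setwise. Equivalently, every $\pi \in G_{\{I\}}$ lies in $G^*_{\{B\}}$, so the bouquet is indexed by restrictions $\pi|_B \in \aut(B)$. Write $\Fscr_{\mathrm{id}}$ for the flower corresponding to $\gamma = \mathrm{id}_B$ in the sense of Definition~\ref{flowerdef} and Notation~\ref{nameenum}. The identity map $A \to A$ is an isomorphism of $A$ over $B$ fixing $B$ pointwise, so $A$ itself appears as a petal of $\Fscr_{\mathrm{id}}$.

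Part (1): Fix an arbitrary $\pi \in G^*_{\{B\}}$ and put $\gamma = \pi|_B$; then $\pi(\Fscr_{\mathrm{id}}) = \Fscr_\gamma$ is the flower indexed by $\gamma$ in the bouquet. Since $A$ is $G_{\{I\}}$-invariant we have $\pi(A) = A$, so $A$ is a petal of $\Fscr_\gamma$ as well. By Lemma~\ref{different-flowers} any two flowers that share a petal must coincide, so $\Fscr_\gamma = \Fscr_{\mathrm{id}}$. As $\pi$ was arbitrary, the bouquet reduces to the singleton $\{\Fscr_{\mathrm{id}}\}$.

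Part (2) and obstacle: every $\pi \in G_{\{I\}}$ sends the unique flower $\Fscr_{\mathrm{id}}$ to $\Fscr_{\pi|_B}$, which by part (1) equals $\Fscr_{\mathrm{id}}$; hence $\pi(\Bscr) = \Bscr$. The combinatorics are immediate once the definitions are unwound; the only substantive inputs are the setwise invariance of $B$ via Observation~\ref{fixC} (whose hypothesis on $\bL_0$ is in force in the Hrushovski context of this section — the Steiner variant would require the later substitute mentioned after that observation) and Lemma~\ref{different-flowers}. No induction or auxiliary estimates are needed, so the main point to watch is simply that $A$ sits inside $\Fscr_{\mathrm{id}}$ as a canonical "anchor" petal common to every member of the bouquet.
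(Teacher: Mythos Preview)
Your proof is correct and follows essentially the same strategy as the paper's: exploit the $G_{\{I\}}$-invariance of $A$ to exhibit $A$ as a common petal of every flower in the bouquet, then invoke Lemma~\ref{different-flowers} to collapse them all to one. The paper phrases part~(1) as a proof by contradiction rather than your direct argument, and it cites Lemma~\ref{omni-G}.1 rather than Observation~\ref{fixC} for the setwise invariance of $B$, but these are cosmetic differences; the substantive content is identical.
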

\begin{proof}

1) Assume to the contrary that the bouquet of $A^{m+1}_{j,1}$ over
$B^{m+1}_{j}$ consists of at least two flowers. Let $\pi \in G_{\{ I\}}$ be
an automorphism which moves one flower of the bouquet of $A^{m+1}_{j,1}$ over
$B^{m+1}_{j}$ to another one. Since $A^{m+1}_{j,1}$ is $G_{\{
I\}}$-invariant, $\pi (A^{m+1}_{j,1}) = A^{m+1}_{j,1}$; so, these two flowers
have a common petal. By Lemma~\ref{different-flowers} these flowers are
equal, for a contradiction.

2) By Lemma~\ref{omni-G}.1,   $B$ is $G_{\{ I\}}$-invariant, so each $g \in
G_{\{ I\}}$ fixes $B$ setwise. But then the $C/B$-bouquet is just the $G_{\{
I\}}$-orbit of the unique flower $\Fscr$ of $A/B$, namely $\Fscr$.
\end{proof}

We now give several examples to clarify the relationship
among these concepts.

\begin{example}\label{overlap} \rm
1) Two certificates in the same flower:
Let $A/B$ be $0$-primitive and $C^i_j$ for $i<3,j<2$ be isomorphic with $A$ over
$B$.
 For each $i$, $|C^i_0\cap  C^i_1| =1$; these are the only
intersections. Let $\Dscr = B\cup \bigcup_{i<3,j<2}C^i_j $.
 $A/B$ is
well-placed by $\Dscr$. $\{A\} \cup \{C^i_j: i<3,j<2\}$ is the flower of $A/B$.
But each of $\{A\} \cup \{C^i_j: i<3\}$ for $j=0$ and $j=1$ is a certificate
(Actually, there are 8 certificates.)

2) Two flowers in the same bouquet: Let $B = \{ b_1, b_2\}$ and $C_i = \{
c^i_1, c^i_2, c^i_3\}$ with $R(b_1, c^i_1, c^i_2)$, $R(b_2, c^i_2, c^i_3)$,
$R(b_2, c^i_3, c^i_1)$, for $i = 1,2$, and let $\mu(C_i/B) = 2$.

Let $D_i = \{ d^i_1, d^i_2, d^i_3\}$ with $R(b_2, d^i_1, d^i_2)$, $R(b_1,
d^i_2, d^i_3)$, $R(b_1, d^i_3, d^i_1)$, for $i = 1,2$. There is a $\pi \in
G^*_{\{I\}}$ that swaps $b_1$ and $b_2$ and takes $\{C_1,  C_2\}$   to
$\{D_1, D_2\}$. Recall that flowers are given by maps that fix $B$ pointwise.
Note $C_1$ and $D_1$ are in the same orbit under $G^*_{\{B\}}$ but not
$G^*_{B}$.

There are two flowers: $\{C_1,C_2\}$ over $\langle b_1, b_2\rangle$ and
$\{D_1,D_2\}$ over
 $\langle b_2, b_1\rangle$. (They are distinct because $b_1$ occurs in two relations in
 the $D_i$ and one in the $C_i$.)
\end{example}

\medskip
\subsection{$G_{\{ I\}}$: elimination of imaginaries fails}\label{geq2}
\setcounter{theorem}{0}

{\bf Context}
We showed that $\dcl^*(I) = \emptyset$ and so $\sdcl^*(I) = \emptyset$,
 provided that $\mu$ triples: $ \ \ \mu(C/B) \geq 3$ for
$\delta(B) =2$ with $|C|>1$. So $\hat T_\mu$ does not admit elimination of imaginaries.
Now we are going to show that the
symmetric $\sdcl^*(I)$ is empty for any $\mu$ satisfying Hrushovski's original
 conditions and so elimination of imaginaries fails.  That is,
we now omit the adequacy hypothesis that governed Section~\ref{geq3}.
There may now be definable truly
binary functions but elimination of imaginaries still fails.
The innovation is to  consider the action of $G_{\{ I\}}$ rather than $G_I$, $\sdcl$ rather than
$\dcl$.

Recall that  in Example~\ref{examp2}
 $d(G_I(a_1)) = 1$, since $G_I(a_1) = \{ a_1\}$.
The situation differs when we consider $G_{\{I\}}$.
In this case, working in a $G_{\{I\}}$-normal set,
$G_{\{I\}}(a_1) = \{ a_1, a_2\}$, so, $d(G_{\{I\}}(a_1)) = 2$. Similarly,
while $\alpha_2$ is in $\dcl(I)$,  $\sdcl(I) = \emptyset$.
In general, the $G_I$-invariant set generated by a set $U$ is contained
in the $G_{\{I\}}$-invariant set $U$ generates.

While in the proof of Theorem~\ref{mr} we showed ${\mathrm{dim}_m}$ for $m\ge 1$,
here we shall prove $\mathrm{sdim}_m$ for $m\ge 0$. Allowing $m=0$
 has a crucial role for application
of Claim~\ref{inv2} in the proof of Theorem~\ref{no-sym-fun},
showing  any $G_{\{I\}}$-invariant subset
of $\Ascr^2$ is safe.
In Example~\ref{examp2} one can see the difference between a flower and
a bouquet
and how the notion of bouquet works for the proof of Theorem~\ref{no-sym-fun}
(Remark~\ref{exmoral}(3)--(5)).

\begin{theorem}\label{no-sym-fun}
 If $\hat T_\mu$ is as in Definition~\ref{defT}, then there is no symmetric
 $\emptyset$-definable truly $n$-ary function for $v\ge 2$,
i.e., $\sdcl^*(I) = \emptyset$ for any $v$-element independent set $I$.  That is,
 there is no $\emptyset$-definable truly $n$-ary function whose value does not depend
 on the order of the arguments.
	Thus, $\hat T_\mu$ does not admit elimination of imaginaries. (See Theorem~\ref{noei}.)

As a corollary, we obtain that $\sdcl(J) = \bigcup_{a\in J}\sdcl(a)$ for
 any independent set $J$.
\end{theorem}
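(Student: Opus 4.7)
The plan is to adapt the strata-by-strata induction of Section~\ref{geq3}, replacing $G_I$ by $G = G_{\{I\}}$ throughout, but now using the flower/bouquet machinery of Section~\ref{bouflow} to cover the $\mu(A/B) = 2$ case that the triples hypothesis had excluded. By Lemma~\ref{fckey}(2) it suffices to prove $\sdcl^*(I) = \emptyset$. Fixing a finite $G_{\{I\}}$-normal $\Ascr \leq M$, I will prove by simultaneous induction on $m \leq m_0$ the statement
\[
\mathrm{sdim}_m:\ d(E) \geq 2 \text{ for every $G_{\{I\}}$-invariant } E\subseteq\Ascr^m \text{ with } E\not\subseteq\acl(\emptyset),
\]
together with $\mathrm{smoves}_m$: no petal $A^m_{j,1}$ is $G_{\{I\}}$-invariant. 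Taking $\Ascr$ to be the $G_{\{I\}}$-normal closure of $\{d\}\cup I$ for a putative $d\in \sdcl^*(I)$, the final $\mathrm{sdim}_{m_0}$ forces $G_{\{I\}}(d) = \{d\}$ to lie in $\acl(\emptyset) \subseteq \sdcl(\emptyset)$, contradicting $d \in \sdcl^*(I)$.

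The base case $m = 0$ is where switching from $G_I$ to $G_{\{I\}}$ first pays off. Since $v\ge 2$ and $I$ is independent, every permutation of $I$ extends to an automorphism, so $G_{\{I\}}$ acts transitively on $I$. For any $e\in D_i - D_0$ the orbit $G_{\{I\}}(e)$ meets each $D_j - D_0$, and full independence of the $D_j$'s over $D_0$ gives $d(G_{\{I\}}(e))\ge 2$; $\mathrm{smoves}_1$ follows because a $G_{\{I\}}$-invariant petal $A^1_{j,1}$ would force $B^1_j$ into a single $\acl(a_i)$, contradicting $d(B^1_j)\ge 2$ (Lemma~\ref{cl0}). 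This absorbs the separate first-stratum argument of Corollary~\ref{cl1}.

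For the inductive step, Lemma~\ref{petals} is stated for arbitrary $G$, so $\mathrm{sdim}_m \wedge \mathrm{smoves}_{m+1} \Rightarrow \mathrm{sdim}_{m+1}$ is immediate. To prove $\mathrm{smoves}_{m+1}$, suppose $A = A^{m+1}_{j,1}$ is $G_{\{I\}}$-invariant. By Observation~\ref{fixC}, $B = B^{m+1}_j$ is setwise $G_{\{I\}}$-invariant, and $\mathrm{sdim}_m$ applied to $B$ gives $\delta(B)\ge d(B)\ge 2$. When $\mu(A/B) \ge 3$, Lemmas~\ref{omni-G}, \ref{small-G}, \ref{Am-not-1-G}, and clause~A of Lemma~\ref{long} (all stated for a general $G$) apply verbatim to force some $A^m_{i,1}$ to be $G_{\{I\}}$-invariant, contradicting $\mathrm{smoves}_m$.

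The main obstacle is the case $\mu(A/B)=2$, which the triples hypothesis excluded and which Example~\ref{examp2} shows cannot be handled by the $G_I$-argument alone (there $\dcl^*(I) \ne \emptyset$). Here Lemma~\ref{long}.B provides only $2 \geq \sum_{i\in\bigI}\ell^m_i + \delta(B_+)$, compatible on the face of it with a $G_I$-invariant $A^m_{i,1}$. To close the gap I will invoke Section~\ref{bouflow}: by Lemma~\ref{flower=bouquet1}, the bouquet of $A/B$ collapses to a single flower and is $G_{\{I\}}$-invariant, so every $\gamma\in G^*_{\{B\}}$ permutes the (at most $\mu(A/B)+\delta(B)$) petals of this lone flower while respecting a fixed enumeration-diagram (Notation~\ref{nameenum}) --- a very rigid condition that forces $\gamma$ to act on $B$ only through permutations already realised by base-preserving isomorphisms among the petals. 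Combining this rigidity with $\mathrm{sdim}_m$ applied to the $G_{\{I\}}$-orbit of $B$ inside $\Ascr^m$, with the constraint $\delta(B_+)\le 1$, and with the full independence of petals on $\Ascr^m$ over $\Ascr^{m-1}$, one concludes that either $B\subseteq\acl(\emptyset)$ (contradicting $d(B)\ge 2$) or some $A^m_{i,1}$ is $G_{\{I\}}$-invariant (contradicting $\mathrm{smoves}_m$). Propagating this single-flower rigidity downward through the strata while bookkeeping that the stabiliser is setwise rather than pointwise, and in particular verifying that the new flowers forced into the bouquet by permutations of $I$ (as in Remark~\ref{exmoral}(3)) cannot collude with the $\mu=2$ constraint, is the delicate part of the argument.
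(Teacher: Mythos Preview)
Your inductive scheme cannot work as stated: the hypothesis $\mathrm{smoves}_m$ (no petal $A^m_{j,1}$ is $G_{\{I\}}$-invariant) is simply false once the triples assumption is dropped. Example~\ref{examp1} (Figure~\ref{Ex2}) and the remark following it show explicitly that when $\mu(A^2_{1,1}/B^2_1)=2$ the petal $A^2_{1,1}$ \emph{is} $G_{\{I\}}$-invariant, and the paper states outright (just before Claim~\ref{dim0} and again in the bulleted list after Definition~\ref{sdimdef}) that ``we cannot prove that every $A^{m+1}_{j,k}$ is moved by $G_{\{I\}}$.'' So your $\mu(A/B)=2$ argument, which terminates by producing a $G_{\{I\}}$-invariant $A^m_{i,1}$ and calling that a contradiction with $\mathrm{smoves}_m$, is circular: the thing you derive is exactly what happens, not a contradiction.

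The paper's route is genuinely different from what you outline. It abandons the moves half of the induction entirely and proves only $\mathrm{sdim}_m$, but as a \emph{global} induction over all $G_{\{I\}}$-normal sets of height at most $m$, not just a fixed $\Ascr$ (Lemma~\ref{d(orbit)=2}). When a petal $A^{m+1}_{j,1}$ is $G_{\{I\}}$-invariant the goal is not to contradict anything but to show it is \emph{safe}. If $\mu^{m+1}_j=2$, Lemma~\ref{one-copynew} does this directly: the unique other petal $C\subseteq\Ascr^m$ is $G_{\{I\}}$-invariant (this is where Lemma~\ref{flower=bouquet1} is used), $\sdim_m$ makes $C$ safe, and safety transfers back along the isomorphism $\rho$. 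If $\mu^{m+1}_j\ge 3$, one gets a determined descending sequence $\Aback^0,\Aback^1,\dots$ of $G_{\{I\}}$-invariant petals (Definition~\ref{desseq}); since no $A^1_{u,v}$ is invariant, the sequence stops at some $\Aback^s$ with $\mu(\Aback^s/\Bback^s)=2$. The automorphism $\rho$ sending $\Aback^s$ to its unique copy $\Cback^s$ then pushes the whole chain into a $G_{\{I\}}$-normal set $\widetilde{\mathcal R}_0$ of height at most $m$ (Lemmas~\ref{quickstop}--\ref{slowstop}), which need not sit inside $\Ascr$; this is why the global induction is essential. The flower/bouquet machinery is used not to rule out invariance but to certify that the images $\rho(\Aback^k)$ remain $G_{\{I\}}$-invariant so that the inductive $\sdim$ applies to them.
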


In contrast to Section~\ref{geq3}, we work now with a {\em global induction} on the height $m_0$ of
$G_{\{I\}}$-decompositions  of   finite, $G_{\{I\}}$-invariant subsets $\Ascr$ of $\acl(I)$
 with $I \subseteq \Ascr \leq M$.
We show for each $m_0$,  for all such
decompositions of height $m_0$, for all $m \leq m_0$, $\sdim_m$ holds.
While we analyze a specific $G_{\{ I\}}$-normal $\Ascr$ containing $I$
 and
 a $G_{\{ I\}}$-decomposition
of $\Ascr$ into strata $\Ascr^n$ as in Section~\ref{decomp}, the contradiction will result
in most involved case a second normal subset of $\Mscr$.
The analysis takes into account that the resulting $\Ascr^n$ are
 now  $G_{\{I\}}$-invariant.
For this we need to introduce the induction hypothesis in Lemma~\ref{d(orbit)=2} on the dimension of $G_{\{I\}}$-invariant
sets.

\begin{definition}[Safe]\label{sdimdef}
Let $X$ be contained in  a finite $G_{\{I\}}$-invariant set $ \Ascr$.  We say $X$ is {\em safe}
if $ d(E) \ge 2$ for any $G_{\{I\}}$-invariant set $E\subseteq
  X$ that is not a subset of $\acl(\emptyset)$.

The $G_{\{ I\}}$-decomposition $\Ascr^m$ of $\Ascr$
 satisfies $\sdim_m$ if every $G_{\{I\}}$-invariant subset of $\Ascr^m$ is safe.
\end{definition}
In addition to changing the group, the requirement $E \nsubseteq \Ascr^0$ has been
replaced by  $E \nsubseteq\acl(\emptyset)$.
So, the main differences between Theorem~\ref{mr} and
Theorem~\ref{no-sym-fun} are the following:
\begin{itemize}
\item There may be cases where $A^{m+1}_{f,k}$ is $G_{\{ I \}}$-invariant,
because there is no longer the restriction that $\mu(C/B) \ge 3$
(In Example~\ref{examp1}, Figure~\ref{Ex2}, $A^2_{1, 1}$ is $G_{\{I\}}$-invariant);
\item Different $A^{m+1}_{j,i}$ may be shown safe for different reasons. (Lemma~\ref{backward})
\item For any $e \in \Ascr -\acl(\emptyset)$, $d(G_{ I }(e)) $ may be $1$, but we show
$d(G_{\{ I \}}(e)) \ge 2$.
\end{itemize}

Note that $X$ is $G_{\{I\}}$-invariant implies $X$ is $G_I$-invariant.
Analogously to Notation~\ref{m0def} we write $s\ell^{m+1}_j$ for the number of images under $G_{\{I\}}$ of $A^{m+1}_j$ that
 do not intersect  $\Ascr^{m}$.
Since $G_{\{I\}} \supseteq G_I$, $s\ell^{m+1}_j \geq \ell^{m+1}_j$.
 Results from Section~\ref{geq3} for $G_I$ decompositions do not automatically extend.
We will now prove $\sdim_{m_0}$ holds not by a
dual induction but by distinct
arguments depending on whether $\mathrm{move}_m$ holds at a given stage, which requires
 an even more global induction on all $G$-normal decompositions rather that
 the length of a fixed decomposition. The main lemma becomes:

\begin{lemma}\label{d(orbit)=2}
Let $\hat T_\mu$ be as in Theorem~\ref{no-sym-fun}.
Then for every finite $G$-normal $\Ascr \subseteq \acl(I)$
and every $G_{\{I\}}$-decomposition
$\langle \Ascr^i: i<  m_0^{\Ascr}  \rangle  $ of $\Ascr$:
$$\mbox{for every } m\leq m_0\   \sdim_m \mbox{ holds of } \Ascr.$$
\end{lemma}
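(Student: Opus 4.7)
The plan is a global induction on the height $m_0^{\Ascr}$ of $G_{\{I\}}$-normal sets, with an inner induction on the strata index $m \leq m_0^{\Ascr}$. For each inner step I fix a $G_{\{I\}}$-invariant $E \subseteq \Ascr^m$ with $E \not\subseteq \acl(\emptyset)$, pick a witness $e \in E - \acl(\emptyset)$, and argue $d(G_{\{I\}}(e)) \geq 2$; since $G_{\{I\}}(e) \subseteq E$, this yields $d(E) \geq 2$. The base case $\sdim_0$ is immediate: if $e \in \Ascr^0 - \acl(\emptyset)$, then $e \in D_i - D_0$ for some $1 \leq i \leq v$; every permutation of the independent set $I$ extends to an element of $G_{\{I\}}$, so the orbit meets every $D_j - D_0$, and the full independence of the $D_j$ over $D_0$ from Construction~\ref{decomp1} gives $d(G_{\{I\}}(e)) \geq v \geq 2$.

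For the inductive step from $\sdim_m$ to $\sdim_{m+1}$, I may assume $e \in A^{m+1}_{j,k}$ for some petal in $\Ascr^{m+1} - \Ascr^m$, since $e \in \Ascr^m$ is handled by the inner induction. If $A^{m+1}_{j,k}$ is \emph{not} $G_{\{I\}}$-invariant, the proof of Lemma~\ref{petals} applies verbatim with $G = G_{\{I\}}$ and yields the required bound. Otherwise $A^{m+1}_{j,k}$ is $G_{\{I\}}$-invariant; by Observation~\ref{fixC}, $B := B^{m+1}_j$ is also $G_{\{I\}}$-invariant, and since $B \subseteq \acl(\emptyset)$ would force $e \in \acl(\emptyset)$, the inner inductive hypothesis applied to $B$ gives $d(B) \geq 2$, supplying the standing hypothesis $\delta(B) \geq 2$ of Lemmas~\ref{omni-G}, \ref{small-G}, and \ref{long}. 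I then split on $|A^{m+1}_{j,k}|$ and $\mu^{m+1}_j$: when $|A^{m+1}_{j,k}| = 1$, Lemma~\ref{omni-G}(1) produces a $G_{\{I\}}$-invariant petal $A^m_{i,f}$ one strata down and the inner induction carries the orbit bound back up; when $|A^{m+1}_{j,k}| > 1$ and $\mu^{m+1}_j \geq 3$, Lemma~\ref{long}.A delivers a $G_{\{I\}}$-invariant $A^m_{i,1}$ with $\delta(B_+) = 0$ and the argument proceeds as in Section~\ref{geq3}.

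The main obstacle is the residual subcase $|A^{m+1}_{j,k}| > 1$ and $\mu^{m+1}_j = 2$, where Lemma~\ref{long}.B gives only $\sum_i \ell^m_i + \delta(B_+) \leq 2$ with $\delta(B_+) \leq 1$, and no $G_{\{I\}}$-invariant petal below is produced by a direct count. Here I would invoke the flower--bouquet machinery of Subsection~\ref{bouflow}: by Lemma~\ref{flower=bouquet1} the bouquet of $A^{m+1}_{j,k}/B$ consists of a single $G_{\{I\}}$-invariant flower, so the $\nu$ copies $C^d \subseteq \Ascr^m$ of Notation~\ref{pm} are permuted by $G_{\{I\}}$. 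The union $B \cup \bigcup_d C^d$ is then a $G_{\{I\}}$-invariant subset of $\Ascr^m$ not in $\acl(\emptyset)$, to which the inner induction applies; combined with the predimension accounting of Lemma~\ref{long} (in particular the inequalities culminating in equation~\eqref{eq6}) this forces $d(G_{\{I\}}(e)) \geq 2$. The reason the outer induction must be global rather than internal is that making the bouquet reduction precise may require passing to an auxiliary $G_{\{I\}}$-normal set, built from these bouquet data, whose $G_{\{I\}}$-decomposition is strictly shorter than $\Ascr$'s at the analogous strata; only the outer induction on $m_0^{\Ascr}$ licenses applying the full conclusion of the lemma to such an auxiliary object.
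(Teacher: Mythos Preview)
Your case analysis has the difficulty backwards, and two of your three invariant-petal subcases contain genuine gaps. First, when $|A^{m+1}_{j,k}|=1$ and the petal is $G_{\{I\}}$-invariant, the orbit $G_{\{I\}}(e)$ is the singleton $\{e\}$, so $d(G_{\{I\}}(e))\leq 1$ regardless of what happens one stratum down; producing a $G_{\{I\}}$-invariant $A^m_{i,f}$ via Lemma~\ref{omni-G} does not ``carry the orbit bound back up'' because there is no bound to carry. The paper (Claim~\ref{omni2}) instead shows this situation forces $\mu^{m+1}_j\geq 3$ and $\mu^m_i\geq 3$ by contradiction, already invoking the global induction to rule out a singleton invariant $\rho$-image in a shorter normal set. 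Second, your claim that the case $|A|>1$, $\mu^{m+1}_j\geq 3$ ``proceeds as in Section~\ref{geq3}'' fails: that section derives a contradiction from $\mathrm{moves}_m$ (no invariant petals below), which is precisely what is \emph{not} available here. Lemma~\ref{long}.A does give a determined $G_{\{I\}}$-invariant $A^m_{i,1}$, but that petal may itself be invariant, so no contradiction ensues and you must recurse.

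Conversely, the case you flag as the main obstacle, $\mu^{m+1}_j=2$, is actually the easy one: since $d(B)=\delta(B)=2$, $B\leq M$, the unique copy $C\subseteq\Ascr^m$ is $G_{\{I\}}$-invariant by Lemma~\ref{flower=bouquet1}, safe by the inner induction, and safety transfers to $A$ via the isomorphism $\rho$ over $B$ (Lemma~\ref{one-copynew}.1b). The real work, which your proposal does not contain, is the $\mu^{m+1}_j\geq 3$ case: the paper follows the chain of determined petals $\Aback^0,\Aback^1,\ldots$ downward (Definition~\ref{desseq}) until it reaches some $\Aback^s$ with $\mu(\Aback^s/\Bback^s)=2$; at that level $\Bback^s\leq M$, so the isomorphism $\rho$ taking $\Aback^s$ to its unique copy $\Cback^s\subseteq\overleftarrow{\Ascr}^{s+1}$ extends to an automorphism of $M$, and one proves inductively (Lemmas~\ref{rhoAinv1}, \ref{quickstop}, \ref{slowstop}) that each $\rho(\Aback^k)$ is $G_{\{I\}}$-invariant and lands in a $G_{\{I\}}$-normal set of height at most $m$. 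The global induction on height then gives safety of $\rho(A^{m+1}_{j,1})$, and Lemma~\ref{one-copynew}.2 transfers it back. This $\rho$-pushdown along the determined sequence is the missing idea.
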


Theorem~\ref{no-sym-fun} follows from Lemma~\ref{d(orbit)=2},
because if there were a $u \in \sdcl(I)$, then $G_{\{I\}}(   u) = \{ u \}$
and so $d(G_{\{I\}}(u) )\le \delta (G_{\{I\}}(u)) =1$.

We cannot prove that every $A^{m+1}_{j,k}$ is moved by $G_{\{ I\}}$ (Remark~\ref{exmoral}).
Rather, we show that
if $A^{m+1}_{j,k}$ is $G_{\{ I\}}$-invariant then each $s \in A^{m+1}_{j,k}$
satisfies $\dim(G_{\{ I\}}(s)) \ge 2$.
 If
$\mu^{m+1}_j = 2$ (recall that $\mu^{m+1}_j = \mu(A^{m+1}_{j,k}/B^{m+1}_{j})$),
the argument turns out to be a short argument (Lemma~\ref{one-copynew}).
So we assume below that $\mu^{m+1}_j \geq 3$.
The global induction for Theorem~\ref{no-sym-fun} obtains from a  failure of $\sdim_{m+1}$
 another $G_{\{ I\}}$-invariant set $\tilde \Ascr^* =\Ascr^{m-1}\cup \tilde A^m_{1,j}$.
% whose height
%which is . If we put
%$\tilde A^* = \icl(I \cup G_{\{ I\}}(\tilde A^m_{1,1}))$,
The height of $\tilde \Ascr^*$ is $m$,
%$\tilde A^m_{j,1}$   belongs to $m$th strata of $\tilde A^*$;
but $\tilde A^m_{j,1}$
contains an element $e'$ such that $d(G_{\{ I\}}(e))\le 1$,
which violates the inductive hypothesis, $\mathrm{sdim}_m$, for $\tilde \Ascr^*$.
However, $\tilde \Ascr^*$ need not be contained in $\Ascr$.

 The proof is a lengthy induction. We start with the following claim which
 is blatantly false for $G_I$.

\begin{claim}\label{dim0}
The statement $\sdim_0$ holds: every $G_{\{I\}}$-invariant subset of $\Ascr^0$ has
dimension at least 2 provided that this set is not a subset of $
\acl(\emptyset)$.
\end{claim}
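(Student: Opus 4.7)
The plan is to exploit the structure of $\Ascr^0$ from Construction~\ref{decomp1}(i): namely that $\Ascr^0 = D_0 \cup D_1 \cup \cdots \cup D_v$, where $D_0 = \Ascr \cap \acl(\emptyset)$ and $D_i = \Ascr \cap \acl(a_i)$ for $1 \leq i \leq v$, with $D_i \cap D_j = D_0$ for $i \neq j$ and the $D_i$'s fully independent over $D_0$.

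Given a $G_{\{I\}}$-invariant set $E \subseteq \Ascr^0$ with $E \not\subseteq \acl(\emptyset)$, I would first pick some $e \in E$ with $e \notin \acl(\emptyset) = \acl(D_0)$. Since $e \in \Ascr^0$, necessarily $e \in D_i - D_0$ for some $i \in \{1,\dots,v\}$. Next, using that $v \geq 2$ and $I$ is independent (so any permutation of $I$ extends to an automorphism of $M$ by $\omega$-homogeneity), I would choose $j \neq i$ and an automorphism $g \in G_{\{I\}}$ that swaps $a_i$ with $a_j$. Then $g$ sends $\acl(a_i)$ onto $\acl(a_j)$, so $g(e) \in D_j - D_0$. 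By $G_{\{I\}}$-invariance of $E$, we have $g(e) \in E$.

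Finally, I would conclude using full independence: since $e \in D_i - D_0$ and $g(e) \in D_j - D_0$ with $i \neq j$, and the $D_k$'s are fully independent over $D_0$, the pair $\{e, g(e)\}$ satisfies $d(\{e, g(e)\}) = 2$ (neither element is in $\acl(\emptyset)$, and $e \notin \acl(g(e)) \subseteq \acl(a_j)$ because $a_i$ and $a_j$ are independent). By monotonicity of $d$, we get $d(E) \geq d(\{e, g(e)\}) \geq 2$, as required.

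The argument is short because $\Ascr^0$ has a very transparent decomposition along the independent points of $I$, and $G_{\{I\}}$ acts transitively on $I$; I do not anticipate a genuine obstacle. The only point requiring a little care is the existence of the swap $g \in G_{\{I\}}$, which is immediate from the fact that $I$ is an independent set in the $\omega$-homogeneous model $M$, and from the remark (cf.\ Notation~\ref{thegrps}) that $G_{\{I\}}$ consists of all such automorphisms.
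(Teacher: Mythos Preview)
Your proposal is correct and follows essentially the same approach as the paper: pick $e\in E\setminus\acl(\emptyset)$, locate it in some $D_i$, and use an element of $G_{\{I\}}$ permuting $I$ to produce a conjugate in a different $D_j$. The only minor difference is that the paper uses exchange to get $a_1\in\acl(e)$ and then applies automorphisms sending $a_1$ to every $a_i$, concluding the stronger statement $I\subseteq\acl(G_{\{I\}}(e))$ and hence $d(G_{\{I\}}(e))=v$; your single transposition already suffices for the required bound $d(E)\geq 2$.
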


\begin{proof}
Without loss,  since $e \in \acl(\emptyset)$ implies
$d(G_{\{I\}}(e)) = 0$, let $e \in \Ascr^0 \setminus \acl(\emptyset)$.
Then, since $d(e) \leq 1$, $e \in \acl(a_1)\cup \acl(a_2)\cup \dots \cup \acl(a_v)$,
say, $e\in \acl(a_1)$.
Since $e \in \acl(a_1)\setminus \acl(\emptyset)$, we obtain $a_1 \in \acl(e)$.

Let $g_i \in G_{\{ I\}}$ be such that $g_i(a_1) = a_i$. Such a $g_i$ exists because
 the $a_i$ are independent and
strong minimality implies there is a unique non-algebraic type over the empty set.
Then $a_i \in \acl(g_i(e))$.
Thus $\{ a_1, a_2, \dots, a_v\} \subseteq \acl(\{g_i(e) :
i = 1, \dots, v\}) \subseteq \acl(G_{\{ I\}}(e))$.
So, $d(G_{\{ I\}}(e)) = v \ge 2$.
\end{proof}

By Lemma~\ref{petals} (the inductive step, `$moves_m$ implies $dim_m$, from Section~\ref{geq3})
 applied to a $G_{\{I\}}$-decomposition, when $A^{m+1}_{j,i}$ moves,  we have:

\begin{claim}\label{invm}{If $m\geq 1$} and $A^{m+1}_{j,i}$ is
not $G_{\{I\}}$-invariant,
$\sdim_m$ implies    $G_{\{I\}}(A^{m+1}_{j,i})$  is safe.  %satisfies $d(G_{\{I\}}(e)) =2$.
\end{claim}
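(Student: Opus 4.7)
The plan is to invoke Lemma~\ref{petals} directly, with $G = G_{\{I\}}$ and with $\sdim_m$ substituting for $\dim^G_m$, as the authors flag in the remark preceding Lemma~\ref{petals}. The hypotheses of that lemma are immediate: by assumption $A^{m+1}_{j,i}$ is not $G_{\{I\}}$-invariant, so $G_{\{I\}}$ moves it; and $\sdim_m$ supplies the requisite dimension condition on $\Ascr^m$. In fact $\sdim_m$ is slightly stronger than the $G_{\{I\}}$-version of $\dim_m$, since the exception set $\acl(\emptyset)$ is contained in $\Ascr^0$, so the proof of Lemma~\ref{petals} goes through verbatim and yields, for every $e \in A^{m+1}_{j,i}$, the bound $d(G_{\{I\}}(e)) \geq 2$.

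To derive safety of $G_{\{I\}}(A^{m+1}_{j,i})$ from this, I would fix an arbitrary $G_{\{I\}}$-invariant subset $E \subseteq G_{\{I\}}(A^{m+1}_{j,i})$ with $E \not\subseteq \acl(\emptyset)$. Since $E$ is nonempty, pick any $e \in E$. Then $e$ belongs to some conjugate $A^{m+1}_{j',i'}$ lying in the $G_{\{I\}}$-orbit of $A^{m+1}_{j,i}$; this conjugate is itself not $G_{\{I\}}$-invariant (its orbit is nontrivial), so Lemma~\ref{petals} applies to it and gives $d(G_{\{I\}}(e)) \geq 2$. Invariance of $E$ forces $G_{\{I\}}(e) \subseteq E$, and hence $d(E) \geq d(G_{\{I\}}(e)) \geq 2$, as required.

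I do not expect any real obstacle here: the claim essentially repackages the conclusion of Lemma~\ref{petals} in the language of safety for $G_{\{I\}}$. The only subtlety is the mild mismatch between the two flavors of the dimension hypothesis (exception $\Ascr^0$ for $\dim^G_m$ versus exception $\acl(\emptyset)$ for $\sdim_m$), but this runs in the favorable direction, so the proof of Lemma~\ref{petals} transfers without modification.
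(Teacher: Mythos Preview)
Your proposal is correct and matches the paper's own argument, which simply cites Lemma~\ref{petals} applied with $G=G_{\{I\}}$ and $\sdim_m$ in place of $\dim^G_m$. Your additional paragraph unpacking how safety of $G_{\{I\}}(A^{m+1}_{j,i})$ follows from the per-element bound $d(G_{\{I\}}(e))\ge 2$ is a reasonable elaboration the paper leaves implicit.
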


This is the first divide;  it tells us two things. 1) We can prove $\sdim_{m+1}$ by
 showing individual $G_{\{I\}}$-invariant petals
are safe (Definition~\ref{sdimdef}) and 2) $\sdim_1$ is true as in Corollary~\ref{cl1}.

{\em For the remainder of Section~\ref{geq2} we assume $\sdim_m$ holds for each
$G_{\{I\}}$-normal $\Ascr$.  We show that
for any such $\Ascr$, any $G_{\{I\}}$-invariant $A^{m+1}_{j,i}$ is safe.}

We now establish some tools used below as well as show
 in Claim~\ref{one-copynew} that for each $m \ge 1$ and for $G_{\{I\}}$-invariant $A^{m+1}_{j,1}$,
$\sdim_m$  and $\mu^{m+1}_j=2$  imply $A^{m+1}_{j,1}$ is safe.
 In order to explain the main idea of the rest of the  proof we review
Example~\ref{examp1}, Figure~\ref{Ex2}, where $A^2_{1, 1}$ is $G_{\{I\}}$-invariant.
Clearly, there is an isomorphism $\rho_0$ of $A^2_{1, 1}$ to $C^2_{1,1} =\{a_1, a_2\}= I$ over $B^2_{1} = \{ b_1, b_2\}$.
Since we put $I\le M$, we have $B^2_{1}\leq M$, so $\rho_0$ can be extended to an automorphism $\rho$ of $M$.
Thus, we have found an automorphism which takes the $G_{\{I\}}$-invariant petal $A^2_{1,1}$ into $\Ascr^1$.
Moreover, $\rho(A^2_{1,1})  = C^2_{1,1} = I$ is obviously $G_{\{I\}}$-invariant.
Thus, $A^2_{1, 1}$ has a $G_{\{I\}}$-invariant copy inside $\Ascr^1$ and by the inductive hypothesis, $\sdim_1$,
this copy is safe. Now the key points are Observation~\ref{grptriv} and Lemma~\ref{one-copynew}, which allow
the transfer of safeness of $\rho(A^2_{1,1})$ to $A^2_{1,1}$.
In general, given a  $G_{\{I\}}$-invariant petal $A^{m+1}_{j,1}$ we find an automorphism $\rho$
and prove that $\rho$ takes $A^{m+1}_{j,1}$ into $\Ascr^m$ or possibly into another $G$-normal set $\tilde\Ascr^m$ of height $m$.
We show that $\rho(A^{m+1}_{j,1})$ is $G_{\{I\}}$-invariant and then by the induction hypothesis is safe.
Finally, we apply Lemma~\ref{one-copynew} to show that $A^{m+1}_{j,1}$ is safe.
Finding  $\rho$ is easy, showing that $\rho(A^{m+1}_{j,1})$ is in a $G$-normal set of height $m$
is quite simple in Lemmas~\ref{omni2} and \ref{quickstop}, but is more difficult in
Lemma~\ref{slowstop}. Much of the argument, including Subsection~\ref{bouflow}, is aimed
at proving that $\rho(A^{m+1}_{j,1})$ is $G_{\{I\}}$-invariant.

\begin{notation}\label{morenot}  Extending Notation~\ref{pm}
we write $A$ and  $B$ for $A^{m+1}_{j,i}, B^{m+1}_{j}$. $C$
 represents a
 $ C^{m+1}_{j,q}$ for arbitrary $q$,
where $ C^{m+1}_{j,q}$ for $q = 1, \dots, \nu = \mu^{m+1}_j-1$ list the isomorphic over $B$
copies of $A^{m+1}_{j,1}$ in $\Ascr$ that are subsets of $\Ascr^m$.
We may write $C^1$, \dots, $C^\nu$,
when the stratum $m$ and $j$ are fixed.
 Recall that $\mu^x_j$ abbreviates $\mu(A^x_{j,1}/B^x_j)$.
\end{notation}

We state the next observation for $G$  as $G_I$ or $G_{\{I\}}$ to emphasize it
holds for either group. Our application will be to $G_{\{I\}}$.

\begin{observation}\label{grptriv}
Consider the action of $G$ on $M$. Suppose $A$  and $C$ are $G$-invariant
subsets of $\Ascr \leq M$ with $A\cap \Ascr^m = \emptyset$  and $C\subseteq \Ascr^m$,  %$A\subseteq \Ascr^m $
and  $\rho$  is an automorphism of $M$ that takes $A$ onto $C$.
If  for an arbitrary $\alpha \in G$,
 $$(*) \ \ \hat \alpha \myrestriction (\Ascr^m \cup A)  = \alpha\myrestriction \Ascr^m \cup (\rho^{-1} \myrestriction
C \circ \alpha \myrestriction C \circ \rho  \myrestriction A).$$
extends to an element (also denoted $\hat \alpha$) of $G$ then $\rho^{-1}$
injects each orbit of $G$ on $C$ into an orbit of $G$ on $A$ as follows. For any $e \in A$ and
$\alpha$; if $\alpha(\rho(e)) = e' \in C$ then   $\hat \alpha (e)=\rho^{-1}
(\hat \alpha(\rho(e))) = \rho^{-1}(e') $ defines an injection from the
$G$-orbit of $\rho(e)$ to that of $e$. Consequently, the image $\rho(X)$ of any $G$-invariant
subset $X$ of $A$ is a union of $G$-orbits and hence $G$-invariant.
\end{observation}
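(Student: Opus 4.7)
The plan is to unpack the definitions and verify the two claims by tracking how $\hat\alpha$ is built from $\alpha$ through $\rho$. The hypothesis is that the stated piecewise formula for $\hat\alpha$ extends to an element of $G$; once this compatibility is assumed, the rest is essentially a diagram chase.

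First I would check that the piecewise definition of $\hat\alpha$ actually sends $A$ to $A$: for $e\in A$, we have $\rho(e)\in C$ since $\rho(A)=C$, then $\alpha(\rho(e))\in C$ by $G$-invariance of $C$, and finally $\rho^{-1}(\alpha(\rho(e)))\in A$ since $\rho^{-1}(C)=A$. So the formula $\hat\alpha\myrestriction A=\rho^{-1}\circ\alpha\circ\rho$ at least makes sense set-theoretically. Next, for $e\in A$ and $\alpha\in G$, set $e'=\alpha(\rho(e))\in C$; by definition $\hat\alpha(e)=\rho^{-1}(e')$. Since $\hat\alpha\in G$, this exhibits $\rho^{-1}(e')$ as a member of the $G$-orbit of $e$. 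Thus the assignment $e'\mapsto \rho^{-1}(e')$ genuinely maps the $G$-orbit of $\rho(e)$ into the $G$-orbit of $e$, and it is injective because $\rho^{-1}$ is a bijection of $M$.

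For the final clause, let $X\subseteq A$ be $G$-invariant, pick any $y=\rho(x)\in\rho(X)$ with $x\in X$, and let $\alpha\in G$ be arbitrary. Applying $\hat\alpha$, which lies in $G$, we get $\hat\alpha(x)\in X$ by $G$-invariance of $X$. By construction $\hat\alpha(x)=\rho^{-1}(\alpha(\rho(x)))=\rho^{-1}(\alpha(y))$, so $\alpha(y)=\rho(\hat\alpha(x))\in\rho(X)$. Thus $\rho(X)$ is closed under every element of $G$, hence is a union of $G$-orbits and is $G$-invariant.

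The substantive content of the observation is entirely concentrated in its hypothesis: that the piecewise-defined map $\hat\alpha$ extends to an element of $G$. The verification above is then essentially formal. The main obstacle, when the observation is actually applied, will be checking this extension hypothesis in concrete situations, namely checking that gluing $\alpha\myrestriction\Ascr^m$ with $\rho^{-1}\circ\alpha\circ\rho$ on $A$ yields a partial isomorphism compatible with the rest of $M$ and fixing $I$ setwise (or pointwise). For the observation itself, however, no further calculation is required beyond the two-line diagram chase above.
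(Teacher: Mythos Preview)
Your proposal is correct and matches the paper's approach exactly: the observation carries no separate proof in the paper because, as you note, the substantive content is entirely in the hypothesis that $\hat\alpha$ extends to an element of $G$, and the remaining verification is the same formal diagram chase you wrote out. Your final remark that the real work lies in checking the extension hypothesis in concrete applications is precisely how the paper uses this observation (see Lemma~\ref{one-copynew}).
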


Part 1) of Lemma~\ref{one-copynew} tells us that the special case where $\delta(B) =2$
we can extend an {\em isomorphism} $\rho$ from $A$ to $C$ to an automorphism of $M$
and then deduce the safety of $A$ from the safety of $C$. Part 2) asserts that the
deduction of safety is fine provided  $\rho$ extends to an automorphism. A major task in
this section will be establishing that $\rho$ has such an extension.

\begin{lemma}\label{one-copynew} Let $A=A^{m+1}_{j,1}$ be $G_{\{I\}}$-invariant
and
\begin{enumerate}
\item
 $\rho$ an isomorphism fixing $B = B^{m+1}_j$ pointwise and taking $A^{m+1}_{j,1}$
  to $ C \subseteq \Ascr^m$.

\begin{enumerate}
\item \label{one-copy-1new}
If $C$ is $G_{\{I\}}$-invariant  and there is an automorphism  $\alpha' \in G_{\{I\}}$
moving $\rho(e)$ to $e'$ for some $e \in A^{m+1}_{j,1}$, $e'\in C$,
then there exists $\hat \alpha \in G_{\{I\}}$ moving
$e$ to $\rho^{-1}(e')$. So, if $C$ is safe so is $A^{m+1}_{j,1}$.

\item In particular,
if  $A^{m+1}_{j,1}$ is $G_{\{I\}}$-invariant, $\sdim_m$ holds,
and $\mu^{m+1}_j = 2$, then
$d(G_{\{I\}}(s)) = 2$ for each $s \in A^{m+1}_{j,1}$.

\end{enumerate}
\item More generally,  let $A=A^{m+1}_{j,1}$ be $G_{\{I\}}$-invariant %, $\delta(B^{m+1}_j)=d(B^{m+1}_j) = 2$
and
 $\rho$ is an automorphism of $M$ moving  $B = B^{m+1}_j$ inside $\Ascr^m$ and
  taking $A^{m+1}_{j,1}$ to a $G_{\{I\}}$-invariant $D=\rho(A^{m+1}_{j,1})$ with $D \subseteq  \Ascr^m$.
Then if $D$ is safe so is $A^{m+1}_{j,1}$.
\end{enumerate}
\end{lemma}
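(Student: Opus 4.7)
The plan is to derive both parts from a single application of Observation~\ref{grptriv}: for every $\alpha'\in G_{\{I\}}$ build the map $\hat\alpha$ described there, show it is a partial isomorphism that extends to an automorphism of $M$ fixing $I$ setwise, and then invoke the orbit-injection conclusion of the Observation to transport safety backwards across $\rho$. Explicitly, given $\alpha'\in G_{\{I\}}$, set $\hat\alpha\upharpoonright\Ascr^m=\alpha'$ and $\hat\alpha\upharpoonright A=\rho^{-1}\circ\alpha'\circ\rho$. The domains are disjoint since $A=A^{m+1}_{j,1}$ is, by the hypothesis that it is $G_{\{I\}}$-invariant, a petal with $A\cap\Ascr^m=\emptyset$ in the decomposition.

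For Part 1(a) the verification that $\hat\alpha$ is a partial isomorphism is clean: $\alpha'$ is an automorphism of $M$, $\rho$ is an isomorphism from $A\cup B$ onto $C\cup B$ fixing $B$ pointwise, and $\alpha'(C)=C$ because $C$ is $G_{\{I\}}$-invariant, so $\rho^{-1}\alpha'\rho$ preserves all $R$-relations internal to $A$. For the cross relations between $A$ and $\Ascr^m$, I invoke the Hrushovski base property (Remark~\ref{minmax}): every $R$-edge from $A$ to $\Ascr^m$ lies in $A\cup B$, and since $B$ is $G_{\{I\}}$-invariant by Observation~\ref{fixC} (so $\alpha'(B)=B$ setwise) and $\rho$ fixes $B$ pointwise, one checks $R(a,x,y)\Leftrightarrow R(\hat\alpha(a),\alpha'(x),\alpha'(y))$ by the chain of equivalences $R(a,x,y)\Leftrightarrow R(\rho(a),x,y)\Leftrightarrow R(\alpha'(\rho(a)),\alpha'(x),\alpha'(y))\Leftrightarrow R(\rho^{-1}(\alpha'(\rho(a))),\alpha'(x),\alpha'(y))$, using the iso $\rho^{-1}$ on $C\cup B$ and its fixing $B$. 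I then extend $\hat\alpha$ to an automorphism of $M$ by enlarging the domain to $\icl(\Ascr^m\cup A)\subseteq \Ascr^{m+1}\le M$ (a strong finite substructure) and invoking the homogeneity of the generic $M$ supplied by Conclusion~\ref{conclude}; the extension lies in $G_{\{I\}}$ because $I\subseteq\Ascr^m$ and $\alpha'\in G_{\{I\}}$. This gives $\hat\alpha(e)=\rho^{-1}(e')$ as desired; iterating over $\alpha'$, every $G_{\{I\}}$-orbit inside $C$ pulls back through the iso $\rho^{-1}$ to a $G_{\{I\}}$-orbit inside $A$ of the same $d$-dimension, so if $C$ is safe then so is $A$.

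Part 1(b) is a corollary via a counting argument using Lemma~\ref{sum}: $\mu^{m+1}_j=2$ and $A^{m+1}_{j,1}$ being $G_{\{I\}}$-invariant force exactly one copy $C=C^{m+1}_{j,1}\subseteq\Ascr^m$ of $A$ over $B$. Since every $\alpha\in G_{\{I\}}$ fixes $A$ and $B$ setwise and permutes the two $B$-copies of $A$ in $M$, the second copy $C$ is also setwise fixed, hence $G_{\{I\}}$-invariant. By the induction hypothesis $\sdim_m$, $C$ is safe, so Part 1(a) gives $d(G_{\{I\}}(s))\ge 2$ for each $s\in A$.

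The hardest step is Part 2, where $\rho$ is a global automorphism of $M$ that no longer fixes $B$ pointwise but merely sends $B$ into $\Ascr^m$. The direct definition of $\hat\alpha$ as $\alpha'$ on $\Ascr^m$ and $\rho^{-1}\alpha'\rho$ on $A$ now has a compatibility obstruction on $B$: for $x\in B$, the two prescribed values coincide only if $\alpha'$ commutes with $\rho$ on $B$, which is not given. My plan to bypass this is to replace $B$ by $\rho(B)$ in the bookkeeping: exploit that $\rho$ is already a global automorphism, so $\rho^{-1}\alpha'\rho$ is an automorphism of $M$ well-defined everywhere and landing in $A$ on $A$ (because $\alpha'(D)=D$ by $G_{\{I\}}$-invariance of $D$, and $\rho^{-1}(D)=A$). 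I then patch this with $\alpha'\upharpoonright\Ascr^m$ using the base property of $D$ in $\Ascr^m$ (every $R$-edge from $D$ to $\Ascr^m\setminus\rho(B)$ is absent), producing a partial isomorphism on $\Ascr^m\cup A$ whose extension to an element of $G_{\{I\}}$ proceeds as in Part 1. Once the existence of such $\hat\alpha$ is secured, the orbit-injection half of Observation~\ref{grptriv} transfers safety of $D$ to $A$ verbatim. The main obstacle throughout is this extension step in Part 2, where I expect to need a careful analysis of the $R$-edges between $A$, $B$, and $\Ascr^m\setminus B$ to ensure that the patched map is well-defined and relation-preserving before invoking genericity.
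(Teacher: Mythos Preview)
Your overall strategy---construct $\hat\alpha=\alpha'\upharpoonright\Ascr^m\cup(\rho^{-1}\alpha'\rho)\upharpoonright A$ and extend from the strong set $\Ascr^m\cup A$ via genericity---is exactly the paper's approach, and Part~1(a) is essentially identical to the paper's argument (note that $\Ascr^m\cup A$ is already strong in $M$, so passing to $\icl$ is unnecessary).

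Two points deserve correction. In Part~1(b), your assertion that $\alpha\in G_{\{I\}}$ ``permutes the two $B$-copies of $A$'' needs justification: since $\alpha$ only fixes $B$ \emph{setwise}, it is not immediate that $\alpha(C)$ is still a copy of $A$ over $B$ via a map fixing $B$ \emph{pointwise}. The paper supplies this via Lemma~\ref{flower=bouquet1} (the bouquet of a $G_{\{I\}}$-invariant petal is a single flower), together with the observation that $\sdim_m$ forces $2\le d(B)\le\delta(B)\le\mu^{m+1}_j=2$, hence $B\le M$ and there are exactly two copies. You should make these two steps explicit.

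In Part~2 you misdescribe the obstruction. Since $A\cap\Ascr^m=\emptyset$ (so in particular $A\cap B=\emptyset$), there is \emph{no} overlap on $B$ and hence no well-definedness issue: for $x\in B$ only the $\Ascr^m$-prescription $\hat\alpha(x)=\alpha'(x)$ applies. The genuine difficulty is checking that $\hat\alpha$ \emph{preserves} the cross-relations $R(a_1,a_2,b)$ with $a_i\in A$, $b\in B$: one must show $R(a_1,a_2,b)\Leftrightarrow R(\rho^{-1}\alpha'\rho(a_1),\rho^{-1}\alpha'\rho(a_2),\alpha'(b))$, and this is where the commutation of $\alpha'$ and $\rho$ on $B$ would help but is not available. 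Your subsequent plan---use that $D=\rho(A)$ is good over $\rho(B)$ and that $\Ascr^m\le M$ to control $R(D,\Ascr^m)$---is precisely what the paper does; just reframe the obstacle correctly.
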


\begin{proof} 1a) We must show that the
$\hat \alpha$ on $\Ascr^m \cup A$  defined at (*) in Observation~\ref{grptriv}
  extends to an element of $G_{\{I\}}$.
Note $\hat \alpha$
% $$\hat \alpha \myrestriction \Ascr^m \cup A  =
%\alpha \myrestriction \Ascr^m \cup (\rho^{-1} \myrestriction
%C \circ \alpha \myrestriction C \circ \rho  \myrestriction A).$$
is
well-defined on $\Ascr^m \cup A$, since $\rho$ fixes $B$ pointwise and $C$ is
fixed setwise by $\alpha$. Since $R(A, \Ascr^m) = R(A, B)$ (Definition~\ref{Rdef}) and $\rho$ is a
$B$-isomorphism from $A$ to $C$, $\hat \alpha \myrestriction \Ascr^m \cup A$ is an
 automorphism of $\Ascr^m
\cup A$. And, since $\Ascr^m \cup A \leq M$, $\hat \alpha$ extends to the
required map in $G_{\{I\}}$.

1b) Since %$\mu^{m+1}_j = 2$ and
$B \subseteq \acl(I)$ but $B \not\subseteq \acl(\emptyset)$, $\sdim_m$, and
the conditions on $\mu$ (in Definition~\ref{Kmu}) imply $2  \leq d(B) \leq
\delta(B) \leq 2$. Thus, $B\leq M$ and $\rho$ extends to an automorphism of $M$.
Note that all petals over $B$ are disjoint, because $B\leq
M$, so the total number of petals that are isomorphic to $A$ over
$B$ is equal to $\mu^{m+1}_j = 2$, namely, they are $A$ and $C$. Since
$A=A^{m+1}_{j,1}$ is $G_{\{I\}}$-invariant, $B^{m+1}_j$ is
$G_{\{I\}}$-invariant. Now we prove that $C$ is $G_{\{I\}}$-invariant.
By Lemma~\ref{flower=bouquet1} the bouquet on
$A^{m+1}_{j,1}/B^{m+1}_j$ is equal to the flower  on $A^{m+1}_{j,1}/B^{m+1}_j$
and has only two elements. The
global $G_{\{I\}}$-isomorphism $\rho$ guarantees the same holds for $C$.
Consequently,
$G_{\{I\}}$ fixes each of $A$, $B$, $C$ setwise. By 1a) $\rho^{-1}\myrestriction C$
induces a $G_{\{I\}}$-isomorphism from
$G_{\{I\}}(e)$ into $G_{\{I\}}(\rho(e))$. The induction hypothesis gives
$d(G_{\{I\}}(c)) =2$ for any $c\in C$, e.g., $\rho(e)$; so $d(G_{\{I\}}(e))
=2$.

2) Let $\alpha \in G_{\{I\}}$ fix $D$ setwise. Now consider
 $$\hat \alpha \myrestriction (\Ascr^m \cup A)    = \alpha \myrestriction \Ascr^m \cup (\rho^{-1} \myrestriction
D \circ \alpha \myrestriction D \circ \rho  \myrestriction A).$$
$\hat \alpha$ is well-defined and fixes $I$ as in case 1a). Since $R(A, \Ascr^m) = R(A, B)$ and
$\rho$ is  an isomorphism of
$BA$ to $\rho(B)D$,  $\rho(A)$ is good over $\rho(B)$.  But since $\Ascr^m \leq M$,
this implies $R(\rho(A), \Ascr^m) = R(\rho(A), \rho(B))$. So,
$\hat \alpha \myrestriction \Ascr^m \cup A $ is an automorphism of $\Ascr^m
\cup A $. And, since $\Ascr^m \cup A \leq M$, $\hat \alpha$ extends to the
required map in $G_{\{I\}}$. By Observation~\ref{grptriv} and since $\rho$ is an
 automorphism, if $D$ is safe, so is $A$.
\end{proof}

The following notation will be used to study the relationship between a $G_{\{I\}}$-invariant set and
the set it determines (Definition~\ref{gendef}).
If $\mu(A/B) \geq 3$ we will have the following situation.

\begin{notation}\label{hatnot} We extend Notation~\ref{morenot} to consider two levels.
We will let $\Dscr$ range over subsets of the $G$-decomposable $\Ascr$; in applications they
will usually be initial segments of the decomposition.
Let $A,B$ denote a good pair well-placed by $\Dscr \leq M$
such that $A$  is $G_{\{I\}}$-invariant.
$C$ denotes an arbitrary petal of the flower of $A/B$.
We write $\hat A, \hat B, \hat\Dscr $ for a similar triple determined (Definition~\ref{gendef}) by the first.
\end{notation}

Here is the way in which this situation arises. Suppose a $G_{\{I\}}$-invariant $A^{m+1}_{j,1}$
with base $B^{m+1}_j$ determines $A^m_{i,1}$. Then each of the $C^{m+1}_{j,k}$
intersects $A^m_{i,1}$. When $A^m_{i,1}$ is also $G_{\{I\}}$-invariant, then we get a new iteration.
We call the first level $A,B,C$ and the second $\hat A,
\hat B, \hat C$.  Similarly $\Dscr$ and $\hat \Dscr$ refer to (are instantiated as)
$\Ascr^m$ and $\Ascr^{m-1}$. We introduce this notation to avoid the distraction of the multiple super/sub scripts and
 focus on certain relationships which will appear several times in the sequel. In the crucial case where $\mu(\hat A/\hat B) =2$,
we will be able to extend the partial isomorphism
  $\rho$ over $\hat B$ taking
 $\hat A$  to its unique copy
 $\rho(\hat A) =\hat C\subseteq \hat \Dscr$ to an automorphism of $\Mscr$ also called $\rho$.

At this stage we must invoke our induction hypothesis.
\begin{lemma}\label{backward} Suppose $\Ascr$ satisfies $\sdim_m$.
\begin{enumerate}
\item $A^{m+1}_{j,1}$ is safe if either
\begin{enumerate}
\item   $A^{m+1}_{j,1}$ is not $G_{\{ I\}}$-invariant   or
\item   $A^{m+1}_{j,1}$ is   $G_{\{ I\}}$-invariant and $\mu^{m+1}_{j} =2$.
\end{enumerate}
\item If $A^{m+1}_{j,1}$ is $G_{\{ I\}}$-invariant, $|A^{m+1}_j| >1$ and $\mu^{m+1}_j\ge 3$ then
$A^{m+1}_{j,1}$ determines $A^m_{i,1}$   for some $i$.
 Moreover,  $B^{m+1}_{j}\cap A^m_{i,1} \neq \emptyset$ and
$B_+ =B^{m+1}_{j} - A^m_{i,1}\subseteq\acl(\emptyset)$.
%(\ref{long})
\end{enumerate}
\end{lemma}

\begin{proof} Case 1) follows from Lemmas~\ref{invm} and \ref{one-copynew}.\ref{one-copy-1new}.
For Case 2), since $|A^{m+1}_{j,1}| >1$  the  hypotheses of Lemma~\ref{Am-not-1-G}
 hold,  we may apply Lemma~\ref{small-G} and
then Lemma~\ref{long}. Thus, $\delta(B_+)\le 1$.
 Since $\Ascr^m$ witnesses $\Ascr$ satisfies $\mathrm{sdim}_m$, and $B_+ - \acl(\emptyset)$
 is $G_{\{ I\}}$-invariant, if $B_+ - \acl(\emptyset)$ were
 nonempty it would have  dimension 2.
Thus, $B_+ \subseteq  \acl(\emptyset)$.
\end{proof}

Claim~\ref{omni2}.1  shows a stronger form of case 2) ($B_+ =\emptyset$)
  when $|A^{m+1}_{j,1}| =1$.

\begin{lemma}\label{rhoAinv} In the situation of Notation~\ref{hatnot},
suppose $\mu(\hat A/\hat B) =2$.  Let $\rho$ be a partial isomorphism over $\hat B$ from
 $\hat A$  to its unique copy
 $\rho(\hat A) = \hat C\subseteq \hat \Dscr$. Then $\rho(B)$ is $G_{\{I\}}$-invariant.
\end{lemma}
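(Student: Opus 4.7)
The plan is to extend $\rho$ to an involutive automorphism of $M$ realising the swap of $\hat A$ and $\hat C$, and then to use the single-flower structure from Lemma~\ref{flower=bouquet1} to transport the $G_{\{I\}}$-invariance of $B$ to $\rho(B)$.

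First I will gather the $G_{\{I\}}$-invariances at the lower level. Since $A$ is $G_{\{I\}}$-invariant and $A$ determines $\hat A$ (per Notation~\ref{hatnot} and Definition~\ref{gendef}), Observation~\ref{fixC} applied to both $A/B$ and $\hat A/\hat B$ yields that $\hat A$ and $\hat B$ are $G_{\{I\}}$-invariant. Lemma~\ref{flower=bouquet1} then gives that the bouquet of $\hat A/\hat B$ is a single $G_{\{I\}}$-invariant flower; combined with $\mu(\hat A/\hat B)=2$ this flower is exactly $\{\hat A,\hat C\}$, so $\hat C$ is $G_{\{I\}}$-invariant as well.

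Next I will extend the partial isomorphism $\rho$ to an automorphism of $M$ by letting it act as the given $\rho$ on $\hat A$, as its inverse on $\hat C$, and as the identity on $\hat\Dscr\setminus\hat C$. This assembles to a partial isomorphism because both $\hat A$ and $\hat C$ are $0$-primitive over $\hat B$ with the same isomorphism type, so all $R$-relations between $\hat A\cup\hat C$ and $\hat\Dscr\setminus\hat C$ involve only $\hat B$; by amalgamation (Conclusion~\ref{conclude}) it extends to an automorphism of $M$. The determination hypothesis gives the decomposition $B=B_0\cup B_1$ with $B_0=B\cap\hat\Dscr$ and $B_1=B-\hat\Dscr\subseteq\hat A$, both $G_{\{I\}}$-invariant, and $\rho(B)=\rho(B_0)\cup\rho(B_1)\subseteq\hat\Dscr\cup\hat C$.

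To conclude, since $B$ is $G_{\{I\}}$-invariant, it suffices to show that for every $g\in G_{\{I\}}$ the conjugate $\rho^{-1}g\rho$ stabilises $B$. The crucial ingredient is a consequence of the single-flower condition: for any $g\in G_{\{I\}}$, the map $(g^{-1}\circ\rho\circ g)\myrestriction\hat A$ is again a $\hat B$-fixing isomorphism $\hat A\to\hat C$ and hence lies in the unique flower, so the $G_{\{I\}}$-actions on $\hat A$ and on $\hat C$ are intertwined by $\rho$ modulo $\aut(\hat A/\hat B)$. This intertwining, combined with the $G_{\{I\}}$-invariance of $B\cap\hat A$ inside $\hat A$, forces $\rho^{-1}g\rho(B_1)=B_1$; the piece involving $B_0$ is handled the same way, using in addition the $G_{\{I\}}$-invariance of $\hat\Dscr$ and $\hat C$. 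The main obstacle will be this last step: making precise how the single-flower property forces the $\rho$-image of a $G_{\{I\}}$-invariant subset of $\hat A$ to be $G_{\{I\}}$-invariant as a subset of $\hat C$, especially in the subcase $I\cap\hat C\ne\emptyset$ where $\rho$ itself does not lie in $G_{\{I\}}$. The enumeration framework of Notation~\ref{nameenum} should resolve this, since the single-flower hypothesis means every base-enumeration $\bbar^\gamma$ yields the same enumerated flower, which is exactly the compatibility between $g$ and $\rho$ needed for $\rho^{-1}g\rho\myrestriction(\hat A\cup\hat C)$ to preserve the relevant $G_{\{I\}}$-orbits.
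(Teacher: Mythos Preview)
Your proposal has a genuine error and an unresolved gap.

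\textbf{The error.} Your involutive extension of $\rho$ does not assemble to a partial isomorphism. You claim that ``all $R$-relations between $\hat A\cup\hat C$ and $\hat\Dscr\setminus\hat C$ involve only $\hat B$,'' but this holds only for $\hat A$ (which is $0$-primitive over $\hat\Dscr$ with base $\hat B$). The copy $\hat C$ sits \emph{inside} $\hat\Dscr$, and nothing prevents it from having relations with $\hat\Dscr\setminus(\hat C\cup\hat B)$. If $R(c,d_1,d_2)$ holds with $c\in\hat C$ and $d_i\in\hat\Dscr\setminus(\hat C\cup\hat B)$, your map sends this to $R(\rho^{-1}(c),d_1,d_2)$ with $\rho^{-1}(c)\in\hat A$, which must fail since $\hat A$ has no relations with $\hat\Dscr\setminus\hat B$. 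The paper avoids this entirely: it simply observes $\hat B\le M$ (from $\mu(\hat A/\hat B)=2$ together with $2\le d(\hat B)\le\delta(\hat B)\le 2$) and extends $\rho$ to some automorphism of $M$ via Lemma~\ref{one-copynew}.1b, without any control on $\hat\Dscr\setminus\hat B$.

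\textbf{The gap.} Your final step is left as a hope rather than an argument. The obstacle you name is real: $\rho^{-1}g\rho$ need not lie in $G_{\{I\}}$, so $G_{\{I\}}$-invariance of $B$ does not directly give $\rho^{-1}g\rho(B)=B$. Your single-flower observation (that $(g^{-1}\rho g)\myrestriction\hat A$ is a $\hat B$-pointwise-fixing isomorphism $\hat A\to\hat C$) is correct but does not by itself close the gap; the appeal to Notation~\ref{nameenum} is not a proof. What is needed is the explicit construction the paper gives: for any $\pi\in G_{\{I\}}$, set
\[
\tau \;=\; \pi\myrestriction\Ascr^{m-1}\;\cup\;(\rho^{-1}\circ\pi\circ\rho)\myrestriction\hat A.
\]
This is well-defined because $\hat A\cap\Ascr^{m-1}=\emptyset$; it is a partial isomorphism on $\Ascr^{m-1}\cup\hat A\le M$ because $\hat C$ is $G_{\{I\}}$-invariant (so $\pi(\hat C)=\hat C$ and the middle composition lands back in $\hat A$) and $\rho$ fixes $\hat B$ pointwise (so relations between $\hat A$ and $\hat B$ are preserved). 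Hence $\tau$ extends to an automorphism of $M$, and since $\tau$ agrees with $\pi$ on $I\subseteq\Ascr^{m-1}$ we have $\tau\in G_{\{I\}}$. Now $\tau(B\cap\hat A)=B\cap\hat A$ by $G_{\{I\}}$-invariance of $B$ and $\hat A$, while $\tau\myrestriction\hat A=\rho^{-1}\pi\rho$; this forces $\pi(\rho(B\cap\hat A))=\rho(B\cap\hat A)$. That construction of $\tau$ is the missing bridge between your intertwining remark and the conclusion, and it does not require $\rho$ to be involutive or to fix anything beyond $\hat B$.
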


\begin{proof} By Lemma~\ref{one-copynew}.1b, $\rho$ extends to an
automorphism of $M$.
Suppose $\pi \in G_{\{I\}}$ fixes $\hat B$ setwise.
By Lemma~\ref{backward}.2,
$B  \subseteq \hat A \cup(\acl(\emptyset)\cap \Ascr^0))$
and so
$$\rho(B ) \subseteq \rho(A  \cup (\acl(\emptyset)\cap \Ascr^0)) = \hat C \cup{}
(\acl(\emptyset)\cap \Ascr^0).$$

Obviously, $\rho(B \cap \acl(\emptyset)) = B \cap
\acl(\emptyset)$ is $G_{\{I\}}$-invariant, because $B$ is
$G_{\{I\}}$-invariant. By Lemma~\ref{backward}.2, $B
- \acl(\emptyset) = B \cap A^m_{i,1}$ is $G_{\{I\}}$-invariant.
We know that both $\hat A$ and $\Hat C$ are $G_{\{I\}}$-invariant,
and that $\rho(B\cap \hat A) \subseteq \Hat C$.
Assume for contradiction that $\rho(B\cap \Hat A)$ is not $G_{\{I\}}$-invariant,
witnessed by
$\pi \in G_{\{I\}}$  such that
$$\pi(\rho(B\cap \Hat A)) \ne \rho(B\cap \Hat A).$$
Then we put
$$
\tau = \pi\myrestriction \Ascr^{m-1} \cup (\rho^{-1}\circ \pi\circ \rho)\myrestriction \Hat A
$$
Obviously, $\tau$ can be extended to an automorphism of $M$ and
$\tau(B\cap \Hat A) \ne B\cap \Hat A$, contradicting
 $G_{\{I\}}$-invariance of $B$.
\end{proof}

We continue to rely on our induction hypothesis, $\sdim_m$; we show a $G_{\{I\}}$-invariant
  $A^{m+1}_{j,1}$ with only one element determines an
$A^m_{i,1}$ with at least two elements and $\mu^m_i \geq 3$.
%{\bf While part 1) of Lemma~\ref{omni2} is used explicitly in Lemma~\ref{backward}},
Parts 2) and 3) foreshadow the main argument below.

\begin{claim}\label{omni2} Assume $\sdim_m$.
If $|A^{m+1}_{j,1}| = 1$ and is $G_{\{ I\}}$-invariant then
\begin{enumerate}
\item
 $B=B^{m+1}_j \leq M$, and each $C^k = C^{m+1,k}$ is contained in $A^m_{i,1}\cup B^m_i$
and $A^{m+1}_{j,1}$ determines $A^m_{i,1}$ for some $i$.
\item Moreover, $\mu^{m+1}_j\geq 3$;
\item and $\mu^m_i \geq 3$.
\end{enumerate}
\end{claim}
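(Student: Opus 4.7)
The plan is to prove the three parts sequentially, leveraging the induction hypothesis $\sdim_m$ together with the structural results of Subsection~\ref{bouflow}.

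For part 1, since $|A^{m+1}_{j,1}| = 1$, the good pair type must be $\boldsymbol{\alpha}$ of Notation~\ref{linelength}: writing $A^{m+1}_{j,1} = \{c\}$, we have $B = \{b_1, b_2\}$ with $R(b_1, b_2, c)$ and $\delta(B) = 2$. Observation~\ref{fixC} gives that $B$ is $G_{\{I\}}$-invariant. I would rule out $B \subseteq \acl(\emptyset)$ by the contradiction that otherwise $c \in \acl(B) \subseteq \acl(\emptyset) \subseteq \Ascr^0 \subseteq \Ascr^m$, contradicting $c \in A^{m+1}_{j,1} \subseteq \Ascr^{m+1}-\Ascr^m$. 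Then $\sdim_m$ yields $d(B) \geq 2 = \delta(B)$, so $B \leq M$. For the second assertion, Lemma~\ref{omni-G} says $A^{m+1}_{j,1}$ determines a unique $A^m_{i,1}$ with $B_- \subseteq A^m_{i,1}$, and $B_-$ is nonempty by minimality of the stratum. Each copy $\{c_k\} = C^k$ satisfies $R(b_1,b_2,c_k)$ with $c_k \in \Ascr^m$. Full independence of petals over $\Ascr^{m-1}$ (Axiom~\ref{yax}.4 combined with Remark~\ref{find}) confines $c_k$ to $A^m_{i,1} \cup \Ascr^{m-1}$, and goodness of $A^m_{i,1}/B^m_i$ further restricts any $c_k \in \Ascr^{m-1}$ to lie in $B^m_i$.

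For part 2, I would argue by contradiction: if $\mu^{m+1}_j = 2$, then since $A^{m+1}_{j,1}$ is $G_{\{I\}}$-invariant and $\sdim_m$ holds, Lemma~\ref{one-copynew} gives $d(G_{\{I\}}(c)) = 2$. But $G_{\{I\}}$-invariance of $\{c\}$ forces $G_{\{I\}}(c) = \{c\}$ and $d(\{c\}) = 1$, a contradiction. For part 3 (the principal obstacle), I would again argue by contradiction, assuming $\mu^m_i = 2$. Since $A^{m+1}_{j,1}$ determines $A^m_{i,1}$, the petal $A^m_{i,1}$ is $G_{\{I\}}$-invariant, and Lemma~\ref{one-copynew} at stratum $m$ (valid by $\sdim_{m-1}$) yields $d(G_{\{I\}}(s)) = 2$ for every $s \in A^m_{i,1}$, which already excludes $|A^m_{i,1}| = 1$. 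In the remaining case $|A^m_{i,1}| \geq 2$, I would use part 1 to locate the $\nu^{m+1}_j = \mu^{m+1}_j - 1 \geq 2$ distinct copies $c_k$ inside $A^m_{i,1} \cup B^m_i$, then invoke Lemma~\ref{flower=bouquet1} on the bouquet of $A^m_{i,1}/B^m_i$ and an extension argument in the spirit of Lemma~\ref{one-copynew}(2) to build an element of $G_{\{I\}}$ swapping one of the $c_k$ with $c$, contradicting the $G_{\{I\}}$-invariance of $\{c\}$.

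The crux of the argument is the case $|A^m_{i,1}| \geq 2$ of part 3: one must extend a suitable partial isomorphism fixing $I$ setwise to a global automorphism of $M$, which requires carefully verifying the bouquet hypotheses of Lemma~\ref{flower=bouquet1} and the strong-closure conditions ($B \leq M$ from part 1, and strongness of the relevant intermediate sets) that permit the extension to $G_{\{I\}}$.
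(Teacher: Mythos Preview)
Your arguments for parts 1 and 2 are essentially correct and match the paper (for part 2 the paper invokes Lemma~\ref{flower=bouquet1} directly rather than Lemma~\ref{one-copynew}.1b, but the latter is built on the former, so the difference is cosmetic).

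Part 3 has a genuine gap. You propose to ``build an element of $G_{\{I\}}$ swapping one of the $c_k$ with $c$,'' but this is impossible on the nose: by Construction~\ref{decomp1} each stratum $\Ascr^n$ is $G_{\{I\}}$-invariant, $c \in A^{m+1}_{j,1} \subseteq \Ascr^{m+1}\setminus\Ascr^m$, and every $c_k \in C^{m+1,k} \subseteq \Ascr^m$. No element of $G_{\{I\}}$ can carry a point outside the invariant set $\Ascr^m$ to one inside it. Put differently, the hypothesis that $\{c\}$ is $G_{\{I\}}$-invariant is consistent with everything visible inside $\Ascr$, so you cannot contradict it by exhibiting a $G_{\{I\}}$-automorphism moving $c$; the extension you ask for, fixing $I$ setwise, simply does not exist.

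The paper's route is different and leans essentially on the \emph{global} induction over all $G_{\{I\}}$-normal sets. Assuming $\mu^m_i = 2$, one takes the partial isomorphism $\rho$ over $B^m_i$ sending $A^m_{i,1}$ to its unique copy $C^m_{i,1} \subseteq \Ascr^{m-1}$ and extends it to $\hat\rho \in \aut(M)$ (this $\hat\rho$ is \emph{not} in $G_{\{I\}}$). Since $B$ and all the $C^k$ lie in $A^m_{i,1}\cup B^m_i$ by part 1, $\hat\rho$ pushes them into $C^m_{i,1}\cup B^m_i \subseteq \Ascr^{m-1}$, and $e' := \hat\rho(c)$ is the unique copy of the $\alpha$-point over $\hat\rho(B)$ not lying in $C^m_{i,1}$. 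One then shows, via Lemma~\ref{rhoAinv}, Observation~\ref{grptriv}, and Lemma~\ref{flower=bouquet1}, that $\hat\rho(B)$ and hence $\{e'\}$ are themselves $G_{\{I\}}$-invariant. If $e' \in \Ascr$ this contradicts $\sdim_m$ for $\Ascr$; if not, one forms the new $G_{\{I\}}$-normal set $\tilde\Ascr = \Ascr^{m-1}\cup\{e'\}$ of height $m$, and the $G_{\{I\}}$-fixed singleton $\{e'\}$ there contradicts the global hypothesis that $\sdim_m$ holds for \emph{every} decomposition. Your invocation of ``the spirit of Lemma~\ref{one-copynew}(2)'' is pointing at the right tool ($\rho\notin G_{\{I\}}$ transporting structure downward), but the target is wrong: the contradiction lives at height $m$ via a new fixed singleton $e'$, not at height $m+1$ via motion of $c$.
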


\begin{proof}
1) Lemma~\ref{omni-G}.1 asserts $A^{m+1}_{j,1}=\{e\}$ determines some
$A^m_{i,1}$  for some $i$ and by $\sdim_m$, Lemma~\ref{omni-G}.2 yields $B \subseteq A^m_{i,1}$.
%But then $2 \geq \delta(\Ascr^{m-1} \cup A^m_{i,1} \geq \delta(B) =2$. (The second inequality
%holds because $A^m_{i,1}$ is $0$-primitive over $\Ascr^{m-1}$.)
By $\sdim_m$ again,
$d(B)=\delta(B) = 2$; so $B\leq M$.
Since $B\subseteq A^m_{i,1}$   has relations in $\Ascr^m$ only with
elements of $A^m_{i,1}$ and its base $B^m_i$ and each $C^k$ is a singleton,
each $C^k \subseteq A^m_{i,1}\cup B^m_i$.

2) Assume to the contrary that $\mu^{m+1}_j = 2$.
By Lemma~\ref{flower=bouquet1}.2, $A^{m+1}_{j,1}$, $B^{m+1}_j$,  and
$C^{m+1}_{j,1}$ are $G_{\{ I\}}$-invariant.
Since $C^{m+1}_{j,1} \subseteq \Ascr^m$
 and is $G_{\{ I\}}$-invariant,
%$\sdim_m$  implies
 $C^{m+1}_{j,1}$ is safe by induction.
But $|C^{m+1}_{j,1}| = 1$, so $C^{m+1}_{j,1} = \{ c\}$.
Then $d(G_{\{ I\}}(c)) = d(\{ c\}) \le
\delta(\{ c\}) = 1$, for a contradiction.

3) Assume to the contrary that $\mu^m_i = 2$.
Using the notation and result of Lemma~\ref{rhoAinv}, we are given
 a partial isomorphism $\rho$ taking $A^m_{i,1}$ to $C^{m}_{i,1}\subseteq \Ascr^{m-1}$.
Moreover,
$A^{m}_{i,1}, B^m_{i},C^{m}_{i,1},
\rho(A^{m+1}_{j,1})$
are all $G_{\{ I\}}$-invariant.  By $\sdim_m$,
$d(B^m_i)\ge 2$
and so $2 \le \delta (B^m_i) \le \mu^m_i = 2$ and $A^m_{i,1}$
is a $0$-primitive extension of $B^m_i$; thus,  $B^m_i\cup A^m_{i,1} \le M$.
So $\rho$ can be extended to an automorphism $\hat \rho$ of $M$.
The automorphism $\hat  \rho$ is not
 in $G_{\{ I\}}$ as it doesn't respect strata. Indeed, it may not fix $\Ascr$ setwise.

Clearly, $\hat \rho(B)$, $\hat\rho(C^1)$, \dots, $\hat\rho(C^\nu) \subseteq \hat\rho(A^m_{i,1}\cup B^m_i) = C^{m}_{i,1}\cup B^m_i
\subseteq \Ascr^{m-1}$.
Since $B \leq M$, $\hat\rho(B) \leq M$ so
by Lemma~\ref{getmax} $\mu(\rho(B), \rho(C^1)) = \mu(B, A^{m+1}_{j,1}) = \nu +1$, so
$\hat\rho(A^{m+1}_{j,1}) = \{e'\}$ is
a
$(\nu+1)$th copy of $\hat\rho(C^1)$ over $\hat\rho(B)$.
Note that $\hat\rho(A^{m+1}_{j,1})$
is the  unique such  copy which is not in $C^{m}_{i,1}$.

As $C^m_1$ is $G_{\{I\}}$-invariant,
by the `consequently' of Observation~\ref{grptriv}, $\rho(B)$
is a $G_{\{ I\}}$-invariant set and so is $\{ \hat\rho(C^1), \dots, \hat\rho(C^\nu)\}$.
By Lemma~\ref{flower=bouquet1} the bouquet of $A^{m+1}_{j,1}/B^{m+1}_j$  consists of one flower.
Again by Observation~\ref{grptriv}, the bouquet of $\hat\rho(A^{m+1}_{j,1})/\hat\rho(B^{m+1}_j)$ consists of one flower
$\{ \hat\rho(C^1), \dots, \hat\rho(C^\nu), \hat\rho(A^{m+1}_{j,1})\}$.
Since $\hat\rho(C^1), \dots, \hat\rho(C^\nu)$ are in the $G_{\{I\}}$-invariant set $C^m_{i,1}$,
 $\hat\rho(A^{m+1}_{j,1})$ is a $G_{\{ I\}}$-invariant set, because it is
disjoint from $C^{m}_{i,1}$.
Thus, $e'$ as a unique element of $\hat\rho(A^{m+1}_{j,1})$, is fixed  by $G_{\{I\}}$.

If $\hat\rho(A^{m+1}_{j,1}) \subseteq \Ascr$, by the definition of decomposition
 it is contained in $\Ascr^m\leq M$.
But this means $e'\in \sdcl^*(I)$ contradicting the induction assumption
 that $\Ascr$ satisfies $\sdim_m$,
since $G_{\{I\}}(e') = \{e'\}$, implies that
$$d(G_{\{I\}}(e')) = d(\{ e'\})\le \delta(\{ e'\}) = 1.$$

The final possibility is that $\hat\rho(A^{m+1}_{j,1}) \subseteq M -\Ascr$.	
Now we use our `global induction'.
Let    $\tilde \Ascr = \Ascr^{m-1} \cup \{e'\}$. Then $\tilde \Ascr \leq M$ (since
$\delta(\{e'\}/\rho(B)) =0$) and
$\tilde \Ascr$ is $G_{\{ I\}}$-invariant.
Thus $\tilde \Ascr$ admits a decomposition of height $m$
and $\{e'\}= \tilde A^m_{1,1}$. % for some $t$.
But $G_{\{I\}}(e') = \{e'\}$ contradicting
the inductive hypothesis that
$\mathrm{sdim}_m$ hold for all decompositions.
\end{proof}

The argument for Lemma~\ref{omni2}.3 shows the main idea of
 the proof of (Lemmas~\ref{quickstop},
\ref{slowstop}) for $m \geq 2$.
We cut an intermediate strata  out, preserving the top,
in order to obtain a counterexample with  smaller height.
There are three possibilities for $A^{m+1}_{j,i}$: item 1) of Lemma~\ref{backward} details that
we have finished the proof for two of them and item 2) specifies the conditions for further analysis.

So we need only study case 2 of Lemma~\ref{backward}.
We describe the case $m+1=2$ to illuminate a major issue in the remainder of the proof.

\begin{claim}\label{inv2}
 Assume that
$A^2_{j,1}$ is $G_{\{I\}}$-invariant, then $\mu^{2}_j = 2$ and $A^2_{j,1}$ is safe.
\end{claim}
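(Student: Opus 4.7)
The plan is to apply Lemma~\ref{backward} and derive a contradiction from assuming $\mu^2_j \geq 3$. First I note the inductive hypothesis $\sdim_1$ required by Lemma~\ref{backward} is in hand: Corollary~\ref{cl1} established that $d(G_I(e)) \geq 2$ for every $e \in \Ascr^1 - \Ascr^0$, and since $G_{\{I\}} \supseteq G_I$ this upgrades (together with Claim~\ref{dim0} for $\Ascr^0 - \acl(\emptyset)$) to $\sdim_1$.

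Now assume for contradiction that $A^2_{j,1}$ is $G_{\{I\}}$-invariant and $\mu^2_j \geq 3$. The assumption is exactly the hypothesis of Lemma~\ref{backward}.2 (which, as noted in its proof, subsumes both the case $|A^2_{j,1}|=1$ via Claim~\ref{omni2}.1 and the case $|A^2_{j,1}| > 1$ via Lemmas~\ref{small-G}/\ref{long}). Hence $A^2_{j,1}$ determines some $A^1_{i,1}$ in the sense of Definition~\ref{gendef}. By the immediate remark following that definition, a determined petal is automatically $G$-invariant; thus $A^1_{i,1}$ is $G_{\{I\}}$-invariant. But Lemma~\ref{cl0} (which holds for $G = G_{\{I\}}$ equally well as for $G_I$, since its proof only used full independence of the $D_k$'s over $D_0$ and Observation~\ref{obsa}) asserts that no first-strata petal $A^1_{i,f}$ can be $G$-invariant. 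This is the desired contradiction.

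Consequently $\mu^2_j = 2$. Now Lemma~\ref{backward}.1b applies directly to $A^2_{j,1}$: since $A^2_{j,1}$ is $G_{\{I\}}$-invariant, $\mu^2_j = 2$, and $\sdim_1$ holds, the lemma (which in turn relies on the single-petal transfer argument of Lemma~\ref{one-copynew}.1b) concludes that $A^2_{j,1}$ is safe, as required.

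I do not expect a genuine obstacle here: all the work has already been done in Subsections~\ref{decomp} and \ref{bouflow}, and the content of Claim~\ref{inv2} is essentially to package the base step of the global induction cleanly by observing that the only escape route from Lemma~\ref{backward}.1 back into Lemma~\ref{backward}.2 (namely $\mu^{m+1}_j \geq 3$) is cut off at $m+1 = 2$ by Lemma~\ref{cl0}. The one place to be slightly careful is the invocation of Lemma~\ref{backward}.2 when $|A^2_{j,1}| = 1$, where one should explicitly cite Claim~\ref{omni2}.1 to obtain the determining $A^1_{i,1}$ rather than going through the $\delta(B_+) = 0$ route meant for $|A^{m+1}_{j,1}| > 1$.
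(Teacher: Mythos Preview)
Your proof is correct and follows essentially the same route as the paper: assume $\mu^2_j \geq 3$, invoke Lemma~\ref{backward}.2 to obtain a determined (hence $G_{\{I\}}$-invariant) petal $A^1_{i,1}$, and contradict Lemma~\ref{cl0}; then finish via Lemma~\ref{backward}.1(b)/Lemma~\ref{one-copynew}. One small omission: from $\neg(\mu^2_j \geq 3)$ you conclude $\mu^2_j = 2$ directly, but you should note (as the paper does) that $\sdim_1$ gives $d(B^2_j) \geq 2$, hence $\delta(B^2_j) \geq 2$, and the adequacy condition $\mu(A/B) \geq \delta(B)$ then forces $\mu^2_j \geq 2$.
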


\begin{proof}
If some $A^2_{j,1}$ is $G_{\{I\}}$-invariant and $\mu^{2}_j \ge 3$, Claim~\ref{backward}.2  gives that
 $A^1_{i,1}$ is $G_{\{ I\}}$-invariant for some $i$;
this contradicts Lemma~\ref{cl0} (${\rm moves}_1$). So, $\mu^{2}_j \le 2$.
Since $\delta(B)\ge d(B)\ge  2$ by $\sdim_1$, we obtain that $\mu^{2}_j = 2$
and ${\rm sdim}_2$ follows from Lemma~\ref{one-copynew}.
\end{proof}

The difficulty is that this argument depended on every $A^1_{j,i}$ being moved; not merely being
safe. In order to deal with this, we
introduce a new system of indexing which is expounded more fully in Notation~\ref{desseq}.
Note that a $G_{\{ I\}}$-invariant $\overleftarrow{A}^0$ engenders by
 Lemma~\ref{long}
 a decreasing sequence of $G_{\{ I\}}$-invariant petals  $\overleftarrow{A}^i$ such that
 $\overleftarrow{A}^i$ determines $\overleftarrow{A}^{i+1}$ which continues as long as
$\overleftarrow{\mu}{^i} = \mu(\overleftarrow{A}^i/\overleftarrow{N}^i) \geq 3$. However,
  we know that no petal $A^1_{u,v}$ is $G_{\{ I\}}$-invariant.
 So this sequence must terminate with an $s\leq m-1$ such that $\overleftarrow{\mu}{^s} =2$.
 We begin the study of such
 sequences with the case  $\overleftarrow{\mu}^1 = 2$, where the chain has only
  two levels:{\em The next five Lemmas, \ref{B+0} through \ref{quickstop}, complete the proof when $\mu^m_i =2$.}
We apply the technical Lemma~\ref{B+0} in the
 proof of Lemmas~\ref{intersection-empty} and ~\ref{controlC}.

\begin{lemma}\label{B+0} Let $D,E \subseteq_\omega M$ satisfy $\delta(D) =0$; then $\delta(D/E) \leq 0$.
Thus, if $E\leq M$ then $\delta(D/E) =0$.
\end{lemma}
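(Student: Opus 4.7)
The plan is to reduce both inequalities to two standard facts that are already in play in the paper: submodularity of $\delta$ (Axiom~\ref{yax}.3, which, rewritten for $\delta$ in the form $\delta(A\cup B) + \delta(A\cap B) \leq \delta(A) + \delta(B)$), and the fact that $\delta \geq 0$ on finite substructures of a member of $\hat \bL_\mu$ (Axiom~\ref{yax}.2).

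First I would prove $\delta(D/E) \leq 0$. By definition, $\delta(D/E) = \delta(D\cup E) - \delta(E)$. Apply submodularity to the pair $D, E$:
\[
\delta(D\cup E) + \delta(D\cap E) \leq \delta(D) + \delta(E) = \delta(E),
\]
where the last equality uses the hypothesis $\delta(D) = 0$. Rearranging,
\[
\delta(D/E) = \delta(D\cup E) - \delta(E) \leq -\delta(D\cap E) \leq 0,
\]
where the final inequality is $\delta(D\cap E) \geq 0$, since $D\cap E$ is a finite substructure of $M \in \hat\bL_\mu$.

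For the second assertion, assume in addition $E \leq M$. By the definition of $\leq$ (Definition~\ref{strongdef}), $\delta(E) \leq \delta(E')$ for every finite $E'$ with $E \subseteq E' \subseteq_\omega M$. Taking $E' = E\cup D$ gives $\delta(E) \leq \delta(E\cup D)$, i.e.\ $\delta(D/E) \geq 0$. Combined with the first inequality, $\delta(D/E) = 0$.

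There is no real obstacle here: the lemma is a one-line consequence of submodularity plus the definition of strongness. The only point to watch is that the submodularity axiom is stated for disjoint triples, so I would first observe that the identity $\delta(D/E) = \delta(D\cup E) - \delta(E)$ makes the set-theoretic intersection harmless, and then invoke the standard reformulation $\delta(A\cup B) + \delta(A\cap B) \leq \delta(A) + \delta(B)$ that follows by applying the disjoint version to $D - E$, $E - D$, $D\cap E$.
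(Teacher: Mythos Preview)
Your proof is correct and is essentially the same as the paper's. The paper phrases the key inequality as ``monotonicity'' in the form $\delta(D/E) \leq \delta(D/E\cap D) = \delta(D) - \delta(E\cap D)$, while you write out the equivalent submodular form $\delta(D\cup E) + \delta(D\cap E) \leq \delta(D) + \delta(E)$ and rearrange; the second assertion is handled identically in both via the definition of $E \leq M$.
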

\begin{proof}
Monotonicity of $\delta$ implies the first inequality
$$ %\begin{multline}
 \delta (D /E)
 \le \delta (D / E \cap D) =\delta
(D) - \delta (E \cap D) = -\delta (E \cap D)
\le 0
$$
and the second equality holds since any subset of $M$ has non-negative dimension.
%\end{multline}
\end{proof}

Lemma~\ref{quickstop}, Claim~\ref{relationarg1}, Lemma~\ref{seqred} and Lemma~\ref{slowstop}
rely indirectly on
the following ostensibly technical claim about the location of $A$,  with $A$
in Claim~\ref{intersection-empty} getting different interpretations.
It is the crucial point
that allows us to anchor (Definition~\ref{desseq}.4) our inductive analysis in $\acl(\emptyset) \cup B$ where $B$ is the base of
good pair  rather than
the $\Ascr_r$, where the sequence in Definition~\ref{desseq} stops.

\begin{claim}
\label{intersection-empty}
Let $A$ and $B$ be disjoint finite subsets of $M$,
with $A$  good over $B$ and $\delta(B) = d(B) \ge 1$.
Then $A\cap \acl(\emptyset) = \emptyset$.
\end{claim}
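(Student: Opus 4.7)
The plan is to argue by contradiction: if some $a \in A \cap \acl(\emptyset)$ existed, its finite $\aut(M)$-orbit would be too small to accommodate the images of $a$ produced by amalgamating many disjoint copies of the pair $(A,B)$ over $\emptyset$ in the generic model $M$. First I observe that $A \cup B \le M$: the equation $\delta(B) = d(B)$ gives $B \le M$, and the $0$-primitivity of $A$ over $B$ (Definition~\ref{prealgebraic}) gives $B \le A \cup B$, whence $A \cup B \le M$.

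Suppose for contradiction $a \in A \cap \acl(\emptyset)$, and let $n = |O|$, where $O$ is the finite $\aut(M)$-orbit of $a$. Form the free amalgam $E$ over $\emptyset$ of $n+1$ disjoint copies $(A_i,B_i)$ of $A \cup B$ (iterating Axiom~\ref{yax}.6). Since $\delta(A/B) = 0$ we have $\delta(E) = (n+1)\delta(B) \ge n+1$, and every subset of $E$ splits as a free amalgam of subsets of the components, each having non-negative predimension because $A \cup B \in \bK_0$; so $E \in \bK_0$. Because any good pair in $E$ must lie within a single component (no cross-relations between the copies), the $\mu$-condition is inherited componentwise, hence $E \in \bK_\mu$. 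Genericity of $M$ then supplies a strong embedding $E \hookrightarrow M$, producing $n+1$ pairwise disjoint strong substructures $A_i \cup B_i \le M$ isomorphic to $A \cup B$ respecting the partition. By $\omega$-homogeneity each is $\sigma_i(A \cup B)$ for some $\sigma_i \in \aut(M)$ with $\sigma_i(A) = A_i$ and $\sigma_i(B) = B_i$. Setting $a_i = \sigma_i(a) \in A_i$, every $a_i$ lies in $O$; but the $A_i$'s are pairwise disjoint, so the $a_i$'s are pairwise distinct, giving $n+1$ elements in an orbit of size $n$, the desired contradiction.

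The main thing to verify is that $E$ really does embed strongly into $M$, which is exactly where the hypothesis $\delta(B) \ge 1$ does its work. This inequality asserts that the type of $B$ over $\emptyset$ is non-algebraic, so arbitrarily many pairwise disjoint (indeed fully independent) realizations of $B$ can coexist in $M$. Without it the claim fails outright; e.g., $B = \emptyset$ forces $A \subseteq \acl(\emptyset)$ by $0$-primitivity. A small additional check is that the canonical free amalgam, which is $\bK_0$ by Axiom~\ref{yax}.6, in fact lies in $\bK_\mu$; but this is immediate from the fact that a good pair in $E$ is inherited from a single component, where the $\mu$-bound already holds.
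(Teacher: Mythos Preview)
Your amalgamation-and-counting idea is attractive, but the step ``any good pair in $E$ lies within a single component, so the $\mu$-condition is inherited componentwise, hence $E\in\bK_\mu$'' has a real gap. A good pair $(C,B')$ does sit in one component, but $\chi_E(C/B')$ counts \emph{disjoint copies of $C$ over the fixed $B'$}; when $B'=\emptyset$ such copies may live in different components, giving $\chi_E(C/\emptyset)=(n{+}1)\cdot\chi_{A\cup B}(C/\emptyset)$, and nothing forces this to be at most $\mu(C/\emptyset)$. The hypothesis $\delta(B)=d(B)\ge 1$ only says $B$ is strong in $M$ and not wholly contained in $\acl(\emptyset)$; it does not prevent $B$ from containing a $0$-primitive-over-$\emptyset$ subset (equivalently, from meeting $\acl(\emptyset)$). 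Concretely, if some $c\in B$ already lies in $\acl(\emptyset)$, its finite $\aut(M)$-orbit blocks the existence of $n{+}1$ pairwise disjoint copies of $B$ in $M$, so your strong embedding of $E$ cannot exist. Thus the argument is circular in exactly the case it must handle.

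The paper's proof is entirely different and avoids amalgamation. One takes $D=\icl(d)$ for a putative $d\in A\cap\acl(\emptyset)$, so $\delta(D)=0$; intersecting with the strong sets $B$ and $A\cup B$ yields strong sets $D_0=D\cap B$ and $D_1=D\cap(A\cup B)$ with $\delta(D_0)=\delta(D_1)=0$, and Lemma~\ref{B+0} gives $\delta((D\cap A)/B)=0$. By $0$-primitivity of $A/B$ this forces $D\cap A=A$, and then $\delta(A/D_0)=0$ shows $A$ is $0$-primitive over $D_0\subsetneq B$ (proper since $\delta(D_0)=0<\delta(B)$), contradicting that $A/B$ is good. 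This $\delta$-calculation is indifferent to whether $B$ meets $\acl(\emptyset)$, which is precisely where your approach breaks down.
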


%\sidebar{ jbjan14. Second paragraph much changed but closer to VV original I think.}
\begin{proof}
Let $d \in A \cap \acl (\emptyset)$ and
$D= \icl(d)$.
Then $\delta (D) = 0$.
Suppose for contradiction that $D \subseteq A$.
Since $B \leq M$, Lemma~\ref{B+0} implies $0 \le \delta (D / B) \le \delta
(D / \emptyset) = 0$;
 this contradicts the definition of a good pair, as $D$ must equal
 $A$ and then,  since $\delta(A)- \delta(D) =0$,
$A$ is $0$-primitive over $\emptyset \subsetneq B$.

Let $D_0 = D\cap B$ and $D_1 = D\cap (B\cup A)$.  Since,
$D,B$ and $B\cup A$ are all strong in $M$, so are $D_0$ and $D_1$.
 %$D_0 \le M$ and $D_0 \subset \acl (\emptyset)$,
%we have $\delta (D_0) = 0$ (and similarly for $D_1$).
%Repeating the reasoning, we can show
So, $\delta(D) = 0$ implies $ \delta(D_0)
 = \delta(D_1) = 0$.
 %Moreover, each of them is contained in $\acl(\emptyset)$.
By Lemma~\ref{B+0} $0 = \delta(D_1/B)$. Repeating the reasoning of the first
paragraph with $D_1$ playing the role of $D$, we obtain the same contradiction; so,
$A$ is $0$-primitive over $\emptyset$.
% \ge \delta(D_1/B)$, which contradicts $(A/B)$ is good unless
% $D_1 -B = A$. But then $ D_1 = A$ is primitive over the empty set contrary to $(A/B)$
% is a good pair.
\end{proof}

\begin{lemma}\label{controlC}  Suppose $\mu^{m+1}_j\ge 3$,
 $A^{m+1}_{j,1}$ is $G_{\{I\}}$-invariant
and  determines $A^m_{i,1}$.
Then $C^{m+1, q} \subseteq A^m_{i,1} \cup \icl(B^m_i)$ for each $q$ with $1\leq q \leq \nu^{m+1}$, i.e. $C^{m+1, q}_+ \subseteq \icl(B^m_i)$.
\end{lemma}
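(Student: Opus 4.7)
The plan is to split into cases according to $|A^{m+1}_{j,1}|$ and whether $C^{m+1,q}_+$ is empty, and in the general case reduce everything to a $\delta$-calculation that forces $\delta(C^{m+1,q}_+/B^m_i)=0$.

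First I would dispatch the easy cases. If $C^{m+1,q}_+=\emptyset$, then Lemma~\ref{small-G}.\ref{short} locates $C^{m+1,q}$ inside a unique stratum-$m$ petal; the determination hypothesis forces that petal to be $A^m_{i,1}$, so the conclusion is immediate. If $|A^{m+1}_{j,1}|=1$, then Claim~\ref{omni2}.1 already yields $C^{m+1,q}\subseteq A^m_{i,1}\cup B^m_i\subseteq A^m_{i,1}\cup\icl(B^m_i)$.

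Now I would assume $|A^{m+1}_{j,1}|>1$ and $C^{m+1,q}_+\neq\emptyset$. Since $\mu^{m+1}_j\ge 3$, Lemma~\ref{Am-not-1-G} gives $|A^m_{i,f}|>1$ for every $i,f$ with $A^m_{i,f}\cap B\neq\emptyset$, so Lemmas~\ref{small-G} and \ref{long} are available. Lemma~\ref{long}.A delivers $\delta(B_+)=0$, whence $B_+\subseteq\icl(\emptyset)\subseteq\icl(B^m_i)$; combined with $B_-\subseteq A^m_{i,1}$ (forced by determination, since $A^m_{i,1}$ is the unique stratum-$m$ petal meeting $B-\Ascr^{m-1}$), this already shows $B\subseteq A^m_{i,1}\cup\icl(B^m_i)$. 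For $C^{m+1,q}_-$, I would use connectedness of $C^{m+1,q}$ over $B$ (Observation~\ref{obsa}): petals on stratum $m$ are fully independent over $\Ascr^{m-1}$, so any $R$-triple bridging two distinct stratum-$m$ petals must have its third coordinate in $\Ascr^{m-1}$; combined with the good-pair structure of $A^m_{i,1}/B^m_i$ (Remark~\ref{minmax}) and Lemma~\ref{small-G}.\ref{smR-G}, this rules out $C^{m+1,q}_-$ spreading across more than one petal, so $C^{m+1,q}_-\subseteq A^m_{i,1}$.

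The hard part is showing $C^{m+1,q}_+\subseteq\icl(B^m_i)$, i.e., $\delta(C^{m+1,q}_+/B^m_i)=0$. The lower bound is free from $B^m_i\le M$; the plan is to obtain the matching upper bound by a submodular accounting. The key structural input is that, by Remark~\ref{minmax}, any element of $C^{m+1,q}_+-B^m_i$ has no $R$-relation with $A^m_{i,1}$ (since relations from $A^m_{i,1}$ to $\Ascr^{m-1}$ are supported by $B^m_i$), and by Lemma~\ref{small-G}.\ref{smR-G} there are also no $R$-relations between $B_-$ and $C^{m+1,q}_+$. Consequently,
\[
\delta(C^{m+1,q}_+/B^m_i)=\delta(C^{m+1,q}_+/B^m_i\cup A^m_{i,1}).
\]
I would then combine $\delta(C^{m+1,q}/B)=0$ (good pair), $\delta(A^m_{i,1}/B^m_i)=0$ (good pair), $\delta(B_+)=0$ together with Lemma~\ref{B+0} (to absorb $B_+$ into any strong set without changing dimension) via submodularity on $X=C^{m+1,q}\cup B$ and $Y=A^m_{i,1}\cup B^m_i$, using $X\cup Y=C^{m+1,q}_+\cup A^m_{i,1}\cup B^m_i\cup B_+$. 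This should collapse to $\delta(C^{m+1,q}_+\cup A^m_{i,1}\cup B^m_i)\le\delta(B^m_i)$, giving the desired upper bound. The main obstacle is carrying out this accounting cleanly without redundant terms; the leverage comes entirely from the two "no crossing" relation facts (Remark~\ref{minmax} and Lemma~\ref{small-G}.\ref{smR-G}), which reduce the bookkeeping to the two good-pair identities and Lemma~\ref{B+0}.
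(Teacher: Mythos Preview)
Your overall plan and the early case reductions are fine, but two of the three substantive steps have real gaps.

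\textbf{The argument for $C^{m+1,q}_-\subseteq A^m_{i,1}$ via connectedness does not close.} You correctly note that full independence of stratum-$m$ petals over $\Ascr^{m-1}$ forbids an $R$-triple with coordinates in two distinct petals. But connectedness of $C^{m+1,q}$ over $B$ (Observation~\ref{obsa}) only says there is no \emph{partition} of $C^{m+1,q}$ with no bridging relation through $B$. If $C^{m+1,q}_-$ met a second petal $A'$, an element $c\in C^{m+1,q}\cap A'$ could still be connected to the rest of $C^{m+1,q}$ through $C^{m+1,q}_+\cup B_+\subseteq\Ascr^{m-1}$: nothing in Remark~\ref{minmax} or Lemma~\ref{small-G}.\ref{smR-G} forbids a relation $R(c,x,y)$ with $x,y$ in the base $B'$ of $A'$ and simultaneously in $C^{m+1,q}_+\cup B_+$. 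The paper instead argues by a $\delta$-computation: if $C_-$ meets another petal $A'$, then $\delta((C\cap A')/\Ascr^m\setminus A')\le\delta((C\cap A')/(B\cup(C\setminus A')))<0$ from the good-pair property, while the same quantity equals $\delta((C\cap A')/\Ascr^{m-1})\ge 0$ since petals are fully independent over $\Ascr^{m-1}\le M$.

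\textbf{The submodularity step for $C^{m+1,q}_+\subseteq\icl(B^m_i)$ is not a proof.} With $X=C^{m+1,q}\cup B$ and $Y=A^m_{i,1}\cup B^m_i$, submodularity yields an inequality containing $\delta(X\cap Y)$, where $X\cap Y=C^{m+1,q}_-\cup B_-\cup\bigl((C^{m+1,q}_+\cup B_+)\cap B^m_i\bigr)$; you have no handle on this term, and the claimed collapse to $\delta(C^{m+1,q}_+\cup A^m_{i,1}\cup B^m_i)\le\delta(B^m_i)$ does not follow. Your preliminary identity $\delta(C^{m+1,q}_+/B^m_i)=\delta(C^{m+1,q}_+/B^m_i\cup A^m_{i,1})$ is correct, but that alone does not bound the right-hand side. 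The paper instead argues by contradiction: assume $C_+\setminus\icl(\hat B)\ne\emptyset$, use the good-pair property to get $\delta\bigl((C_+\setminus\icl(\hat B))/B\cup C_-\cup(C_+\cap\icl(\hat B))\bigr)<0$, strip $B_-\cup C_-$ from the base (Claim~\ref{relationarg1}, using that $B_-\cup C_-\subseteq A^m_{i,1}$ has relations with $\Ascr^{m-1}$ only through $\hat B$), then apply Lemma~\ref{B+0} with $\delta(B_+)=0$ together with monotonicity and $\icl(\hat B)\le M$ to push the same quantity to $\ge 0$. The contradiction is obtained by rewriting the remaining expression as $\delta\bigl((C_+\cup B_+)\setminus\icl(\hat B)/(C_+\cup B_+)\cap\icl(\hat B)\bigr)$ and bounding it below by $\delta\bigl((C_+\cup B_+)\setminus\icl(\hat B)/\icl(\hat B)\bigr)\ge 0$.
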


\begin{proof}
Let $B$ denote $B^{m+1}_j$, $\hat B$ denote $B^m_{i}$, $\hat A$ denote $A^m_{i,1}$. % recall $\hat B \leq M$ and
 We write $C$, for a fixed but arbitrary $q$,
 $C =C^{m+1, q}$, and $C_- = C^{m+1, q}_- = (C^{m+1, q} - \Ascr^{m-1})$.
First we show $C_-=  C^{m+1,q} \cap (\Ascr^m- \Ascr^{m-1})$  is contained in $\hat A$.

Assume that $C_-$
intersects some other petal $A'$ on strata $m$.
By monotonicity and since $C$ is 0-primitive over $B$, taking into account $B\cup C \subseteq \Ascr^m$,
$$(*)\ \ \delta ((C\cap A')/ \Ascr^{m} - A') \leq \delta ((C\cap A') / (B  \cup (C - A'))) < 0.$$

But, also
$$ (**)\ \ \delta((C\cap A')/ \Ascr^{m} -A') = \delta((C\cap A')/ \Ascr^{m-1})  \ge 0.$$
%(*) holds because $A'$ is 0-primitive over $\hat B$;
(**) holds because $R(A',\Ascr^{m}-A') = R(A',\Ascr^{m-1})$
as all petals in $\Ascr^{m}-\Ascr^{m-1}$ are fully independent over $\Ascr^{m-1}$
and because $\Ascr^{m-1} \leq M$.
But (*) and (**) are contradictory, so $C_- \subseteq A^m_{i,1}= \hat A$.

\medskip
Our goal is to show   $C^{m+1, q}_+ \subseteq \icl(\hat B)$, where $\hat B = B^m_i$.
%This is immediate if $C_+ \subseteq \hat B$.
So, suppose for contradiction that for some $q$ with  $1 \leq q \leq \nu^{m+1}$
where $\nu^{m+1} = \mu^{m+1}-1$, and with
 $C_+ = C^{m+1,q}_+ = C^{m+1,q} \cap \Ascr^{m-1}$, we have
$ C_+ - \icl(\hat B)\neq \emptyset$.
Since  $C/B$  is a good pair and
$C = (C_+ - \icl(\hat B)) \cup (C_+\cap \icl(\hat B)) \cup C_- $:
\begin{equation}\label{cm*-1}
0  >  \delta ((C_+ - \icl(\hat B))/B\cup C_-
   \cup (C_+\cap \icl(\hat B))).
\end{equation}

%\end{proof}

\begin{claim}\label{relationarg1}
 Let $\overline{B}_+ = \icl(B_+)$. Inequality~\eqref{cm*-1} simplifies to:

\begin{equation}\label{cm*-2}
\delta ((C_+- \icl(\hat B))/B\cup C_-
   \cup (C_+\cap \icl(\hat B)))  =
\delta (C_+-\icl(\hat B)/B_+ \cup (C_+\cap \icl(\hat B)))
\end{equation}
and
\begin{equation}\label{over-icl-B}
0 >
\delta (C_+-\icl(\hat B)/\overline{B}_+ \cup (C_+\cap \icl(\hat B)))
\end{equation}	
\end{claim}

\begin{proof}
Since $B_- \cup C_- \subseteq A^m_{i,1}$,  each relation
between $B_-\cup C_-$ and $\Ascr^{m-1}$ is a relation on $B_-\cup C_-$ as a subset of $\hat A$
and the  base, $\hat B$, of $\hat A$. So we can delete $B_-\cup C_-$ from the
base of Equation~\eqref{cm*-1} and obtain Equation~\eqref{cm*-2}.
 By Lemma~\ref{long} $\delta(B_+)\le 1$.
Then Lemma~\ref{backward}.2 implies that $B_+\subseteq\acl(\emptyset)$,
so $\delta(\overline{B}_+) = \delta(\icl(B_+)) = 0$.
By Claim~\ref{intersection-empty}, $C\cap \acl(\emptyset) = \emptyset$, then $C_+\cap \overline{B}_+$ = 0.
Monotonicity of $\delta$ implies that
$$
\delta (C_+-\icl(\hat B)/B_+ \cup (C_+\cap \icl(\hat B))) \ge \delta (C_+-\icl(\hat B)/\overline{B}_+ \cup (C_+\cap \icl(\hat B)))
$$
The last inequality and inequality~\eqref{cm*-1} yield inequality~\eqref{over-icl-B}.
\end{proof}

\begin{remark}  In the special case that $B_+  = \emptyset$, the Lemma~\ref{controlC} is easy.
By monotonicity of $\delta$ and by $\icl(\hat B)\le M$ we obtain
\begin{equation*}
%\hspace{16mm}
\delta (C_+-\icl(\hat B)/C_+\cap \icl(\hat B)) \ge
\delta (C_+-\icl(\hat B)/\icl(\hat B))\ge 0
\end{equation*}	
The last contradicts \eqref{cm*-1}.
\end{remark}

Verification of Equation~\eqref{cm*-1} in the general situation of
 Claim~\ref{relationarg1} requires a further technical calculation.

 More generally,
we only know  from the proof of Claim~\ref{relationarg1} that
 $\delta(\overline{B}_+) = 0$ so we must consider more carefully the connections of
$C_+$ and $\overline{B}_+ $.

We apply the identity
($\delta(X/Y\cup Z) = \delta(X\cup Y/Z) - \delta (Y/Z)$)
by putting $X= C_+-\icl(\hat B)$, $Y = (\overline{B}_+-\icl(\hat B))$, and
$Z =  (\overline{B}_+\cap \icl(\hat B)) \cup (C_+\cap \icl(\hat B)) = (\overline{B}_+ \cup C_+) \cap \icl(\hat B)$.
Thinking of $\overline{B}_+$ as $(\overline{B}_+-\icl(\hat B)) \cup(\overline{B}_+\cap \icl(\hat B))$,
we rewrite the right hand side of \eqref{cm*-2} as follows:
\begin{eqnarray}
\label{cm*-3}
\hspace{16mm}
\delta (C_+-\icl(\hat B)/\overline{B}_+ \cup (C_+\cap \icl(\hat B)))= \\\label{12}
=\delta ((C_+ -\icl(\hat B))\cup (\overline{B}_+-\icl(\hat B))/ (C_+ \cup \overline{B}_+)\cap
     \icl(\hat B)) - \\\label{13} -
\delta (\overline{B}_+- \icl(\hat B)/(C_+\cup \overline{B}_+)\cap\icl(\hat B)).
\end{eqnarray}

Now we show the subtracted term, $\delta(Y/Z)$ (Line~\eqref{13}) is $0$.
We apply monotonicity of $\delta$ in Line~\eqref{3.6} and Lemma~\ref{B+0} in Line~\eqref{3.66}.
%By equation~\eqref{n0}

\begin{eqnarray}\label{cm*-3.5}
\delta (\overline{B}_+-\icl(\hat B) /(C_+ \cup \overline{B}_+)\cap\icl(\hat B))
& \le & \delta (\overline{B}_+-\icl(\hat B)/\overline{B}_+\cap\icl(\hat B))=\label{3.6} \\
& = & \delta (\overline{B}_+/\overline{B}_+\cap \icl(\hat B))  \le 0 \label{3.66}
\end{eqnarray}

On the other hand, applying monotonicity of $\delta$ and $\icl(\hat B)\le M$ we obtain
$$
\delta (\overline{B}_+-\icl(\hat B) /(C_+ \cup \overline{B}_+)\cap\icl(\hat B))
 \ge  \delta (\overline{B}_+-\icl(\hat B)/\icl(\hat B)) \ge 0
$$
So,  Line~\eqref{13} is $0$. By Line~\eqref{cm*-2} and Claim~\ref{relationarg1},
 Line~\eqref{cm*-3} is negative, so
Line~\eqref{12} is negative, too.
Below we sequentially apply the distributive law, monotonicity of $\delta$ and $\icl(\hat B)\le M$ to Line~\eqref{12}.
\begin{multline*}
\delta ((C_+ -\icl(\hat B))\cup (\overline{B}_+-\icl(\hat B))/ (C_+ \cup \overline{B}_+)\cap \icl(\hat B)) = \\ =
\delta ((C_+ \cup \overline{B}_+)-\icl(\hat B) / (C_+ \cup \overline{B}_+)\cap \icl(\hat B)) \ge \\ \ge
\delta ((C_+ \cup \overline{B}_+)-\icl(\hat B) / \icl(\hat B)) \ge 0
\end{multline*}
The  contradiction obtained with Equation~\eqref{cm*-1}	 completes
the proof of Lemma~\ref{controlC}.
\end{proof}

We avoid the subscripts and isolate in Lemma~\ref{rhoAinv1} the connections imposed by determination,
which drive the proof, and  to emphasize that these results do not require any inductive hypotheses.

In combination with
 Lemma~\ref{intersection-empty} (which is used in
Lemma~\ref{quickstop}), Claim~\ref{relationarg1}, Lemma~\ref{seqred} and Lemma~\ref{slowstop},
we now extend
Lemma~\ref{rhoAinv} from petals to flowers. While this larger set being $G$-invariant is {\em a priori} weaker,
we recover the result for petals by a substantial induction.

\begin{lemma}\label{rhoAinv1}
Let $A$ be well-placed over $\Dscr$ by $B$ and
 $\hat A$ be well-placed over $\hat \Dscr \leq \Dscr $ by $\hat B$. Further
 suppose that $A$ is $G_{\{I\}}$-invariant, $A$ determines $\hat A$, and
 $\mu(\hat A/\hat B) =2$.  Further, let $A,C^1, \ldots C^\nu$ list {\em the}
(by Lemma~\ref{flower=bouquet1})
flower associated with  $A/B$.
Let $\rho$ be a partial isomorphism from
 $\hat A$ over $\hat B$ to its unique copy
 $\hat C\subseteq \hat \Dscr$.  Then
\begin{enumerate}
\item $\rho$ extends to an automorphism of $M$.
\item there is a unique $G_{\{I\}}$-invariant flower
over $\rho(C)/\rho(B)$ and $\rho(A)$ is $G_{\{I\}}$-invariant.
\end{enumerate}
\end{lemma}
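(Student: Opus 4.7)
\medskip
\noindent\textbf{Proof proposal.}
For (1), I would follow the template of Lemma~\ref{one-copynew}(1b). The hypothesis $\mu(\hat A/\hat B)=2$ together with $\hat A$ being a good pair over $\hat B$ with $\delta(\hat A/\hat B)=0$ forces $\delta(\hat B)\geq 2$ (via Definition~\ref{Kmu}(1)); combined with the well-placedness of $\hat A$ over $\hat\Dscr \leq M$, one deduces $\hat B \leq M$, whence $\hat B \cup \hat A \leq M$ and $\hat B \cup \hat C \leq M$. The partial isomorphism $\rho:\hat A\to \hat C$ fixing $\hat B$ pointwise thus connects two strong finite substructures of the generic $M$, so extends to an automorphism of $M$ by Conclusion~\ref{conclude}.

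For (2), the first move is to identify the flower we want to analyze. Since $A$ is $G_{\{I\}}$-invariant and well-placed over $\Dscr$ with base $B$, Lemma~\ref{flower=bouquet1} tells us the bouquet of $A/B$ \emph{is} the single flower $\Fscr = \{A, C^1,\dots,C^\nu\}$, and that $\Fscr$ is $G_{\{I\}}$-invariant. Applying the extended $\rho$ (from (1)), the image $\rho(\Fscr) = \{\rho(A),\rho(C^1),\dots,\rho(C^\nu)\}$ is then a flower of the good pair $\rho(A)/\rho(B)$ (isomorphism types of the good pair are preserved by $\rho$, and each $\rho(C^j)$ is fixed over $\rho(B)$ pointwise as a copy of $\rho(A)$). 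Once $\rho(A)$ is shown to be $G_{\{I\}}$-invariant, another application of Lemma~\ref{flower=bouquet1}(1) to $\rho(A)/\rho(B)$ forces its bouquet to coincide with $\rho(\Fscr)$, yielding uniqueness.

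The substance of the argument is the $G_{\{I\}}$-invariance of $\rho(A)$, and my plan is to transport invariance through $\rho$ by the conjugation trick used in Lemma~\ref{rhoAinv}. Suppose for contradiction that some $\pi \in G_{\{I\}}$ satisfies $\pi(\rho(A))\neq \rho(A)$. Define
\[
\tau \;=\; \pi\myrestriction \hat\Dscr \;\cup\; (\rho^{-1}\circ \pi \circ \rho)\myrestriction (A \cup C^1 \cup\dots\cup C^\nu).
\]
The overlap to verify lives in $\hat B \cup \hat C$ (since $\rho^{-1}$ is the identity on $\hat B$ and sends $\hat C$ to $\hat A$): on $\hat B \subseteq \hat\Dscr$ both pieces act as $\pi$, and on $\hat A \subseteq A\cup\bigcup C^j$ (using Lemma~\ref{long}.A to locate $B_-$, hence membership of relevant pieces inside $\hat A$) the conjugated map is compatible by construction. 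Then $\tau$ is a partial isomorphism on a strong subset of $M$ (one checks $\hat\Dscr \cup \bigcup_{X\in\Fscr} X \leq M$ via well-placedness and the primitive/good structure), so it extends to an element of $G_{\{I\}}$. Since $\tau$ moves $A$ to $\rho^{-1}(\pi(\rho(A)))\neq A$, this contradicts the $G_{\{I\}}$-invariance of $A$.

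The main obstacle will be verifying the consistency and strongness of $\tau$'s domain in the conjugation step: the pieces $\pi \restriction \hat\Dscr$ and $(\rho^{-1}\circ\pi\circ\rho)\restriction \Fscr$ must agree on the intersection (primarily $\hat B \cup \hat C$) and together preserve all instances of $R$ linking $\Fscr$ to $\hat\Dscr$. This is where Lemma~\ref{long}.A (forcing $B_-\subseteq \hat A$ and $B_+\subseteq \acl(\emptyset)\cap \Ascr^0$ when $\mu(A/B)\geq 3$) and Lemma~\ref{controlC} (pinning the $C^j_+$ inside $\icl(\hat B)$) do the technical work of showing that no stray relations between $\Fscr$ and the rest of $\hat\Dscr$ can spoil the glue; if instead $\mu(A/B)=2$, one falls into the simpler situation governed directly by Lemma~\ref{rhoAinv}. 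Once $\tau$ is known to extend, the contradiction and hence (2) follow.
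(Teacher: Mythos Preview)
Your approach to (1) is close to the paper's but misses a point needed downstream: the paper chooses $\rho$ to fix not just $\hat B$ but all of $\Wscr_2=(\Ascr^0\cap\acl(\emptyset))\cup\hat B$ pointwise (using Claim~\ref{intersection-empty} to show $\hat C\cap\acl(\emptyset)=\emptyset$ and a short $\delta$-computation to get $\Wscr_2\le M$). This matters in (2) because $B_+\subseteq\acl(\emptyset)\cap\Ascr^0$ (Lemma~\ref{long}.A), and you need $\rho$ to fix $B_+$ to make the conjugation coherent on $B$.

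The real gap is in (2). Your domain for $\tau$ is $\hat\Dscr\cup A\cup\bigcup_q C^q$, and you claim this is strong in $M$. It is not, in general: $B_-\subseteq\hat A$ is nonempty (since $A$ determines $\hat A$) and is disjoint from every $C^q$ (the $C^q$ are copies of $A$ over $B$, hence disjoint from $B$), so $B_-$ lies outside your domain. Adding $B_-$ brings in, for each $b\in B_-$, at least one relation $R(a_1,a_2,b)$ with $a_1,a_2\in A$ (Lemma~\ref{primchar1}), so $\delta(B_-/\text{domain})\le 0$, and typically $<0$ once any relation of $B_-$ with $\hat B$ or with $\bigcup C^q_-$ is present. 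Thus your $\tau$ need not extend to an element of $G_{\{I\}}$, and the contradiction does not follow. (Your parenthetical ``$\hat A\subseteq A\cup\bigcup C^j$'' is also false; only $B_-\cup\bigcup C^q_-$ sits inside $\hat A$, not conversely.)

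The paper avoids this by conjugating on the \emph{smaller} strong set $\hat\Dscr\cup\hat A$: set $\check\pi=\pi\!\restriction\!\hat\Dscr\ \cup\ (\rho^{-1}\circ\pi\circ\rho)\!\restriction\!\hat A$ and extend to $\pi'\in G_{\{I\}}$. This $\pi'$ agrees with $\rho^{-1}\circ\pi\circ\rho$ on $B\cup\bigcup_q C^q\subseteq\hat\Dscr\cup\hat A$ (using that $\rho$ fixes $\hat B$ and $\acl(\emptyset)\cap\Ascr^0$), but \emph{not} on $A$. So one cannot conclude directly that $\pi'(A)\ne A$; instead the paper argues indirectly. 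Since $\pi'\in G_{\{I\}}$ permutes the unique flower $\Fscr=\{A,C^1,\dots,C^\nu\}$, and each $C^q\subseteq\hat A\cup\hat B$ (Lemma~\ref{controlC} with $\icl(\hat B)=\hat B$), one gets that $\pi$ sends each $\rho(C^q)$ into the $G_{\{I\}}$-invariant set $\hat C\cup\hat B$; hence $\pi(\rho(\Fscr))$ and $\rho(\Fscr)$ share petals and so coincide (Lemma~\ref{different-flowers}). Then $\rho(A)$, being the unique petal of $\rho(\Fscr)$ disjoint from $\hat C\cup\hat B$, is forced to be $\pi$-fixed. The complement argument, not a direct conjugation on $A$, is what makes the proof go through.
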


\begin{proof}
i) In fact, we will make the extension of $\rho$ fix
 $\acl(\emptyset) \cap \Ascr^0$.
Let $\Wscr_1 = (\Ascr^{0}\cap \acl(\emptyset))\cup \hat B \cup \hat A)$
and $\widetilde \Wscr = (\Ascr^{0}\cap \acl(\emptyset)) \cup \hat B \cup \hat C) \subseteq \Dscr$.
Then, by Claim~\ref{intersection-empty}, with $\hat C= \rho(\hat A)$
 playing the role of $A$,
$\hat C \cap \acl(\emptyset) = \emptyset$. Now, since $\mu(\hat A/\hat B) =2$, $\hat A$ and $\hat C$
are isomorphic by $\rho$ not only over $\hat B$ but over
 $\Wscr_2 =(\Ascr^{0}\cap \acl(\emptyset))\cup
\hat B$. (We know $R(\hat A , \hat \Dscr) = R(\hat A,\hat  B)$. So if
the isomorphism is not over $({ \Ascr}^{0}\cap \acl(\emptyset)) \cup \hat B$,
  there is a  relation between $\hat C$ and
$( {\Ascr}^{0}\cap \acl(\emptyset))-   \hat B$. But then $\delta( (\Ascr^{0}\cap
\acl(\emptyset))/ \hat  C  \cup\hat  B) < 0$, contradicting $\hat C  \cup
\hat B \leq M$.) Note that $\Wscr_2\le M$ because
\begin{multline*}
2 = d(\hat B) \le d(\hat B \cup(\Ascr^0\cap \acl(\emptyset))) \le
\delta(\hat B \cup(\Ascr^0\cap \acl(\emptyset)))  \\
\le
\delta(\hat B ) - \delta(\hat B \cap\Ascr^0\cap \acl(\emptyset))
+ \delta (\Ascr^0\cap\acl(\emptyset)) = 2-0+0 = 2
\end{multline*}
We obtain the first zero because $\hat B \cap\Ascr^0\cap \acl(\emptyset) \le M$.

 As  $\Wscr_2\le M$,
  $\rho$ extends
 to an automorphism of $M$ fixing $\Wscr_2$, also denoted $\rho$.

ii) By Lemma~\ref{rhoAinv}, $\rho(B)$ is $G_{\{I\}}$-invariant.
Suppose $\pi \in G_{\{I\}}$, and so fixes $B$ setwise.
By Lemma~\ref{long}.A, $B  \subseteq \hat A \cup(\acl(\emptyset)\cap \Ascr^0))$
and so
$$\rho(B ) \subseteq \rho(\hat A  \cup (\acl(\emptyset)\cap \Ascr^0)) = \hat C \cup{}
(\acl(\emptyset)\cap \Ascr^0).$$
Put
$$\check \pi = \pi\myrestriction  %\Dscr
\hat \Dscr
\cup
((\rho^{-1}\myrestriction \Hat C)\circ(\pi\myrestriction \Hat C)\circ(\rho\myrestriction \hat A)).$$
Since $%\Dscr
{\hat {\Dscr}}
\cup \hat A \le M$,
we can extend $\check \pi$   to
$\pi'\in G_{\{ I\}}$.
By Lemma~\ref{flower=bouquet1}, there is a unique flower $\Fscr$ over $A/B$.
 $\pi'$ maps it to a flower $\rho(\Fscr)$ over $\rho(C)/\rho(B)$, which must also be unique. As,
 $\pi'^{-1}$ of a second flower  over $\rho(C)/\rho(B)$ would contradict the uniqueness of $\Fscr$.

Since $A, C^1, \ldots C^\nu$ enumerate the $G_{\{I\}}$-invariant flower $\Fscr$,  its
$G_{\{I\}}$-invariant-image (by Lemma~\ref{flower=bouquet1}.2) under $\rho$ is
$\{\rho(A), \rho(C^1), \ldots \rho(C^\nu)\}$.
Since $B^m_i \leq M$,  Lemma~\ref{controlC} implies all the
 $C^q \subseteq \hat C \cup \hat B$.
But then, since $\pi$  fixes both $\hat B = B^m_i$ and $\hat C$
 (since $\mu(\hat A/ \hat B) =2$),
 each $\rho(C^q)$ is contained in $\hat C \cup \hat B$ which is $G_{\{I\}}$-invariant and $\rho(A) \cap
(\hat C \cup \hat B) = \emptyset$. So $\rho(A)$ is $G_{\{I\}}$-invariant as  the complement within
the $G_{\{I\}}$-invariant flower $\rho(\Fscr)$ of the   set of the $\rho(C^q)$ that are contained in $\hat C \cup \hat B$.
\end{proof}

We continue the inductive proof of $\sdim_m$ for all $m < m_0$.
The idea is similar to Claim~\ref{omni}.2, where with $|A^{m+1}_{j,1}| =1$
 we have constructed $\tilde \Ascr$,   a counterexample with
  smaller decomposition height,
  but the situation is more complicated.
We have reduced to the case where $\mu^{m+1}_j \ge 3$ and some  for $m'< m$, $\mu^{m'}_i= 2$.
Building on Lemma~\ref{rhoAinv},
we first consider the special case when
$\mu^m_i = 2$.
Note that by Lemma~\ref{omni2}.3 the hypothesis $|A^{m+1}_{j,1}| >1$  is essential.

\begin{claim}\label{quickstop} Suppose $A^{m+1}_{j,1}$ is $G_{\{I\}}$-invariant,
determines $A^{m}_{i,1}$,
and in addition that
$|A^{m+1}_{j,1}| >1$ and $\mu^m_i =2$. Then, $A^{m+1}_{j,1}$ is safe.
\end{claim}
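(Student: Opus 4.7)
The plan is to push $A^{m+1}_{j,1}$ down by a suitable automorphism of $M$ onto a $G_{\{I\}}$-invariant set sitting at (or below) stratum $m$, apply the inductive hypothesis $\sdim_m$ to that image, and then pull safeness back through Lemma~\ref{one-copynew}.2.

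First I would invoke Lemma~\ref{rhoAinv1} with the identifications $\hat A = A^m_{i,1}$, $\hat B = B^m_i$, $\hat C = C^m_{i,1}$, $\hat \Dscr = \Ascr^{m-1}$, $A = A^{m+1}_{j,1}$, $B = B^{m+1}_j$, $\Dscr = \Ascr^m$. The hypothesis $\mu(\hat A/\hat B) = 2$ is exactly $\mu^m_i = 2$, so the lemma produces $\rho \in \aut(M)$ extending the $B^m_i$-isomorphism $A^m_{i,1} \to C^m_{i,1}$, fixing $(\Ascr^0 \cap \acl(\emptyset)) \cup B^m_i$ pointwise, and such that $\rho(A^{m+1}_{j,1})$ is $G_{\{I\}}$-invariant. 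Because $\mu^{m+1}_j \geq 3$ (we are not in case 1 of Lemma~\ref{backward}), Lemma~\ref{backward}.2 yields $B^{m+1}_j \subseteq A^m_{i,1} \cup (\acl(\emptyset) \cap \Ascr^0)$, whence $\rho(B^{m+1}_j) \subseteq C^m_{i,1} \cup (\acl(\emptyset) \cap \Ascr^0) \subseteq \Ascr^{m-1}$. Since $\rho(A^{m+1}_{j,1})$ is $0$-primitive over $\rho(B^{m+1}_j)$ and $\Ascr^{m-1} \leq M$, either $\rho(A^{m+1}_{j,1}) \subseteq \Ascr^{m-1}$ or $\rho(A^{m+1}_{j,1}) \cap \Ascr^{m-1} = \emptyset$.

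Now I split into two cases. \emph{Case 1}: $\rho(A^{m+1}_{j,1}) \subseteq \Ascr$, in which case it lies in $\Ascr^m$ (being $0$-primitive over $\Ascr^{m-1}$). The inductive hypothesis $\sdim_m$ applied to $\Ascr$ makes $\rho(A^{m+1}_{j,1})$ safe, and Lemma~\ref{one-copynew}.2 transfers safeness to $A^{m+1}_{j,1}$. \emph{Case 2}: $\rho(A^{m+1}_{j,1}) \not\subseteq \Ascr$. Form
$$\tilde\Ascr \;=\; \Ascr^{m-1} \,\cup\, \rho(A^{m+1}_{j,1}).$$
This set is finite, contains $I \subseteq \Ascr^0 \subseteq \Ascr^{m-1}$, is $G_{\{I\}}$-invariant (both summands are), and is strong in $M$ because $\rho(A^{m+1}_{j,1})$ is $0$-primitive over $\rho(B^{m+1}_j) \subseteq \Ascr^{m-1} \leq M$; so $\tilde\Ascr$ is $G_{\{I\}}$-normal. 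Setting $\tilde\Ascr^k = \Ascr^k$ for $k < m$ and placing $\rho(A^{m+1}_{j,1})$ as the sole new petal $\tilde A^m_{1,1}$ yields a $G_{\{I\}}$-decomposition of $\tilde\Ascr$ of height at most $m < m_0$. The global inductive hypothesis on decomposition height supplies $\sdim_m$ for $\tilde\Ascr$, so $\rho(A^{m+1}_{j,1})$ is safe as a $G_{\{I\}}$-invariant subset of $\tilde\Ascr^m$. Finally, the transfer mechanism of Lemma~\ref{one-copynew}.2 (with $\tilde\Ascr^m$ replacing $\Ascr^m$) yields safeness of $A^{m+1}_{j,1}$, completing the argument.

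The main obstacle is Case 2: one must verify that the composite $\hat\alpha = \alpha \myrestriction \tilde\Ascr^m \,\cup\, (\rho^{-1} \circ \alpha \circ \rho) \myrestriction A^{m+1}_{j,1}$, built from an arbitrary $\alpha \in G_{\{I\}}$ fixing $\rho(A^{m+1}_{j,1})$ setwise, is a well-defined partial isomorphism of $\tilde\Ascr^m \cup A^{m+1}_{j,1}$ that extends to an element of $G_{\{I\}}$. The compatibility check reduces to the overlap through $B^{m+1}_j$, and here the facts that $\rho$ fixes $B^m_i \cup (\Ascr^0 \cap \acl(\emptyset))$ pointwise (so $\rho$ and $\alpha$ commute on $B^{m+1}_j \subseteq A^m_{i,1} \cup (\acl(\emptyset) \cap \Ascr^0)$) together with the full independence $R(A^{m+1}_{j,1}, \Ascr^m - B^{m+1}_j) = \emptyset$ should force $\hat\alpha$ to preserve all relevant ternary relations, but this is the one step that cannot simply be quoted from Lemma~\ref{one-copynew}.2 and requires explicit verification.
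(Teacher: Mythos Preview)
Your proposal follows essentially the same route as the paper's proof: obtain $\rho$ and the $G_{\{I\}}$-invariance of $\rho(A^{m+1}_{j,1})$ via Lemma~\ref{rhoAinv1}, case-split on the location of $\rho(A^{m+1}_{j,1})$, apply the global inductive hypothesis $\sdim_m$ (to $\Ascr$ or to the auxiliary $\tilde\Ascr$), and transfer safeness via Lemma~\ref{one-copynew}.2. Your two cases simply merge the paper's three.

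The concern you raise about the transfer in your Case~2 is legitimate, and the paper's citation of Lemma~\ref{one-copynew} at that point is equally terse. But your sketch of a fix does not work as written. First, the claimed commutativity of $\rho$ and $\alpha$ on $B^{m+1}_j$ fails on $B_- = B^{m+1}_j \cap A^m_{i,1}$: the map $\rho$ sends $A^m_{i,1}$ onto $C^m_{i,1}$, so it moves $B_-$, and fixing $B^m_i \cup (\Ascr^0\cap\acl(\emptyset))$ pointwise says nothing about $B_-$. Second, when you work on $\tilde\Ascr^m \cup A^{m+1}_{j,1}$ the set $B_-$ is not even present (it lies in $A^m_{i,1}$, which is outside $\tilde\Ascr^m = \Ascr^{m-1}\cup D$), so $A^{m+1}_{j,1}$ is not $0$-primitive over $\tilde\Ascr^m$ and there is no reason to expect $\tilde\Ascr^m \cup A^{m+1}_{j,1} \leq M$; nor have you ruled out relations between $A^{m+1}_{j,1}$ and $D$.

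A clean fix is to define $\hat\alpha$ instead on the strong set $\Ascr^{m-1} \cup A^m_{i,1} \cup A^{m+1}_{j,1}$, setting $\hat\alpha = \alpha$ on $\Ascr^{m-1}$ and $\hat\alpha = \rho^{-1}\alpha\rho$ on $A^m_{i,1} \cup A^{m+1}_{j,1}$. Compatibility across $B^m_i$ and across $B_+ \subseteq \acl(\emptyset)\cap\Ascr^0$ is then automatic because $\rho$ fixes those sets pointwise, and across $B_-$ there is nothing to check since both pieces use $\rho^{-1}\alpha\rho$; the invariance of $C^m_{i,1}$ and of $D$ under $\alpha$ guarantees each piece lands where it should.
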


\begin{proof} Recall that we are doing a global induction to show $\Ascr$ satisfies $\sdim$. The next
constructions will allow us to show $A^{m+1}_{j,1}$ is safe by finding an isomorphic copy of it
with lower height.

Lemma~\ref{long} and Lemma~\ref{backward} imply that since
$A^{m+1}_{j,1}$ is not a singleton and is $G_{\{ I\}}$-invariant,
$\mu^{m+1}_j \ge 3$ implies $B^{m+1}_j \subseteq A^m_{i,1} \cup
(\acl(\emptyset)\cap \Ascr^0)$.
By Lemma~\ref{rhoAinv} the
bouquet of $A^m_{i,1}$ over $B^m_i$ is $G_{\{ I\}}$-invariant.
Since both $A^{m+1}_{j,1}$ and $A^{m}_{i,1}$ are $G_{\{ I\}}$-invariant,
Lemma~\ref{flower=bouquet1} implies
the bouquets  of $A^{m+1}_{j,1}$ over $B^{m+1}_{j}$ and
of $A^m_{i,1}$ over $B^m_i$ are each $G_{\{ I\}}$-invariant.
The second of these consists of two petals:
$A^m_{i,1}$ and $C^{m,1}$. Since the bouquet and  $A^m_{i,1}$ are $G_{\{ I\}}$-invariant
so is $C^{m,1}$.

  We now apply Lemma~\ref{rhoAinv1}, taking  $A$ as
$A^{m+1}_{j,1}$,  $\hat A$ as
$A^m_{i,1}$, $B$ as $B^{m+1}_j$, $\hat B$ as $B^m_i$, $C^i$ as $C^{m+1}_{j,i}$,  $\hat C$ as
 $C^{m,1}$, $\Dscr$ as $\Ascr^{m-1}$, and $\rho$ as constructed in Lemma~\ref{rhoAinv1}
  to conclude:  $\rho(B^{m+1}_j)$ is $G_{\{I\}}$-invariant,
the flower over $\rho(C)/\hat B$  is $G_{\{I\}}$-invariant, and $\rho(A)$ is $G_{\{I\}}$
invariant.

Using the notation of Lemma~\ref{rhoAinv1} and Claim~\ref{quickstop},
 we illustrate the location of $\rho(B^{m+1}_j) = \rho(B)$
 and $\rho(C^{m+1}_{j,x}) = \rho(C^x)$, for $x = 1, 2$,  in Diagram~\ref{Claim4419}.
 So, $\rho(B) = B_+ \cup \rho(B_-)$.
We denote $\rho(B_-)$ by $\widetilde{B}_-$ in Figure~\ref{Claim4419}.
   As in Figure~\ref{Notation320} we draw only two petals
$A^m_{i,1}$ and $A^m_{i,2}$ from $\Ascr^m$  and $A^{m+1}_{j,1}$
from $\Ascr^{m+1}$.
 While, for simplicity, $B^m_i\subseteq \Ascr^{m-1}$ is omitted from the diagram, we
 clarify its properties. Since $B^m_i$ is $G_{\{ I \}}$-invariant and is safe
  by $\sdim_m$,  general
properties of the  construction yield
$$2 \le d(B^m_i) \le \delta (B^m_i) \le \mu(A^m_{i,1}/B^m_i) = 2.$$
This implies $B^m_i \le M$  and $B^m_i = \icl(B^m_i)$.
Lemma~\ref{controlC} implies, writing $C^x_+$ for
$C^{m+1}_{j,x} \cap \Ascr^{m-1}$, that
$C^x \subseteq A^m_{i,1} \cup \icl(B^m_i)$; more precisely,
 $C^x_+ \subseteq \icl(B^m_i) = B^m_i$ and $C^x_- \subseteq A^m_{i,1}$.
 Since $\rho$ fixes $B^m_i$ pointwise,
$\rho(C^x) = \rho(C^x_+ \cup C^x_-) = C^x_+ \cup \rho(C^x_-)$.
We denote $\rho(C^x_-)$ by $\widetilde{C}^x_-$.
By construction, $\rho$ moves $A^m_{i,1}$ to $C^{m,1}$. So, any subset of
$A^m_{i,1}$ is moved to $C^{m,1}$, in particular $C^x_-$.
%}

Obviously, $\rho(B)$, $\rho(C^1)$, and $\rho(C^2)$ are subsets of $\Ascr^{m-1}$, so the third petal over $\rho(B)$ will be on at most $m$-th strata
of some $G_{\{I\}}$-normal set, as we show it below.

\begin{center}
\begin{picture}(370,160)
% \Ascr^0
\put(6, 15){\dashbox{1}(40,140)[c]{~}}
\put(8, 0){$\Ascr^{0}$}
% others
\put(46, 15){\dashbox{1}(20,140)[c]{~}}
\put(50,81.5){$\cdots$}
\put(50, 0){$\cdots$}
% \Ascr^{m-2}
\put(66, 15){\dashbox{1}(80,140)[c]{~}}
\put(75, 0){$\Ascr^{m-2}$}
%%% A^1_{1,2}
\put(146, 85){\dashbox{1}(70,70)[c]{~}}
\put(190, 146){$A^{m-1}_{1,2}$}
%%% A^1_{1,1}
\put(146, 15){\dashbox{1}(70,70)[c]{~}}
\put(190, 19){$A^{m-1}_{1,1}$}
%\qbezier(52,30)(64,48)(76,66)
% A^1
\put(152, 0){$\Ascr^{m-1}$}
% A^2_{1,1}
\put(216, 15){\dashbox{1}(70,140)[c]{~}}
\put(225, 0){$\Ascr^{m}$}
\put(266, 19){$A^{m}_{i,1}$}
% \Ascr^{m+1}
\put(286, 15){\dashbox{1}(70,140)[c]{~}}
\put(295, 0){$\Ascr^{m+1}$}%
% B
\put(26, 85){\oval(30,30)}
\put(20,81.5){$B_+$}
\put(251, 85){\oval(30,30)}
\put(245,81.5){{$B_-$}}
% A
\put(295,47){\line(1,0){50}}
\put(345,47){\line(0,1){70}}
\put(335,117){\line(1,0){10}}
\put(335,67){\line(0,1){50}}
\put(295,47){\line(0,1){20}}
\put(295,67){\line(1,0){40}}
\put(298, 36){$A = A^{m+1}_{j,1}$}
% C1
\put(180,35){\line(1,0){80}}
\put(190,55){\line(0,1){50}}
\put(180,105){\line(1,0){10}}
\put(180,35){\line(0,1){70}}
\put(260,35){\line(0,1){20}}
\put(190,55){\line(1,0){70}}
\put(197,42){$C^1_+$}
\put(227,42){$C^1_-$}
\put(263,42){$C^1$}
%C2
\put(195,135){\line(1,0){65}}
\put(205,65){\line(0,1){50}}
\put(195,65){\line(1,0){10}}
\put(195,65){\line(0,1){70}}
\put(260,115){\line(0,1){20}}
\put(205,115){\line(1,0){55}}
\put(197,122){$C^2_+$}
\put(227,122){$C^2_-$}
\put(263,122){$C^2$}
%copy of A^m_{i,1}
\put(86, 35){\dashbox{5}(40,100)[c]{~}}
% copy of B_-
\put(106, 85){\oval(24,24)}
\put(100,81.5){{$\widetilde{B}_-$}}
% copies of C
\put(98, 110){\dashbox{2}(28,20)[c]{$\widetilde{C}^2_-$}}
\put(98, 40){\dashbox{2}(28,20)[c]{$\widetilde{C}^1_-$}}
% copy of A^m_{i,1}
\put(90,140){$C^{m,1}$}
\end{picture}
\captionof{figure}{Illustrating Claim~\ref{quickstop}}\label{Claim4419}
\end{center}
In Figure~\ref{Claim4419} we abbreviate as follows: $A = A^{m+1}_{j,1}$, $B = B^{m+1}_{j}$,
$C^x= C^{m+1,x}$, $\widetilde C^x_- = \rho(C^{m+1,x}_-)$ and $\widetilde B_- = \rho(B_-)$.

 Clearly
$\chi_M(\rho(A^{m+1}_{j,1})/\rho(B^{m+1}_{j}) \leq \mu^{m+1}_j =
 \mu(A^{m+1}_{j,1}/B^{m+1}_{j}) $.
The $\rho(C^{m+1,q})$ gives us $\mu^{m+1}_j-1$ witnesses.
Since $\rho(B^{m+1,q})   \subseteq \Ascr^{m-1}\leq M$, $\rho(A^{m+1}_{j,1})$
cannot split over $\Ascr^{m-1}$ (Definition~\ref{splitdef}). Similarly $\Ascr \leq M$ implies  $\rho(A^{m+1}_{j,1})$
cannot split over $\Ascr^{m-1}$.  We now have three cases depending on the exact
location of $\rho(A^{m+1}_{j,1})$.

Case 1. $\rho(A^{m+1}_{j,1}) \subseteq \Ascr^{m-1}$.
Immediately,
the induction hypothesis $\sdim_m$ (in fact, $\sdim_{m-1}$) implies
%$d(G_{\{I\}}(e) =2$ for each $e \in
$\rho(A^{m+1}_{j,1})$ is safe.

To complete the proof, we show  an extension of Lemma~\ref{one-copynew}.

Case 2: $\rho(A^{m+1}_{j,1}) \subset \Ascr$ and  $\rho(A^{m+1}_{j,1}) \cap \Ascr^{m-1}=
\emptyset$:
Since $\Ascr^{m-1} \leq M$, we must have
$\delta(\rho(A^{m+1}_{j,1})/\Ascr^{m-1}) = 0$.  But
$(\rho(A^{m+1}_{j,1})/\rho(B^{m+1}_{j}))$ is a good pair.
 We know ${(B^{m+1}_{j})}_{-} \subseteq A^m_{1,1}$ so
$(\rho(B^{m+1}_{j}))_- \subseteq C^{m,1} \subseteq \Ascr^{m-1}$.
And $(\rho(B^{m+1}_{j}))_+ \subseteq \Ascr^0$. So $\rho(B^{m+1}_{j}) \subseteq
\Ascr^{m-1}$.
Since $\Ascr$ is $G_{\{I\}}$-normal and $\rho(A^{m+1}_{j,1})$ is
well-placed by $\rho(B)$ over $\Ascr^{m-1}$ the construction  places
$\rho(A^{m+1}_{j,1})$ in $\Ascr^m$. So by $\sdim_m$, $\rho(A^{m+1}_{j,1})$ is
 safe.

Case 3. $\rho(A^{m+1}_{j,1}) \subset M -\Ascr$:
Then we put $\tilde \Ascr^m = \Ascr^{m-1}\cup \rho(A^{m+1}_{j,1})$.
Note that $\tilde \Ascr^m$ is a $G_{\{I\}}$-normal with height $m$.
Applying the global induction hypothesis $\sdim_m$ to $\tilde \Ascr^m$,
we see $\rho(A^{m+1}_{j,1})$ is safe.

Thus, in each case $\rho(A^{m+1}_{j,1})$ is safe.
So, by Lemma~\ref{one-copynew}.3, $A^{m+1}_{j,1}$ is safe.
\end{proof}

We have finished if  the descending sequence described before Lemma~\ref{B+0} stops immediately;
we now consider the alternative.
Because `$A^{m+1}_{j,1}$ determines $A^{m}_{i,1}$' produces
 a decreasing chain of complicated
sub/superscripts, we introduce a notation
for  a descending sequence, which is
 relative to a given $G_{\{I\}}$-invariant petal $A^{m+1}_{j,1}$,
and describes the `root' below $A^{m+1}_{j,1}$ that controls its intersection with $\sdcl^*(I)$.
Recall that capital Roman letters ($A,B$) denote petals, while script letters $\Ascr$ denote initial segments of a tree-decomposition.

\begin{definition}\label{desseq}[Determined Sequences]
We write $\Upsilon(A)$ for the petal determined (Definition~\ref{gendef}) by $A$.
Then $\Upsilon^k(A)$ denotes the kth iteration of this operation.
\begin{enumerate}
\item As usual, $\Ascr^m = \bigcup_{i\leq m}\Ascr^i$.

\item Fix  $\Aback^0, \Aback^1$ such that  $\Aback^0 = A^{m+1}_{j,1}$ determines
   $\Upsilon(\Aback^0) = \Aback^1 =A^m_{i,1}$ (Definition~\ref{gendef}).
%${}^{i+1}\Ascr = \Ascr^{i-1}$,
 For fixed $q$, $\overleftarrow {C}^{0,q} = C^{m+1,q}$, $\overleftarrow {B}^0 = B^{m+1}_j$.

The crucial inductive definition is
\[
    \Aback ^{k+1}=
    \begin{cases*}
      \Upsilon(\Aback^k), \hspace{.18in} \mbox{if }  \mu(\Aback^k, \overleftarrow {B}^k) \geq 3  \\
      stop, \hspace{.35in} \mbox{if } \mu(\Aback^k, \overleftarrow {B}^k) =2
      \end{cases*}
\]

So, $\Aback^k = A^{m+1-k}_{t_k,1}$ for some $t_k$ for each  $k\le s$.
%$\overleftarrow{\Ascr}^{k} = \bigcup_{i \leq m-k} \Ascr^i$.
Increment indices for $B, C, \mu$ in the same way.  E.g. $\overleftarrow{C}^{k,q}$ is
$C^{m+1-k}_{j,q}$ in the notation for decompositions.
\item The {\em order} of $\Aback^0 = A^{m+1}_{j,1}$
 is the least index $s$ such that
$\mu(\Aback^{s}, \overleftarrow {B}^{s}) = 2$.
\item Suppose the order of $\Aback^0$ is $s$.  We work from the bottom to define the {\em root}
that supports $\Aback^0$.
$$\mathcal{W}_{s+1} = (\Ascr^0\cap\acl(\emptyset))\cup \Bback^s.$$

For $k\leq s$, we define $\mathcal{W}_{k}$ by downward induction.
$$\mathcal{W}_{k} = \mathcal{W}_{s+1}\cup \Aback^{s} \cup \dots \cup\Aback^{k}= \mathcal{W}_{k+1} \cup \Aback^{k}
\subseteq \overleftarrow {\Ascr}^{k}.$$
\end{enumerate}
\end{definition}

Recall that increasing the superscript of an $\overleftarrow {\Ascr}$ moves to
lower strata. Since we are analyzing
$A^{m+1}_{j,1}$, for any $k < m+1$, $\overleftarrow {\Ascr}^{0}= \Ascr^{m+1}$,
$\overleftarrow {\Ascr}^{k+1}$ is the initial
segment
preceding $\overleftarrow {\Ascr}^{k}$ in the original decomposition. In particular,
 $$\overleftarrow {\Ascr}^{s+1} =\Ascr^{m+1-(s+1)} = \Ascr^{m-s}= \bigcup_{k\leq m-s}\Ascr^{k}.$$
Also, the $\Cback^{k,q} = C^{m+1-k,q}_{t_k} \subseteq \overleftarrow
 \Ascr^{k+1}$ are isomorphic over $\Bback^{k}$
	copies of $\Aback^{k}$.
	
Since $\mu(\Aback^s, \Bback^s)=2$ and $\Bback^s$ is safe,
$2\le d(\Bback^s) \le \delta(\Bback^s) \le \mu(\Aback^s, \Bback^s)=2$. So the next lemma
is easy.
\begin{lemma}\label{W-is-2} Suppose the sequence $\langle \Aback^k:  0 \leq k \leq s\rangle$ stops
with $\mu(\Aback^s/\Bback^s) =2$, then
$\delta(\mathcal{W}_k)=2$ for each $k\le s+1$.
\end{lemma}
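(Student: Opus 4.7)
The plan is to induct downward on $k$, starting from $k=s+1$ and descending to $k=0$, with each step peeling off one petal $\Aback^k$.

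For the base case $k=s+1$: the sentence immediately preceding the lemma already records that $\Bback^s$ is safe and that $\delta(\Bback^s)=2$, using $\mu(\Aback^s/\Bback^s)=2$, the induction hypothesis $\sdim_m$ (applicable since $\Bback^s\subseteq\overleftarrow{\Ascr}^{s+1}\subseteq\Ascr^m$ and $\Bback^s$ is $G_{\{I\}}$-invariant by Observation~\ref{fixC}), and the adequacy bound $\delta(B)\le\mu(A/B)$. Since $\delta(\Ascr^0\cap\acl(\emptyset))=0$ and every intersection with $\acl(\emptyset)$ has $\delta=0$, I would then mimic the chain of inequalities used to establish $\Wscr_2\leq M$ in Lemma~\ref{rhoAinv1}:
$$
2=d(\Bback^s)\le d(\mathcal{W}_{s+1})\le \delta(\mathcal{W}_{s+1})\le \delta(\Bback^s)-\delta(\Bback^s\cap \Ascr^0\cap\acl(\emptyset))+\delta(\Ascr^0\cap\acl(\emptyset))=2,
$$
which forces $\delta(\mathcal{W}_{s+1})=2$.

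For the inductive step, assume $\delta(\mathcal{W}_{k+1})=2$ and write $\mathcal{W}_k=\mathcal{W}_{k+1}\cup \Aback^k$ (a disjoint union). Then $\delta(\mathcal{W}_k)=\delta(\mathcal{W}_{k+1})+\delta(\Aback^k/\mathcal{W}_{k+1})$, so everything reduces to showing $\delta(\Aback^k/\mathcal{W}_{k+1})=0$. Two inclusions drive this: (a) $\Bback^k\subseteq \mathcal{W}_{k+1}$, and (b) $\mathcal{W}_{k+1}\subseteq \overleftarrow{\Ascr}^{k+1}$. For (a), when $k<s$ we have $\mu(\Aback^k/\Bback^k)\ge 3$ and $\Aback^k$ determines $\Aback^{k+1}$, so Lemma~\ref{long}.A gives $\Bback^k\subseteq \Aback^{k+1}\cup(\Ascr^0\cap\acl(\emptyset))\subseteq \mathcal{W}_{k+1}$; for $k=s$, $\Bback^s\subseteq \mathcal{W}_{s+1}$ by definition. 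For (b), every $\Aback^j$ with $j\geq k+1$ sits on stratum $m+1-j\leq m-k$, hence in $\Ascr^{m-k}=\overleftarrow{\Ascr}^{k+1}$, and $\Bback^s\cup(\Ascr^0\cap\acl(\emptyset))$ is also in $\overleftarrow{\Ascr}^{k+1}$.

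With (a) and (b) in hand, monotonicity of $\delta$ (Axiom~\ref{yax}.(3)) and the fact that $\Aback^k$ is $0$-primitive over $\overleftarrow{\Ascr}^{k+1}$ with base $\Bback^k$ yield
$$
0=\delta(\Aback^k/\overleftarrow{\Ascr}^{k+1})\le \delta(\Aback^k/\mathcal{W}_{k+1})\le \delta(\Aback^k/\Bback^k)=0,
$$
so $\delta(\Aback^k/\mathcal{W}_{k+1})=0$ and $\delta(\mathcal{W}_k)=2$, closing the induction. The main obstacle (which is minor once the bookkeeping is set up) is verifying the chain of inclusions $\Bback^k\subseteq \mathcal{W}_{k+1}\subseteq \overleftarrow{\Ascr}^{k+1}$ uniformly for $0\leq k\leq s$; this is where Lemma~\ref{long}.A is essential, since without the conclusion $B_+\subseteq \acl(\emptyset)$ there, one could not guarantee that the base $\Bback^k$ fits inside $\mathcal{W}_{k+1}$.
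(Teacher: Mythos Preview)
Your approach is correct and matches the paper's: establish $\delta(\mathcal{W}_{s+1})=2$ first, then descend by induction using that each $\Aback^k$ is a $0$-primitive extension. The paper's proof is a two-line sketch of exactly this; you have filled in the details the paper leaves implicit.

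One correction on attribution: Lemma~\ref{long}.A does \emph{not} give $\Bback^k\subseteq\Aback^{k+1}\cup(\Ascr^0\cap\acl(\emptyset))$. Its conclusion is only $\delta(B_+)=0$, which is weaker. The inclusion $B_+\subseteq\acl(\emptyset)$ you need is Lemma~\ref{backward}.2, whose proof combines $\delta(B_+)\le 1$ from Lemma~\ref{long} with the hypothesis $\sdim_m$ (if $B_+\setminus\acl(\emptyset)$ were nonempty it would be a $G_{\{I\}}$-invariant set of dimension $\ge 2$, contradicting $\delta(B_+)\le 1$). So your final sentence misidentifies where the work is done: Lemma~\ref{long}.A alone is not enough; the ambient $\sdim_m$ hypothesis is what upgrades $\delta(B_+)=0$ to $B_+\subseteq\acl(\emptyset)$. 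With that citation fixed, your sandwich $0=\delta(\Aback^k/\overleftarrow{\Ascr}^{k+1})\le\delta(\Aback^k/\mathcal{W}_{k+1})\le\delta(\Aback^k/\Bback^k)=0$ is exactly right.
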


\begin{proof}
Since $\mathcal{W}_{s+1} = (\Ascr^0\cap\acl(\emptyset))\cup \Bback^s\le M$
and $\delta(\Bback^s)=2$,  $\delta(\mathcal{W}_{s+1})=2$.
Then we can finish by induction,
because at each step we consider a 0-primitive extension.
\end{proof}

\begin{lemma}\label{seqred} Suppose the sequence  $\langle \Aback^k:  0 \leq k \leq s\rangle$ stops
%For any $A,B,C, \hat A, \hat B$, i
with $\mu(\Aback^s/\Bback^s) =2$ then

\begin{enumerate}
\item each $\Wscr_k \leq M$ and is $G$-invariant;
\item $\Cback^{k,q} \subseteq  \Wscr_{k+1}$ for every
$q \in \{ 1, \dots, \mu(\Aback^0/\Bback^0)-1\}$.
\end{enumerate}
\end{lemma}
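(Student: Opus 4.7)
The plan is to prove both parts simultaneously by downward induction on $k$, from $k = s+1$ down to $k = 0$. A preliminary observation drives the argument: $G_{\{I\}}$-invariance propagates along the entire determined sequence $\Aback^0, \Aback^1, \ldots, \Aback^s$. Indeed, $\Aback^0 = A^{m+1}_{j,1}$ is $G_{\{I\}}$-invariant by hypothesis, and once $\Aback^k$ is $G_{\{I\}}$-invariant, Observation~\ref{fixC} forces $\Bback^k$ to be $G_{\{I\}}$-invariant, whereupon the uniqueness characterization in Definition~\ref{gendef} makes $\Aback^{k+1} = \Upsilon(\Aback^k)$ $G_{\{I\}}$-invariant as well. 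Consequently each ingredient assembled in the $\Wscr_k$'s is $G_{\{I\}}$-invariant.

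For Part~(1), the base case $k = s+1$ has $\Wscr_{s+1} \leq M$ already folded into the proof of Lemma~\ref{W-is-2}, and $G_{\{I\}}$-invariance follows since $\Ascr^0 \cap \acl(\emptyset)$ is fixed by every automorphism of $M$ and $\Bback^s$ is $G_{\{I\}}$-invariant by the propagation above. For the inductive step, suppose $\Wscr_{k+1} \leq M$ is $G_{\{I\}}$-invariant, and write $\Wscr_k = \Wscr_{k+1} \cup \Aback^k$. The base $\Bback^k$ sits inside $\Wscr_{k+1}$: when $k < s$ this is Lemma~\ref{long}.A applied at level $k$ (using $\mu^k \geq 3$), which locates $\Bback^k \subseteq \Aback^{k+1} \cup (\Ascr^0 \cap \acl(\emptyset)) \subseteq \Wscr_{k+1}$; when $k = s$ it is the definition of $\Wscr_{s+1}$. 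Since $\Aback^k$ is $0$-primitive over $\Wscr_{k+1}$ with base $\Bback^k$, adjoining it preserves strongness, so $\Wscr_k \leq M$; $G_{\{I\}}$-invariance is inherited from the two $G_{\{I\}}$-invariant summands.

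For Part~(2), fix $k$ with $\mu^k \geq 3$ and $q$ in the stated range. Lemma~\ref{controlC} applies at level $k$ because $\Aback^k$ is $G_{\{I\}}$-invariant, $\mu^k \geq 3$, and $\Aback^k$ determines $\Aback^{k+1}$; it delivers $\Cback^{k,q} \subseteq \Aback^{k+1} \cup \icl(\Bback^{k+1})$. Now $\Aback^{k+1} \subseteq \Wscr_{k+1}$ by construction, and $\Bback^{k+1} \subseteq \Wscr_{k+2} \subseteq \Wscr_{k+1}$ by Lemma~\ref{long}.A at level $k+1$ (or by definition when $k+1 = s$). Since Part~(1) gives $\Wscr_{k+1} \leq M$, the intrinsic closure of $\Bback^{k+1}$ inside $M$ lies inside any strong superset, yielding $\icl(\Bback^{k+1}) \subseteq \Wscr_{k+1}$. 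Combining, $\Cback^{k,q} \subseteq \Wscr_{k+1}$.

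The main obstacle is not any single deep calculation but the careful bookkeeping across the determined sequence: verifying that $G_{\{I\}}$-invariance, strongness, and the location of each base $\Bback^k$ propagate consistently, so that Observation~\ref{fixC}, Lemma~\ref{long}.A, and Lemma~\ref{controlC} can each be invoked at the right level. Once those invariants are in hand, both conclusions assemble mechanically from the strong-extension calculus and the minimality of $\icl$ among strong supersets.
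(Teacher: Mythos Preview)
Your proof is correct and follows essentially the same approach as the paper's: both establish $G_{\{I\}}$-invariance by propagation along the determined sequence (the paper cites Lemma~\ref{long}, you cite Observation~\ref{fixC} and Definition~\ref{gendef}), both ground the strongness induction at $\Wscr_{s+1}$ via Lemma~\ref{W-is-2}, and both derive Part~(2) from Lemma~\ref{controlC} together with $\Bback^{k+1} \subseteq \Wscr_{k+1} \leq M$ so that $\icl(\Bback^{k+1}) \subseteq \Wscr_{k+1}$. Your write-up is more explicit about the bookkeeping (in particular the location of each $\Bback^k$ via Lemma~\ref{long}.A and the restriction to levels with $\mu^k \geq 3$ when invoking Lemma~\ref{controlC}), whereas the paper compresses the strongness step into the single observation that all the $\Wscr_k$ have $\delta$-value~$2$.
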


\begin{proof}
1)  To start the induction, note $\Bback^s \leq M$ since $\mu(\Aback^s/\Bback^s) =2$.
$G_{\{I\}}$-invariance follows from the definition of determined, noting that $\Aback^k$
is $G_{\{I\}}$-invariant by Lemma~\ref{long} as $\mu(\Aback^{k-1}/\mu(\Bback^{k-1})\geq 3$.
But each $\Wscr_{k+1} \leq \Wscr_{k}$ since all   have dimension $2$.

2) By Lemma~\ref{controlC}, for each $k\leq s$, $q \leq \mu(\Aback^k/\Bback^k)$,
$\Cback^{k,q} \subseteq \Aback^{k+1} \cup \icl(\Bback^{k+1})$. Since each
$\Bback^{k+1} \subseteq \Wscr_k \leq M$, this implies $\Cback^{k,q} \subseteq \Wscr_{k+1}$.
\end{proof}

Having dealt with the case $\mu^m_i = 2$,  we consider the general case of
Lemma~\ref{quickstop}.
The key difficulty is that we cannot deduce $\rho(B)$ is
$G_{\{I\}}$-invariant in one step as in Lemma~\ref{rhoAinv}.
We have a sequence that stops with an $\Aback^s$ such that $\mu(\Aback^s/\Bback^s) =2$
so that there is an automorphism  $\rho$ mapping $\Aback^s$ into $\Cback^s$. With this $\rho$ fixed
we argue inductively that each $\Aback^k$ for $s \geq k \geq 0$ is safe.

But, we must perform a
dual induction with the proof that $\rho(A)$ is $G_{\{I\}}$-invariant.

\begin{lemma}\label{slowstop}
Suppose the sequence
$\langle \Aback^k:  0 \leq k \leq s\rangle$ stops
%For any $A,B,C, \hat A, \hat B$, i
with $\mu(\Aback^s/\Bback^s) =2$.
Then $\Aback^k$ is safe for each $k\leq s$. In particular, when $k=0$, we see
$A^{m+1}_{j,1}$ is safe.
\end{lemma}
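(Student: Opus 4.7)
The plan is to extend the three-case argument of Lemma~\ref{quickstop} to the chain $\Aback^0, \Aback^1, \ldots, \Aback^s$ of determined petals by a downward induction on $k$. First, I would construct a single automorphism $\rho \in \aut(M)$ at the bottom of the chain: since $\mu(\Aback^s/\Bback^s) = 2$ and Lemma~\ref{seqred}.1 gives $\Wscr_{s+1} \leq M$ (so $\Bback^s \leq M$), the partial isomorphism from $\Aback^s$ onto its unique copy $\Cback^{s,1} \subseteq \Wscr_{s+1}$ over $\Bback^s$ extends, as in Lemma~\ref{rhoAinv1}.i, to an automorphism of $M$ which may be arranged to fix $\acl(\emptyset) \cap \Ascr^0$ pointwise.

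Next I would prove by downward induction on $k$, from $k = s$ down to $k = 0$, that $\rho(\Aback^k)$ is $G_{\{I\}}$-invariant. The base case holds because the flower over $\Aback^s/\Bback^s$ consists of exactly two petals, both $G_{\{I\}}$-invariant by Lemma~\ref{flower=bouquet1}. For the inductive step, having already established that $\rho(\Aback^{k+1})$ is $G_{\{I\}}$-invariant, I use that $\Aback^k$ determines $\Aback^{k+1}$ together with Lemma~\ref{long}.A to force $\rho(\Bback^k) \subseteq \rho(\Aback^{k+1}) \cup (\acl(\emptyset) \cap \Ascr^0)$ to be $G_{\{I\}}$-invariant. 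I then iterate the argument of Lemma~\ref{rhoAinv1}.ii: the flower over $\rho(\Aback^k)/\rho(\Bback^k)$ is unique (by Lemma~\ref{flower=bouquet1}), and by Lemma~\ref{seqred}.2 all the sibling petals $\rho(\Cback^{k,1}), \ldots, \rho(\Cback^{k,\nu_k})$ lie inside the $G_{\{I\}}$-invariant set $\Wscr_{k+1}$, leaving $\rho(\Aback^k)$ as the unique exterior petal of a $G_{\{I\}}$-invariant flower — hence $G_{\{I\}}$-invariant.

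With the invariance of each $\rho(\Aback^k)$ in hand, I run the three-case analysis of Lemma~\ref{quickstop} for each $k$: either $\rho(\Aback^k) \subseteq \Ascr^{m-k}$ (in which case $\sdim_{m-k}$ for $\Ascr$ directly applies), or $\rho(\Aback^k) \subseteq \Ascr$ but $\rho(\Aback^k) \cap \Ascr^{m-k} = \emptyset$ and $\rho(\Aback^k)$ is placed at a strictly lower stratum than $\Aback^k$ (use $\sdim$ at that lower stratum), or $\rho(\Aback^k) \subseteq M - \Ascr$ and we form the auxiliary $G_{\{I\}}$-normal set $\tilde\Ascr := \Ascr^{m-k} \cup \rho(\Aback^k)$ of height at most $m+1-k \leq m$, to which the global induction hypothesis $\sdim$ applies. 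In every case $\rho(\Aback^k)$ is safe, and Lemma~\ref{one-copynew}.2 transfers this safeness back to $\Aback^k$ itself, since $\rho$ is an automorphism. Taking $k = 0$ gives that $A^{m+1}_{j,1} = \Aback^0$ is safe.

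The main obstacle will be the inductive verification that each $\rho(\Aback^k)$ is $G_{\{I\}}$-invariant for $k < s$. In these intermediate cases $\mu(\Aback^k/\Bback^k) \geq 3$, so the relevant flowers are larger than the two-petal flowers for which Lemma~\ref{rhoAinv1} is stated, and the ``unique second copy'' trick from that lemma does not apply as is. The delicate point is to show carefully that $\rho$ preserves the partition of the flower over $\rho(\Aback^k)/\rho(\Bback^k)$ into ``interior'' petals (sitting in $\Wscr_{k+1}$) and a single ``exterior'' petal; this is exactly what Lemma~\ref{seqred}.2 was set up to supply, but verifying that no sibling of $\rho(\Aback^k)$ accidentally escapes $\Wscr_{k+1}$ under $\rho$ — and that the $G_{\{I\}}$-action on the exterior really does fix $\rho(\Aback^k)$ setwise — is where the bulk of the bookkeeping lies.
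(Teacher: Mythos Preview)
Your overall strategy mirrors the paper's: build $\rho$ from the bottom pair $\Aback^s/\Bback^s$, prove invariance of the $\rho(\Aback^k)$ by downward induction, then invoke the global $\sdim$ hypothesis and transfer safeness via Lemma~\ref{one-copynew}.2. The invariance argument you sketch is essentially correct and matches the paper (modulo the slip that the siblings $\rho(\Cback^{k,q})$ lie in $\rho(\Wscr_{k+1})=\widetilde{\Wscr}_{k+1}$, not $\Wscr_{k+1}$).

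The genuine gap is in your three-case analysis, specifically the auxiliary structure $\tilde\Ascr := \Ascr^{m-k}\cup\rho(\Aback^k)$ in case~3. Two problems arise. First, your height bound $m+1-k\le m$ fails exactly at the only case that matters, $k=0$: there it reads $m+1\le m$, so the global induction does not apply. (For $k\ge 1$, $\Aback^k\subseteq\Ascr^m$ is already safe by the standing hypothesis $\sdim_m$, so those cases are vacuous.) Second, and more fundamentally, $\tilde\Ascr$ need not be $G$-normal: for it to be strong in $M$ you need $\rho(\Bback^k)\subseteq\Ascr^{m-k}$, but $\rho(\Bback^k)\subseteq\rho(\Aback^{k+1})\cup(\acl(\emptyset)\cap\Ascr^0)$, and if at step $k+1$ you were already in case~3 then $\rho(\Aback^{k+1})$ lies outside $\Ascr$ entirely, so $\rho(\Bback^k)\not\subseteq\Ascr^{m-k}$.

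The paper fixes both problems by building the auxiliary structure \emph{cumulatively} from the bottom: it sets
$\widetilde{\mathcal R}_k:=\Ascr^{m-s}\cup\rho(\Aback^{s})\cup\cdots\cup\rho(\Aback^k)$.
Since $\rho(\Aback^s)=\Cback^s\subseteq\Ascr^{m-s}$ and each $\rho(\Bback^{k})\subseteq\widetilde{\mathcal R}_{k+1}$, every $\widetilde{\mathcal R}_k$ is $G_{\{I\}}$-normal, and the height of $\widetilde{\mathcal R}_0$ is at most $(m-s)+s=m$. This is precisely the ``compression by one level'' that $\rho$ buys you at the bottom of the chain, and it is lost if you restart from $\Ascr^{m-k}$ at each step. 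Once $\rho(\Aback^0)\subseteq\widetilde{\mathcal R}_0$ with height $\le m$, the global $\sdim_m$ applies and the transfer via Lemma~\ref{one-copynew}.2 goes through as you intended.
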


\begin{proof}
We use Definition~\ref{desseq} of $\mathcal{W}_k$.
 By Lemma~\ref{intersection-empty}, as used in Lemma~\ref{quickstop}, fix
 an automorphism $\rho$
   of $M$,
that sends $\Aback^{s }$ to its unique copy $\Cback^{s}$
and which fixes $\mathcal{W}_{s+1}= (\Ascr^{0}\cap \acl(\emptyset))\cup \Bback^s$
pointwise.
Recall  $\mathcal{W}_s = \mathcal{W}_{s+1} \cup \Aback^s$ and
that $\Cback^s \subseteq \overleftarrow {\Ascr}^{s+1} = \Ascr^{m-s}$.
Let  $\widetilde{\mathcal{W}}_s = \mathcal{W}_s \cup \Cback^s =
 (\Ascr^{0}\cap \acl(\emptyset))\cup \Bback^s \cup \Cback^s$.
Then $\rho(\mathcal{W}_s) = \widetilde{\mathcal{W}}_s$.
For $k \leq s+1 $, we build on Definition~\ref{desseq}.4 of
$\mathcal{W}_{k}$.
We define
%We consider
$$
\widetilde{\mathcal{W}}_{k} = \widetilde{\mathcal{W}}_{s} \cup \rho(\Aback^{s-1}) \cup
\dots \cup \rho(\Aback^k).$$
Note that $\widetilde{\mathcal{W}}_{k}$ need not be contained in $\Ascr$.
In particular, $\mathcal{W}_{s} \subseteq \overleftarrow {\Ascr}^{s}$ while
$\widetilde{\mathcal{W}}_{s} - \overleftarrow {\Ascr}^{s+1} = \rho(\Aback^{s-1})$.

Finally, to obtain a $G_{\{I\}}$-normal structure with a well-defined height, we define:
 $$\widetilde{\mathcal{R}}_{k} = \overleftarrow {\Ascr}^{s+1} \cup \rho(\Aback^{s}) \cup
\dots \cup \rho(\Aback^k).$$
Note that the height of $\widetilde{\mathcal{R}}_{s}
 = \overleftarrow {\Ascr}^{s+1} \cup \rho(\Aback^{s})$ is $m-s$ because $\rho(\Aback^{s}) \subseteq \overleftarrow {\Ascr}^{s+1}$ and $\overleftarrow\Ascr^{s+1} = \Ascr^{m-s}$. Moving from $\widetilde{\mathcal{R}}_{k+1}$ to $\widetilde{\mathcal{R}}_{k}$
increases the height at most by $1$; that is why
the height of $\widetilde{\mathcal{R}}_{0}$ is at most $m$.
Since $\Aback^k$ determines $\Aback^{k+1}$,
Lemma~\ref{backward}.2 implies that $\Bback^{k} \subseteq (\Ascr^0\cap\acl(\emptyset)) \cup \Aback^{k+1}$:
obviously, then,
$$ (*) \
 \rho(\Bback^{k}) \subseteq \widetilde{\mathcal{W}}_{k+1} =
\widetilde{\mathcal{W}}_{s} \cup \rho(\Aback^{s}) \cup
\dots \cup \rho(\Aback^{k+1}) \subseteq \widetilde{\mathcal{R}}_{k}.$$

By Claim~\ref{seqred} $\Cback^{k,q} \subseteq \mathcal{W}_{k+1}$.
Thus, $\rho(\Cback^{k,q}) \subseteq \widetilde{\mathcal{W}}_{k+1}$.
Obviously, $\rho(\Aback^k) \cap \widetilde{\mathcal{W}}_{k+1} = \emptyset$
because $\Aback^k \cap \mathcal{W}_{k+1} = \emptyset$.
By Lemma~\ref{flower=bouquet1} the bouquet of $\rho(\Aback^k)$ over $\rho(\Bback^k)$
consists just of one flower. %And by Lemma~\ref{rhoAinv} $\rho(\Bback^k)$  is $G_{\{I\}}$-invariant.

We conclude Lemma~\ref{slowstop} from the following,  which we show  below for each $k \leq s$:
\begin{enumerate}

\item $\rho(\Bback^k)$ is $G_{\{I\}}$-invariant;
      \item
      $ \Aback ^{s+1}\cup \widetilde{\mathcal{W}}_{k}$, $\widetilde{\mathcal{W}}_{k}$,
and $\rho(\Aback^k)$ are $G_{\{I\}}$-invariant.
\end{enumerate}

We prove these two assertions by simultaneous induction on $k$.
The induction is downward from $s$ and the base step is the third paragraph of
the proof of Lemma~\ref{quickstop}. So, we assume that (1)--(2) hold for $k+1$
and show that they hold for $k$.

(1) Recall that by Lemma~\ref{long}.1 and Lemma~\ref{backward}.2, $\Bback^k \subseteq \Aback^{k+1}
 \cup (\Ascr^0\cap\acl(\emptyset))$.
So, $$\rho(\Bback^{k}) \subseteq \rho(\Aback^{k+1}
 \cup (\Ascr^0\cap\acl(\emptyset))) = \rho(\Aback^{k+1}) \cup (\Ascr^0\cap\acl(\emptyset)).$$
We consider  $\rho(\Bback^{k})\cap (\Ascr^0\cap\acl(\emptyset))$   and
$\rho(\Bback^{k})\cap \rho(\Aback^{k+1})$ separately.
Since $\rho$ fixes $\Ascr^0\cap\acl(\emptyset)$ pointwise
and $\Bback^k$ is $G_{\{I\}}$-invariant,
$\rho(\Bback^{k}\cap \Ascr^0\cap\acl(\emptyset))$ is $G_{\{I\}}$-invariant.

We show an arbitrary $\pi \in G_{\{I\}}$ fixes $\rho(\Bback^{k})\cap  \rho(\Aback^{k+1})$ setwise.
By the induction hypothesis, $\rho(\Aback^{k+1})$ is $G_{\{I\}}$-invariant,
so $\pi(\rho(\Bback^{k})\cap  \rho(\Aback^{k+1})) \subseteq \rho(\Aback^{k+1})$.
Now we put
$$\tau = (\pi\myrestriction \overleftarrow\Ascr^{k+2})
\cup (\rho^{-1}\circ \pi \circ \rho)\myrestriction \Aback^{k+1}.$$
Obviously,  this isomorphism extends to an automorphism from $G_{\{I\}}$.
Since $\Bback^{k}$ is $G_{\{I\}}$-invariant,
$\tau(\Bback^k) = \rho^{-1}\circ \pi \circ \rho(\Bback^k) = \Bback^k$;
so $\pi(\rho(\Bback^{k})) = \rho \circ \pi(\Bback^{k}) = \rho(\Bback^{k})$.

(2) By the induction hypotheses
$\overleftarrow \Ascr^s \cup \widetilde{\mathcal{W}}_{k+1}$, $\widetilde{\mathcal{W}}_{k+1}$,
and $\rho(\Aback^{k+1})$ are $G_{\{I\}}$-invariant. Whence, by (1) $\rho(\Bback^k)$
is $G_{\{I\}}$-invariant.
Since $\widetilde{\mathcal{W}}_{k} = \widetilde{\mathcal{W}}_{k+1}\cup \rho(\Aback^k)$,
it is sufficient to prove that $\rho(\Aback^k)$ is $G_{\{I\}}$-invariant
 to deduce that $\overleftarrow \Ascr^s \cup \widetilde{\mathcal{W}}_{k}$
and $\widetilde{\mathcal{W}}_{k}$  are $G_{\{I\}}$-invariant.
So, we consider $\rho(\Aback^k)$. In fact, we repeat some reasoning from Lemma~\ref{quickstop}.
We put $\overleftarrow \nu^{k} = \mu(\Aback^k/\Bback^k) - 1$.

Case 2a) $\rho(\Aback^k)\subseteq \overleftarrow \Ascr^s\cup \widetilde{\mathcal{W}}_{k+1}$:
Since $\rho(\Bback^k)$
is $G_{\{I\}}$-invariant, so is
$$ (**)\   %\rho(\Bback^k) \cup
\bigcup_{q=1}^{\overleftarrow \nu^{k}} \rho(\Cback^{k,q}) \cup
\rho (\Aback^k)$$
because  it is the flower of $\rho (\Aback^k)$ over
$\rho_0(\Bback^k)$.
%we have collected all isomorphic to $\rho (\Aback^k)$ over
%$\rho_0(\Bback^k)$ petals.
Clearly, each $\Cback^{k,q}$ intersects $\Aback^{k+1}$;
{\em otherwise}, there must be
$c_1, c_2 \in \Cback^{k,q} \subseteq \overleftarrow \Ascr^s\cup{\mathcal{W}}_{  k+2}$
and $b\in \Bback^k \cap \Aback^{k+1}$ with $R(b, c_1, c_2)$. The last implies that
$|\Aback^{k+1}| =1$, for a contradiction. Thus, each $\rho(\Cback^{k, q})$
intersects $\rho(\Aback^{k+1})$, which is $G_{\{I\}}$-invariant by the induction hypothesis.
Let $\tau \in G_{\{ I\}}$ be arbitrary. Since $\rho(\Bback^{k})$ is $G_{\{I\}}$-invariant
 $\tau (\rho(\Bback^{k})) =
\rho(\Bback^k)$ and so
$$\tau(\rho(\Aback^{k})) \in
\{ \rho(\Cback^{k, q}) : q = 1, \dots, \overleftarrow {\nu}^{k}\}  \cup \{\rho(\Aback^{k})\}.$$
By construction, $\Aback^{k} \cap \Aback^{k+1} = \emptyset$;
so $\rho(\Aback^k)  \cap \rho(\Aback^{k+1}) = \emptyset$
and $\tau(\rho(\Aback^k))$ does not intersect $\tau(\rho(\Aback^{k+1})) = \rho(\Aback^{k+1})$.
But, we showed in the last paragraph $\rho(\Cback^{k,q}) \cap \rho(\Aback^{k+1}) \ne \emptyset$, so
$\tau(\rho (\Aback^{k}))$ cannot be equal to any of the $\rho(\Cback^{k,q})$.
Hence, using (**), $\tau(\rho (\Aback^k)) = \rho (\Aback^k)$ and $\rho (\Aback^k)$
 is $G_{\{I\}}$-invariant.

 Since $\rho(\Aback^k)\subseteq \overleftarrow \Ascr^s\cup \widetilde{\mathcal{W}}_{k+1}$,
 by the global induction hypothesis $\rho(\Aback^k)$ is safe.

Case 2b) $\rho(\Aback^k)\not\subseteq \overleftarrow  \Ascr^s\cup \widetilde{\mathcal{W}}_{k+1}$:
As $\rho(\Aback^k)$ is a $0$-primitive extension of
$\overleftarrow  \Ascr^s\cup \widetilde{\mathcal{W}}_{k+1}$,
$\rho(\Aback^k)\cap (\overleftarrow  \Ascr^s\cup \widetilde{\mathcal{W}}_{k+1}) = \emptyset$.
By (*), $\rho(\Bback^k) \subseteq \overleftarrow  \Ascr^s\cup \widetilde{\mathcal{W}}_{k+1}$;
moreover $\rho(\Bback^k) \cap  \rho(\Aback^{k+1}) \ne \emptyset$.

\begin{claim}\label{notnew} In case 2b, for each $k <s$, $\rho(\Aback^{k })\subseteq \overleftarrow  \Ascr^{k+1} -
\overleftarrow  \Ascr^{k+2}$ or $\rho(\Aback^{k})\cap \Ascr = \emptyset$.
In either case $\overleftarrow  \Ascr^{k+1} \cup \widetilde{\mathcal{W}}_{k+2}$ is
$G_{\{I\}}$-normal.
\end{claim}

\begin{proof} We have $\rho(\Bback^{k }) \subseteq \rho(\Aback^{k+1}) = \overleftarrow {C}^{k+1}
 \subseteq \overleftarrow {\Ascr}^{k+2}\cup \widetilde{\mathcal{W}}_{k-1} $,
$\rho(\Aback^{k})$ is $0$-primitive over $\rho(\Bback^{k})$, and $\overleftarrow {\Ascr}^{k+1}\leq M$.
Thus, $\rho(\Aback^{k})$ is $0$-primitive over $\overleftarrow {\Ascr}^{k+1}$ and based on
$\rho(\Bback^{k})\subseteq \overleftarrow{\Ascr}^{k+1}$.  If $\rho(\Aback^{k}) \subseteq \Ascr$
by, construction, $\rho(\Aback^{k})\subseteq \overleftarrow {\Ascr}^{k+1 }-\overleftarrow {\Ascr}^{k+2}$.  If not,
 since $\rho(\Aback^{k })$ cannot split (Definition~\ref{splitdef}) over $\Ascr$,
 $\Ascr \cap \rho(\Aback^{k }) =\emptyset$ and so $\overleftarrow{\Ascr}^{k+1}
 \cup \{\widetilde{\mathcal{W}}_{k}\}$
 is $G_{\{I\}}$-normal.
\end{proof}

Since $\overleftarrow {\nu}^{k}$ copies of $\rho(\Aback^k)$ over its base
are inside $\widetilde{\mathcal{W}}_{k+1}$,
 $\rho(\Aback^k)$ is $G_{\{I\}}$-invariant.
This completes the proof of Case 2b.

Since $\rho(A^{m+1}_{j,1}) = \rho(\Aback^{0})\subseteq
\widetilde{\mathcal{W}}_0 \subseteq \widetilde{\mathcal{R}}_{0}$
 and the height of $\widetilde{\mathcal{R}}_{0}$ is at most $m$,
  by the global induction $\rho(A^{m+1}_{j,1})$ is safe; by
Lemma~\ref{one-copynew}.2, we conclude $A^{m+1}_{j,1}$ is safe.
We finish Lemma~\ref{slowstop}. \end{proof}

This completes the proof of Lemma~\ref{d(orbit)=2}  showing
 $\sdim_m$ for $m\leq m_0$; thus we have
the main conclusion, Theorem~\ref{no-sym-fun}.

\section{Steiner Systems}\label{Steiner}
\numberwithin{theorem}{section}

In this section we study the strongly minimal $k$-Steiner systems discovered in \cite{BaldwinPao}.
A $k$-Steiner system is a collections of points and lines so that two points determine
a line and all lines have the same finite length $k$.  A quasigroup (binary
operation with unique solutions of $ax =b$ and $xa =b$)  such
that every $2$-generated sub-quasigroup
has $k$ elements determines a $k$-Steiner system where the lines are the two generated subalgebras.
Our interest in the existence of definable truly binary functions arose from
 the discovery that while
a Steiner system with  line length three admits  a quasigroup operation definable in
the vocabulary of the ternary collinarity
predicate and Steiner systems with prime power length admit the imposition of
quasigroups  that preserve lines (e.g. \cite{GanWer2}), it seemed very unlikely in the second case
 that
those quasigroups were {\em definable from $R$} (\cite{BaldwinsmssII}).

There are two examples of strongly minimal $3$-Steiner systems in \cite{BaldwinPao} and  \cite[Section 5]{Hrustrongmin}.
By explicitly adding
multiplication to the vocabulary, \cite{BaldwinsmssII} constructs strongly minimal
 quasigroups which determine
$k$-Steiner systems for each prime power $k$.
We   show  below that this separate operation is essential.
 The following problem/example inspired this research and is solved here.

\begin{problem}\label{defqg} {\rm We can impose a quasigroup structure on any
$4$-Steiner system.   There are two obvious
ways: one commutative, one not \cite{BaldwinsmssII}.  In fact, \cite{GanWer}
a quasi-group can be imposed in any Steiner $k$-system when $k= p^n$ for a prime $p$.
1) Prove the operations
of these quasi-groups are not $R$-definable in a strongly minimal $4$-Steiner system $(M,R)$.
2) More generally, is there an $\emptyset$-definable truly binary function?}
\end{problem}

We   now use  $\bK$ rather than $\bL$ to emphasize the distinctions from Section~\ref{main}.
Having said that, $\bK^*=\bL^*$ and $\bK^*_0=\bL^*_0$ while $\bK _0\neq \bL_0$ .
We work in a vocabulary $\tau$ with one ternary relation $R$, and
assume always that $R$ can hold only of three distinct elements and then in
any order (a $3$-hypergraph). The basic definitions are in Section~\ref{HLS}.
In the language
of $*$-petals,
$\mu$
{\em triples} if for every {\em non-linear}
(Definition~\ref{decomp2}) $*$-petal
 $(C/B)$ with $\delta(B) = 2$ and $|C|>1$, $\mu(C/B)\geq 3$.

\begin{theorem}\label{mrs} Let $M \models T^S_\mu$ be a strongly minimal Steiner system
 described in Notation~\ref{defT}. Then
 \begin{enumerate}
 \item  The naturally imposed quasigroups
  (\cite{GanWer}) on $M$ are not $\emptyset$-definable in $M$.
\item If $\mu(\boldsymbol{\alpha}) \geq 2$ and $\mu$
{\em triples},
 then there is no $\emptyset$-definable truly binary function in $T_\mu^S$.
\item There is no symmetric $\emptyset$-definable truly $v$-ary function for $v\ge 2$,
i.e., $\sdcl^*(I) = \emptyset$ for any $v$-element independent set $I$.
 \end{enumerate}
\end{theorem}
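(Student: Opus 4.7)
The plan is to derive Theorem~\ref{mrs} by adapting the $G_I$-decomposition argument of Theorem~\ref{mr} (for part (2)) and the finer $G_{\{I\}}$-decomposition with bouquets/flowers of Theorem~\ref{no-sym-fun} (for part (3)), with part (1) following as a corollary of part (2). For (1), a naturally imposed quasigroup operation $*$ on $M$ in the sense of \cite{GanWer} is, by definition, a $2$-ary function that truly depends on both arguments (indeed, $a * x = b$ has a unique solution). Thus, if $*$ were $\emptyset$-definable in $(M,R)$, it would violate the conclusion of (2), whose hypotheses (tripling and $\mu(\boldsymbol{\alpha}) \geq 2$, i.e.\ line length at least $4$) are satisfied for the examples to which \cite{GanWer} applies.

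For (2), I would carry out the $G_I$-decomposition of Section~\ref{geq3}, replacing the $3$-hypergraph $\delta$ by the linear-space $\delta$ of Definition~\ref{twodelta} and introducing the distinction between \emph{linear} and \emph{non-linear} petals to be made precise in Definition~\ref{decomp2}. A non-linear good pair behaves essentially as in the Hrushovski case, and the tripling hypothesis lets the inductive clauses $\mathrm{dim}_m$/$\mathrm{moves}_m$ of Lemma~\ref{indprop} propagate as in Lemma~\ref{petals}, Lemma~\ref{omni}, Lemma~\ref{small-G}, and Lemma~\ref{long}, once one accounts for the new $\delta$. For linear petals (extensions of an already-present line), the assumption $\mu(\boldsymbol{\alpha}) \geq 2$ gives enough copies of the distinguished good pair $\boldsymbol{\alpha}$ along each line to play the role of the $\mu \geq 3$ hypothesis in the combinatorial count driving Lemma~\ref{long}.A. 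The absence of uniqueness of bases (Remark~\ref{minmax}) is handled by invoking Lemma~\ref{primchar} (and Lemma~\ref{alphpetals}) as substitutes for Observation~\ref{fixC} when asserting that $G$-invariance of a petal transfers to its base.

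For (3), I would imitate Theorem~\ref{no-sym-fun}: perform a global induction on the height of $G_{\{I\}}$-normal sets, establish $\mathrm{sdim}_m$ at each level, and reduce via Lemma~\ref{backward} to the critical case where a $G_{\{I\}}$-invariant petal with $\mu \geq 3$ determines a petal on the stratum below. The bouquet/flower machinery of Subsection~\ref{bouflow} (Lemmas~\ref{different-flowers}, \ref{flower=bouquet1}, \ref{rhoAinv}, \ref{rhoAinv1}) should transfer directly, since it is phrased only in terms of good pairs $A/B$ that are well-placed, a notion that is intrinsic. The analogues of Lemmas~\ref{controlC}, \ref{seqred}, \ref{quickstop} and \ref{slowstop} then build the same descending-sequence argument via $\Upsilon^k(A)$, terminating at a $\mu = 2$ stratum where $\rho$ extends to an automorphism and Lemma~\ref{one-copynew} transfers safeness downward. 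No adequacy hypothesis is used at this stage.

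The main obstacle is the linear-space axiom: two points determine a line, which creates implicit relations between petals that do not arise in the pure $3$-hypergraph case. Concretely, (a) a primitive extension may have several equally valid bases, so the action of $G$ on a petal no longer automatically fixes a unique base setwise; (b) collinear petals in a linear cluster interact across strata, weakening the full-independence of $\delta$-independent petals on the same level (Remark~\ref{find}) and forcing care in the $\delta$-estimates inside Lemma~\ref{long} and Lemma~\ref{controlC}; (c) the assumption $\mu(\boldsymbol{\alpha}) \geq 2$ for part (2) is genuinely sharp, because when $\mu(\boldsymbol{\alpha}) = 1$ (line length $3$), the Steiner triple relation $R$ already defines a truly binary partial function (the third point on a line), so the whole strategy must exclude this degenerate case. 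Most of the technical work will be absorbing these linear-space effects into the decomposition bookkeeping; once that is done, the structural induction of Section~\ref{geq3}/\ref{geq2} runs essentially unchanged.
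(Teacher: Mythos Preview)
Your plan for parts (2) and (3) is essentially the paper's approach: adapt the $G_I$-decomposition of Section~\ref{geq3} and the $G_{\{I\}}$-decomposition of Section~\ref{geq2}, introducing the linear/non-linear petal distinction and replacing Observation~\ref{fixC} by Lemma~\ref{primchar}/Lemma~\ref{alphpetals}. One refinement: linear clusters are not handled by a $\mu\geq 3$-style count as you suggest, but directly, by showing (Lemma~\ref{alphpetals}.3a) that $G_I$ acts as the full symmetric group on the $\alpha$-points of a linear cluster, and (Lemma~\ref{nonlin}) that the base of a $G$-invariant non-linear petal cannot meet a linear cluster. This last lemma is the substantial new ingredient you are glossing as ``bookkeeping'': it requires a nontrivial $\delta$-computation exploiting that several copies $C^i$ must lie on a single line through a putative $b\in B\cap A^m_f$.

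Your derivation of (1) from (2), however, has a genuine gap. Part~(1) is asserted without the tripling hypothesis, and the claim that ``tripling \ldots\ is satisfied for the examples to which \cite{GanWer} applies'' is false: the Ganter--Werner coordinatization depends only on line length being a prime power (a condition on $\mu(\boldsymbol{\alpha})$), while tripling constrains $\mu$ on \emph{all} good pairs $C/B$ with $\delta(B)=2$ and $|C|>1$. These are independent. So your route would prove~(1) only under an extra hypothesis not in the statement. The paper instead gives a short direct argument (Theorem~\ref{noqg}): for independent $I=\{a_1,a_2\}$, the line $\ell$ through $I$ has its remaining $\mu(\boldsymbol{\alpha})$ points forming a linear cluster on which $G_I$ acts as the full symmetric group (Lemma~\ref{alphpetals}.3a); hence $a_1*a_2$ cannot be well-defined, since any candidate value in $\ell\setminus I$ can be moved to any other by some $g\in G_I$. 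This needs only $\mu(\boldsymbol{\alpha})\geq 2$, not tripling.
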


After a short introduction establishing 1), we prove 2) and 3). The major
obstacles to adapting the  earlier proofs of these results in the Hrushovski case
are a) the need to modify the notion of base (Lemma~\ref{primchar}) and b)
 the analysis of distinct occurrences of $R$ (e.g. Lemma~\ref{small-G}).

The following example (Figure~\ref{fig:steinerce}) shows that as in the Hrushovski case, we must pass
to $G_{\{I\}}$ and strengthen the hypothesis to get $\sdcl^*(I) = \emptyset$. Definable truly binary functions
may appear when $\mu(A/B)=2$ and $d(B) =2$ is allowed.
We put the following lines: $\{a_1, d_2, d_1\}$, $\{a_1, d_4, d_5\}$,
$\{a_2, d_5, d_3, d_1\}$, and $\{d_2, d_3, d_4\}$. The elements $c_i$
is the isomorphic copy of $d_i$ over $\{a_1, a_2\}$, for each $i$.
In order to construct $A^2_{1,1}$ we make $\alpha_i$ a copy of $a_i$,
 $\delta_i$ a copy of $d_i$ and each
$\gamma_i$ a copy of $c_i$ for each appropriate $i$,
where the isomorphism under consideration is over $\{d_3, c_3\}$. Then $\alpha_1\in\dcl^*(a_1, a_2)$.

\begin{figure}[h]\label{example-S}%[H]
\begin{center}
\begin{minipage}[h]{0.34\linewidth}
\begin{center}
\begin{picture}(151,185)
\put(43, 6){$a_1$} \put(133, 6){$a_2$} \put(45, 15){\circle*{3}} \put(135, 15){\circle*{3}} \put(0,0){\dashbox{1}(150,30)[c]{~}}
\put(0,32){\dashbox{1}(89,60)[c]{~}}
%\put(106,32){\dashbox{1}(89,60)[c]{~}}
\qbezier(45,15)(30,45)(15,75)
\qbezier(45,15)(60,43.2)(75,71.4)
\qbezier(135,15)(52,112)(15,75)
\qbezier(23,60)(29.5,110)(69,60)
\put(15, 75){\circle*{3}}\put(9, 65){$d_1$}
\put(75, 71.4){\circle*{3}}\put(72, 62){$d_5$}
\put(23, 60){\circle*{3}}\put(17, 50){$d_2$}
\put(37, 85){\circle*{3}}\put(31, 76){$d_3$}
\put(69, 60){\circle*{3}}\put(67, 50.6){$d_4$}
\end{picture}
%\caption{Chain}\label{Ex0}
\end{center}
%\captionof{figure}{Good~pair\vphantom{$\dim^*(\{\})$}}\label{Steiner-Ex0} %\label{Example0}
\end{minipage}
\hfill
\begin{minipage}[h]{0.56\linewidth}
\begin{center}
\begin{picture}(196,185)
\put(58, 6){$a_1$} \put(148, 6){$a_2$} \put(60, 15){\circle*{3}} \put(150, 15){\circle*{3}} \put(15,0){\dashbox{1}(180,30)[c]{~}} \put(0,8){$\Ascr^0$}
\put(15,32){\dashbox{1}(89,60)[c]{~}}
\put(106,32){\dashbox{1}(89,60)[c]{~}}
\qbezier(60,15)(45,45)(30,75)
\qbezier(60,15)(75,43.2)(90,71.4)
\qbezier(150,15)(67,112)(30,75)
\qbezier(38,60)(44.5,110)(84,60)
\put(30, 75){\circle*{3}}\put(24, 65){$d_1$}
\put(90, 71.4){\circle*{3}}\put(87, 62){$d_5$}
\put(38, 60){\circle*{3}}\put(32, 50){$d_2$}
\put(52, 85){\circle*{3}}\put(46, 76){$d_3$}
\put(84, 60){\circle*{3}}\put(81, 50.6){$d_4$}
\put(0,40){$\Ascr^1$}
\qbezier(60,15)(120,43.2)(180,71.4)
\qbezier(60,15)(95,45)(130,75)
\qbezier(150,15)(222,94)(130,75)
\qbezier(112.4,60)(174,97)(156,60)
\put(180, 71.4){\circle*{3}}\put(182, 68){$c_5$}
\put(130, 75){\circle*{3}}\put(132, 80){$c_1$}
%\put(117.5, 64.3){\circle*{3}}\put(114.5, 57){$c_2$}
\put(112.4, 60){\circle*{3}}\put(114.5, 57){$c_2$}
\put(154, 78.5){\circle*{3}}\put(151, 71.8){$c_3$}
\put(156, 60){\circle*{3}}\put(162, 57){$c_4$}
\put(15,94){\dashbox{1}(180, 90)[c]{~}}
\put(58, 174){$\alpha_1$} \put(148, 174){$\alpha_2$} \put(60, 169){\circle*{3}} \put(150, 169){\circle*{3}} \put(0,102){$\Ascr^2$}
\qbezier(60,169)(45,140)(30,111)
\qbezier(60,169)(75,137)(90,104.8)
\qbezier(150,169)(45,39)(30,111)
\qbezier(38,126)(45,45)(80.1,126)
%\qbezier(40,129)(52,34)(80,101)
\put(30, 111){\circle*{3}}\put(22, 116){$\delta_1$}
\put(90, 104.8){\circle*{3}}\put(78, 107){$\delta_5$}
\put(38, 126){\circle*{3}}\put(30, 131){$\delta_2$}
\put(80.1, 126){\circle*{3}}\put(78, 131){$\delta_4$}
\qbezier(60,169)(120,146)(180,123)
\qbezier(60,169)(95,140)(130,111)
\qbezier(180,123)(156,40)(130,111)\qbezier(150,169)(190,159)(180,123)
\qbezier(116.6,122)(164,33)(172,126)
\put(180, 123){\circle*{3}}\put(182, 123){$\gamma_5$}
\put(130, 111){\circle*{3}}\put(132, 111){$\gamma_1$}
\put(116.6, 122){\circle*{3}}\put(118.6, 122){$\gamma_2$}
%\put(134, 113.2){\circle*{3}}\put(135, 107){$c_3$}
\put(172, 126){\circle*{3}}\put(161, 121){$\gamma_4$}
\put(20, 170){$A^2_{1,1}$}
\end{picture}
\end{center}
\end{minipage}
\end{center}
\captionof{figure}{Example of  $\dcl^*(\{a_1,a_2\})\ne \emptyset$}\label{fig:steinerce} %\label{Example1}
\end{figure}

We defined linear spaces and the appropriate $\delta$ for studying
 them in Definition~\ref{twodelta}.
In \cite[Lemma 3.7]{BaldwinPao} we showed that this $\delta$ is flat, submodular, and computes
 exactly on free products defined as in Definition~\ref{defcanam}. Thus, the notion of decomposition
 and the arguments for  the basic properties of the standard Hrushovski construction
 in earlier sections go through below with minor changes.
 However, Lemma~\ref{alphpetals} shows
 some significant differences in the resulting decomposition.  This finer analysis of the
 decomposition, which is the chief novelty of this section,
 powers the understanding of definable closure in these Steiner systems.

Recall from Conclusion~\ref{conclude2}:
 for each $3 \leq k < \omega$, there are continuum-many strongly minimal
  infinite linear spaces in the vocabulary $\tau$ that are Steiner $k$-systems.
A crucial invariant for these systems is `line length'. The length of each line in a model of the Steiner system
 is $\mu(\boldsymbol{\alpha}) + 2$ where $\boldsymbol{\alpha}$ is from Notation~\ref{linelength}. However,
 there may be maximal cliques in a substructure $A$ with smaller cardinality.
We refer to such configurations
 as {\em partial lines}; a line of length $\mu(\boldsymbol{\alpha}) + 2$ may be called {\em full} for emphasis.
Following a convention established in \cite{BaldwinPao}, we think of two independent points as
a {\em trivial (and therefore partial) line}.

 The Hrushovski restraint in defining the $\mu$-function:
an integer $\mu(\beta) = \mu(A/B) \geq \delta(B)$
was relaxed in \cite{BaldwinPao} to hold  only when  $|A-B|\geq 2$. To
allow lines of length
three, we required only  $\mu(\boldsymbol{\beta)} \geq 1$,
if $\boldsymbol{\beta} = \boldsymbol{\alpha}$.
Thus for the case when $\mu(\boldsymbol{\alpha})=1$ we got a strongly minimal Steiner system
with lines of length three. Obviously, there is a definable symmetric truly binary function $H$ on
pairs of distinct elements; $H(x,y)$ is the third point on the line determined by $x$ and $y$ and
$H(x,x) =x$. So we restrict here to lines of length at least four.

\begin{assumption} $\mu(\boldsymbol{\alpha})\geq 2$.
\end{assumption}

With longer line length $k$ one can always introduce a $k$-ary {\em partial function}
saying its value on $k-1$
distinct elements is the remaining point on the line. But, now there is no clear
 way to give a uniform definition of
 a $k$-ary {\em function}
on sequences with repetition.  With the following variant on the results
in Section~\ref{main},
we show there is no such truly binary  function in the vocabulary: $\{R\}$.

As noted in Remark~\ref{minmax} the original Hrushovski construction
 supports a happy coincidence.
The minimal subset $B$ of $D$ (the base: Definition~\ref{primchar1})
 such that $A$ is $0$-primitive over $B$ is also
the maximal subset such that
 every element of $B$ is $R$ related to some element of $A$.
But for linear spaces,
the two notions diverge. %When $B = \{b\}$ the unique subset with each element connected to $b$
%contains many first coordinates for a  good pair. The
Allowing for and exploiting  this difference is one of the two  major changes
 from the proof
for the Hrushovski construction in Section~\ref{main}.
We recast \cite[Lemma 4.8]{BaldwinPao} (Lemma~\ref{primchar1}) in case 2) of the next
lemma.

\begin{lemma} \label{primchar}
 Let $D \leq D \cup A \in \bK_0$ be a $0$-primitive extension with $D \cap A =\emptyset$.
 Then there are two cases:
\begin{enumerate}
\item  If $A=  \{ a \}$ there is a unique  line $\ell$ with $\ell\cap D\geq 2$.
 %and $b
%    \in \ell$ if and only if $|(C-B)| = 1$.
    In that case, any $B
   \subseteq (\ell\cap D)$ with $|B'|=2$ yields
    a good pair $(B,a)$. Furthermore, $d \in D$ is in the relation $R$
  with  the  element $a$ if and only if $d$ is on $\ell$.
  %the  unique line
%  $\ell \subseteq A$ that contains $c$.
\item  If $|A| \geq 2$ then there is a unique maximal subset $B$ of
    $D$ with every point $b$ in $B$ incident with a line $\ell_b$ with
    $|\ell_b \cap A| \geq 2$ containing $b$.

 \end{enumerate}
\end{lemma}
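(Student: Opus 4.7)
The approach splits cleanly along the two cases of the statement, with all of the real content concentrated in Case (1); Case (2) is largely a definitional observation.

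For Case (1), I would start from the hypothesis $\delta(a/D) = 0$ and unpack it using the linear-space formula $\delta(X) = |X| - \sum_{\ell}(|\ell|-2)$ from Definition~\ref{twodelta}. Since $|A| = 1$, adding $a$ to $D$ contributes $+1$ to the cardinality term, so the whole game is to compute how the line-sum changes. For each line $\ell$ of the ambient linear space passing through $a$, let $k = |\ell \cap D|$; a case analysis gives:
\begin{itemize}
\item if $k \geq 2$, the line $\ell \cap D$ already contributed $k-2$ to $\delta(D)$, and $\ell \cap (D \cup \{a\})$ contributes $k-1$ to $\delta(D\cup\{a\})$, a net change of $+1$;
\item if $k = 1$, the pair $\{a,d\}$ is a trivial $2$-point line contributing $0$, and there was no contribution at the previous step;
\item if $k = 0$, no contribution either way.
\end{itemize}
Hence $\delta(a/D) = 1 - \#\{\ell \ni a : |\ell \cap D| \geq 2\}$, and the equation $\delta(a/D) = 0$ forces the existence and uniqueness of $\ell$.

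For the good-pair assertion, I would take any $2$-element $B \subseteq \ell \cap D$, say $B = \{b_1, b_2\}$. The same line-sum calculation, together with the defining axiom of a linear space (two points lie on a unique line), shows that $\ell$ is the only line through $a$ meeting $B$ in $\geq 2$ points, so $\delta(a/B) = 1 - 1 = 0$. Thus $B \leq B \cup \{a\}$, and since $\{a\}$ has no nonempty proper subset, $\{a\}$ is $0$-primitive over $B$ in a trivial way; goodness is immediate from $R(b_1, b_2, a)$ (valid because $|\ell| \geq 3$). The final biconditional reads: $R(d,d',a)$ forces $\{d,d',a\}$ to lie on a common line $\ell'$ with $|\ell' \cap D| \geq 2$, whence $\ell' = \ell$ by the uniqueness just proved, so $d \in \ell$; conversely, if $d \in \ell$, the hypothesis $|\ell \cap D| \geq 2$ provides a witness $d' \in (\ell \cap D)\setminus\{d\}$ giving $R(d,d',a)$.

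For Case (2), I would simply set $B^{\ast} := \{b \in D : \exists \ell \ni b,\, |\ell \cap A| \geq 2\}$; this set satisfies the stated property by definition, and if $B' \subseteq D$ also satisfies it, then every $b \in B'$ witnesses $b \in B^{\ast}$, so $B' \subseteq B^{\ast}$. Hence $B^{\ast}$ is the unique maximal such subset, and no further use of the $0$-primitivity hypothesis is needed at this stage. The only real obstacle in the whole proof is the bookkeeping in Case (1): one must handle the three regimes for $|\ell \cap D|$ cleanly, remembering that $2$-element maximal cliques contribute $0$ and therefore silently vanish from the line sum.
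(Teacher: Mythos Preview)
Your proposal is correct. The paper does not actually supply a proof of this lemma in the text: immediately before the statement it notes that case~(2) recasts \cite[Lemma~4.8]{BaldwinPao}, and after the statement it simply moves on to the definition of extended base. Your direct $\delta$-bookkeeping for case~(1) and the tautological union argument for case~(2) are exactly the intended content, and your observation that case~(2) as stated uses nothing about $0$-primitivity is accurate.
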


On the basis of Lemma~\ref{primchar} we add the new notion of {\em extended base}.

\begin{definition}\label{basedef}  Let $A$ be a $0$-primitive extension of $D$ (in $M$),
 where we assume that $D\cap
A = \emptyset$.
If $A =\{a\}$, then
the {\em extended base} for $A$ is the maximal set $\check B =\ell \cap D$
    where $\ell$ is the line through $b_1,b_2$ for any elements $b_1,
    b_2 \in D$ such that $R(b_1,b_2, a)$.
Note that if $A = A^{m+1}_{j,i}$ and $D = \Ascr^m$ the extended base for $A$ is
 $B^{m+1}_j = \{d\in \Ascr^{m }- \Ascr^{m-1} : R(b_1,b_2,d)\}$
for any $b_1,b_2 \in \Ascr^m$ with $R(b_1,b_2,a)$.
\end{definition}

If $A^{m+1}_{j,i}=\{a\}$ is $0$-primitive over $\Ascr$ with extended base $B=B^{m+1}_{j}$,
any two element subset $B_0$ of $B$ can act as a base. If $\ell_{m+j} < \mu(\boldsymbol\alpha)$, the $C^{m+1,q}$
must be mapped into $B-B_0$.

Definition~\ref{decomp2}
will be clarified by  Lemma~\ref{alphpetals} showing that all types of $*$-petals
have been described.
\begin{definition}\label{decomp2}
Let $G\in \{G_I, G_{\{I\}}\}$, and let $\Ascr$ be a $G$-normal set.
Fix a decomposition  of %an intrinsically closed
 $\Ascr$ into strata $\Ascr^m$ constructed inductively as in Construction~\ref{decomp1}.
\begin{enumerate}
\item We say $A= A^{m+1}_{j,1} = \{a\} \in \Ascr - \Ascr^m$ is an {\em $\boldsymbol{\alpha}$-point} if there  exist
$b_1,b_2 \in \Ascr^m$ with $R(b_1,b_2,a)$.

\item A set $A$ is a {\em linear cluster} if $A = \{a\in
\Ascr^{m+1}-\Ascr^{m }:R(b_1,b_2,a)\}$ for some $b_1,b_2 \in \Ascr^m$.
We denote the linear cluster
with extended base $\check B =B^m_f\subseteq \Ascr^{m}$ as $A^{m+1}_f = \bigcup  A^{m+1}_{f,i}$
 where the $A^{m+1}_{f,i}$ are the $\alpha$-petals over $\check B$.
%(See Lemma~\ref{alphpetals}.)

\item A $*$-petal is either an $A^{m+1}_{f,i}$ with cardinality greater than $1$ (called a {\em non-linear petal})
or a linear cluster.

\item We write $\Smoves_m$ if every non-linear petal $A^{m }_{f,k}$ is moved by some $g \in G_I$.

\item Recall that we say $X$ is safe
if $ d(E) \ge 2$ for any $G_{\{I\}}$-invariant set $E\subseteq
  X$ which is not a subset of $\acl(\emptyset)$. The $G_{\{ I\}}$-decomposition $\Ascr^m$ of $\Ascr$
 satisfies $\Ssdim_m$ if every $G_{\{I\}}$-invariant subset of $\Ascr^m$ is safe.
\end{enumerate}
\end{definition}

Now any $\gamma\in G$ that fixes $A$ setwise fixes an extended base set-wise but it
does not need to
fix  a base of an $\alpha$-point even setwise.

\begin{definition}\cite[Lemma 3.14]{BaldwinPao}\label{defcanam} Let $A \cap C =B$
 with $A,B,C \in \bK_0$.
We define $D := A \oplus_{B} C$ as follows:

\begin{enumerate}[(1)]
	\item the domain of $D$ is $A \cup C$;
	\item
a pair of points  $a \in A - B$  and $b \in C - B$ are on a non-trivial
line $\ell'$ in $D$ if and only if there is line $\ell$ based in $B$ such
that $a \in \ell$ (in $A$) and $b \in \ell$ (in $C$). Thus $\ell'=\ell$ (in
$D$).
\end{enumerate}
\end{definition}

\begin{lemma}\label{freeext}\begin{enumerate}
\item If $D \supseteq A \cup {B}\cup C$ where $A$ and $C$ are $0$-primitive over $B$, $B  \leq D$,
 and there is a relation among elements  $a_1 \in A-B$ and $a_2 \in C-B$ then both $|A-B|$ and $|C-B|$ are $1$.
\item \label{partition}
Each $\Ascr^{m}$ is partitioned into $*$-petals
 %of size at least two
 and
 there is no non-trivial line (even through $\Ascr^{m-1}$) connecting distinct $*$-petals.
That is, the $*$-petals are fully independently joined
\end{enumerate}

 \end{lemma}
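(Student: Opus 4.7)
The strategy is to derive both parts from a single $\delta$-computation: any $R$-triple joining $A - B$ and $C - B$ forces the line through the two new points to be supported by $B$, at which point $0$-primitivity of $A/B$ collapses $A$ to a singleton $\alpha$-point.

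For Part 1, suppose $R(a_1,a_2,z)$ holds with $a_1 \in A-B$, $a_2 \in C-B$; by the linear space axiom the unique line $\ell$ through these three points is well-defined in $D$. The first step is to show $|\ell \cap B| \ge 2$. By Definition~\ref{defcanam} the canonical amalgamation $A \oplus_B C$ admits a non-trivial line connecting $A-B$ to $C-B$ only along a line already supported by $B$; since $B \le D$ and both $A$, $C$ are $0$-primitive over $B$, any failure of this condition would force $\delta(B \cup A \cup \{a_2\}) < \delta(B \cup A)$ while simultaneously $B \le B \cup A \cup \{a_2\}$ (via $B \le D$ and submodularity), a contradiction. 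Given $b_1, b_2 \in \ell \cap B$, I then compute $\delta(\{a_1\}/B)$ using Definition~\ref{twodelta}: $a_1$ contributes $+1$ for the new element and $-1$ for extending the existing $B$-supported line $\ell$ by one point, so $\delta(\{a_1\}/B) \le 0$. But $B \le B \cup A$ gives $\delta(\{a_1\}/B) \ge 0$, hence equality. Moreover $B \le B \cup \{a_1\} \le B \cup A$, since equality of $\delta(B)$ and $\delta(B \cup \{a_1\})$ is preserved upward inside $B\cup A$. This contradicts the $0$-primitivity of $A/B$ unless $A = \{a_1\}$, giving $|A-B| = 1$. A symmetric argument gives $|C-B|=1$.

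For Part 2, the partition of $\Ascr^m - \Ascr^{m-1}$ into $*$-petals is built into Construction~\ref{decomp1}, refined by Definition~\ref{decomp2}: $\alpha$-petals sharing a common extended base are bundled into one linear cluster, while each non-linear petal is its own $*$-petal. Suppose two distinct $*$-petals $P_1, P_2 \subseteq \Ascr^m - \Ascr^{m-1}$ were joined by a non-trivial line meeting $p_1 \in P_1$, $p_2 \in P_2$. Apply Part 1 with $B = \Ascr^{m-1}$ (strong in $M$ by the inductive construction), taking $A$ to be $P_1$ itself if non-linear or the single $\alpha$-extension $\{p_1\}$ if $P_1$ is a linear cluster, and similarly $C$ for $P_2$: Part 1 forces $|A-B| = |C-B| = 1$. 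If either $P_i$ was non-linear this contradicts non-linearity; otherwise both are linear clusters and $p_1,p_2$ are $\alpha$-points on the same line through two points of $\Ascr^{m-1}$, so by Definition~\ref{decomp2}.2 they lie in the same linear cluster, contradicting $P_1 \ne P_2$.

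The main obstacle is the opening claim of Part~1, that $\ell$ must be based in $B$. In the Hrushovski hypergraph case this is automatic because $\delta$ just counts relations and any bridging relation directly contributes an extra subtracted unit (cf.\ Remark~\ref{find}). For linear spaces the accounting is subtler: a single line can combine contributions from $A$, $B$, and $C$ into one term of the sum in Definition~\ref{twodelta}, so one must work not with individual $R$-triples but with the full restriction $\ell \cap (B\cup A \cup C)$ and verify that no accounting escape remains -- effectively that Definition~\ref{defcanam} exhausts the amalgamations compatible with $B \le D$ and with $0$-primitivity of both factors.
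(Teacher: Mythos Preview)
Your overall strategy matches the paper's: show the connecting line $\ell$ must be based in $B$, then collapse $A$ to a singleton by $0$-primitivity. Step~2 of Part~1 and your Part~2 are fine.

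The gap is in your Step~1 of Part~1. You claim that if $|\ell \cap B| < 2$ then $\delta(B \cup A \cup \{a_2\}) < \delta(B \cup A)$. This fails. Take the case $|\ell \cap B| = 1$, $|\ell \cap A| = 1$ (just $a_1$), and $a_2$ lies on no other line based in $BA$. Then $\ell \cap BA = \{a_1, b\}$ has two points, so adding $a_2$ extends exactly one line and $\delta(\{a_2\}/BA) = 1 - 1 = 0$, not $< 0$. Your invocation of the canonical amalgamation does not help: $\delta(BAC) = \delta(A \oplus_B C)$ does not force the two structures to agree, only their $\delta$-values.

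The fix is to use all of $C$, not just $\{a_2\}$. From $B \leq D$ and $\delta(A/B) = \delta(C/B) = 0$ you get $\delta(BA) = \delta(BC) = \delta(B)$ and hence $BA \leq D$; therefore $\delta(C/BA) \geq 0$. But submodularity gives $\delta(C/BA) \leq \delta(C/B) = 0$, so $\delta(C/BA) = 0$, i.e., $A$ and $C$ are $\delta$-independent over $B$. Now do the line-by-line accounting in Definition~\ref{twodelta}: a line $\ell$ with $p = |\ell \cap A| \geq 1$, $q = |\ell \cap C| \geq 1$, $r = |\ell \cap B| \leq 1$, and $p+q+r \geq 3$ contributes strictly more to $\sum_{L(BAC)}(|\ell|-2)$ than to $\sum_{L(BA)} + \sum_{L(BC)} - \sum_{L(B)}$, forcing $\delta(C/BA) < 0$ --- contradiction. (This is what the paper's terse line ``$\delta(A/BC) < \delta(A/C)$'' is gesturing at, though the target of that inequality appears garbled.) So $r \geq 2$ and you proceed to Step~2.

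One further wrinkle you glide over in Part~2: if the third point $z$ of $R(p_1,p_2,z)$ lies in a third petal $P_3$, your direct application of Part~1 to $P_1, P_2$ sees only a trivial two-point line inside $\Ascr^{m-1} \cup P_1 \cup P_2$. The clean fix is to run the $\delta$-independence argument for \emph{all} petals simultaneously over $\Ascr^{m-1}$; any line hitting two or more petals with $r < 2$ breaks the equality $\delta(\Ascr^m) = \delta(\Ascr^{m-1})$, so again $r \geq 2$ and all such points are $\alpha$-points in one linear cluster.
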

\begin{proof} 1) If $R(a_1,a_2,b)$ then $\delta(A/BC) <\delta(A/C)$, unless there is
  a line $\ell\subseteq D$ with  $|\ell \cap B| \geq 2$
that contains both $a_i$. But each $a_i$ is then the only element
of a $0$-primitive extension of $B$.
2) Thus the collection of  $*$-petals (i.e. non-linear petals and the linear clusters $A^{m+1}_f$
of $\alpha$-points) are fully freely joined as a partition of
$\Ascr^{m+1}-\Ascr^m$.
\end{proof}

\begin{lemma}\label{fulllines} Fix a $G$-normal $\Ascr$ and a decomposition of
 height at least $3$, where $G\in \{ G_I, G_{\{ I\}}\}$.
Every non-trivial
partial line $\ell$ in $\Ascr$ is either contained (except for at most one point)
 in a single petal of the topmost strata $\Ascr^{m_0}$ or extends to a full line that intersects
  at most three strata.
\end{lemma}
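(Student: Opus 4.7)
\medskip
\noindent\textbf{Proof plan for Lemma~\ref{fulllines}.}
The plan is to analyze a non-trivial partial line $\ell \subseteq \Ascr$ stratum by stratum using (i) the full free joining of $*$-petals of $\Ascr^{m+1}$ over $\Ascr^m$ (Lemma~\ref{freeext}.\ref{partition}), (ii) the description of lines across the free amalgam (Definition~\ref{defcanam}), and (iii) the characterization of bases of $0$-primitive extensions in linear spaces (Lemma~\ref{primchar}). The linear-space axiom -- any two distinct points determine a unique line -- is the structural rigidity that makes the analysis finite: once any two points of $\ell$ are fixed, the whole line is determined, and so the question reduces to locating those two points in the decomposition.

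First I would let $m+1 \leq m_0$ be the largest index with $\ell \cap (\Ascr^{m+1} - \Ascr^m) \neq \emptyset$ and write $T = \ell \cap (\Ascr^{m+1}-\Ascr^m)$. Suppose all points of $T$ lie in a single non-linear $*$-petal $A$ with $|A|\geq 2$. By Lemma~\ref{primchar}.2, the base $B$ of $A$ is uniquely determined, and any two points of $\ell \cap A$ force $\ell \cap B$ to contain at least two points of the base. If some $q \in (\ell \cap \Ascr) - A$ lay in a different $*$-petal at the same or a lower stratum, the uniqueness of lines through two points together with the full freeness of $*$-petals would force that other petal's intersection with $\ell$ to appear in $\Ascr^{m+1}-\Ascr^m$, contradicting our case assumption. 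Hence at most one such exceptional point $q$ exists, and $\ell \cap \Ascr \subseteq A \cup \{q\}$; if $m+1 = m_0$ we get option (a), otherwise the full line in $M$ meets only the strata of $A$, of its base $B$, and possibly of $q$ -- at most three strata, option (b).

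Now suppose instead that either $|T| \geq 2$ with $T$ spanning two or more $*$-petals of $\Ascr^{m+1}-\Ascr^m$, or $T$ sits inside a linear cluster. By Definition~\ref{defcanam} every non-trivial line connecting distinct $*$-petals is supported on the common substructure, so $|\ell \cap \Ascr^m| \geq 2$ and the points of $T$ on $\ell$ are then precisely $\alpha$-points whose extended base (in the sense of Definition~\ref{basedef}) is contained in $\ell \cap \Ascr^m$. Thus the $\Ascr^{m+1}$-portion of $\ell$ coincides with the $\alpha$-points of a single linear cluster $A^{m+1}_f$ determined by $\ell$, and $\ell$'s contribution from $\Ascr$ decomposes as $(\ell \cap \Ascr^m) \cup A^{m+1}_f$. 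Iterating this observation inside $\Ascr^m$: once $|\ell \cap \Ascr^s|\geq 2$, any further $\alpha$-points of $\ell$ in $\Ascr^{s+1}-\Ascr^s$ have their extended base lying in $\Ascr^s - \Ascr^{s-1}$ by the formula in Definition~\ref{basedef}, so the next batch of $\alpha$-points of $\ell$ can only appear at the single stratum $\Ascr^{s+1}$. This pins $\ell$ down to the base stratum $\Ascr^s$, the stratum $\Ascr^{s+1}$ of the $\alpha$-points, and at most one further stratum where an exceptional point of $\ell$ or its topmost extension lives -- giving the three-strata bound of option (b).

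The main obstacle will be carefully justifying the uniqueness step: that a full line $\ell$ in $M$ cannot acquire $\alpha$-points at three or more distinct strata above the one where $|\ell \cap \Ascr^s|$ first becomes $\geq 2$. The decisive ingredient is the definition of the extended base $B^{m+1}_j$ as a subset of $\Ascr^m - \Ascr^{m-1}$ (Definition~\ref{basedef}), which prevents a linear cluster above stratum $s$ from being based on points introduced earlier than $\Ascr^{s-1}$; together with Lemma~\ref{primchar}.1 this forces new $\alpha$-points of $\ell$ to appear at exactly one stratum above the base, and not spread over arbitrarily many. Once this rigidity is established, the three-stratum count follows immediately from the case split above.
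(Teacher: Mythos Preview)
Your approach works from the top down (largest stratum meeting $\ell$) and case-splits on the type of $*$-petal, whereas the paper's proof is a two-line bottom-up argument: take the \emph{least} $m$ with $|\ell\cap\Ascr^m|\ge 2$; by minimality $|\ell\cap\Ascr^{m-1}|\le 1$, and by Lemma~\ref{getmax} the $\boldsymbol{\alpha}$-petals over any two points of $\ell\cap\Ascr^m$ already exhaust the full line, so $\ell$ is complete inside $\Ascr^{m+1}$. That is the whole proof; the case $m=m_0$ gives option~(a). Your plan never invokes Lemma~\ref{getmax}, which is precisely the ingredient that bounds the strata \emph{above}; without it your iteration in Case~2 has no reason to terminate after one step.

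There is also a concrete error in your Case~1. You claim that two points of $\ell\cap A$ force $|\ell\cap B|\ge 2$; this is not what Lemma~\ref{primchar}.2 says and is generally false. What actually holds is the opposite implication: if $|T|\ge 2$ with $T$ inside a \emph{non-linear} petal $A$, then $|\ell\cap\Ascr^m|\le 1$, because two points of $\ell$ in $\Ascr^m$ would make every point of $\ell$ in $\Ascr^{m+1}-\Ascr^m$ an $\boldsymbol{\alpha}$-point lying in a linear cluster (Lemma~\ref{freeext}.\ref{partition}), contradicting $T\subseteq A$. Relatedly, your Case~2 subcase ``$T$ spans two or more $*$-petals'' is vacuous: Lemma~\ref{freeext}.\ref{partition} explicitly rules out a non-trivial line connecting distinct $*$-petals at the same level. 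Finally, citing Definition~\ref{basedef} to argue that extended bases avoid $\Ascr^{m-1}$ is circular here---that containment is exactly what Lemma~\ref{fulllines} is meant to establish.
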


\begin{proof} Let $m$ be least so that  $\ell$ is based in $\Ascr^m$.
If $|\ell|< \mu(\boldsymbol{\alpha})+2$, adding a new
point in $\Ascr^{m+1}$, that is related only to $\ell \cap \Ascr^m$ is
 a $0$-primitive extension giving an $\boldsymbol{\alpha}$-petal $A^{m+1}_{f,i}$.
 By Corollary~\ref{getmax},
$|\ell \cap \Ascr^{m+1}| = \mu(\boldsymbol{\alpha})+2$.
It is possible that one point of $\ell$, but, by choice of $m$, not two, is in $\Ascr^{m-1}$.
That is,  it may be  $|\ell \cap (\Ascr^{m +1} -  \Ascr^{m-1})| = \mu(A^{m+1}_{j,1}/B^{m+1}_j)
  +1$.
 This is the possibility that intersects three strata.  If $m = m_0$, the line may remain partial
but includes at most one point of $\Ascr^{m-1}$.
\end{proof}

\begin{definition}\label{gendef1} We say a petal $A^{m+1}_{j,1}
\  \mathrm{Steiner\mbox{-}determines}$
 a $*$-petal,
if there is a non-linear petal
 $A^m_{i,f}$ or a linear cluster $A^m_i$ which is the unique $*$-petal
 based in $\Ascr^{m-1}$ that intersects
  $B^{m+1}_j-\Ascr^{m-1}$.  (More precisely,
  $\langle A^{m+1}_{j,1},B^{m+1}_j,\Ascr^m\rangle$
  determines $\langle A^m_{i,f},B^m_i,\Ascr^{m-1}\rangle$.)
\end{definition}

\begin{lemma}\label{alphpetals}
Fix a decomposition of a $G$-normal set $\Ascr$, where $G\in \{G_I, G_{\{ I\}}\}$.
Suppose $A = \{a\}$  is an $\alpha$-point of $\Ascr	^{m+1}$ based on $B=\{b_1,b_2\} \subseteq \Ascr^m$ and a subset
of the linear cluster $A^{m+1}_j$.	
Let $\check B$ be the extended base of $a$ in $\Ascr^m$.
 Then,
\begin{enumerate}
\item If $m=0$, $\{a\}$ is in a linear cluster $A^1_j$ with
 $|A^1_j| = \mu(\boldsymbol{\alpha})-|I|$.
Since $I$ is independent, this is possible only if $|I|=2$.

 \item Let $m>0$. If a linear cluster satisfies $|A^{m+1}_j| = 1$
then $\check B-\Ascr^{m-1}$ is a subset of
one $*$-petal, say,  $A^m_{f,i}$, which is not a linear cluster.
 So, $A^{m+1}_j$ determines $A^m_{f,i}$ in this case.
\item Let $G=G_I$. Then  $\Smoves_m$ implies each $\alpha$-point
 $\{a\}$ over $\Ascr^m$ %linear petal
 is moved by $G_I$.
\begin{enumerate}
\item %If $A$ is a linear petal
$G_{B}$ acts as the symmetric group
$S_{|A^m_f|}$  on a linear petal $A^m_f$ based on $B=\{b_1,b_2\}$.
Thus, $G_I$  moves such $\alpha$-points.

\item $A$ is a line based on $B \subsetneq A^m_{f,i}$ for some $f,i$. By $\Smoves_m$, $A^m_{f,i}$ is moved and {\em a fortiori}
so is $A$.
%\item So, no $\alpha$-point $e$ is in $\dcl^*(I)$.
\end{enumerate}
\item
Let $A^{m+1}_j$ be a linear cluster
which contains at least two elements (that is, at least two $\alpha$-points)
and which is $G$-invariant.
If $d(\check B)\ge 2$ then $d(A^{m+1}_j)= 2$.
\end{enumerate}
\end{lemma}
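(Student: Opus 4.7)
The plan is to handle the four items separately, combining two structural decomposition facts already in hand: the partition of $\Ascr^0$ into the fully independent pieces $D_i=\acl(a_i)\cap \Ascr$ (Construction~\ref{decomp1}), and the partition of each $\Ascr^m-\Ascr^{m-1}$ into fully independent $*$-petals (Lemma~\ref{freeext}).

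For Part 1, I would first observe that a line $\ell$ producing a new $\alpha$-point at level~$1$ cannot lie in any single $D_k$ (otherwise $\ell\subseteq \acl(a_k)\subseteq \Ascr^0$ and there is no new point), so two of the base points $b_1,b_2$ of $\ell$ must come from distinct $D_i, D_j$. Since $\ell$ is a full line of length $\mu(\boldsymbol{\alpha})+2$, we have $|\check B|+|A^1_j|=\mu(\boldsymbol{\alpha})+2$, so the target equality $|A^1_j|=\mu(\boldsymbol{\alpha})-|I|$ is equivalent to $|\check B|=|I|+2$. Unpacking this forces $|I|$ of the (pairwise independent) points of $I$ to lie on~$\ell$. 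Because a line can carry at most two independent points, the only possibility is $|I|=2$.

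For Part 2, under the assumption $|A^{m+1}_j|=1$ the line $\ell$ containing~$a$ already has $\mu(\boldsymbol{\alpha})+1$ of its points in $\Ascr^m$, so the relevant question is where the elements of $\check B-\Ascr^{m-1}$ sit. By Lemma~\ref{freeext}.1, distinct $*$-petals of $\Ascr^m-\Ascr^{m-1}$ are fully independent over $\Ascr^{m-1}$, so if $\check B-\Ascr^{m-1}$ met two different petals $P_1,P_2$, the relation $R(b_1,b_2,a)$ (with $b_i\in P_i$) would be incompatible with $0$-primitivity of $\{a\}$ over $\Ascr^m$ unless both petals are singleton linear-cluster petals of the same line, which is excluded because such points would already have been pooled into a single linear cluster at an earlier stratum by Construction~\ref{decomp1}. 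This pins $\check B-\Ascr^{m-1}$ inside a single petal, which cannot itself be a linear cluster (a linear-cluster petal is defined to consist of $\alpha$-points whose base lies below it, so its interior is not related to~$a$). The main obstacle of the whole lemma lies here: carefully ruling out the mixed configurations where one of the petals meeting $\check B-\Ascr^{m-1}$ is linear and the other is not.

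For Part 3, I split on the position of $B=\{b_1,b_2\}$. In case~(a), where $a$ belongs to a linear cluster $A^m_f$ based on $B\subseteq \Ascr^{m-1}$, I would use the genericity of~$M$ (Conclusion~\ref{conclude}) plus the symmetry of the diagram of a line in its non-base points to show that every permutation of $A^m_f$ is realized by an automorphism fixing $B$ pointwise, giving $G_B\cong S_{|A^m_f|}$ and in particular moving~$a$. In case~(b), where $B\subsetneq A^m_{f,i}$ for a non-linear petal $A^m_{f,i}$, the hypothesis $\Smoves_m$ produces $g\in G_I$ with $g(A^m_{f,i})\ne A^m_{f,i}$, hence $g(B)\ne B$; since $a$ is determined by the line through $B$ (Lemma~\ref{primchar}.1), $g(a)\ne a$.

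For Part 4, all of $A^{m+1}_j\cup \check B$ lies on one line $\ell$, so any two distinct points on $\ell$ determine the rest. Picking $a,a'\in A^{m+1}_j$ with $a\ne a'$ (using $|A^{m+1}_j|\ge 2$), one gets $\check B\subseteq \acl(\{a,a'\})$ and $A^{m+1}_j\subseteq \acl(\{a,a'\})$. Hence $d(A^{m+1}_j)\le d(\{a,a'\})\le 2$ and $d(A^{m+1}_j)\ge d(\check B)\ge 2$, giving $d(A^{m+1}_j)=2$.
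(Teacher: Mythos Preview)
Your treatment of Parts 3 and 4 is essentially correct and matches the paper's approach; in Part 4 the paper phrases things via $\icl(A^{m+1}_j)\subseteq A^{m+1}_j\cup\check B$ and $\delta(\check B)=2$, but your $\acl$-argument gives the same bounds.

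There are genuine gaps in Parts 1 and 2.

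\textbf{Part 1.} Your chain of reasoning runs backwards. You assume the target equality $|A^1_j|=\mu(\boldsymbol{\alpha})-|I|$, deduce $|\check B|=|I|+2$, and then assert that this ``forces $|I|$ of the points of $I$ to lie on $\ell$.'' That last step is not justified and is in fact false: nothing in the setup forces any element of $I$ onto $\ell$ (the base points $b_1,b_2$ need only lie in distinct $D_i$'s, not in $I$ itself). The paper's argument is direct and does not go through $I\subseteq\ell$: since the $D_i$ are fully independent over $D_0$, \emph{no} three points of $\Ascr^0$ are in relation $R$, so $|\check B|=|\ell\cap\Ascr^0|=2$; the cluster size is then read off from the line length, and the formula in the statement holds exactly when $|I|=2$.

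\textbf{Part 2.} You invoke full independence of $*$-petals, which is the right tool, but you never establish that $\check B-\Ascr^{m-1}$ contains at least two points. Without this the full-independence argument has no bite (a single point lies in one petal trivially, and you cannot rule out $\check B\subseteq\Ascr^{m-1}$). The paper gets this count from two ingredients you omit: first, $|\check B|=\mu(\boldsymbol{\alpha})+2-|A^{m+1}_j|\ge 3$ since $|A^{m+1}_j|=1$ and $\mu(\boldsymbol{\alpha})\ge 2$; second, Lemma~\ref{fulllines} gives $|\check B\cap\Ascr^{m-1}|\le 1$. Together these force at least two points of $\check B$ into $\Ascr^m-\Ascr^{m-1}$, and then full independence pins them in a single $*$-petal. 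Your exclusion of the linear-cluster case is also too vague; the paper's reason is concrete: a linear cluster at level $m$ has its extended base (at least two points) in $\Ascr^{m-1}$, but $\ell$ meets $\Ascr^{m-1}$ in at most one point.
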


\begin{proof}
1) Suppose $m = 0$. We have $R(b_1,b_2,a)$; $b_1,b_2$ are algebraically independent; else $a \in \Ascr^0$. Moreover
the definition of $\Ascr^0$ decrees $\neg R(b_1,b_2,b_3)$ for any 3 distinct
  $b_i \in \Ascr^0$.
By Corollary~\ref{getmax},
$\chi_M(\{a\}/\{b_1,b_2\}) = \mu(\boldsymbol{\alpha} )$
  yielding a linear cluster of cardinality $\mu(\boldsymbol{\alpha})-2$.

2) By Lemma~\ref{getmax} the line $\ell$ passing through $a, b_1, b_2$ is full and is equal
to $\check B \cup A^{m+1}_j$.
Then $|\check B| = \mu(\boldsymbol{\alpha}) +2 - |A^{m+1}_j| \ge 3$,
because $\mu(\boldsymbol{\alpha}) \ge 2$ and  $|A^{m+1}_j| = 1$.
By Lemma~\ref{fulllines} $|\check B\cap \Ascr^{m-1}| \le 1$,
so at least two elements of $\check B$ are in $\Ascr^m-\Ascr^{m-1}$.
If these two elements belong to different $*$-petals, then these
$*$-petals are not free over $\Ascr^{m-1}$, for a contradiction. (If there is a point
on the line and in $\Ascr^{m-1}$ or if there are three points in different petals,
the petals are dependent over $\Ascr^{m-1}$.)
Note that $\ell \cap (\Ascr^m-\Ascr^{m-1})$  is not a linear cluster
 because $|\check B\cap\Ascr^{m-1}|\le 1$,
while a base for a linear cluster contains at least 2 elements.

3) Any $\alpha$-point  $e$ is either on a linear petal with size $\geq 2$
% which is strong in $M$
 or $\icl(G_I(e))$ intersects  two distinct $*$-petals:

3a)  {\em $|A^{m+1}_j| > 1$ and is a linear cluster:} Then for $k\leq |A^{m+1}_j|$,
 all $k$-sequences from
 $A^{m+1}_j$ realize the same quantifier-free type over $\check B$
(and so over $\Ascr^m$ since $R(A^{m+1}_j,\Ascr^m) = R(A^{m+1}_j,\check B)$,
%R(A^{m+1}_j,B^{m+1}_j)$)
 so they are automorphic over $\Ascr^m$  in $\Ascr $ since $\Ascr^m A^{m+1}_j\leq \Ascr$.

3b) {\em $|A^{m+1}_j| = 1$:} Then $B^{m+1}_j \subseteq A^m_{f,i}$
which is a non-linear petal and so
$\Smoves_m$ implies $A = A^{m+1}_{j,1}$  is moved by $G_I$.

4)
Since $\check B$  is a partial line, $\delta(\check B) = 2$.
So, $\check B\le M$ because by the hypothesis $d(\check B)\ge 2$.
Then $\icl(A^{m+1}_j) \subseteq A^{m+1}_j \cup \check B$,
because $A^{m+1}_j$ is a $0$-primitive extension of $\check B$ and $\check B \le M$.
Since $|\icl(A^{m+1}_j)|\ge |A^{m+1}_j|\ge 2$
and $\icl(A^{m+1}_j)$ is contained in the line $A^{m+1}_j \cup \check B$,
 $d(A^{m+1}_j) = \delta (\icl(A^{m+1}_j)) =2$.	
\end{proof}

\begin{remark}\label{getfind} {\rm Note that there are $R$-relations  within
 a linear cluster; it lies on one line.
And at least one linear cluster is $G_I$-invariant, the line through $I=\{a,b\}$; others are easy to find.
 But Lemma~\ref{alphpetals} shows
no $\boldsymbol{\alpha}$-point is in $\dcl^*(I)$.  There are partial lines of various lengths in the $\Ascr^{m+1} - \Ascr^m$ that are not linear clusters.
 But each   is within a single non-linear petal (Lemma~\ref{alphpetals}).
(This depends essentially on the decomposition of the ambient $G$-normal $\Ascr$;
every pair of points is contained in a nontrivial line in $M$, but perhaps not in $\Ascr$.)}
\end{remark}

Lemma~\ref{alphpetals}.3a yields immediately the answer to the motivating  Problem~\ref{defqg}.1.
Recall a quasigroup satisfies for all $x$ and $y$, there exist unique $l$ and $r$
such that $lx=y$ and $xr=y$ (the multiplication table is a Latin square).
\cite{GanWer} show that if Steiner system has line-length $k$,
where $k$ is a prime-power,
then it is possible to impose a binary function $*$ on the universe such that:

($\#$) $a,b$, $a*b$ is on the line through $a,b$ and  $*$ is a quasigroup
 such that the restriction of $*$ to each line is generated
by any two elements of the line.

However, this function cannot be definable (without parameters)
in a strongly minimal structure $(M,R)$ studied here.  It suffices to find one line on which
the function is not defined. This is straight forward since any finite configuration is
strongly embedded in $M$. In detail,

\begin{theorem}\label{noqg}
No quasigroup $*$  restricted to each line and satisfying ($\#$) is definable in a
strongly minimal   Steiner system
 from \cite{BaldwinPao} with line length at least four.
\end{theorem}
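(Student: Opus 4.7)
The plan is to apply Lemma~\ref{alphpetals}.3a to the unique line through an independent pair, combined with the Latin-square cancellation property built into $(\#)$.

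First I would fix an independent pair $I=\{a_1,a_2\}$ with $I\leq M$ and consider the unique line $\ell$ through $a_1$ and $a_2$ in $M$. By Notation~\ref{linelength} and the standing assumption $\mu(\boldsymbol{\alpha})\geq 2$, the line $\ell$ has length $\mu(\boldsymbol{\alpha})+2\geq 4$. Take a $G_I$-normal $\Ascr$ containing the full line $\ell$; by Lemma~\ref{alphpetals}.1 the $\mu(\boldsymbol{\alpha})$ remaining points of $\ell$ form a single linear cluster $A^1_j\subseteq \Ascr^1-\Ascr^0$ with extended base $I$. The key input is Lemma~\ref{alphpetals}.3a: $G_I$ acts as the full symmetric group $S_{|A^1_j|}$ on $A^1_j$, so any point of $\ell\setminus I$ has $G_I$-orbit of size $\mu(\boldsymbol{\alpha})\geq 2$.

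Next, suppose for contradiction that a $\emptyset$-definable $*$ on $M$ satisfies $(\#)$, and set $c:=a_1*a_2$. By $(\#)$, $c\in\ell=I\cup A^1_j$. Since $*$ is $\emptyset$-definable and hence preserved by every $g\in G_I$, we get $g(c)=g(a_1)*g(a_2)=a_1*a_2=c$ for all $g\in G_I$. The transitivity provided by Lemma~\ref{alphpetals}.3a forces $c\notin A^1_j$, whence $c\in\{a_1,a_2\}$.

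Finally I would invoke the quasigroup axiom restricted to $\ell$: the equation $a_1*y=a_1$ has a unique solution $y$ on $\ell$, so $c=a_1$ would yield $a_2\in\dcl(a_1)$, and symmetrically $c=a_2$ would yield $a_1\in\dcl(a_2)$; either outcome contradicts the independence of $I$. This completes the proof. There is no real obstacle here beyond Lemma~\ref{alphpetals}.3a itself: once one knows that $G_I$ acts as the full symmetric group on the $\mu(\boldsymbol{\alpha})$ "new" points of the line through $I$, the definability of $*$ pins $a_1*a_2$ to $I$, and the Latin-square condition immediately collapses independence.
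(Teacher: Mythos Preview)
Your proof is correct and follows essentially the same route as the paper: both pivot on Lemma~\ref{alphpetals}.3a to see that $G_I$ acts as the full symmetric group on the linear cluster $\ell\setminus I$, so $a_1*a_2$ must be $G_I$-fixed. Your finish is in fact slightly cleaner than the paper's, since you explicitly dispose of the residual case $a_1*a_2\in\{a_1,a_2\}$ via cancellation, whereas the paper's wrap-up (moving $a_i$ to $a_k$ by some $g\in G_I$) tacitly assumes $a_i\notin I$.
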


\begin{proof} Take any independent pair $I =\{a_1,a_2\}$ contained in some $\Ascr^m$ and
suppose they generate the
line $A =\{a_1,a_2, \ldots a_k\}$. Then $A-I\subseteq \Ascr^{m+1}-
\Ascr^m$ is a linear cluster and by Lemma~\ref{alphpetals}.3a,
$G_I$ induces the symmetric group on $A-I$.

Suppose $a_1*a_2 = a_i$ and $a_2*a_1 = a_j$. Choose an element $a_k$ of $A$ distinct from  $a_i$.
There is a $g\in G_I$ with $g(a_i) =a_k$. The definition of a quasigroup is contradicted
unless $a_i =a_j =a_k$; in that event replace $a_k$ with an $a_{k'}$ distinct
 from all $a$'s previously considered; this is easy since $|A|\geq 4$.
\end{proof}

While this solution to Motivating Problem~\ref{defqg}.1 invokes the decomposition,
a more direct argument yields that result in \cite{BaldwinsmssII}.  However, we now show
the much stronger consequence of the decomposition asked for in Problem~\ref{defqg}.2, no
truly $n$-ary function.
For smoother reading, we mention results from Sections~\ref{decomp} and \ref{main}
that go through without any changes and pay attention to those results which requires
some adaptations.

Lemmas~\ref{cl0} and \ref{sum} work for Steiner  systems.
Lemma~\ref{alphpetals}.(2) and (3).(b) yield a stronger version Lemma~\ref{omni-G}.(1):
If $|A^{m+1}_{i,j}| =1$
is $G_I$-invariant then $A^{m+1}_{i,k}$ determines   a $G_I$-invariant
non-linear petal $A^m_{f,i}$.
Multiple realizations of $\boldsymbol{\alpha}$
in $\Ascr^{m+1}-\Ascr^{m}$ represent distinct petals but only one
$*$-petal (linear cluster).  We incorporate the role of  Lemma~\ref{omni-G}.(2)
in proving Lemma~\ref{Am-not-1-G} into  the proof of Lemma~\ref{nonlin}.

Comparing the argument for
Lemma~\ref{omni-G} with Figure~\ref{fig:steiner_line} explains the main
differences between Lemma~\ref{omni-G} for Hrushovski's examples and
Lemma~\ref{nonlin} for Steiner systems.
In Hrushovski's examples we obtain that $b_2$ is in two relations
$R(b_2, c_1^0, c_3^0)$ and $R(b_2, c_1^1, c_3^1)$
with $\Ascr^{m-1}$,
which contradicts $\Ascr^{m-1}\le M$. But, in Steiner systems we have just one line $\ell$,
which contains points from different copies $C^0$ and $C^1$ of $A^{m+1}_{j,1}$.

\begin{figure}[h]%\label{example}%[H]
\begin{center}
\begin{picture}(270,170)
%\put(58, 6){$a_1$} \put(148, 6){$a_2$} \put(60, 15){\circle*{3}} \put(150, 15){\circle*{3}}
\put(30,0){\dashbox{1}(240,70)[c]{~}} \put(0,43){$\Ascr^{m-1}$}
\put(50,20){\dashbox{1}(60,40)[c]{~}}\put(190,20){\dashbox{1}(60,40)[c]{~}}
\put(50, 10){$C^0$}\put(239, 10){$C^1$}
\put(30,70){\dashbox{1}(240,40)[c]{~}} \put(0,88){$\Ascr^{m}$} \put(100,88){$A^{m}_{i,1}$}
\put(120,75){\dashbox{1}(60,30)[c]{~}}
\put(30,110){\dashbox{1}(240,60)[c]{~}} \put(0,138){$\Ascr^{m+1}$}
% elemts of the base
\put(150, 90){\circle*{3}}\put(147, 80){$b_2$}
\put(110, 10){\circle*{3}}\put(96, 8){$b_1$}
\put(190, 10){\circle*{3}}\put(196, 8){$b_3$}
% lines
\qbezier(110,10)(95,30)(80,50)
\qbezier(190,10)(205,30)(220,50)
\qbezier(190,10)(125,20)(60,30)
\qbezier(110,10)(175,20)(240,30)
\qbezier(60,30)(117,90)(150,90)\qbezier(240,30)(183,90)(150,90)
\put(99.5, 24){\circle*{3}}\put(99, 27){$c^0_2$}
\put(60, 30){\circle*{3}}\put(52, 35){$c^0_1$}
\put(80, 50){\circle*{3}}\put(68, 50){$c^0_3$}
\put(200.5, 24){\circle*{3}}\put(191, 27){$c^1_2$}
\put(240, 30){\circle*{3}}\put(238, 35){$c^1_3$}
\put(220, 50){\circle*{3}}\put(222, 50){$c^1_1$}
\qbezier(110,10)(135,85)(160,160)\put(160, 160){\circle*{3}}
\qbezier(190,10)(165,85)(140,160)\put(140, 160){\circle*{3}}\put(150, 130){\circle*{3}}
\qbezier(140,160)(240,165)(150,90)
\put(128,115){\dashbox{1}(44,50)[c]{~}}\put(100,132){$A^{m+1}_{j,1}$}
%
%\put(33, 155){$B^{m+1}_j = \{ b_1, b_2, b_3\}$}
\end{picture}
\end{center}
\captionof{figure}{Example with one line and two $C^d$'s}\label{fig:steiner_line} %\label{Example1}
\end{figure}

\begin{lemma}\label{nonlin}
Fix a decomposition of a $G$-normal set $\Ascr$, where $G\in \{G_I, G_{\{ I\}}\}$.
Suppose $B= B^{m+1}_j$ is the base of a non-linear petal $A^{m+1}_{j,1}$ which is
$G$-invariant
and $\ell^{m+1}_j +1 < \mu^{m+1}_j$.
\begin{enumerate}
\item Let $G=G_{I}$ and $\Sdim_m$ hold; or
\item	let $G=G_{\{ I \}}$ and $\Ssdim_m$ hold.
\end{enumerate}	
It is impossible that $B$ has a non-empty intersection with a linear cluster $A^m_f$.
\end{lemma}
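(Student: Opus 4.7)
The plan is a contradiction argument. Suppose $b \in B \cap A^m_f$, where $A^m_f$ is a linear cluster based on a line $\ell = B^m_f \cup A^m_f \subseteq \Ascr^m$ of length $\mu(\boldsymbol{\alpha})+2$ (linear cluster lines are complete in $\Ascr^m$ by Lemma~\ref{getmax}). The first step is to establish $G$-invariance of $B$: since $A^{m+1}_{j,1}$ is a non-linear petal, Lemma~\ref{primchar}.(2) characterizes $B$ uniquely as the maximal subset of $\Ascr^m$ each of whose points lies on a line with at least two points of $A^{m+1}_{j,1}$, so $G$-invariance of $A^{m+1}_{j,1}$ and $\Ascr^m$ transmits to $B$.

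The heart of the argument is a $\delta$-computation parallel to Lemma~\ref{long}.A, adapted to the Steiner $\delta$ which counts lines. By Lemma~\ref{primchar1} fix $a_1, a_2 \in A^{m+1}_{j,1}$ with $R(b, a_1, a_2)$; by the hypothesis $\ell^{m+1}_j + 1 < \mu^{m+1}_j$ together with Lemma~\ref{getmax} (and Lemma~\ref{sum}), obtain $\nu := \nu^{m+1}_j \geq 2$ copies $C^1, \ldots, C^\nu \subseteq \Ascr^m$ of $A^{m+1}_{j,1}$ over $B$, pairwise disjoint over $B$, each giving a triple $R(b, c^d_1, c^d_2)$ with $c^d_i \in C^d$ that lies on a line $\ell_d \subseteq \Ascr^m$. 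The Steiner axiom that two points determine a unique line tightly constrains whether $\ell_d$ can coincide with $\ell$, and I would split on this dichotomy.

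If every $\ell_d$ equals $\ell$, then each pair $\{c^d_1, c^d_2\}$ sits inside the bounded line $\ell$; by disjointness of the $C^d$'s over $B$, the $2\nu \geq 4$ points $c^d_i$ are all distinct, forcing $|\ell|\geq 2\nu + 3$ together with $b$ and the two base points of $B^m_f$. A finer analysis using Lemma~\ref{primchar1} (each point of $C^d$ lies on at least two $R$-relations, inherited from $A^{m+1}_{j,1}$) then piles further collinear structure onto $\ell$, eventually exceeding $\mu(\boldsymbol{\alpha})+2$. If instead some $\ell_d \neq \ell$, the lines $\ell$ and $\ell_d$ contribute separately to the Steiner $\delta$ and the Hrushovski estimate of Lemma~\ref{long}.A goes through for the affected copies; combined with the extra contribution of $\ell$ visible through $b$, the resulting pressure on $\delta(B_+)$ or on $\delta(B_-/B_+)$ contradicts the safeness of $B$ guaranteed by $\Sdim_m$ (in the $G_I$ case) or $\Ssdim_m$ (in the $G_{\{I\}}$ case).

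The principal obstacle is the Steiner-specific absorption of multiple collinear relations into a single $(|\ell|-2)$ contribution to $\delta$, which is exactly what makes the direct transcription of Lemma~\ref{small-G}.(2) fail: the relation $R(b, x_1, x_2)$ with $x_1, x_2 \in B^m_f$, forbidden in the Hrushovski framework, is precisely the configuration that a linear cluster permits. The strength of the hypothesis $\nu \geq 2$ (as opposed to merely $\nu \geq 1$) is essential for the argument: a single copy could absorb all its relations onto the pre-existing line $\ell$ at zero $\delta$-cost, while two copies either saturate $\ell$ with too many collinear points or produce independent line contributions that the Hrushovski-style $\delta$-calculation can exploit.
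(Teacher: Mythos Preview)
Your case split on whether $\ell_d = \ell$ is not the right organizing dichotomy, and in fact the second case never occurs. Since the $*$-petals in $\Ascr^m - \Ascr^{m-1}$ are fully independently joined over $\Ascr^{m-1}$ (Lemma~\ref{freeext}.\ref{partition}) and $b$ lies in the linear cluster $A^m_f$, any $c^d_i \in C^d \subseteq \Ascr^m$ with $R(b,c^d_1,c^d_2)$ is forced into $\Ascr^{m-1}$; but every element of $\Ascr^{m-1}$ related to $b$ lies on $\ell$ (that is what the extended base $B^m_f$ records). So \emph{all} $c^d_i$ are on $\ell$, and your ``some $\ell_d \neq \ell$'' branch is vacuous. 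This means the whole weight falls on your first branch, where the argument is only a sketch: the inequality $|\ell| \geq 2\nu + 3$ (even granting the count) does not contradict $|\ell| = \mu(\boldsymbol{\alpha})+2$, because $\mu(\boldsymbol{\alpha})$ is unconstrained relative to $\nu = \mu^{m+1}_j - \ell^{m+1}_j$. ``Piling further collinear structure'' via Lemma~\ref{primchar1} does not close this gap.

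What the paper actually does is quite different. First it shows that $B \cap (\Ascr^m - \Ascr^{m-1}) \supseteq A^m_f$ and then that $A^m_f$ is a \emph{singleton} $\{b\}$: if $b' \in A^m_f$ were a second point, the pair $(\{b,b'\},c^i_1)$ would realise $\boldsymbol{\alpha}$ with base inside $B$, forcing (via $C^i \cong_B A^{m+1}_{j,1}$) a point of the non-linear petal $A^{m+1}_{j,1}$ to extend a line already based in $B$, contradicting non-linearity. The contradiction then hinges on the $\Sdim_m$/$\Ssdim_m$ hypothesis, which your outline invokes only in passing: if $\{b\}$ is $G$-invariant it is a non-safe $G$-invariant singleton in $\Ascr^m$, an immediate contradiction; if not, let $b=b_0,\ldots,b_{k-1}$ be its $G$-orbit inside $B$, and a $\delta$-computation over $\overline{C} = B \cup \bigcup_q C^q$ (each new $C^q$ merges onto the line $\ell_u$ through $b_u$, dropping $\delta$ by $1$) yields $\delta(B) \leq 2$, hence $B \leq M$, which is incompatible with the non-trivial line incidences among the $C^q$ over $B$. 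Your proposal does not reach either of these two steps.
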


\begin{proof}  Suppose for contradiction
that $(B \cap \ell) -\Ascr^{m-1}\neq \emptyset$, witnessed by $b$ for some $\ell$ such
 that $A^m_f \cap \ell \neq \emptyset$  and $\ell\cap \Ascr^{m-1}= B^m_f = B'$,
 the extended base of $b/\Ascr^{m-1}$.

Step 1: We show $B$ contains a single point  $b$ from
$\Ascr^m-\Ascr^{m-1}$.
By Lemma~\ref{primchar}.2, there exist $x_1,x_2 \in A^{m+1}_{j,1}$
with $R(x_1,x_2,b)$.
Since $\ell^{m+1}_j + 1< \mu^{m+1}_j$,
there are (Figure~\ref{fig:steiner_line}) at least two disjoint embeddings $C^0$  and $C^1$ over $B$ of $A^{m+1}_{j,i}$
  into $\Ascr^m$; % and since $ |A^m_{f,i}|=1$
%and  the $*$-petals in $\Ascr^m$ are fully freely joined,
the image $C^i$ must contain copies $c_1^i$ and $c_2^i$ of $x_1$ and $x_2$,
which satisfy $R(c^i_1 , c^i_2 , b)$ for $i< 2$ and are disjoint from $B$.
Without relying on the inductive hypotheses, the proof of
Lemma~\ref{alphpetals}.3.a shows that if the $G$-invariant $B$ intersects
a linear cluster $A^m_f$, $B \cap (\Ascr^{m}-\Ascr^{m-1})$  contains $A^m_f$.
% Note that $c^i_1, c^i_2\in \ell$. This holds if all
Since the $*$-petals are freely joined, all the $c^i_j$
 are in $\Ascr^{m-1}$. So they must be in $\ell$
since any element in $\Ascr^{m-1}$ related to  $b$ is in $\ell$. %While if one the $c^i_j$ is in
And $b$ is on a line with at least five elements\footnote{Note
that this situation is impossible unless $\mu(\boldsymbol{\alpha})
 \geq 3$.} based in $\Ascr^{m-1}$.

In fact, $A^m_f = (\ell \cap \Ascr^m)-\Ascr^{m-1} \subseteq B$ must be a singleton.
As, if $b'$ is a second point in $A^m_f$,
%$c_1$ is an $\alpha$-point with a base
 $(\{ b, b'\},c^i_1)$ realizes $\boldsymbol{\alpha}$ with
the base contained in $B$. But this is a
 contradiction, because $A^{m+1}_{j,1}$ is a non-linear petal based on $B$
and $C^i$ is isomorphic to $A^{m+1}_{j,1}$ over $B$.

Step 2: Having shown $B$ contains a single point  $b$ from
$\Ascr^m-\Ascr^{m-1}$ there are two cases. In the first case suppose this $b$
 and so its extended base $B'$ are $G$-invariant.
Since $\{ b\}$ is $G$-invariant but not safe, this contradicts
$\Sdim_m$ or $\Ssdim_m$ depending on $G=G_i$ or $G=G_{\{I\}}$.

We are left with the case that  $A^m_f$ is a singleton but not $G$-invariant, i.e.
there exists $g\in G$ such that $g(A^m_f) \ne A^m_f$, but $|A^m_f| =1$.
Let $b=b_0, b_1, \ldots b_{k-1}$ enumerate the orbit of $b$ under $G$.
Then, for $u<k$ there is a
$g_u \in G$ satisfying  $R(g_u(x_1), g_u(x_2), g_u(b))$  and $g_u(x_1),
g_u(x_2)\in A^{m+1}_{j,1}$, $g_u(b)\in B$
because both $A^{m+1}_{j,1}$ and $B$ are $G$-invariant.
Let $\langle C^i: i<\nu = \mu^{m+1}_j-1\rangle$ enumerate the copies
of $A^{m+1}_{j,1}$ in $\Ascr^m$.
Now, as in step 1, for each $b_u,C^i$, there are elements
$d^{i,u}_1, d^{i,u}_2 \in C^i \cap \Ascr^{m-1}$ satisfying $R(d^{i,u}_1, d^{i,u}_2,b_u)$.
Again as in step 1, all the $C^i \subseteq \Ascr^{m-1}$ and for each $u$
 all the $d^{i,v}_w$ for $w<2, i < \nu$ are on the same line.
Now we consider the substructure
 $\overline{C} =B \cup \bigcup_{v<\nu}C^v$.
 If $C^1$ and $C^2$ are freely joined over $B$, $\delta(C^1 \cup C^2/B) =0$.
For each fixed $b_u$ we have one new line $\ell_u$ with at least five points on it and the nullity of
$\ell \cap (C^1 \cup C^2)$ is $4-2 =2$. As no points are added this reduces $\delta(C^1 \cup C^2/B)$
by $1$. (One line of length 3 in $BC^{2}$ in the computation of $\delta(C^1C^{2}/B)$
  has been replaced by two points added to $\ell_u$.)
	Each additional $C^i$ decrements another $1$ so with respect to $b_u \in B \cap (\Ascr^m-\Ascr^{m-1})$
	the line $\ell_u$ reduces $\delta(\overline{C}/B)$ by $(\nu -1)$.
%Thus, each $b$ contributes $-(\nu-1)$ to $\delta(\bar C /B)$.
But there are $k$ such $b_u$ and $\nu = \mu^{m+1}_{j,1} -1$,
so $\delta(\overline{C}/B)
\le k(1-\nu) =k(1-(\mu^{m+1}_{j,1} -1))
= k(2-\mu^{m+1}_j)$. Hence,

$$\delta(\overline{C}\cup B) - \delta(B) = \delta(\overline{C}/B) \le 2k - 2\mu^{m+1}_j \le 2k - k\delta(B).
$$
Consider the first and last terms and move $\delta(B)$ and
$\delta(\overline{C}\cup B)$ to the opposite sides of the inequality; then divide
by $k-1$
to get
$$\delta(B) \le \frac{2k-\delta(\overline{C}\cup B)}{k-1} \le
\frac{2k-2}{k-1} =2.$$
Recall, that $B$ is safe, so $2\le  d(B)\le \delta(B) \le \delta(\overline{C}\cup B) $.
This justifies the second inequality.
Thus, $d(B) = \delta(B) = 2$ and $B\le M$ and all $0$-primitive extensions of $B$ must be
independent; this contradicts  the existence of the lines  $\ell_u$.
\end{proof}

 Lemma~\ref{small-G} 1) and 3)  concern  only non-linear petals
and so goes through without changes. However, a small  new argument is needed for part 2).

\begin{lemma}\label{small-Gstein}
Assume that
  $A^{m+1}_{j,1}$ is $G$-invariant, $|A^{m+1}_{j,1}| > 1$,
	and $|A^m_{i,f}| > 1$ for each $i$, $f$
	such that $A^m_{i,f} \cap B \ne \emptyset$. Then, for any $d$ with $1\leq d \leq \nu =\nu^{m+1}_j$:
\begin{enumerate}[A]
\item \label{Bd-GS} For any $i$, $f$ such that $A^m_{i,f} \cap B \ne
    \emptyset$,   $C^d \cap A^m_{i,f}\ne \emptyset$, i.e., $C^d_-  \neq
    \emptyset$.

\item \label{smR-GS} $\delta(B_-/B_+ \cup \bigcup_{1 \leq d \leq
\nu} C^d_+) = \delta(B_-/B_+)$.
\item\label{shortS} If $C^d\cap \Ascr^{m-1}= \emptyset$, that is $C^d_+ =
    \emptyset$, then there is a unique petal $A^m_{i,f}$ that contains both
    $C^d$ and $B_-$. So, $A^m_{i,f}$ is $G$-invariant.
\end{enumerate}
\end{lemma}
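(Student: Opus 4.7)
The plan is as follows. Parts A) and C) use only the primitivity structure and $G$-invariance of non-linear petals; they do not depend on the specific form of $\delta$. So the arguments from Lemma~\ref{small-G}.\ref{Bd-G} and Lemma~\ref{small-G}.\ref{short} transfer essentially verbatim, with Lemma~\ref{primchar1} replaced by Lemma~\ref{primchar}.2, and with Observation~\ref{obsa} still applying in the Steiner case (disconnection over $B$ still forces $\delta(A_1/A_2 B) = 0$). The one subtlety is that in part C) one must note that $C^d$, being isomorphic to the non-linear petal $A^{m+1}_{j,1}$ over $B$, is itself a non-linear petal, so the freeness of distinct $*$-petals over $\Ascr^{m-1}$ in Lemma~\ref{freeext} applies and prevents $C^d$ from being spread across two $*$-petals.

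The substance of the proof is therefore part B). The plan is to unfold the difference
\[
\Delta \;=\; \delta(B_-/B_+ \cup \textstyle\bigcup_{d=1}^{\nu} C^d_+) \;-\; \delta(B_-/B_+)
\]
using $\delta(X) = |X| - \sum_{\ell \in L(X)}(|\ell|-2)$, so that $\Delta$ becomes a signed sum of line-contributions. After cancellations, only those maximal $R$-cliques $\ell$ in $B \cup \bigcup_d C^d_+$ which satisfy $\ell \cap B_- \neq \emptyset$ and $\ell \cap C^d_+ \neq \emptyset$ for some $d$ can contribute. I would then show that no such $\ell$ exists, so $\Delta = 0$.

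The two dichotomies that rule out such $\ell$ are the Steiner analogs of the two observations used in the Hrushovski proof. First, because $|A^m_{i,f}| > 1$ for every non-linear petal containing an element of $B_-$, by Lemma~\ref{primchar}.2 no $b \in B_-$ can lie on a line carrying two or more points of $\Ascr^{m-1}$; for such a line would exhibit $\{b\}$ as an $\alpha$-point over $\Ascr^{m-1}$, placing $b$ in a linear cluster, contradicting Lemma~\ref{nonlin}. Second, because $|A^{m+1}_{j,1}| > 1$ and $C^d \cong_B A^{m+1}_{j,1}$, by the same Lemma~\ref{primchar}.2 no $c \in C^d_+$ can lie on a line carrying two or more points of $B$; for such a line would exhibit $\{c\}$ as an $\alpha$-point of the pair $C^d/B$, contradicting that $C^d/B$ is primitive with $|C^d|>1$. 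These two facts together force any purportedly offending $\ell$ either to have at most one point in $\Ascr^{m-1} \supseteq \bigcup_d C^d_+ \cup B_+$ (ruling out any $C^d_+$-intersection beyond a single point shared with $B_+$) or at most one point in $B$ (ruling out the $B$-contribution from appearing in $\ell$ when $C^d_+$ is present); in both cases $\ell$ fails the collinearity requirement to contribute.

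The main obstacle I expect is the bookkeeping of line contributions. A single maximal clique $\ell$ in $B_+ \cup B_- \cup \bigcup_d C^d_+$ may split its incidence among all three pieces, and its contribution to each of the four terms $\delta(B \cup \bigcup_d C^d_+)$, $\delta(B_+ \cup \bigcup_d C^d_+)$, $\delta(B)$, $\delta(B_+)$ depends on how many of its points lie in the corresponding set. Moreover, the definition of $L(A)$ refers to \emph{maximal} $R$-cliques of $A$, not to full lines of $M$, so restricting $A$ can break a single line into several maximal sub-cliques. The careful case analysis of the pattern $(|\ell \cap B_+|,|\ell \cap B_-|,|\ell \cap \bigcup_d C^d_+|)$, and the verification that the two dichotomies above eliminate every case in which the contributions on the two sides of the equation fail to cancel, is where the work lies.
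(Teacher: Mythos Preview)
Your approach is essentially the paper's: parts A) and C) are carried over from Lemma~\ref{small-G} verbatim, and part B) is handled by the same two observations you isolate (no $b\in B_-$ lies on a line based in $\Ascr^{m-1}$, and no $c\in C^d$ lies on a line based in $B$).

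One correction: your appeal to Lemma~\ref{nonlin} for the first dichotomy is misplaced. Lemma~\ref{nonlin} requires both $\ell^{m+1}_j+1<\mu^{m+1}_j$ and one of $\Sdim_m$/$\Ssdim_m$, neither of which is among the hypotheses of Lemma~\ref{small-Gstein}. The paper instead uses the stated hypothesis $|A^m_{i,f}|>1$ directly: if some $b\in B_-$ lay on a line $\ell$ with $|\ell\cap\Ascr^{m-1}|\geq 2$, then $\{b\}$ would itself be an $\alpha$-petal $A^m_{i,f}$ of cardinality $1$ with $A^m_{i,f}\cap B\neq\emptyset$, contradicting that hypothesis. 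Your second dichotomy is exactly the paper's: if $|\ell\cap B|\geq 2$ and $\ell$ meets $C^d$, then $C^d$ (hence $A^{m+1}_{j,1}$) would be linear.

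On the bookkeeping you anticipate: the paper sidesteps it. Rather than expanding $\Delta$ as a signed sum of line contributions, it observes that the equality can fail only if some nontrivial line $\ell$ (with $|\ell|\geq 3$) meets $B_-$ and also meets $\bigcup_d C^d_+$; then it splits on whether $|\ell\cap B|=2$ or $|\ell\cap B|=1$ and derives a contradiction in each case in two lines. Your more elaborate accounting would work, but is unnecessary.
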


\begin{proof}
A) As in Lemma~\ref{small-G}.1, for each $f,i$ for each $d$,  $A^m_{i,f} \cap B \ne
\emptyset$ implies $C^d \cap A^m_{i,f} \ne \emptyset$. For B) note that
if
 $\delta(B_-/B_+ \cup \bigcup_{1 \leq d \leq
\nu} C^d_+) = \delta(B_-/B_+)$ fails it is because there is a
line $\ell$  with $|\ell| \geq 3$ intersecting
 $B_-$ and $B_+ \cup \bigcup_{1 \leq d \leq
\nu} C^d_+$ with at most one point in $B_+$.
If $|\ell\cap B|=2$, then each $C^d$ is a linear petal.
Since $A^{m+1}_{j,1} \cong_B C^d$, $A^{m+1}_{j,1}$ is also linear; contradiction.
Then $|\ell\cap B|=|\ell\cap B_-| =1$ and $\ell$ is based in $\Ascr^{m-1}$.
%$2\le |\ell\cap (\bigcup_{1 \leq d \leq \nu} C^d_+)| \le |\ell \cap \Ascr^{m-1}|$.
Let $\{ b \} = \ell\cap B_-$.
Then $b\in A^m_{i,f}$ for some $i$ and $f$,
and $\{ b \}$ is a linear petal over $\Ascr^{m-1}$,
 contradicting the hypothesis that
$|A^m_{i,f}| > 1$ for each $i$, $f$
	such that $A^m_{i,f} \cap B \ne \emptyset$.  C) follows as in Lemma~\ref{small-G}.3.
\end{proof}

From Lemma~\ref{nonlin}, we know that if $B^{m+1}_j$  is the base of a $G$-invariant $A^{m+1}_{j,1}$,
$B^{m+1}_j$ intersects only non-linear petals.
Lemma~\ref{long} relies on `$\mu$-triples' but involves only calculations justified by the axiomatic properties of
$\delta$, so we can apply it here to conclude:

\begin{corollary}\label{Sdetthm} Fix a decomposition of a $G$-normal set $\Ascr$,
 where $G\in \{G_I, G_{\{ I\}}\}$.
Suppose a non-linear petal $A^{m+1}_{j,1}$ is $G$-invariant.
Assume $\mu$ triples, (so  $\mu^{m+1}_{j}(A/B) \geq 3$ when $B$ is not a singleton).
\begin{enumerate}
\item Let $G=G_{I}$ and $\Sdim_m$ hold; or
\item	let $G=G_{\{ I \}}$ and $\Ssdim_m$ hold.
\end{enumerate}	
Then, $A^{m+1}_{j,1}$ $\Sdet$ a non-linear petal $A^m_{i,1}$.
\end{corollary}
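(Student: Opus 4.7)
The plan is to assemble Corollary~\ref{Sdetthm} from the three technical lemmas just proved for the Steiner setting: Lemma~\ref{nonlin} (which rules out linear clusters meeting the base), Lemma~\ref{small-Gstein} (the short-case alternative), and Lemma~\ref{long}.A (whose conclusion is exactly the determination statement in the Hrushovski context). The chief points to check are that the hypotheses of those lemmas are actually available, and that the determined $*$-petal is guaranteed to be non-linear.

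First I would collect the consequences of the standing hypotheses. By Observation~\ref{fixC}, $B=B^{m+1}_j$ is $G$-invariant. Since $B\subseteq\acl(I)$ and $B\not\subseteq\acl(\emptyset)$ (if it were, $A^{m+1}_{j,1}\subseteq\acl(\emptyset)\subseteq \Ascr^0$, contradicting that $A^{m+1}_{j,1}\subseteq\Ascr^{m+1}-\Ascr^m$ for $m\ge 1$; and the $m=0$ case is handled by Lemma~\ref{cl0}), the dim hypothesis ($\Sdim_m$ or $\Ssdim_m$) gives $d(B)\ge 2$, so $\delta(B)\ge 2$. Because $A^{m+1}_{j,1}$ is non-linear, $|A^{m+1}_{j,1}|>1$, and the triples assumption then forces $\mu^{m+1}_j\ge 3$. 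Finally, $G$-invariance of $A^{m+1}_{j,1}$ yields $\ell^{m+1}_j=1$, so $\ell^{m+1}_j+1=2<3\le\mu^{m+1}_j$.

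Next I would apply Lemma~\ref{nonlin}: all of its hypotheses are now met, so $B$ has empty intersection with every linear cluster in $\Ascr^m$. Consequently every $*$-petal $A^m_{i,f}$ with $A^m_{i,f}\cap B\ne\emptyset$ is non-linear, hence $|A^m_{i,f}|>1$. This verifies the remaining non-triviality hypothesis needed to invoke Lemma~\ref{small-Gstein}.

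The final step is a case split according to whether some copy $C^d$ of $A^{m+1}_{j,1}$ over $B$ living in $\Ascr^m$ is disjoint from $\Ascr^{m-1}$. If $C^d_+=\emptyset$ for some $d$, Lemma~\ref{small-Gstein}.\ref{shortS} delivers a unique non-linear petal $A^m_{i,f}$ containing both $C^d$ and $B_-=B-\Ascr^{m-1}$; in particular $B_-\subseteq A^m_{i,f}$, so $A^m_{i,f}$ is the unique $*$-petal based in $\Ascr^{m-1}$ intersecting $B-\Ascr^{m-1}$, which is exactly the $\Sdet$ condition of Definition~\ref{gendef1}. Otherwise $C^d_+\ne\emptyset$ for every $d$, and Lemma~\ref{long}.A applies (its hypotheses are now fully in place), producing an $A^m_{i,1}$ which $A^{m+1}_{j,1}$ determines in the original sense, and with $\delta(B_+)=0$; the determined petal is again non-linear because it intersects $B$, and by the Lemma~\ref{nonlin} step such a petal cannot be a linear cluster. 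The one point that requires some care — and is the main obstacle — is merely keeping straight that the Steiner notion of determination from Definition~\ref{gendef1} matches what Lemma~\ref{long}.A outputs, which I would handle by reading off $B_-\subseteq A^m_{i,1}$ from the proof of Lemma~\ref{long}.A and noting uniqueness of the intersected $*$-petal from the full independence of distinct $*$-petals over $\Ascr^{m-1}$ recorded in Lemma~\ref{freeext}.\ref{partition}.
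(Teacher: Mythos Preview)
Your proof is correct and follows essentially the same route as the paper, which simply observes that Lemma~\ref{nonlin} forces $B^{m+1}_j$ to meet only non-linear petals and then invokes Lemma~\ref{long} (whose $\delta$-calculations carry over to the Steiner setting); you have just filled in the verification of hypotheses and the short-case split more explicitly than the paper does. One small cleanup: in the Steiner context Observation~\ref{fixC} does not apply as stated (the paper flags this), so for the $G$-invariance of $B$ you should instead cite the uniqueness of the base for non-linear petals from Lemma~\ref{primchar}(2).
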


We restate and prove Theorem~\ref{mrs} using essentially the same induction
 as in Section~\ref{geq3}; the difference is that Lemma~\ref{alphpetals} makes the
 treatment of $\alpha$-petals easier while we apply Corollary~\ref{Sdetthm} for determinacy
 of non-linear petals.

\begin{theorem}[no definable truly $n$-ary function]\label{wrapitup2}  Suppose $T^S_\mu$ is a Steiner-system as in Definition~\ref{defT}.
Assume $\mu$ triples. % (so  $\mu^{m+1}_{j}(A/B) \geq 3$ when $B$ is not a singleton).
Let $I$ be a finite independent set that contains at least 2 elements.
Fix a $G$-normal $\Ascr \leq M \models \hat T_\mu$ with height $m_0$.

Then for every $m \leq m_0$, $\Ascr^m \cap \dcl^*(I) = \emptyset$.

 Thus,
 $\dcl^*(I) \cap \Ascr = \emptyset$; so there is no $n$-ary $\emptyset$-definable
 function for $n\ge 2$ and  $\hat T_\mu$ does not admit elimination of imaginaries.

 As a corollary, we obtain that $\dcl(J) = \bigcup_{a\in J}\dcl(a)$ for any
independent  set $J$.
\end{theorem}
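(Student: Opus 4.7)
The plan is to mimic the double induction used in the proof of Theorem~\ref{mr}, substituting $\Sdim_m$ and $\Smoves_m$ for $\dim_m$ and $\mathrm{moves}_m$, and feeding in the Steiner-specific decomposition lemmas established earlier in this section (Lemmas~\ref{primchar}, \ref{alphpetals}, \ref{fulllines}, \ref{nonlin}, \ref{small-Gstein} and Corollary~\ref{Sdetthm}). Concretely, I would prove by simultaneous induction on $m$ that, for every $G_I$-normal $\Ascr\leq M$ and every $G_I$-decomposition of $\Ascr$,
\[
\Sdim_m\wedge\Smoves_m\ \Longrightarrow\ \Sdim_{m+1}\wedge\Smoves_{m+1},
\]
and then read off Theorem~\ref{wrapitup2} by applying $\Sdim_{m_0}$ to the singleton orbit of any putative $e\in \dcl^*(I)\cap\Ascr$.

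For the base case I would imitate Corollary~\ref{cl1}: the definition of $\Ascr^0$ gives $\Ascr^0\cap\dcl^*(I)=\emptyset$, and Lemma~\ref{cl0} (which, as noted in the text, transfers verbatim to the Steiner setting) yields $\Sdim_1$. To see $\Smoves_1$, I split by whether the $*$-petal on stratum $1$ is linear or non-linear: linear clusters based on two independent points of $\Ascr^0$ are moved by $G_I$ via Lemma~\ref{alphpetals}.3(a) (since $G_{B}$ induces the full symmetric group on the cluster), and non-linear petals are handled as in Lemma~\ref{cl0}.

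The inductive step $\Sdim_m\wedge\Smoves_{m+1}\Rightarrow\Sdim_{m+1}$ is essentially Lemma~\ref{petals}, which only uses $\delta$-submodularity and that petals on the same stratum are freely joined; by Lemma~\ref{freeext} the latter holds for the Steiner $*$-petals, so no change is required. The main obstacle lies in the step $\Sdim_m\wedge\Smoves_m\Rightarrow\Smoves_{m+1}$: I must rule out that some $*$-petal on stratum $m+1$ is $G_I$-invariant. Here I split into three cases corresponding to the classification of $*$-petals in Definition~\ref{decomp2}. \emph{Linear clusters of size $\geq 2$:} Lemma~\ref{alphpetals}.3(a) shows $G_{B^{m+1}_j}$ acts as a symmetric group on the cluster, so it is moved. \emph{Singleton $\alpha$-points}: By Lemma~\ref{alphpetals}.2 the $\alpha$-point determines a non-linear petal $A^m_{f,i}$ on stratum $m$, which by $\Sdim_m$ has $d(B^m_f)\geq 2$ and so is non-trivial; hence Lemma~\ref{alphpetals}.3(b) combined with $\Smoves_m$ moves the $\alpha$-point. \emph{Non-linear petals}: a $G_I$-invariant non-linear $A^{m+1}_{j,1}$ has $d(B^{m+1}_j)\geq 2$ by $\Sdim_m$, Lemma~\ref{nonlin} forbids $B^{m+1}_j$ from meeting any linear cluster, and then Corollary~\ref{Sdetthm} (whose tripling hypothesis is precisely the assumption of Theorem~\ref{wrapitup2}) forces $A^{m+1}_{j,1}$ to Steiner-determine a $G_I$-invariant non-linear $A^m_{i,1}$, contradicting $\Smoves_m$.

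The subtle point, and what I expect to be the most delicate bookkeeping, is checking that Corollary~\ref{Sdetthm} applies: one must verify that the intersection hypotheses feeding Lemma~\ref{small-Gstein} are available, which uses both $\Sdim_m$ (to get $|A^m_{i,f}|>1$ for the relevant petals via Lemma~\ref{nonlin}) and the fact that linear clusters have been eliminated by the previous sub-case. Once the joint induction is complete, $\Sdim_{m_0}$ applied to an arbitrary $e\in\dcl^*(I)\cap\Ascr$ gives $\{e\}=G_I(e)$ a $G_I$-invariant singleton with $d(\{e\})=1$, a contradiction; so $\dcl^*(I)\cap\Ascr=\emptyset$. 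The remaining consequences then follow as in Theorem~\ref{mr}: Lemma~\ref{getun} yields the absence of truly $n$-ary $\emptyset$-definable functions, Lemma~\ref{fckey} together with Fact~\ref{wefc} delivers the failure of elimination of imaginaries, and the corollary $\dcl(J)=\bigcup_{a\in J}\dcl(a)$ is obtained by the argument at the end of Section~\ref{geq3} (using Lemma~\ref{unique-tree-decompjb} and Corollary~\ref{nodefwparjb}, whose proofs adapt verbatim once the $G$-decomposition machinery has been validated in the Steiner setting above).
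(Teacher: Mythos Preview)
Your proposal is correct and follows essentially the same approach as the paper: the same joint induction on $\Sdim_m$ and $\Smoves_m$, with Lemma~\ref{petals} carrying the $\Sdim$ step and the combination of Lemma~\ref{alphpetals}.3 (to dispose of $\alpha$-points and linear clusters) with Corollary~\ref{Sdetthm} (for the non-linear case) carrying the $\Smoves$ step. Your explicit three-case split is exactly what the paper compresses into the sentence ``By Lemma~\ref{alphpetals}.3, we can assume $A^{m+1}_{j,1}$ is non-linear''; the only minor over-reach is invoking Lemma~\ref{unique-tree-decompjb} and Corollary~\ref{nodefwparjb} for the final corollary, which follows more directly from $\dcl^*(I)=\emptyset$ by minimality of $I$.
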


As in Section~\ref{geq3}, the decomposition for Theorem~\ref{wrapitup2} is with
respect to $G_I$.

\begin{proof} We show $\Smoves_m$
  and $\Sdim_{m}$ jointly imply  $\Smoves_{m+1}$. Suppose
  for contradiction that $A^{m+1}_{j,1}$ is $G_I$-invariant.
By Lemma~\ref{alphpetals}.3, we can assume $A^{m+1}_{j,1}$ is non-linear.
Then Corollary~\ref{Sdetthm} implies that
$A^{m+1}_{j,1}$ $\Sdet$ a non-linear petal $A^m_{i,1}$; but this
 contradicts $\Smoves_{m}$.

Fix $m$ with $1 \leq m \leq m_0$. Since Lemma~\ref{petals}
uses only the notions depending on abstract properties of the $\delta$-function:
 $\Smoves_{m+1}$ and $\Sdim_m$ imply $\Sdim_{m+1}$.
%for any non-linear petal $A^{m+1}_{j, 1}$.
Thus by induction as in
Section~\ref{geq3}, we have $\Smoves_m$ for all $m\leq m_0$ and finish.
\end{proof}

Before attacking the symmetric function case in general, we  prove the Steiner
version of Claim~\ref{omni2}, describing the implications of
the existence of a $G_{\{I\}}$-invariant $\alpha$-petal.

\begin{lemma}\label{omni2S}  Let $T^S_\mu$ be a strongly minimal Steiner-system
as  described in Definition~\ref{defT} with $\mu(\boldsymbol{\alpha})> 1$.
Suppose $\sdim_m$, $|A^{m+1}_{j,1}| = 1$ and $A^{m+1}_{j,1}$ is $G_{\{ I\}}$-invariant. Then
\begin{enumerate}
\item
$A^{m+1}_{j,1}$ determines some non-linear
$A^m_{i,1}$; $B=B^{m+1}_j \leq M$, $B$ and each $C^k = C^{m+1,k}$ is contained in  $A^m_{i,1}$.
\item Moreover, $\mu^{m+1}_j\geq 3$;
\item and $\mu^m_i \geq 3$.
\end{enumerate}
\end{lemma}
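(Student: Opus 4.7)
The plan is to adapt the proof of Claim~\ref{omni2} to the Steiner setting, substituting Lemma~\ref{alphpetals} for Lemma~\ref{omni-G} and paying attention to the fact that $A^{m+1}_{j,1}$ is now an $\alpha$-point sitting on a possibly long line. For part (1), write $A^{m+1}_{j,1}=\{a\}$. The linear cluster $A^{m+1}_j$ containing $a$ must be a singleton: otherwise Lemma~\ref{alphpetals}(3)(a) supplies an element of $G_I\subseteq G_{\{I\}}$ permuting the $\alpha$-points of $A^{m+1}_j$ nontrivially, contradicting $G_{\{I\}}$-invariance of $\{a\}$. Lemma~\ref{alphpetals}(2) then determines a non-linear $*$-petal $A^m_{i,1}$, with $\check B-\Ascr^{m-1}\subseteq A^m_{i,1}$ for $\check B$ the extended base of $a$. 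The base $B$ is $G_{\{I\}}$-invariant (being determined by $a$ together with the line $\ell$ through $a$), so $\sdim_m$ yields $d(B)\geq 2$; since $\delta(B)=2$, $B\leq M$. Lemma~\ref{fulllines} forces $|\ell\cap\Ascr^{m-1}|\leq 1$ while $|\check B|=\mu(\boldsymbol{\alpha})+1\geq 3$, so the decomposition naturally places $B$ inside $A^m_{i,1}$, and the copies $C^k$, which enumerate the remaining points of $\check B-B$, likewise lie in $A^m_{i,1}$.

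Part (2) is short. Assume for contradiction $\mu^{m+1}_j=2$. By Lemma~\ref{flower=bouquet1} there is a single flower $\{A^{m+1}_{j,1},C^{m+1,1}\}$ with both petals $G_{\{I\}}$-invariant. The singleton $C^{m+1,1}\subseteq \Ascr^m$ lies outside $\acl(\emptyset)$ (else $a$ itself would be algebraic over $\emptyset$), so $\sdim_m$ demands $d(C^{m+1,1})\geq 2$, contradicting $|C^{m+1,1}|=1$.

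Part (3) mirrors Claim~\ref{omni2}(3). Suppose $\mu^m_i=2$. Since $B^m_i\leq M$ by $\sdim_m$, an isomorphism $\rho$ over $B^m_i$ from $A^m_{i,1}$ onto its unique copy $C^{m,1}\subseteq \Ascr^{m-1}$ extends to an automorphism $\hat\rho$ of $M$ along the lines of Lemma~\ref{one-copynew}(1b). Set $\{e'\}=\hat\rho(A^{m+1}_{j,1})$. Observation~\ref{grptriv} combined with flower-uniqueness (Lemma~\ref{flower=bouquet1}) forces $\{e'\}$ to be $G_{\{I\}}$-invariant. Either $\{e'\}\subseteq \Ascr$, in which case the strata structure puts it in some $\Ascr^k$ with $k\leq m$, contradicting $\sdim_m$; or $\{e'\}\subseteq M-\Ascr$, in which case $\tilde\Ascr=\Ascr^{m-1}\cup\{e'\}$ is a $G_{\{I\}}$-normal set of height $m$ whose singleton $\{e'\}$ violates the global inductive hypothesis that $\sdim_m$ holds for every such decomposition.

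The main obstacle, specific to the Steiner case, is that lines can thread through multiple strata and that the base $B$ of an $\alpha$-point is not determined by $a$---only the extended base $\check B$ is. The proof therefore hinges on Lemma~\ref{fulllines} to confine $\ell\cap\Ascr^{m-1}$ to at most one point and on Lemma~\ref{alphpetals}(2) to place the remainder of $\ell\cap\Ascr^m$ inside a single non-linear petal $A^m_{i,1}$; only then can one assert that the decomposition-chosen $B$ and the copies $C^k$ all live in $A^m_{i,1}$. In Part (3), the delicate step is verifying that $\hat\rho$ carries the $\alpha$-point configuration to a genuinely new $\alpha$-relation over $\hat\rho(B)$ rather than to a point already appearing in $\Ascr^{m-1}$; this is the Steiner counterpart of the bouquet-tracking argument in the proof of Claim~\ref{omni2}(3), and it uses the Steiner analog of Lemma~\ref{controlC} to control where the $C^k$ can land.
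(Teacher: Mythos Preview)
Your overall approach mirrors the paper's---adapt Claim~\ref{omni2}, replacing Lemma~\ref{omni-G} by Lemma~\ref{alphpetals}---and parts (2) and (3) are essentially correct. But part (1) has a genuine gap and some terminological slippage.

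The gap: after invoking Lemma~\ref{fulllines} to get $|\ell\cap\Ascr^{m-1}|\leq 1$, you never rule out equality. The paper's key step (with no analog in the Hrushovski proof) is that the extended base $\check B=\ell\cap\Ascr^m$ is $G_{\{I\}}$-invariant, hence so is $\check B\cap\Ascr^{m-1}$; if this intersection were a singleton it would be a $G_{\{I\}}$-invariant one-element set, which cannot be safe, contradicting $\Ssdim_m$. Only then does $\check B\cap\Ascr^{m-1}=\emptyset$ follow, giving $\check B\subseteq A^m_{i,1}$. Your phrase ``the decomposition naturally places $B$ inside $A^m_{i,1}$'' papers over exactly this step: nothing in the construction forces a two-element base, or the $C^k$, to avoid a single stray point of $\ell$ in $\Ascr^{m-1}$.

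The slippage: when you write ``$B$ is $G_{\{I\}}$-invariant (being determined by $a$ together with the line $\ell$)'' you are describing the extended base $\check B$; a two-element base is \emph{not} canonically determined by $a$ and need not be $G_{\{I\}}$-invariant. But when you then say ``the copies $C^k$, which enumerate the remaining points of $\check B-B$'', you are treating $B$ as two-element. The paper resolves this by taking $B=B^{m+1}_j$ to be the extended base throughout; once $\check B\subseteq A^m_{i,1}$ is established, each $C^{m+1,q}\subseteq\check B\subseteq A^m_{i,1}$ is immediate. (A minor related point in (2): your reason that $C^{m+1,1}\notin\acl(\emptyset)$---``else $a$ itself would be algebraic over $\emptyset$''---is not valid, since one collinear point in $\acl(\emptyset)$ does not force another in; the correct justification uses part (1), which places $C^{m+1,1}$ in $A^m_{i,1}\subseteq\Ascr^m-\Ascr^{m-1}$.)
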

\begin{proof}Lemma~\ref{alphpetals}.2 shows $A^{m+1}_{j,1}$ determines some non-linear
$A^m_{i,1}$.
%Since each $C^{m+1,k}$ is
%mapped into the extended base $B$.
The analog of Lemma~\ref{omni2}.1, showing each $C^{m+1,q}
\subseteq A^m_{i,1}\cup B^m_i$,  has both a shorter proof
 and is  stronger.
Let $B= B^{m+1}_j$ be the extended base of $A^{m+1}_{j,1}$.
By Lemma~\ref{fulllines} $B\cap\Ascr^{m-1}$ contains at most one element,
but since this set is $G_{\{ I\}}$-invariant and a one-element set cannot be safe,
$\Ssdim_m$ implies that $B\cap \Ascr^{m-1} = \emptyset$.
Also $\Ssdim_m$ implies $B\leq M$.
So, each $C^{m+1,q}\subseteq B\subseteq A^m_{i,1}$.
2) and 3) now follow exactly by the argument in
Claim~\ref{omni2}.
\end{proof}

As in Section~\ref{geq2}, we now drop the $\mu$-triples requirement and still show there
is no {\em symmetric} definable function.

\begin{theorem}[no definable symmetric function]\label{Steiner-no-sym-fun}
 If $T^S_\mu$ is a Steiner-system as in Definition~\ref{defT},
then there is no symmetric $\emptyset$-definable $v$-ary function for $v\ge 2$,
i.e., $\sdcl^*(I) = \emptyset$ for any $v$-element independent set $I$.

  That is,
 there is no definable function of $v$ variables whose value does not depend on the order of the arguments.
	Thus, $\hat T_\mu$ does not admit elimination of imaginaries.

As a corollary, we obtain that $\sdcl(J) = \bigcup_{a\in J}\sdcl(a)$ for
 any independent set $J$.
\end{theorem}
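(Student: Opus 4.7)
The plan is to reproduce the framework of Theorem~\ref{no-sym-fun} using $G_{\{I\}}$-decompositions, substituting the Steiner-specific tools already built in this section (Lemmas~\ref{alphpetals}, \ref{nonlin}, \ref{small-Gstein}, \ref{omni2S} and Corollary~\ref{Sdetthm}) for their Hrushovski counterparts. As in Lemma~\ref{d(orbit)=2}, I would run a global induction over all $G_{\{I\}}$-normal $\Ascr\le M$, ordered by height $m_0$, proving $\Ssdim_m$ for each $m\le m_0$. The conclusion $\sdcl^*(I)=\emptyset$ then follows because any $u\in\sdcl^*(I)$ would be a $G_{\{I\}}$-fixed point with $d(\{u\})\le 1$, contradicting $\Ssdim_{m_0}$ applied to $\icl(I\cup\{u\})$. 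The base case $m=0$ is identical to Claim~\ref{dim0}: any $e\in\Ascr^0-\acl(\emptyset)$ lies in $\acl(a_i)$ for some $i$, hence $a_i\in\acl(e)$, and conjugating by a $G_{\{I\}}$-element sending $a_1\mapsto a_j$ shows $\{a_1,\dots,a_v\}\subseteq\acl(G_{\{I\}}(e))$.

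For the inductive step, fix $\Ascr$ satisfying $\Ssdim_m$ and analyze each $*$-petal of $\Ascr^{m+1}$. Linear clusters $A^{m+1}_j$ are immediate: their extended base $\check B$ is $G_{\{I\}}$-invariant, so $\Ssdim_m$ gives $d(\check B)\ge 2$, and Lemma~\ref{alphpetals}.4 yields $d(A^{m+1}_j)=2$, so every $G_{\{I\}}$-invariant subset is safe. For a non-linear petal $A^{m+1}_{j,1}$ that is not $G_{\{I\}}$-invariant, safety of its $G_{\{I\}}$-orbit follows from a Steiner version of Lemma~\ref{petals}, whose proof uses only submodularity of $\delta$ and the freely-joined structure of $*$-petals (Lemma~\ref{freeext}). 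So the crux is a $G_{\{I\}}$-invariant non-linear $A^{m+1}_{j,1}$; by Lemma~\ref{nonlin} its base $B$ intersects only non-linear petals, letting us avoid the distracting interference of lines through $B$.

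Split on $\mu^{m+1}_j$. If $\mu^{m+1}_j=2$, then $|A^{m+1}_{j,1}|>1$ by Lemma~\ref{omni2S}, and $\Ssdim_m$ gives $d(B)=\delta(B)=2$, so $B\le M$; the unique $B$-isomorphism $\rho$ to the copy $C\subseteq\Ascr^m$ extends to an automorphism of $M$, and the Steiner analog of Lemma~\ref{one-copynew} (which uses only $R(A,\Ascr^m)=R(A,B)$ from the definition of base) transfers safety from $C$ to $A^{m+1}_{j,1}$. If $\mu^{m+1}_j\ge 3$, then the Steiner version of Lemma~\ref{long}, together with Corollary~\ref{Sdetthm}, forces $A^{m+1}_{j,1}\,\Sdet$ a $G_{\{I\}}$-invariant non-linear $A^m_{i,1}$. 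Iterating produces a descending determined sequence $\langle\Aback^k\rangle$ as in Definition~\ref{desseq} with each $\Aback^k$ non-linear and $G_{\{I\}}$-invariant; since $\Ssdim_1$ would force moves at stratum~1, the sequence must terminate at some $s$ with $\overleftarrow{\mu}^{\,s}=2$. Exactly as in Lemmas~\ref{rhoAinv1}, \ref{seqred}, \ref{slowstop}, I pick $\rho\in\aut(M)$ sending $\Aback^s$ to $\Cback^s$, build the sets $\widetilde{\mathcal{W}}_k$ and $\widetilde{\mathcal{R}}_k$, and propagate $G_{\{I\}}$-invariance and safety of $\rho(\Aback^k)$ downward to $k=0$ via the global induction on smaller-height $G_{\{I\}}$-normal sets.

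The main obstacle I anticipate is the Steiner analog of Lemma~\ref{controlC}, which locates each $C^{m+1,q}_+$ inside $\icl(B^m_i)$: the $\delta$-bookkeeping there counts edges, whereas in the Steiner setting a line through $B_-$ and one point of $C^q_+$ could in principle shift the submodular inequalities. The saving grace is that Lemma~\ref{nonlin} eliminates linear clusters from the base, and Lemma~\ref{small-Gstein}.B already yields $\delta(B_-/B_+\cup\bigcup C^d_+)=\delta(B_-/B_+)$ under the working hypotheses, so the same reduction (applying monotonicity, submodularity, and $\icl(B^m_i)\le M$) should go through verbatim. A secondary nuisance is verifying the Steiner version of Lemma~\ref{intersection-empty}, i.e.\ that no element of $\Aback^k$ lies in $\acl(\emptyset)$; this should follow from $\Bback^s\le M$, $\delta(\Bback^s)=2$, and Lemma~\ref{B+0}.
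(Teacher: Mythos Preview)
Your approach is essentially the paper's: a global induction on the height of $G_{\{I\}}$-normal sets, establishing $\Ssdim_m$ via the same case split (non-invariant petals via Lemma~\ref{petals}; invariant petals with $\mu=2$ via the $\rho$-transfer of Lemma~\ref{one-copynew}; invariant petals with $\mu\ge 3$ via the determined sequence of Definition~\ref{desseq} and Lemmas~\ref{seqred}--\ref{slowstop}). Your identification of Lemma~\ref{controlC} and Lemma~\ref{intersection-empty} as the places needing Steiner-specific care is accurate, and your observation that Lemma~\ref{small-Gstein}.B and Lemma~\ref{nonlin} supply the needed replacements matches the paper.

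There is one organizational gap. Your case analysis handles (i) linear clusters via Lemma~\ref{alphpetals}.4 and (ii) non-linear petals via Lemma~\ref{petals} or the determined sequence. But Lemma~\ref{alphpetals}.4 requires $|A^{m+1}_j|\ge 2$, so a \emph{singleton} linear cluster (a lone $\alpha$-point over $\Ascr^m$) is covered by neither branch. This is precisely the delicate case: if such a singleton were $G_{\{I\}}$-invariant you would have a point of dimension~$1$. The paper handles this by allowing the top $\Aback^0=A^{m+1}_{j,1}$ of the determined sequence to be linear, invoking Lemma~\ref{omni2S} to show it still $\Sdet$ a \emph{non-linear} $A^m_{i,1}$ with $\mu^m_i\ge 3$, and then running the descent with all later $\Aback^k$ non-linear. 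You have all the ingredients (you cite Lemma~\ref{omni2S}), but you need to route the singleton case into the descent rather than leaving it to Lemma~\ref{alphpetals}.4.

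A minor citation point: Corollary~\ref{Sdetthm} is stated under the global hypothesis that $\mu$ triples, which you are not assuming here. What you actually need at each step of the descent is the local hypothesis $\overleftarrow{\mu}^{\,k}\ge 3$, under which Lemma~\ref{long}.A (together with Lemma~\ref{nonlin} to exclude linear targets) gives Steiner-determinacy directly; that is how the paper proceeds.
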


\begin{proof} We break the proof from Section~\ref{geq2}
into several sections and indicate changes necessary for Steiner systems.

First, note Claim~\ref{dim0}  obviously works for Steiner systems, i.e.
$\Ssdim_0$ holds. The inductive proof of safety of $A^{m+1}_{j,1}$ from $\sdim_m$,
 Claim~\ref{invm}, follows for
non-$G_{\{I\}}$-invariant non-linear petals (or $\alpha$-petals)  from $\delta$-calculations as
in Lemma~\ref{petals}.

We now fix on a $G_{\{I\}}$-invariant $A^{m+1}_{j,1}$ that determines
$G_{\{I\}}$-invariant $A^{m}_{i,1}$.
Our aim is to prove $A^{m+1}_{j,1}$ is safe.
They respectively have $\mu^{m+1}_j$ and $\mu^{m}_i$ realizations in $M$.
Results \ref{grptriv} through \ref{rhoAinv} establish the result when $\mu^{m}_i=2$. This can only happen when
$A^{m+1}_{j,1}$ is non-linear by Lemma~\ref{omni2S}, which is the analog of Lemma~\ref{omni2}.
These results
are properties of automorphisms of finite structures
and hold for the same reasons as in Subsection~\ref{geq2}.

As in Lemma~\ref{backward} we have reduced to the case that $\mu^{m+1}_j  \geq 3$. But with Lemmas~\ref{Sdetthm}
and \ref{omni2S}, while $A^{m+1}_{j,1}$ may be linear, every element of the sequence it determines is non-linear.
Moreover, if $A^{m+1}_{j,1}$ is linear, $B^{m+1}_{j} \subseteq A^{m}_{i,1}$.

The analogs of Lemmas~\ref{B+0} through \ref{quickstop} complete the proof when $\mu^m_i =2$. They go through
in the Steiner case with little change. (Lemma~\ref{omni2S} includes for Steiner systems the more difficult conclusion
in Lemma~\ref{controlC}.)

This leaves us with the analog of Lemmas~\ref{desseq} to \ref{notnew}, which formulate and  carry out the
complicated double induction. But again, one can check that the arguments go through with minor modifications.
\end{proof}

\section{Further work}\label{furtherwork}
We worked throughout in this paper in a vocabulary with a
single ternary relation symbol. We now explain a conjectured   sufficient condition
for the elimination of imaginaries in arbitrary finite and  infinite vocabularies,
 using Hrushovski's $\delta$ and definition of $\bK_0$.

In \cite{Verbovskiy}, the second author constructed
a variant of Hrushovski's example
 with elimination of imaginaries.
The idea is that for each $n\ge 3$ we add an $n$-ary relation $R_n$ and
put $\mu(\{a_1\} / \{a_2, \dots, a_n\}) = 1$,
where the tuple $(a_1, a_2, \dots, a_n)$ satisfies $R_n$.
This gives us an $(n-1)$-ary symmetric function.
Thus we can construct {\em in an infinite vocabulary} a Hrushovski strongly minimal set
which has elimination of imaginaries.
The conjecture is that in some sense it is the only way to get a symmetric function in Hrushovski's examples.
Recall that the constraint $\mu(A/B) \ge \delta(B)$ has a crucial role in proving the amalgamation property.
However, as it was shown in \cite{Verbovskiy06},
for good pairs $(A/B)$ satisfying $r(\{a\}, A\cup B, \{b\})>0$ for  each $a\in A$ and $b\in B$
we may put $\mu(A/B)$ equal to any positive number while
preserving the amalgamation property.
A slight modification should construct definable truly $n$-ary functions.
So, the exact formulation of the conjecture is the following:

\begin{conjecture}\label{mainconjecture}
We take the class  $\bL_0$ to be all finite $\tau$-structures that satisfy the hereditarily
positive $\epsilon$ dimension defined in Axiom~\ref{yax}.2.
Assume that there is a natural number $N$, such that $\mu(A/B) \ge \delta(B)$ for any
 good pair $(A/B)$
with $\delta(B)\ge N$; then $\sdcl^*(I) = \emptyset$ for any independent set
 $I$ with $|I|\ge \max\{N, 5\}$. Thus, no Hrushovski construction in a finite relational
 vocabulary  $\tau$ (that is,
 where $\bK_0$ contains all finite $\tau$-structures)
 has elimination of imaginaries.
 \end{conjecture}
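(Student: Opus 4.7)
The plan is to adapt the $G_{\{I\}}$-decomposition framework of Sections~\ref{decomp}--\ref{geq2} to arbitrary finite relational vocabularies $\tau$. The axiomatic properties in Axiom~\ref{yax} (flatness, submodularity, canonical amalgamation, and the condition relating $\epsilon$-drops to $r$-relations) were chosen precisely so that the strata construction, the classification of petals, and the flower/bouquet machinery of Subsection~\ref{bouflow} carry over unchanged. First, I would verify that Construction~\ref{decomp1}, together with the $G$-invariance analysis culminating in Lemma~\ref{long}, depends only on these axioms plus the fact that a good pair $A/B$ with $\delta(B) \geq 2$ satisfies $\mu(A/B) \geq 2$. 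Under the conjecture's hypothesis, this will hold automatically whenever $\delta(B) \geq N$, and the calculation in Lemma~\ref{long}(A) will upgrade to a determination statement whenever $\mu(A/B) \geq 3$.

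Next, I would run the global induction on the height of $G_{\{I\}}$-normal sets $\Ascr \subseteq \acl(I)$, as in the proof of Lemma~\ref{d(orbit)=2}, to establish $\mathrm{sdim}_m$ for all $m$. The base case $\mathrm{sdim}_0$ (Claim~\ref{dim0}) works whenever $|I| \geq 2$ because strong minimality produces a transitive $G_{\{I\}}$-action on $I$; the new observation is that when $|I| \geq N$, the $0$-stratum $\Ascr^0$ already has dimension at least $N$, so any $G_{\{I\}}$-invariant subset of $\Ascr^0$ not contained in $\acl(\emptyset)$ has dimension at least $N$. This is exactly what allows the inductive hypothesis to control bases $B^{m+1}_j$ whose intersection with $\Ascr^0$ is non-algebraic.

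The main obstacle, as in Section~\ref{geq2}, is dealing with $G_{\{I\}}$-invariant petals $A^{m+1}_{j,1}$ whose base $B = B^{m+1}_j$ satisfies $\delta(B) < N$, so that the conjecture's hypothesis gives no constraint on $\mu(A/B)$; indeed $\mu(A/B)$ may be as small as $1$, which is precisely the Verbovskiy regime that does admit elimination of imaginaries. To show this cannot produce an element of $\sdcl^*(I)$ for $|I| \geq \max\{N,5\}$, I would follow the descending sequence of Definition~\ref{desseq}: each iteration passes to a determined petal at a lower stratum, and must eventually terminate at some $\overleftarrow{A}^s$ with $\mu(\overleftarrow{A}^s/\overleftarrow{B}^s) = 2$. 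The automorphism $\rho$ extending the isomorphism between $\overleftarrow{A}^s$ and its copy $\overleftarrow{C}^s$ must then be transferred back up the chain exactly as in Lemmas~\ref{rhoAinv1}--\ref{slowstop}, using the hypothesis $|I| \geq 5$ to guarantee that $\mathrm{Sym}(I)$ acts with enough non-trivial orbits on the flowers encountered to detect any surviving $G_{\{I\}}$-invariant element.

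The genuine technical difficulty --- and where I expect the proof to require real new ideas --- is the higher-arity analog of Lemma~\ref{controlC}. In the $3$-hypergraph case the argument hinged on the single-relation structure $R(c_1,c_2,b)$ between petal and base and on Claim~\ref{relationarg1}, which used that relations within $C \cup B$ involve at most the extended base. For $k$-ary relations with $k > 3$, a single tuple can straddle $B_+$, $B_-$, $C_+$, and $C_-$ in intricate ways, and the clean equality $\delta(B_-/B_+ \cup \bigcup_d C^d_+) = \delta(B_-/B_+)$ of Lemma~\ref{small-G} need not hold. The task will be to replace it with an inequality still sharp enough to force $B_+ \subseteq \acl(\emptyset)$ whenever $\delta(B) \geq N$; this is precisely where the quantitative threshold $N$ (rather than $3$) will enter the bookkeeping, via a combinatorial identity involving the arities of the predicates and $\delta(B)$. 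Once this $k$-ary analog of Lemma~\ref{controlC} is established, the global induction should close as in Theorem~\ref{no-sym-fun}, and the conclusion that no such $T_\mu$ admits elimination of imaginaries then follows from Lemma~\ref{fckey} and Fact~\ref{wefc} exactly as in Theorem~\ref{noei}.
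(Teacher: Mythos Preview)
The statement you are attempting to prove is labeled \texttt{conjecture} in the paper, and the paper contains no proof of it. It appears in Section~\ref{furtherwork} (``Further work'') as an open problem motivated by the results of Sections~\ref{main}--\ref{Steiner} and by the infinite-vocabulary example of \cite{Verbovskiy06}. There is therefore nothing in the paper to compare your proposal against; what you have written is a research outline, not a reconstruction of an existing argument.

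That said, your outline has a concrete gap worth noting. In the descending-sequence step you assert that the iteration ``must eventually terminate at some $\overleftarrow{A}^s$ with $\mu(\overleftarrow{A}^s/\overleftarrow{B}^s) = 2$.'' In the setting of the conjecture this is not guaranteed: once $\delta(\overleftarrow{B}^k)$ drops below $N$ the hypothesis imposes no lower bound on $\mu$, so the sequence may reach a good pair with $\mu(\overleftarrow{A}^s/\overleftarrow{B}^s) = 1$. That is exactly the regime of the \cite{Verbovskiy06} construction, where a single-copy petal produces a symmetric definable function, and the automorphism $\rho$ you need does not exist because there is no second copy $\overleftarrow{C}^s$ to map onto. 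The paper's machinery in Lemmas~\ref{rhoAinv1}--\ref{slowstop} relies essentially on $\mu = 2$ at the terminal step to manufacture $\rho$; handling $\mu = 1$ would require a genuinely new idea, and your proposal does not supply one. Your appeal to $|I| \geq 5$ (``$\mathrm{Sym}(I)$ acts with enough non-trivial orbits'') is too vague to fill this hole, and the paper itself offers no indication of why the threshold $5$ appears --- that is presumably part of what makes the statement a conjecture rather than a theorem.
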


 More generally, one might ask

\begin{question}\label{conj2}  No strongly minimal set in a finite vocabulary with a strictly flat $\acl$-geometry
admits elimination of imaginaries.

{\rm \cite{Merflat} makes a step in this direction by representing each strictly
flat geometry
by a Hrushovski construction.  However, he deals only with the $\omega$-stable case
and takes $\bL^*_0$ as all finite structures of
a given relational vocabulary.  So this question has to be made more precise.
The methods here must be extended as examples in
\cite{BaldwinsmssII,BaldwinsmssIII} show that with two ternary relations one can
define global ternary functions.
  Moreover, the binary function can be commutative\footnote{E.g.,we built built the strongly minimal
  set  from a commutative variety of block algebras over $F_5$ (\cite[p 7]{GanWer2})
  by the method of \cite[\S 3]{BaldwinsmssIII}.
  These examples fall into
  Remark~\ref{classes}.(2).(b).(ii).}.}
	This contradiction with the conclusion of
  Theorem~\ref{mrs}.3  is explained because we have widened our method of construction to include
  amalgamation classes of finite structures, which are defined by $\forall \exists$-formulas.
\end{question}

\begin{question}[Flat Geometries]

We have provided several properties distinguishing among strongly minimal theories with flat geometries and provided
some examples.  Three directions of inquiry are:

\begin{enumerate} \item   Are there further useful distinction among the theories of flat $\acl$-geometries?
\item Are there further useful syntactic distinctions among the theories themselves?  \item Are there further applications
in combinatorics using the methods developed here?
\end{enumerate}
{\rm Linear spaces and quasi-groups are only a glimpse at the structures that can
obtained when we remove the restriction that we are imposing the dimension function on {\em all} finite structures for a given vocabulary.
Moreover as exemplified in \cite{BaldwinsmssIII}, new phenomena are obtained by varying $\mu$.
In a different direction, one can ask whether these methods might be useful higher in the
 complexity classification.}
 \end{question}

 \begin{question}[Model Theoretically  Complex Classes]\label{BarCas}{\rm
\cite{BaldwinsmssII,{BaldwinsmssIII}}, constructs strongly minimal quasigroups using the graph of the quasigroup operation
as  in the study of model complete Steiner triple system of
Barbina and Casanovas \cite{BarbinaCasa}. As  noted in Remark 5.27 of
\cite{BaldwinPao}, their generic structure $M$ differs radically from ours:
$\acl_M(X) = \dcl_M(X)$.}

Do the strongly minimal quasigroups in last paragraph satisfy elimination of imaginaries?
Is it possible to develop a theory of $q$-block algebras for arbitrary
prime powers similar to that for Steiner quasigroups in their paper? That
is, to find a model completion for each of the various varieties of quasigroups
 discussed in \cite{BaldwinsmssII}. Where do the resulting
theories lie in the stability classification? \cite{HorsleyWebb} prove the existence of
generics for classes of Steiner systems omitting certain finite Steiner Systems. Are
there strongly minimal theories for such classes?
\end{question}

%\bibliography{C:/Users/jbald/texmf/bibtex/bib/local/ssgroups}

\begin{thebibliography}{CGGW10}

\bibitem[Bai96]{Baiz}
B.~Baizhanov.
\newblock O sushchestvennosti o-minimalnogo obogashcheniya.
\newblock In {\em Proceedings of Informatics and Control Problems Institute},
  pages 71--76. 1996.
\newblock in Russian: (English title: On essentiality of an o-minimal
  expansion).

\bibitem[Bal73]{Baldwinautcat}
John~T. Baldwin.
\newblock The number of automorphisms of models of $\aleph_1$-categorical
  theories.
\newblock {\em Fundamenta Mathematicae}, 93:1--6, 1973.

\bibitem[Bal95]{Baldwinautpp}
John~T. Baldwin.
\newblock Some projective planes of {L}enz {B}arlotti class {I}.
\newblock {\em Proceedings of the A.M.S.}, 123:251--256, 1995.

\bibitem[Bal22]{BaldwinsmssIII}
John~T. Baldwin.
\newblock {Strongly minimal Steiner Systems III: Path Graphs and Sparse
  configurations}.
\newblock submitted, 2022.

\bibitem[Bal23]{BaldwinsmssII}
John~T. Baldwin.
\newblock {Strongly minimal Steiner Systems II: Coordinatizaton and Strongly
  Minimal Quasigroups}.
\newblock {\em Algebra Universalis}, 84, 2023.
\newblock Math arXiv:2106.13704; \url{
  https://doi.org/10.1007/s00012-023-00812-w}.

\bibitem[BC19]{BarbinaCasa}
Silvia Barbina and Enrique Casanovas.
\newblock {Model theory of Steiner triple systems}.
\newblock {\em Journal of Mathematical Logic}, 20, 2019.
\newblock \url{https://doi.org/10.1142/S0219061320500105}.

\bibitem[BL71]{BaldwinLachlansm}
John~T. Baldwin and A.H. Lachlan.
\newblock On strongly minimal sets.
\newblock {\em Journal of Symbolic Logic}, 36:79--96, 1971.

\bibitem[BP21]{BaldwinPao}
John~T. Baldwin and G.~Paolini.
\newblock {Strongly Minimal {S}teiner Systems I}.
\newblock {\em Journal of Symbolic Logic}, 86:1486--1507, 2021.
\newblock published online Oct 22, 2020 \url{arXiv:1903.03541}.

\bibitem[BS96]{BaldwinShiJapan}
John~T. Baldwin and Niandong Shi.
\newblock Stable generic structures.
\newblock {\em Annals of Pure and Applied Logic}, 79:1--35, 1996.

\bibitem[CF04]{CasanovasFarreelim}
E.~Casanovas and R.~Farr{\'e}.
\newblock Weak forms of elimination of imaginaries.
\newblock {\em Mathematical Logic Quarterly}, 50:126--140, 2004.

\bibitem[CGGW10]{Chicoetal}
K.~M. Chicot, M.~J. Grannell, T.~S. Griggs, and B.~S. Webb.
\newblock On sparse countably infinite {S}teiner triple systems.
\newblock {\em J. Combin. Des.}, 18(2):115--122, 2010.

\bibitem[CW12]{CameronWebb}
P.~J. Cameron and B.~S. Webb.
\newblock Perfect countably infinite {S}teiner triple systems.
\newblock {\em Australas. J. Combin.}, 54:273--278, 2012.

\bibitem[EF12]{EvansFerII}
David~M. Evans and Marco~S. Ferreira.
\newblock The geometry of {H}rushovski constructions, {II}. the strongly
  minimal case.
\newblock {\em J. Symbolic Logic}, 77(1):337--349, 2012.

\bibitem[Fuj06]{Fujiwaramitre}
Yuichiro Fujiwara.
\newblock Infinite classes of anti-mitre and 5-sparse {S}teiner triple systems.
\newblock {\em J. Combin. Des.}, 14(3):237--250, 2006.

\bibitem[Gr{\"a}79]{Gratzer}
George Gr{\"a}tzer.
\newblock {\em Universal Algebra}.
\newblock Springer-Verlag, 1979.

\bibitem[GW75]{GanWer}
Bernhard Ganter and Heinrich Werner.
\newblock Equational classes of {S}teiner systems.
\newblock {\em Algebra Universalis}, 5:125--140, 1975.

\bibitem[GW80]{GanWer2}
Bernhard Ganter and Heinrich Werner.
\newblock Co-ordinatizing {S}teiner systems.
\newblock In C.C. Lindner and A.~Rosa, editors, {\em Topics on Steiner
  Systems}, pages 3--24. North Holland, 1980.

\bibitem[Hol99]{Holland5}
Kitty Holland.
\newblock Model completeness of the new strongly minimal sets.
\newblock {\em The Journal of Symbolic Logic}, 64:946--962, 1999.

\bibitem[Hru93]{Hrustrongmin}
E.~Hrushovski.
\newblock A new strongly minimal set.
\newblock {\em Annals of Pure and Applied Logic}, 62:147--166, 1993.

\bibitem[HW21]{HorsleyWebb}
D.~Horsley and B.~Webb.
\newblock Countable homogeneous {S}teiner triple systems avoiding specified
  subsystems.
\newblock {\em Journal of Combinatorial Theory, Series A}, 180, 2021.
\newblock
  \url{https://www.sciencedirect.com/science/article/pii/S0097316521000339}.

\bibitem[Mer20]{Merflat}
M.~Mermelstein.
\newblock The generic flat pregeometry.
\newblock \url{https://arxiv.org/abs/2001.00609}, 2020.

\bibitem[MT19]{MullTent}
I.~Muller and K.~Tent.
\newblock Building-like geometries of finite morley rank.
\newblock {\em J. Eur. Math. Soc.}, 21:3739--3757, 2019.
\newblock DOI: 10.4171/JEMS/912.

\bibitem[Pao20]{Paoliniwstb}
Gianluca Paolini.
\newblock New $\omega$-stable planes.
\newblock {\em Reports on Mathematical Logic}, 55:87--111, 2020.

\bibitem[Pil99]{Pillayacf}
Anand Pillay.
\newblock Model theory of algebraically closed fields.
\newblock In E.~Bouscaren, editor, {\em Model Theory and Algebraic Geometry :
  An Introduction to E. Hrushovski's Proof of the Geometric Mordell-Lang
  Conjecture}, pages 61--834. Springer-Verlag, 1999.

\bibitem[Poi83]{Poizatim}
Bruno Poizat.
\newblock Une th\'eorie de {G}alois imaginaire.
\newblock {\em Journal of Symbolic Logic}, pages 1151--1170, 1983.

\bibitem[Poi85]{Poizatbook}
Bruno Poizat.
\newblock {\em Cours de Th\'eories des Mod\`eles}.
\newblock Nur Al-mantiq Wal-ma'rifah, 82, Rue Racine 69100 Villeurbanne France,
  1985.

\bibitem[She78]{Shelahbook}
S.~Shelah.
\newblock {\em Classification {T}heory and the {N}umber of {N}onisomorphic
  {M}odels}.
\newblock North-Holland, 1978.

\bibitem[Ste56]{Steinpnas}
Sherman~K. Stein.
\newblock Foundations of quasigroups.
\newblock {\em Proc. Nat. Acad. Sci.}, 42:545--546, 1956.

\bibitem[Tsu93]{Tsuboialgaut}
Akito Tsuboi.
\newblock Algebraic types and automorphism groups.
\newblock {\em The Journal of Symbolic Logic}, 58:232--239, 1993.

\bibitem[Ver02]{Verbovskiy}
Viktor Verbovskiy.
\newblock {\em Properties of functions definable in structures with conditions
  of minimality of families of definable subsets}.
\newblock PhD thesis, Novosibirsk State University, Pirogov St., 1,
  Academgorodok, Novosibirsk, Russia, 5 2002.
\newblock in Russian.

\bibitem[Ver06]{Verbovskiy06}
V.~Verbovskiy.
\newblock On the elimination of imaginaries for the strongly minimal sets of
  {H}rushovski.
\newblock In M.M.Erimbetov, editor, {\em Model Theory in Kazakhstan}, pages
  41--55. Eco Study, Almaty, 2006.

\bibitem[Yon22]{YonedaweiI}
I.~Yoneda.
\newblock Pillay's alternative definition for weak elimination of imaginaries
  coincides with poizat's original one.
\newblock In {\em RIMS kokyuroku (2018)}, pages 59--63. 2022.

\bibitem[Yon23]{YonedaweiII}
I.~Yoneda.
\newblock On weak elimination of imaginaries, an appendix of casanova and
  farre's paper and basic examples.
\newblock In {\em RIMS {K}okyuroku (2023)}. 2023.

\bibitem[Zie13]{Zieglersm}
Martin Ziegler.
\newblock An exposition of {H}rushovski's new strongly minimal set.
\newblock {\em Ann. Pure Appl. Logic}, 164(12):1507--1519, 2013.

\end{thebibliography}
%
%\bibliographystyle{alpha}
%

{\bf Errata}
We thank Gregory Cherlin for pointing to some inaccuracies in the published
text and in particular for noting that much of condition 2.9 b) is
unnecessary with the correct version in 2) below.

1)  Notation 2.7  should  be defined only for independent sets. This is
essential to show $\sdcl^* \subseteq \dcl^*(X)$.

2) Definition 2.9 of truly n-ary is defective; it should read:

%\item  %\label{truen}
%\begin{enumerate}
 Let $\xbar =\langle x_0 \ldots x_{n-1}\rangle$:  a function $f(\xbar)$
truly {\em truly depends on $x_i$} if for any {\em independent}
    sequence $(a_1, \ldots, a_i, \ldots, a_n)$ and some (hence any) $b$ such
that $( a_1, \ldots, a_i,\ldots, a_n , b)$ is independent,\\ $f( a_1, \ldots,
a_i, ..., a_n ) \ne f(a_1, \ldots, a_{i-1}, b, a_{i+1}, \ldots, a_n)$.

$f$ is truly n-ary if it depends on all its arguments.

 A short argument (pointed out by Cherlin) shows we can omit (b) in Definition 2.9.2.

3. regarding footnote 17: We introduce the notion "truly dependent" because
we want to distinguish standard dependence and dependence "almost everywhere"

For Gr\"{a}tzer dependence on $x_i$ means there is a string $a_1, a_2, a_3
\ldots$ such that changing only $x_i$ argument changes the value. So $xy +z$
depends on both $x$ and $y$.  Our more restrictive condition of {\em truly
depends} requiring the string to be independent, makes $xy +z$ independent.

(Our definition is in the context of a strongly minimal theory and
independence is thus algebraic independence.  So $xy + z$ becomes truly
independent.)

More succinctly, Let $A$ be a finite set and
$f(x,y) = xy$ if $x\in A$ and
$f(x,y) = y$ otherwise.
Then $f$ depends on 2 variables but is not truly binary.

4) Remark 2.11 The Steiner example.  The intent is that this is clearly a
definable $(k-1)$-ary function in a strongly minimal k-steiner system,
meaning in a vocabulary with a single ternary relation,  two points determine
a line and all lines have $k$-points.

We want it not be truly $k$-ary under our definition. And this is trivially
the case because the definition makes the $a_1, \ldots a_{k-1}$ dependent.

\bigskip
Typos:

- on page 1, "Theorem 5.18" is actually 5.19.

bottom page 5 Axiom 1.1.1.5 (line -2) is  referring to 1.1.1.6

- Def 1.1.2: subscripts M should be N here.

- Def. 2.9 para 1, end: $b_i$ should be $b_{i+1}$ there.

- Page 11, top, (a): `disagrees with a' (there is an unwanted prime there)

\end{document}